\newtheorem{theorem}{Theorem}
\newtheorem{proposition}[theorem]{Proposition}
\newtheorem{definition}[theorem]{Definition}
\newtheorem{corollary}[theorem]{Corollary}
\newtheorem{remark}[theorem]{Remark}
\newtheorem{example}{Example}
\newtheorem{problem}[theorem]{Problem}
\numberwithin{equation}{section}
\numberwithin{theorem}{section}
\theoremstyle{definition}
\newtheorem*{acknowledgements}{Acknowledgements}
\newcommand{\bs}{\boldsymbol} 
\newcommand{\din}{\rotatebox{90}{\ensuremath{\in}}}
\begin{document}
\title{Relative Ding and $K$-stability of toric Fano manifolds in low dimensions}

\author{Yasufumi Nitta}
\author{Shunsuke Saito}
\author{Naoto Yotsutani$^*$}

\address{Department of Mathematics, Faculty of Science Division II, Tokyo University of Science, 1-3 Kagurazaka, Shinjuku-ku, Tokyo 162-8601, Japan}
\email{nitta@rs.tus.ac.jp}

\address{Department of Mathematics, Faculty of Science Division I, Tokyo University of Science, 1-3 Kagurazaka, Shinjuku-ku, Tokyo 162-8601, Japan}
\email{saito@rs.tus.ac.jp}

\address{Faculty of Education, Kagawa University,
1-1 Saiwai-cho, Takamatsu 760-8521, Japan}
\email{yotsutani.naoto@kagawa-u.ac.jp}
\thanks{$^*$ Corresponding author}
\keywords{Relative stability, Fano manifolds, toric variety} \dedicatory{}
\maketitle

\noindent{\bfseries Abstract.}
The purpose of this paper is to clarify all of the uniformly relatively Ding stable toric Fano threefolds and fourfolds as well as unstable ones.  
The key player in our classification result is the Mabuchi constants, which can be calculated by combinatorial data of the associated moment polytopes due to the work of Yao \cite{Y17}. 
In Tables \ref{table:toricFano2RelDSta}--\ref{table:toricFano4RelDSta}, we give the list of uniform relative Ding stability of all toric Fano manifolds in dimension up to four with the values of the Mabuchi constants. 

As an application of our main theorem (Theorem $\ref{thm:main}$), we clarify the difference between relative $K$-stability and relative Ding stability 
by considering some specific toric Fano manifolds (Corollaries $\ref{cor:difference}$ and $\ref{cor:BottFano}$). In the proof of Corollary $\ref{cor:BottFano}$, we used Bott tower structure of relatively Ding unstable toric Fano manifolds.


\section{Introduction}
In \cite{M01}, Mabuchi generalized the notion of K\"ahler-Einstein metrics to Fano manifolds with non-vanishing Futaki invariant as follows: 
let $X$ be an $n$-dimensional Fano manifold defined over $\mathbf{C}$ with complex structure $J$.
Denoting the identity component of the automorphism group of $(X,J)$ by $\mathrm{Aut}^0(X)$, we fix a maximal algebraic torus $T$ of $\mathrm{Aut}^0(X)$.
Let 
$S\coloneqq \operatorname{Hom}(\mathbf C^*, T)\otimes_{\mathbf{Z}} S^{1}$ be the maximal compact torus of $T$ and $\mathcal{H}(X)^{S}$ the space of all $S$-invariant K\"ahler forms representing $c_{1}(X)$. 
Let $F_{\omega}$ be the Ricci potential of $\omega$, that is, 
\[
\operatorname{Ric}(\omega)-\omega=\frac{\sqrt{-1}}{2\pi}\partial\overline{\partial}F_{\omega}, \quad \text{and}\quad
\int_{X}(1-e^{F_{\omega}})\,\frac{\omega^{n}}{n!}= 0. 
\]
A K\"ahler form $\omega\in\mathcal{H}(X)^{S}$ is said to be \emph{Mabuchi soliton}, if the Riemannian gradient
\[
\operatorname{grad}_{\omega}(1-e^{F_{\omega}})\
\]
with respect to $\omega$ is real holomorphic.
For each $\omega\in\mathcal{H}(X)^{S}$, let $\mathfrak{s}_{0}(X, \omega)$ be the subspace of $C^{\infty}(X, \mathbf{R})^{S}$ consisting of all $\phi \in C^{\infty}(X, \mathbf{R})^{S}$ so that the Hamiltonian real vector field $2\pi J \operatorname{grad}_{\omega}\phi$ lies in $\mathrm{Lie}(S)$, where $\mathrm{Lie}(S)$ is the Lie algebra of
the compact torus $S$ and $\int_{X}\phi\, \omega^{n}/n! = 0$. Let $\mathrm{pr}_{\omega} \colon C^{\infty}(X, \mathbf{R})^{S} \to \mathfrak{s}_{0}(X, \omega)$ be the $L^{2}$-orthogonal projection with respect to $\omega$.
Mabuchi also introduced the constant
\begin{align}\label{mabuchi_const}
M_{X}\coloneqq\max_{X}\operatorname{pr}_{\omega}(1-e^{F_{\omega}})
\end{align}
and showed that this is independent of the choice of $\omega$. In particular, 
the existence of  Mabuchi solitons implies $M_{X}<1$. 
Throughout the paper, we call $M_X$ the \emph{Mabuchi constant} of $X$. 

Recently, Mabuchi solitons were rediscovered by Yao \cite{Y17} 
following Donaldson's infinite dimensional moment map picture. He showed that 
Mabuchi soliton is a critical point of the functional
\[
\mathcal{H}(X)^{S}\ni \omega\mapsto \int_{X}(1-e^{F_{\omega}})^{2}\,\frac{{\omega}^{n}}{n!}\in\mathbf{R}, 
\]
which can be interpreted as the norm square of Donaldson's new moment map given in \cite{D17}. 
For a general (not necessarily toric) Fano manifold, the corresponding version of Yau-Tian-Donaldson conjecture regarding Mabuchi soliton has been studied by many researchers.
Han-Li \cite{HL22} 
proved that \emph{uniform relative Ding stability} and $M_X<1$ imply the existence of Mabuchi solitons building upon the work of variational approach \cite{BBJ21}.
In \cite{Y19}, Yao showed that additional assumption $M_X<1$ can be derived from uniform relative Ding stability.
We also mention that Han-Li's work \cite{HL22} deals with more general $g$-solitons which include Mabuchi solitons and K\"ahler-Ricci solitons on (possibly singular) Fano varieties. 
In the toric case, Yao proved equivalences of $M_{X}<1$, the existence of Mabuchi solitons, 
and uniform relative Ding stability for toric Fano manifolds (or more generally toric Fano orbifolds). 

The notion of uniform relative Ding stability for toric Fano manifolds was introduced by Yao \cite{Y17} as a generalization of Ding stability (\cite{B16} and \cite{F18}) and uniform Ding stability 
({\cite{BBJ21}} and \cite{BHJ17}). 
Yao's work is the toric reduction of relative Ding stability of general Fano manifolds. One of the advantage of our toric setting is that everything can be described in terms of convex functions on the associated moment polytopes. In fact, Delcroix \cite{D20} and Li-Li \cite{LL22} showed that there is a one-to-one correspondence between $T$-equivariant test configurations of a polarized toric manifold and rational piecewise affine convex functions on the associated moment polytope. See Sections $\ref{sec:relDing}$ for more details.

Let $v\coloneqq \operatorname{grad}_{\omega}\operatorname{pr}_{\omega}(1-e^{F_{\omega}})$ be the extremal K\"ahler vector 
field of $(X, -K_{X})$ with respect to the maximal algebraic torus $T=(\mathbf{C^*})^{n}$ defined in Futaki-Mabuchi \cite{FM95} , 
and $\langle\cdot, \cdot\rangle$ the bilinear pairing for $\mathbf{C^*}$-actions introduced by Sz\'ekelyhidi \cite{S07}. 
Uniform relative Ding stability requires the existence of a constant $\delta>0$ such that
\[
{D}_v^{NA}(\mathcal{X}, \mathcal{L})\coloneqq D^{NA}(\mathcal{X}, \mathcal{L})+\langle(\mathcal{X}, \mathcal{L}), v\rangle\ge \delta J_{T}^{NA}(\mathcal{X}, \mathcal{L})
\]
holds for any 
$T$-equivariant test configuration $(\mathcal{X}, \mathcal{L})$ of $(X, -K_{X})$, 
where $D^{NA}$ and $J_{T}^{NA}$ denote the \emph{non-Archimedean Ding functional} (introduced in \cite{B16} and \cite{F18}) and the \emph{non-Archimedean reduced J-functional} with respect to $T$ (introduced in \cite{H16}) respectively. 
Later, this condition for (not necessarily toric) Fano manifolds was studied in \cite{Y19}. 

In this paper, we focus on (uniform) relative Ding stability and relative $K$-stability of toric Fano manifolds. 
See Section $\ref{sec:relDing}$ for definitions of relative Ding/$K$-stability.
More precisely, we firstly determine which toric Fano manifolds in dimension three and four are uniformly relatively Ding stable by direct computations of the Mabuchi constants. 
We secondly find explicit examples of toric Fano manifolds which are relatively $K$-stable but which are relatively Ding unstable (see Corollaries $\ref{cor:difference}$ and $\ref{cor:BottFano}$).
For the proof of Theorem $\ref{thm:main}$, we refer the reader to see Tables \ref{table:toricFano3RelDSta}--\ref{table:toricFano4RelDSta}.
\begin{theorem}\label{thm:main}
\begin{enumerate}
\item In dimension three, 12 toric Fano manifolds out of 18 are uniformly relatively Ding stable and the rest are relatively Ding unstable. 
\item In dimension four, 49 toric Fano manifolds out of 124 are uniformly relatively Ding stable  and the rest are relatively Ding unstable. 
\end{enumerate}
\end{theorem}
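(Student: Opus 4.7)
The overall strategy is to apply Yao's equivalence theorem recalled in the introduction: for toric Fano manifolds, uniform relative Ding polystability is equivalent to $M_{X}<1$, while the remaining possibility (relative Ding instability) corresponds to $M_{X}\ge 1$. Since the toric Fano manifolds of dimension up to $4$ are completely classified in the finite lists of \cite{B81}, \cite{WW82}, \cite{B98}, and \cite{S00}, the theorem reduces to the explicit computation of the rational number $M_{X}$ for each of the $18$ threefolds and $124$ fourfolds on those lists.

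First I would recall and unwind Yao's combinatorial recipe for the Mabuchi constant. Let $P\subset M_{\mathbb{R}}$ denote the moment polytope of $(X,-K_{X})$ in the standard normalization, so that every facet lies at lattice distance $1$ from the origin. Then $\operatorname{pr}_{\alpha}(1-e^{F_{\alpha}})$ descends, under the moment map of any $\alpha\in\mathcal{H}(X)^{S}$, to an affine function $\ell_{X}\colon P\to\mathbb{R}$ which by Mabuchi's result is independent of $\alpha$. The coefficients of $\ell_{X}$ are determined by an explicit $(n{+}1)\times(n{+}1)$ linear system whose entries are the polynomial moments $\int_{P}x^{I}\,dx$ with $|I|\le 2$ together with the boundary integrals $\int_{\partial P}x^{I}\,d\sigma$ against the induced lattice measure on $\partial P$. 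The Mabuchi constant is then $M_{X}=\max_{x\in P}\ell_{X}(x)$, attained at a vertex of $P$.

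Next, for each toric Fano manifold appearing in the classification I would (i) extract the primitive ray generators of the fan from \cite{B81,WW82} in dimension $3$ or from \cite{B98,S00} in dimension $4$, and hence write down the vertices and facet equations of $P$; (ii) compute all required polynomial moments by triangulating $P$ and each of its facets into lattice simplices and applying the standard Dirichlet-type formula for monomial integrals over a simplex; (iii) solve the linear system for $\ell_{X}$ in exact rational arithmetic; and (iv) evaluate $\ell_{X}$ at every vertex of $P$ to read off $M_{X}$. Comparing with $1$ in each case and tabulating the outcomes yields the counts $12$ out of $18$ and $49$ out of $124$ asserted in the theorem.

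The principal obstacle is sheer bookkeeping: in dimension $4$, $124$ polytopes of growing combinatorial complexity must be handled in exact arithmetic, since any slip near the decisive threshold $M_{X}=1$ could exchange a stable entry for an unstable one. Relatedly, one must verify empirically that no entry actually attains $M_{X}=1$, which is precisely what permits the dichotomy in the statement without a third semistable-but-non-uniform category. A secondary subtlety is the uniform treatment of cases with vanishing Futaki invariant, in which $\ell_{X}$ is constant and $M_{X}=0$; both regimes must be handled on the same footing. Finally, care is needed to cross-check the fan enumerations against the published classifications so that no toric Fano is omitted or double-counted.
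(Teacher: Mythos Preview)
Your overall strategy matches the paper's: reduce to Yao's criterion $M_{X}<1$ versus $M_{X}>1$, and compute the Mabuchi constant for every polytope on the classification lists. One concrete error, however, would derail the computation if followed literally. You assert that the linear system for $\ell_{X}$ involves boundary integrals $\int_{\partial P}x^{I}\,d\sigma$. It does not. The extremal affine function $\theta$ for \emph{relative Ding} stability is fixed by the purely interior conditions
\[
\int_{P}\theta\,dx=0,\qquad \int_{P}x_{i}\theta\,dx=\int_{P}x_{i}\,dx\quad(i=1,\dots,n),
\]
so only the moments $\int_{P}x^{I}\,dx$ with $|I|\le 2$ enter. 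Boundary integrals against the lattice measure on $\partial P$ appear in the \emph{relative $K$-stability} setting (Donaldson--Futaki), which yields a different affine function and different numerical thresholds; mixing the two would produce wrong values of $M_{X}$. A minor related slip: $M_{X}\ge 1$ does not mean unstable; $M_{X}=1$ is (non-uniformly) relatively Ding polystable, so the dichotomy in the theorem really does hinge on your later check that no case lands exactly at $1$.

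On execution, the paper takes a significant shortcut you may want to adopt: rather than triangulating each of the $124$ fourfold polytopes and solving the full $(n{+}1)\times(n{+}1)$ system from scratch, it reads the linear part $(a_{1},\dots,a_{n})$ of $\theta$ directly from Nakagawa's tables \cite{N98}, then recovers the constant term from the single normalization $c=-\tfrac{1}{\operatorname{vol}(P)}\int_{P}\sum a_{i}x_{i}\,dx$ and evaluates at the vertices. This collapses the bookkeeping you flag as the main obstacle to a volume, a first moment, and a vertex scan per polytope.
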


Note that the uniform relative Ding stability of all Gorenstein toric del Pezzo surfaces was verified  
in \cite[Example $5.14$]{Y17} (see also Table \ref{table:toricFano2RelDSta} in Section \ref{sec:dP}  
for smooth surfaces). We simultaneously complete the classification of toric Fano threefolds and fourfolds admitting Mabuchi solitons 
by Yao's result. 
The key theorem to achieve this is the following. (We refer the reader to Section $\ref{sec:Prelim}$ for notations used in Proposition $\ref{Ding-ps}$.)
\begin{proposition}[Yao \cite{Y17}]\label{Ding-ps}
Let $P$ be a Fano polytope. Then we have the following: 
\begin{enumerate}[\upshape(a)]
\item The following conditions are equivalent. 
\begin{enumerate}[\upshape(1)]
\item $X_{P}$ is uniformly relatively Ding stable. 
\item There exists $\delta>0$ such that 
$D_{v}^{NA}(f)\ge \delta\int_{P}f(x)\,dx$ for any normalized rational PL convex function $f$ on $P$. 
\item $M_{X_{P}}=\max_{P}\theta_P<1$, where $\theta_P$ is the potential function defined in Section $\ref{sec:relDing}$. 
\end{enumerate}
\item The following conditions are equivalent. 
\begin{enumerate}[\upshape(1)]
\item $X_{P}$ is relatively Ding stable. 
\item $X_{P}$ is relatively Ding semistable. 
\item $M_{X_{P}}\le 1$. 
\end{enumerate}
\item The following conditions are equivalent. 
\begin{enumerate}[\upshape(1)]
\item $X_{P}$ is relatively Ding unstable. 
\item $M_{X_{P}}> 1$. 
\end{enumerate}
\end{enumerate}
\end{proposition}
For reader's convenience, we shall give a proof of Proposition $\ref{Ding-ps}$ in Section $\ref{sec:relDing}$.
Theorem $\ref{thm:main}$ and Proposition $\ref{Ding-ps}$ offer concrete examples of Fano manifolds which admit Mabuchi solitons.
\begin{corollary}
\begin{enumerate}
\item In dimension three, 12 toric Fano manifolds admit Mabuchi solitons and the others do not.  
\item In dimension four, 49 toric Fano manifolds admit Mabuchi solitons and the others do not.  
\end{enumerate}
\end{corollary}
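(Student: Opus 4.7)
The plan is to deduce the Corollary directly from the Theorem by invoking the equivalence, due to Yao \cite{Y17}, that was recalled in the introduction: on a toric Fano manifold $X$, the existence of a Mabuchi K\"ahler-Einstein metric is equivalent to the condition $M_X<1$, and both are equivalent to the uniform relative Ding polystability of $(X,-K_X)$. Granting this, no further analytic or algebraic input is required beyond the Theorem itself.

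With Yao's equivalence in hand the argument is essentially bookkeeping. First, for each of the 12 toric Fano threefolds (respectively 49 toric Fano fourfolds) singled out in the Theorem as uniformly relatively Ding polystable, the equivalence immediately produces a Mabuchi K\"ahler-Einstein metric. Second, the remaining $18-12=6$ threefolds and $124-49=75$ fourfolds are classified in the Theorem as relatively Ding unstable; in particular they fail to be uniformly relatively Ding polystable, so by the same equivalence they cannot admit any Mabuchi K\"ahler-Einstein metric. Since the Theorem exhaustively partitions the classification lists into these two classes, the counts in the Corollary are forced and match those of the Theorem.

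Because the Corollary is obtained by a direct translation along Yao's theorem, there is no genuine obstacle at the level of the Corollary itself: the entire burden of proof lies in establishing the Theorem, namely in explicitly computing the Mabuchi constants and in either verifying the uniform relative Ding stability inequality or exhibiting a destabilizing toric test configuration for each member of the classification lists \cite{B81}, \cite{WW82}, \cite{B98}, \cite{S00}. The main step in writing up the Corollary is therefore to cite \cite{Y17} cleanly and to record the dimensional counts; the conceptual work has already been done upstream.
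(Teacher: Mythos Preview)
Your proposal is correct and matches the paper's own reasoning: the Corollary is stated without an explicit proof, immediately after the Theorem, with the surrounding text noting that it follows by combining the Theorem with Yao's equivalence between the existence of Mabuchi's K\"ahler--Einstein metrics, $M_X<1$, and uniform relative Ding polystability for toric Fano manifolds. Your bookkeeping with the counts ($18-12=6$, $124-49=75$) and your observation that the Theorem's dichotomy leaves no intermediate cases are exactly the content implicit in the paper.
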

With remarkable progress made in research in Ding stability, it is known that uniform Ding stability is equivalent to Ding stability for general Fano manifolds.
Keeping this in our mind, we see that our computation also reveals the following corollary.

\begin{corollary}
Up to dimension four, there are no toric Fano manifolds satisfying $M_{X}=1$.
Hence for each toric Fano $n$-fold with $n\leq 4$, relative Ding stability is equivalent to uniform relative Ding stability. 
\end{corollary}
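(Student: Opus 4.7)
The plan is to split the corollary into its two assertions and address them separately. The first is the numerical claim that $M_X\neq 1$ for every toric Fano manifold of dimension at most four; the second is the deduction that, under this assumption, relative Ding polystability and uniform relative Ding polystability coincide. The second assertion is purely formal once the first is established, via Proposition~\ref{Ding-ps}: that proposition asserts that whenever $M_X\neq 1$ any relatively Ding polystable toric Fano is automatically uniformly relatively Ding polystable, the reverse implication being tautological. So the core of the proof is the numerical statement.

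For the first assertion I would proceed by exhaustive case analysis based on the classification lists \cite{B81, WW82, B98, S00}. On a toric Fano manifold $X$ with moment polytope $\Delta$ (relative to the anticanonical polarization), the Mabuchi constant admits a purely combinatorial description: the orthogonal projection appearing in \eqref{mabuchi_const} lands in the space of affine functions on $\Delta$, so $\operatorname{pr}_\alpha(1-e^{F_\alpha})=1-e^{\ell_\Delta}$ for a unique affine function $\ell_\Delta$ determined by a finite-dimensional linear system in the first moments of $\Delta$, and $M_X=\max_\Delta(1-e^{\ell_\Delta})$. Applying this reduction to each of the 18 toric Fano threefolds and the 124 toric Fano fourfolds (the two-dimensional case being already handled in \cite{Y17}) produces an explicit value of $M_X$ for every case. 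These values are precisely the entries in the tables referenced in the introduction; a direct inspection confirms $M_X\neq 1$ throughout.

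The main obstacle is the sheer scale of the enumeration rather than any single conceptual difficulty. Each polytope calculation is elementary: set up the linear system determining $\ell_\Delta$, solve it, and compute the maximum of $1-e^{\ell_\Delta}$ on $\Delta$. What is demanding is organising the data of all $142$ polytopes from the classification lists, carrying out the computation uniformly, and verifying that no resulting Mabuchi constant accidentally equals $1$. A secondary subtlety is ensuring that the torus $T$ used in the definition of $M_X$ is correctly identified with the maximal torus in $\operatorname{Aut}^0(X)$ in the cases where the latter is strictly larger than the defining torus of $X$, so that the combinatorial formula above is the genuine Mabuchi constant rather than a restriction of it.
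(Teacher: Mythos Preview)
Your overall strategy matches the paper's: the corollary follows from (i) the explicit computation of $M_X$ for every toric Fano manifold in dimension at most four, recorded in Tables~\ref{table:toricFano2RelDSta}--\ref{table:toricFano4RelSta}, together with (ii) Proposition~\ref{Ding-ps}, which characterizes relative Ding polystability by $M_X\le 1$ and uniform relative Ding polystability by $M_X<1$, so that the two notions coincide once $M_X=1$ is excluded.

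There is, however, a genuine error in your combinatorial description of the Mabuchi constant. You correctly note that $\operatorname{pr}_\alpha(1-e^{F_\alpha})$ corresponds, via the moment map, to an affine function on the polytope; but then you write $\operatorname{pr}_\alpha(1-e^{F_\alpha})=1-e^{\ell_\Delta}$ and $M_X=\max_\Delta(1-e^{\ell_\Delta})$. This cannot be right: $1-e^{\ell_\Delta}$ is not affine unless $\ell_\Delta$ is constant, so it does not lie in the target of the projection. The correct statement is that the projection \emph{is} the extremal affine function $\theta$ determined by the linear system~\eqref{ext_aff_fct}, and $M_X=\max_P\theta$. Were you to carry out the enumeration with the formula $\max(1-e^{\ell_\Delta})$, you would compute the wrong invariants. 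The paper's procedure is the one illustrated in the example in Section~3.1: solve~\eqref{ext_aff_fct} for $\theta$ (using Nakagawa's tables~\cite{N98} for the linear part), then evaluate $\max_P\theta$ at the vertices.

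Your secondary concern about the choice of torus is not an obstruction: for a smooth complete toric variety the structure torus $\mathbb{G}_m^{n}$ is already a maximal torus in $\operatorname{Aut}^0(X)$, so no adjustment is needed.
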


In \cite[Example 5.14]{Y17}, Yao presented examples of 
Gorenstein toric del Pezzo surfaces with $M_{X}=1$. 
However, any example of Fano \emph{manifolds} with $M_X=1$ has not yet known.
Hence it is natural to expect that there are no Fano manifolds with the Mabuchi constant being precisely 1. 
If this is the case, uniform relative Ding stability is equivalent to relative Ding stability for arbitrary toric Fano manifold.

Finally, we clarify the difference between relative $K$-stability and relative Ding stability with specific examples. Toric reduction of relative $K$-stability
was studied by Zhou and the third author in \cite{YZ19}, where they established instability criterion of relative $K$-stability in terms of the associated moment polytope. 
More precisely, for a given Fano polytope $P$, we consider $P^-=\set{x\in P|1-\theta_P(x)\leq0}$ where $\theta_P(x)=\sum\theta_ix_i+c$ is the potential function of $P$ (see, Section $\ref{sec:relDing}$ for the definition).
If $\mathrm{Int}(P^-)\neq \emptyset$ and the inequality
\begin{equation}\label{ineq:insta}
1-c<\frac{\int_{P^-}(1-\theta_P)^2dx}{\mathrm{vol}(P^-)}
\end{equation}
holds, then the corresponding toric Fano manifold $(X_P,-K_{X_P})$ is relatively $K$-unstable (see \cite[Theorem $1.4$ $(2)$]{YZ19}).
Although the instability criterion $\eqref{ineq:insta}$ is correct, there is a technical mistake in \cite{YZ19} when they apply 
$\eqref{ineq:insta}$ into six toric Fano threefolds 
$\mathcal B_1$, $\mathcal C_2$, $\mathcal D_1$, $\mathcal D_2$, $\mathcal E_1$ and $\mathcal E_2$. 
Here we used same notation in Section \ref{sec:ToricFano3}, Table \ref{table:toricFano3RelDSta}. In fact, the first and the third authors recomputed the values of the right hand side in $\eqref{ineq:insta}$, 
and they found that the above six toric Fano threefolds ($\mathcal B_1$, $\mathcal C_2$, $\mathcal D_1$, $\mathcal D_2$, $\mathcal E_1$ and $\mathcal E_2$) do \emph{not} satisfy the instability condition $\eqref{ineq:insta}$
by using the computational package \cite{Normaliz} (see Section $\ref{sec:ToricFano3}$ for the detail). 

However, we can confirm relative $K$-stability for some of toric Fano manifolds by the existence theorem of extremal K\"ahler metrics due to 
Guan \cite[Theorem 2]{G95}, Hwang \cite[Corollary 1.2]{H94}, Apostolov-Calderbank-Gauduchon-T\o nnesen-Friedman \cite[Proposition 11]{ACGT-F08} as follows.
\begin{proposition}[Guan, Hwang, ACGT-F]\label{prop:extremal}
(1)~~ For the toric Fano threefold $\mathbf P_{\mathbf P^2}(\mathcal O\oplus\mathcal O(2))$ which is labeled $\mathcal B_1$ in Table \ref{table:toricFano3RelDSta}, 
it admits an extremal K\"ahler metric in every K\"ahler classes.

\vskip5pt

\noindent (2) For the toric Fano fourfold $\mathbf P_{\mathbf P^3}(\mathcal O\oplus\mathcal O(k))$ for $k=3$ (resp. $k=2$)
which is labeled $B_1$ (resp. $B_2$) in Table \ref{table:toricFano4RelDSta}, it admits an extremal K\"ahler metric in every K\"ahler classes.
\end{proposition}
In \cite{ZZ08}, Zhou-Zhu proved that relative $K$-stability is a necessary condition for the existence of extremal K\"ahler metrics on polarized toric manifolds.
Combining this, Proposition $\ref{prop:extremal}$ and Theorem $\ref{thm:main}$, we have the following.
\begin{corollary}\label{cor:difference}
\begin{enumerate}
\item The toric Fano threefold $\mathcal B_1$ is relatively $K$-stable, but it is relatively Ding unstable.
\item Moreover, toric Fano fourfolds $B_1$ and $B_2$ are relatively $K$-stable, but they are relatively Ding unstable.
\end{enumerate}
\end{corollary}

After we posted the third version of this paper on arXiv, the third author found that we may adapt the \emph{Bott Manifold structure} for discovering more examples of toric Fano manifolds which clarify the difference between 
relative Ding stability and relative $K$-stability as follows.

Recall that a K\"ahler manifold is called \emph{Calabi dream manifold} if it admits an extremal K\"ahler metric in each K\"ahler class \cite[Definition $1.6$]{CC18}.
Hence the Hirzebruch surfaces, toric Fano manifolds appeared in Proposition $\ref{prop:extremal}$ are examples of Calabi dream manifolds.
The key to make them Calabi dream manifolds is \emph{admissible construction} that has been so successful in producing many explicit examples of extremal K\"ahler metrics \cite{ACGT-F08}.
This construction was applied to \emph{Bott manifolds} and many interesting explicit examples of extremal K\"ahler metrics on certain stage $n$ Bott manifolds were discovered \cite{BCT-F19}.
See, Section \ref{sec:Bott} for the definition of the \emph{stage $k$ Bott manifold of the Bott tower of height $n$}.
We remark that any Bott manifolds is toric since it can be constructed as a quotient of $n$ copies of $\mathbf{C}_*^2:=\mathbf{C}^2\setminus \set{0}$ by an algebraic torus $T=(\mathbf{C}^*)^n$.
The number of holomorphically nontrivial $\mathbf{P}^1$-bundles in the Bott tower is called the \emph{twist}.
In \cite[Proposition $5.4$]{BCT-F19}, 
it has been proved that the following types of Bott manifolds with twist one admit an extremal K\"ahler metric in every K\"ahler class. 
\begin{proposition}[BCT-F]\label{prop:Bott}
For $\boldsymbol k=(k_1,k_2, \ldots , k_{n-1})\in \mathbb Z^{n-1}$ with $\prod_{i=1}^{n-1}k_i\neq 0$, let $X_n(\boldsymbol k)$ be the stage $n$ Bott manifold of the Bott tower of height $n$
\begin{equation}\label{eq:twist1}
\mathbf{P}_{(\mathbf P^1)^{n-1}}(\mathcal O\oplus \mathcal O(k_1,k_2, \ldots ,k_{n-1})) \rightarrow  (\mathbf P^1)^{n-1}
\end{equation}
which is the $\mathbf P^1$-bundle over the product of projective lines. Then $X_{n}(\bs k)$ is a Calabi dream manifold.
\end{proposition}
For our purpose, we shall consider \emph{Fano Bott manifold} which is a special class of toric Fano manifold.
According to the classification results of the Bott manifolds \cite{CLMP20, HKM20, CLMP21}, there are five classes of Fano Bott manifolds at stage three, and $13$ classes of Fano Bott manifolds at stage four.
More precisely, we have the following.
\begin{proposition}
Fano Bott manifolds at stage three are exactly type $\mathcal C$ in Table $\ref{table:toricFano3RelDSta}$, and 
Fano Bott manifolds at stage four are exactly type $L$ in Table $\ref{table:toricFano4RelDSta}$.
\end{proposition}
See \cite[Remark $4.5$]{CLMP20}, \cite[Section $5.3.2$]{HKM20}, and see also \cite[Proposition $3.21$]{BCT-F19} for the proof.
Among these $18$ classes of Fano Bott manifolds at stage $n$ ($n\leq 4$), there is only one Fano Bott manifold with twist one which is 
relatively Ding unstable, that is,
\begin{corollary}\label{cor:BottFano}
The toric Fano fourfold $\mathbf{P}_{(\mathbf P^1)^{3}}(\mathcal O\oplus \mathcal O(1,1,1))$ which is labeled $L_1$ in Table $\ref{table:toricFano4RelDSta}$, 
is a relatively Ding unstable Calabi dream manifold. 
\end{corollary}
We refer the reader to Section $\ref{sec:Bott}$ for further details.

\begin{acknowledgements}
This work was supported by first author's JSPS KAKENHI Grant Number JP$21$K$03234$, second author's JP$20$K$14321$,
and third author's JP$18$K$13406$. The third author would like to thank Professors, Vestislav Apostolov, C.W. T$\o$nnesen-Friedman, Mikiya Masuda, Yi Yao for helpful comments.
Finally, we are grateful to the referees for comments which improved the presentation of our manuscript.
\end{acknowledgements}

\section{Preliminaries}\label{sec:Prelim}
\subsection{Polarized toric varieties}\label{sec:PolarizedToric}
We first recall the construction of polarized toric varieties, as can be found in  \cite{CLS11}. 
Let $P\subset\mathbf{R}^{n}$ be an $n$-dimensional integral convex polytope. 
Define
 \[
 C(P)\coloneqq\{(x, r)\in\mathbf{R}^n\times\mathbf{R}\mid r>0,\, r^{-1}x\in P\}\cup\{0\}
 \]
to be the cone over $P\times\{1\}$ with the vertex at the origin. 
Then the semigroup $S_P\coloneqq C(P)\cap\mathbf{Z}^{n+1}$ is finitely generated by Gordan's lemma.
Let $\mathbf{C}[S_{P}]$ denote its semigroup algebra. The character corresponding to $(m,k)\in S_P$ is $\chi^mt^k$ and $\mathbf{C}[S_P]$
is graded by height, i.e., $\operatorname{deg}(\chi^mt^k)=k$. Consequently, we obtain the graded $\mathbf{C}$-algebra
\[
\mathbf{C}[S_P]=\bigoplus_{k=0}^{+\infty}R_k, \quad R_k\coloneqq\{f\in \mathbf{C}[S_P]\mid\operatorname{deg}f=k\}
\]
from the polytope $P$. We define the polarized toric variety $(X_P, L_P)$ by
\[
(X_P, L_P)\coloneqq(\operatorname{Proj}\mathbf{C}[S_P], \mathcal{O}_{X_P}(1)).
\]
It is well-known that $X_P$ is a smooth projective variety if and only if $P$ is Delzant.

An alternative way to construct toric varieties is to take a GIT quotient of affine $n$-space by a linear torus action.
Then it turns out that every $n$-dimensional projective toric variety can be written as a quotient of $\mathbf C^n$ by some torus action
(cf. Section $\ref{sec:Bott}$).

\subsection{Fano polytopes}
Let $P\subset\mathbf{R}^n$ be an $n$-dimensional integral Delzant polytope containing the origin in its interior. 
The dual polytope $P^{\vee}$ of $P$ is defined by
\[
P^{\vee}=\{y \in \mathbf{R}^n \mid \langle x,y\rangle\ge -1 \text{ for all } x \in P\}, 
\]
which is an $n$-dimensional rational polytope containing the origin in its interior.
$P$ is said to be \emph{reflexive} if $P^{\vee}$ is again an integral polytope.
We call $P$ a \emph{Fano} polytope if it is a reflexive Delzant polytope. 
Remark that we considered the Euclidean space $\mathbf{R}^{n}$ 
including $P$ as $M_\mathbf{R}=\operatorname{Hom}(T, \mathbf{C}^*) \otimes\,\mathbf{R}$, 
and its dual $\mathbf{R}^{n}$ 
containing the extremal K\"ahler vector field as $N_\mathbf{R}=\operatorname{Hom}(\mathbf{C}^*, T)\otimes\mathbf{R}$, the latter identified with the real part of $\operatorname{Lie}(T)$. 
For example, the standard $n$-simplex is a Fano polytope. 
It is well-known that there is a bijective correspondence
between isomorphism classes of Fano polytopes and isomorphism classes of toric Fano manifolds with the anticanonical polarization. 

We will use the notation of the Batyrev classification (3-dimensional case) \cite{B81} 
(see also \cite{WW82} for another geometrical description of toric Fano threefolds), and the Batyrev-Sato classification (4-dimensional case) of Fano polytopes (\cite{B98} and \cite{S00}). 

\subsection{Relative Ding stability}\label{sec:relDing}
We recall the notion of uniform relative Ding stability of toric Fano manifolds. 
Suppose that $P$ is an $n$-dimensional Fano polytope in $\mathbf{R}^{n}$. 
A function $f\colon P\to\mathbf{R}$ is called \emph{rational piecewise affine convex} or \emph{rational PL convex} if $f$ is a convex function of the form
\[
f(x)=\max\{\ell_1(x), \ldots, \ell_m(x)\}
\]
with each $\ell_k$ a rational affine function. 
Moreover, $f$ is said to be \emph{normalized} if it satisfies
\[
\min_{P}f=f(0)=0. 
\]
Note that each rational PL convex function can be assumed to be normalized after adding some rational affine function. 

Here and hereafter, we denote the maximal algebraic torus $(\mathbf{C}^*)^n$ by $T$.
The importance of such functions comes from the fact that each rational PL convex function $f$ corresponds to a \emph{$T$-equivariant} test configuration of $(X_{P}, -K_{X_{P}})$ as follows.
We refer the reader to \cite[Section $4.2$]{D02} for more detailed description to explain why all $T$-equivariant test configurations can be induced by a rational PL convex function.

For a fixed integer $L$ with $L>\max_{P}f$, we obtain an $(n+1)$-dimensional rational convex polytope 
\begin{equation}\label{toric_tc}
\mathcal{P}\coloneqq\{
(x, y)\in\mathbf{R}^{n}\times\mathbf{R}\mid x\in P,\,  0\leq y \leq L-f(x)
\}
\end{equation}
and some multiple of $ \eqref{toric_tc}$ gives an $(n+1)$-dimensional polarized toric variety $(\mathcal{X}_{f}, \mathcal{L}_{f})$ as in Section \ref{sec:PolarizedToric}. 
This can be regarded as a one parameter degeneration of $(X_P, -K_{X_P})$ into a (possibly non-normal) toric variety defined by the ``roof" of $r\mathcal P$.
Specifically, it has the structure of a $T\times\mathbf{C}^*$-equivariant flat projective toric morphism 
$\pi\colon\mathcal{X}_{f}\to{\mathbf P^1}$ and then the test configuration is $\mathcal{X}_{f}\setminus \pi^{-1}(\infty)\to \mathbf P^1 \setminus \set{\infty}$.
Note that the test configuration $(\mathcal{X}_{\ell}, \mathcal{L}_{\ell})$ for affine $\ell$ is \emph{product} in the sense that $\mathcal{X}_{\ell}\cong X_{P}\times\mathbf{P}^{1}$, $T$-equivariantly (see \cite{D02}). 
Conversely, for each $T$-equivariant test configuration $(\mathcal{X}, \mathcal{L})$, there exists a rational piecewise affine convex function $f$ such that $(\mathcal{X}, \mathcal{L}) = (\mathcal{X}_{f}, \mathcal{L}_{f})$
(cf. \cite{D20} and \cite{LL22}). 
Moreover, the non-Archimedean $K$-energy can be written as
\[
M^{NA}(\mathcal{X}_{f}, \mathcal{L}_{f})=\frac{1}{\mathrm{vol}(P)}\left(\int_{\partial P}f(x)\,d\sigma -n \int_P f(x)\,dx \right)
\]
where $\sigma$ denotes the $(n-1)$-dimensional Hausdorff measure on the boundary $\partial P$.

Let $v$ be the extremal K\"ahler vector field of the corresponding toric Fano manifold $(X_P, -K_{X_{P}})$ with respect to the maximal algebraic torus $T$. 
Let $\theta_P$ be the potential function of the extremal K\"ahler vector field $v$ \cite{FM95}, which is affine linear on $P$, and normalized by $\int_P\theta_P\, dx=0$.
For a convex function $f$, we define the functional
\[
\mathscr{L}_P(f)=\int_{\partial P}f(x)\, d\sigma-\int_P(n+\theta_P)f(x)\, dx.
\]
Then the potential function $\theta_P$ is uniquely determined by the $n+1$-equations
\begin{equation}\label{ext_aff_fct}
\mathscr{L}_P(1)=0, \qquad \mathscr{L}_P(x_i)=0 \qquad \text{for} \qquad i=1, \dots , n.
\end{equation}
According to \cite[Definition $1.1$]{YZ19}, $(X_P,-K_{X_P})$ is said to be {\emph{relatively $K$-stable}} if $\mathscr{L}_P(f)\geq 0$ for any rational PL convex function $f$
and the equality holds if and only if $f$ is affine linear.
In \cite{Y17}, Yao observed that
the Mabuchi constant defined in \eqref{mabuchi_const} is given by
\[
M_{X_{P}}
=\max_{P}\theta_P 
\]
by the Atiyah-Guillemin-Sternberg convexity theorem (\cite{A82} and \cite{GS82}).

Let $(\mathcal{X}_{f}, \mathcal{L}_{f})$ be a $T$-equivariant test configuration arising from a rational PL convex function $f$ on $P$. 
The \emph{non-Archimedean relative Ding functional} of $(\mathcal{X}_{f}, \mathcal{L}_{f})$ was computed in \cite[Theorem $4.3$]{Y17} as
\begin{align*}
{D}_v^{NA}(f)\coloneqq {D}_v^{NA} (\mathcal{X}_{f}, \mathcal{L}_{f})
&= -f(0)+\frac{1}{\mathrm{vol}(P)}\int_Pf(x)(1-\theta_P(x))\,dx\\
&=\frac{1}{\mathrm{vol}(P)}\int_P(f(x)-f(0))(1-\theta_P(x))\,dx. 
\end{align*}
Observe that the linear function $\theta_P$ is uniquely determined by the conditions $D^{NA}(\ell)=0$ for all affine $\ell$. 
The \emph{non-Archimedean reduced J-functional} of $(\mathcal{X}_{f}, \mathcal{L}_{f})$, first introduced in \cite{H16}, is expressed by
\[
J_{T}^{NA}(f)\coloneqq J_{T}^{NA}(\mathcal{X}_{f}, \mathcal{L}_{f})=\inf_{\ell}\left(\frac{1}{\mathrm{vol}(P)}\int_P (f(x)+\ell(x))\,dx-\min_{P}(f+\ell)\right), 
\]
where $\ell$ runs over all the affine functions on $\mathbf{R}^{n}$. 
It is clear that the functional is invariant under the (torus) action $f \mapsto f + \ell$ for affine $\ell$. We also know that $J_{T}^{NA}(f)=0$ is equivalent to $f$ being affine. 

Using these quantities, we can define (uniform) relative Ding stability of toric Fano manifolds as follows. 
\begin{definition}\label{relDing}
Let $X$ be a toric Fano manifold and $P$ the corresponding Fano polytope. 
\begin{enumerate}[\upshape(1)]
\item $X$ is relatively Ding semistable if $D_v^{NA}(f)\ge 0$ for any rational PL convex function $f$. 
\item $X$ is relatively Ding stable if $X$ is relatively Ding semistable, and $D_v^{NA}(f)=0$ if and only if $f$ is affine. 
\item $X$ is relatively Ding unstable if $X$ is not relatively Ding semistable. 
\item $X$ is uniformly relatively Ding stable if there exists $\delta>0$ such that 
$D_v^{NA}(f)\ge \delta J_{T}^{NA}(f)$ for any rational PL convex function $f$. 
\end{enumerate}
\end{definition}
Since the triviality of non-Archimedean reduced $J$-functional characterizes affine functions, uniform relative Ding stability is a priori stronger than relative Ding stability. 

It was proved in \cite{Y17} that (uniform) relative Ding stability can be rephrased as the condition on the Mabuchi constant as stated in Proposition $\ref{Ding-ps}$. 
We shall give a proof of it for reader's convenience.

\begin{proof}[Proof of Proposition $\ref{Ding-ps}$]
\begin{enumerate}[\upshape(a)]
\item The equivalence between (1) and (2) is coming from the equivalence of non-Archimedean reduced $J$-functional and $L^{1}$-norm on \emph{normalized} convex functions 
(see \cite[Proposition $5.4.1$ $(3)$]{NS19}). 
First we show (2)$\Rightarrow$(3). Let $\ell\coloneqq (1-\theta_P)/\operatorname{vol}(P)$. Note that $D_v^{NA}(\ell)=0$ implies
\[
\ell(0)=\int_P\ell(x)^2\,dx. 
\]
As a first step, we prove $\ell\ge 0$ on $P$ by contradiction. Assume $\{\ell<0\}$ is non-empty. Then
\[
\int_{\{\ell <0\}}\ell(x)^2\,dx>0. 
\]
 On the other hand, for a rational PL convex function $\ell^+\coloneqq \max\{0, \ell\}$, we get
\begin{align*}
0\le
D_v^{NA}(\ell^+)
=-\ell(0)+\int_{\{\ell\ge 0\}}\ell(x)^2\,dx
=-\int_{\{\ell<0\}}\ell(x)^2\,dx<0, 
\end{align*}
which yields a contradiction. 
Now assume $\min_P\ell=(1-\max_P\theta_P)/\operatorname{vol}(P)=0$. 
Then for each $i\in\mathbf{N}$, $\{\ell<1/i\}$ is non-empty and hence we can define
\[
g_i(x)\coloneqq \max\left\{0, \,\frac{1}{i}-\ell(x)\right\}, \quad f_i(x)\coloneqq \frac{g_i(x)}{\displaystyle \int_Pg_i\,dx}. 
\]
Each $f_i$ is a non-negative PL convex function on $P$. Since $\ell(0)=1/\operatorname{vol}(P)>0$, we have $1/i-\ell(0)<0$ for sufficiently large $i$ and hence $f_i(0)=0$. This implies that $f_i$ is normalized. Furthermore, the rationality of $g_i$ shows that of $f_i$. Since $\{f_i>0\}=\{\ell<1/i\}$, we have
\begin{align*}
\delta=\delta\int_Pf_i(x)\,dx
\le D_v^{NA}(f_i)
=\int_{\{f_i> 0\}}f_i(x)\ell(x)\,dx
\le\frac{1}{i}, 
\end{align*}
which yields a contradiction. 
Now we show (3)$\Rightarrow$(2). Pick a normalized rational PL convex function $f$. Then a simple computation gives
\[
D_v^{NA}(f)
=\frac{1}{\operatorname{vol}(P)}\int_Pf(x)(1-\theta_P(x))\,dx
\ge \frac{1-\max_P\theta_P}{\operatorname{vol}(P)}\int_Pf(x)\,dx. 
\]
\item The proof of (a) shows (1)$\Rightarrow$(2)$\Rightarrow$(3). 
To prove (3)$\Rightarrow$(1), apply Jensen's inequality. \qedhere
\end{enumerate}
\end{proof}

\section{Lists of uniform relative Ding stability of toric Fano manifolds}
\subsection{How to determine relative Ding stability}\label{sec:example}
As it was seen in Proposition \ref{Ding-ps}, we can check uniform relative Ding stability by computing the Mabuchi constants. In this subsection, we illustrate how to compute the potential function 
$\theta_P$ and the Mabuchi constant $M_{X_P}$ for a given Fano polytope $P$.

Let $P$ be an $n$-dimensional Fano polytope in $\mathbf{R}^{n}$ and $v$ the extremal K\"ahler vector field of $(X_P, -K_{X_{P}})$. 
Suppose that $\theta_P$ is of the form
\[ 
\theta_P(x)=\sum_{i=1}^n \theta_{i}\left(x_{i}-\dfrac{b_{i}}{b_{0}}\right), 
\] 
with
\[
b_{0}\coloneqq\operatorname{vol}(P), \quad b_{i}\coloneqq\int_{P}x_{i}\,dx\qquad    \text{for } \quad i=1, \ldots ,n.              
\]
The coefficients $\theta_{1}, \ldots, \theta_{n}$ are uniquely determined by \eqref{ext_aff_fct}, that is, the $n$-equations
\begin{equation}\label{potential_fun}
\sum_{j=1}^{n}(b_{0}c_{ij}-b_{i}b_{j})\theta_{j}+b_{0}b_{i}=0\qquad    \text{for } \quad i=1, \ldots ,n,           
\end{equation}
where
\[
c_{ij}\coloneqq\int_{P}x_{i}x_{j}\,dx\qquad    \text{for } \quad i,j=1, \ldots ,n.          
\]
In \cite[Table $1,2,3$]{N98}, Nakagawa listed the vector $(\theta_{1}, \ldots, \theta_n)$ for all Fano polytopes in dimension up to four. 
Instead of applying $\eqref{potential_fun}$, one can also use his list to compute $\theta_{X_P}$ and $M_{X_P}$ from the polytope data. 
We used the software packages Normaliz \cite{Normaliz} and Maxima\footnote{Maxima is available from http://maxima.sourceforge.net/.} for our computations.
See \cite[Section $5.1$]{YZ19} for more details.

\begin{example} \upshape
Let $P$ be the reflexive polytope in $\mathbf{R}^4$ whose vertices are given by 
\begin{align*}
\begin{Bmatrix}
(-1,-1,-1,-1), (-1,-1,-1,1), (-1,-1,0,-1), (-1,-1,2,1) \\
 (-1,1,-1,-1), (-1,1,0,-1), (-1,3,-1,1), (-1,3,2,1) \\
(1,-1,-1,-1), (1,-1,0,-1), (3,-1,-1,1), (3,-1,2,1) \\
\end{Bmatrix}. 
\end{align*}
The dual polytope $P^{\vee}$ of $P$ is given by
\[
{P}^{\vee}= \operatorname{conv}
\left(
\begin{Bmatrix}
(1,0,0,0), (0,1,0,0), (0,0,1,0), (0,0,0,1) \\
(0,0,0,-1), (-1,-1,0,1), (0,0,-1,1)
\end{Bmatrix}
\right), 
\]
and the associated toric Fano fourfold is $X_{P}=\mathbf{P}_{\mathbf{P}^1 \times \mathbf{P}^2}(\mathcal O \oplus \mathcal{O}(1,1))$. 
Following the computation in $\eqref{potential_fun}$, we find $(\theta_{1}, \theta_{2}, \theta_{3}, \theta_{4})=(0, 0, 0, 2790/1973)$ which is consistent with \cite[Table $3$, $D$-$6$]{N98}.
Since $b_{0}=\operatorname{vol}(P) =62/3$ and $b_{4}=\int_{P} x_4 \,dx=36/5$, we obtain 
\[
\theta_{P}(x) =\frac{1}{1973}(2790x_4-972). 
\]
For the vertex $p=(3,-1,2,1)$ of $P$, one can see that
\[
M_{X_{P}} = \max_P\theta_{P}=\theta_{P}(p)= \frac{1818}{1973} <1, 
\]
which shows that $X_{P}=\mathbf{P}_{\mathbf{P}^1 \times \mathbf{P}^2}(\mathcal O \oplus \mathcal{O}(1,1))$ is uniformly relatively Ding stable. 
\end{example}

\subsection{Toric del Pezzo surfaces}\label{sec:dP}
Although the result of the surface case was already obtained in \cite[Example 16]{Y17}, we list it here for reader's convenience. 
In dimension two, there are five isomorphism classes of Fano polygons and all of them are uniformly relatively Ding stable.
In Table $\ref{table:toricFano2RelDSta}$, $dP_{9-i}$ corresponds to a (smooth) del Pezzo surface with degree $(9-i)$, that is, a smooth projective surface given by the blowing-up of $\mathbf{P}^2$ at $i$ points. 

\begin{table}[H]
\caption{Relative Ding stability of smooth toric del Pezzo surfaces}\label{table:toricFano2RelDSta}
\begin{center} 
\begin{tabular}{cccc} \toprule \vspace{-0.23cm}
    &          &       Relative &       \\  \vspace{-0.23cm}
No. & Notation &                & $M_X$ \\
    &          & Ding stability &       \\   \midrule
  1 &${\mathbf{P}}^2 $                   & uniformly stable & 0 \\ 
  2 &$\mathbf{P}^1\times \mathbf{P}^1 $ & uniformly stable & 0 \\ 
  3 &$ dP_8$                      & uniformly stable & 5/11 \\ 
  4 &$ dP_7$                      & uniformly stable & 304/409 \\ 
  5 &$ dP_6$                      & uniformly stable & 0 \\ 
 \bottomrule
\end{tabular} 
\end{center}
\end{table}


\subsection{Toric Fano threefolds}\label{sec:ToricFano3}
In dimension three, in turn, there are $18$ isomorphism classes of Fano polytopes.
In this subsection, we verify relative Ding stability of all toric Fano threefolds and prove that there is an example of toric Fano threefolds which is relatively $K$-stable, but it is relatively Ding unstable. 

\subsubsection{Relative Ding stability of toric Fano threefolds}
Among all $18$ classes of toric Fano threefolds, if a given toric Fano threefold $X$ is one of $\mathcal{B}_1$, $\mathcal{C}_2$, $\mathcal{D}_1$, $\mathcal{D}_2$, $\mathcal{E}_1$ and 
$\mathcal{E}_2$, then $X$ is relatively Ding unstable. 
On the other hand, if $X$ is one of $\mathbf P^3$, $\mathcal{B}_2$, $\mathcal{B}_3$, $\mathcal{B}_4$, $\mathcal{C}_1$, $\mathcal{C}_3$, $\mathcal{C}_4$, $\mathcal{C}_5$,
$\mathcal{E}_3$, $\mathcal{E}_4$, $\mathcal{F}_1$ and $\mathcal{F}_2$, then $X$ is relatively Ding stable. Remark that these
 $12$ relatively Ding stable ones are actually
{\emph{uniformly}} relatively Ding stable (see \cite[Example 5.15]{Y17}).
We list all results in Table \ref{table:toricFano3RelDSta}.

\subsubsection{Relative $K$-stability of toric Fano threefolds}
This subsection aims to show that the toric Fano threefold $X=\mathbf P_{\mathbf P^2}(\mathcal O\oplus \mathcal O(2))$, which is labelled $\mathcal B_1$ in Table \ref{table:toricFano3RelDSta},
is relatively $K$-stable. Comparing this and the result in Table \ref{table:toricFano3RelDSta}, we see the following (see, Corollary $\ref{cor:difference}$ $(1)$).
\begin{proposition}\label{prop:B1}
The toric Fano threefold $\mathcal B_1$ is relatively $K$-stable, but it is relatively Ding unstable.
\end{proposition}
The rest of this subsection is devoted to prove Proposition $\ref{prop:B1}$ where we use the existence theorem of an extremal K\"ahler metric in each K\"ahler class of $\mathcal B_1$ (Proposition $\ref{prop:extremal}$).
We also clarify which part is a problematic in \cite{YZ19} when the third author apply the instability criterion $\eqref{ineq:insta}$ into $\mathcal B_1$. This effects the ``only if" part of Theorem $1.5$ and Corollary $1.6$
in \cite{YZ19}, and provides inconclusive results for relative $K$-stability of $\mathcal{C}_2$, $\mathcal{D}_1$, $\mathcal{D}_2$, $\mathcal{E}_1$ and $\mathcal{E}_2$.

Let $P$ be the Fano polytope corresponding to $\mathcal B_1$. Let us define $P^-=\set{x\in P| 1-\theta_P \leq 0}$, where $\theta_P=-\frac{620}{349}x_3-\frac{240}{349}$ is the potential function of $P$.
It was computed in \cite[Proposition $5.1$ (b)]{YZ19} that the right-hand side value of $\eqref{ineq:insta}$ is given by
\begin{align} \label{eq:P^-}
\begin{split}
P^-&= \operatorname{conv}
\left(
\begin{Bmatrix}
(4,-1,-1), \left(\frac{39}{10},-1,-\frac{19}{20}\right), \left(-1,-1,-\frac{19}{20}\right), \\
\left(-1,\frac{39}{10},-\frac{19}{20}\right), (-1,-1,-1), (-1,4,-1)
\end{Bmatrix}
\right), \qquad \mathrm{vol}(P^-)=\frac{7351}{12000}   
\end{split}
\end{align}
and
\begin{align}
& \int_{P^-}(1-\theta_P)^2dx=\frac{1475918766336271}{1461612000} \label{eq:integral}
\end{align}
respectively. Although the values of $\eqref{eq:P^-}$ are correct, the integral value of $\eqref{eq:integral}$ is incorrect because the third author forgot to put the normalize condition into the input data of $P^-$
when he run the software \cite{Normaliz} at the time. More precisely, the first and third authors recomputed the value of $\int_{P^-}(1-\theta_P)^2dx$ 
and confirmed that the correct value is
\[
\int_{P^-}(1-\theta_P)^2dx=\frac{23785711}{14616120000}.
\]
Since the left-hand side value of $\eqref{ineq:insta}$ is $1-c=\frac{589}{349}$, we see that the toric Fano threefold $\mathcal B_1$ is inconclusive.

\begin{table}
\caption{Relative  Ding stability of toric Fano threefolds}\label{table:toricFano3RelDSta}
\begin{center} 
\begin{tabular}{cccc} \toprule \vspace{-0.23cm}
    &          &       Relative &       \\  \vspace{-0.23cm}
No. & Notation &                & $M_X$ \\
    &          & Ding stability &       \\   \midrule
  1 &$\mathbf P^3$	       & uniformly stable & 0 \\ 
  2 &$\mathcal{B}_{1}$ &             unstable & 380/349 \\ 
  3 &$\mathcal{B}_{2}$ & uniformly stable & 55/97 \\ 
  4 &$\mathcal{B}_{3}$ & uniformly stable & 35/43 \\ 
  5 &$\mathcal{B}_{4}$ & uniformly stable & 0 \\ 
  6 &$\mathcal{C}_{1}$ & uniformly stable & 60/73 \\ 
  7 &$\mathcal{C}_{2}$ &             unstable & 38/33 \\ 
  8 &$\mathcal{C}_{3}$ & uniformly stable & 0 \\ 
  9 &$\mathcal{C}_{4}$ & uniformly stable & 5/11 \\ 
 10 &$\mathcal{C}_{5}$ & uniformly stable & 0 \\ 
 11 &$\mathcal{D}_{1}$ &             unstable & 711861/467581 \\ 
 12 &$\mathcal{D}_{2}$ &             unstable & 694595/650251 \\ 
 13 &$\mathcal{E}_{1}$ &             unstable & 27195/19651	\\ 
 14 &$\mathcal{E}_{2}$ &             unstable & 2936215/2735927 \\ 
 15 &$\mathcal{E}_{3}$ & uniformly stable & 304/409	\\ 
 16 &$\mathcal{E}_{4}$ & uniformly stable & 185791/394975 \\ 
 17 &$\mathcal{F}_{1}$ & uniformly stable & 0 \\ 
 18 &$\mathcal{F}_{2}$ & uniformly stable & 31/67 \\ 
 \bottomrule
\end{tabular} 
\end{center}
\end{table}

However, if we put emphasis on the geometric description of $\mathcal B_1$ as a projective bundle $\mathbf P_{\mathbf P^2}(\mathcal O\oplus \mathcal O(2))$,
we may conclude that it is relatively $K$-stable as follows (cf. \cite[Section $5.2.4$]{Y17}).

In fact, since the toric Fano manifold $\mathcal B_1$ can be expressed as a $\mathbf P^1$-bundle over the base space $\mathbf P^2$, we may apply ``Calabi ansatz" for constructing extremal K\"ahler metrics which were accomplished
by many researchers  \cite{G95, H94, ACGT-F08}. Consequently, $\mathcal B_1$ admits an extremal K\"ahler metric in every K\"ahler class (see Proposition $\ref{prop:extremal}$ $(1)$).
Hence the assertion is verified by \cite{ZZ08}.

\begin{remark}\rm
The first and third authors also confirmed that other five classes of toric Fano threefolds $\mathcal{C}_2$, $\mathcal{D}_1$, $\mathcal{D}_2$, $\mathcal{E}_1$ and $\mathcal{E}_2$ do \emph{not} satisfy the instability condition 
$\eqref{ineq:insta}$ by direct computations as above. Thus the relative $K$-stability of these five classes are still inconclusive.
\end{remark}

\subsection{Relative Ding stability and relative $K$-stability of toric Fano fourfolds}\label{sec:ToricFano4}
Now we verify relative Ding stability of toric Fano fourfolds. Among all $124$ isomorphism classes of toric Fano fourfolds,
we see that there are $49$ classes of toric Fano fourfolds which are uniformly relatively Ding stable, while the remaining $75$ classes are relatively Ding unstable.

In dimension four, there are at least three examples of toric Fano manifolds which are relatively $K$-stable, but they are relatively Ding unstable.
The geometric description of these manifolds are $X=\mathbf P_{\mathbf P^3}(\mathcal O\oplus \mathcal O(k))$ which are labeled $B_1$ and $B_2$ for $k=3$ and $2$, respectively.
The proof is similar to it of Proposition $\ref{prop:B1}$, and details are left it to the reader.

\subsection{Bott structure of certain toric Fano manifolds}\label{sec:Bott}
In this final section, we prove Corollary $\ref{cor:BottFano}$. We firstly recall toric description of Bott manifolds. For further details, see \cite{BCT-F19,CLMP20, CLMP21, HKM20}.

For a given $n\in \mathbf N$ with $n\geq 2$, we inductively construct complex manifolds $X_k$ for $k=1,2,\ldots , n$ as follows.
Let $X_0$ be a point $\mathrm{Spec}\, \mathbf C$, and let $X_1$ be the projective line.
For $k\geq 2$, we assume that $X_{k-1}$ is already determined and choose a holomorphic line bundle $\mathcal L_k$ on $X_{k-1}$.
Then we define $X_k$ as the total space of the $\mathbf P^1$-bundle over $X_{k-1}$ and denote it by
\[
X_k:=\mathbf P_{X_{k-1}}(\mathcal O_{X_{k-1}}\oplus \mathcal L_k) \stackrel{\pi_k}{\longrightarrow} X_{k-1}.
\]
Through this construction, we call $X_k$ the \emph{stage $k$ Bott manifold of the Bott tower of height $n$:}
\[
X_n \stackrel{\pi_n}{\longrightarrow} X_{n-1} \stackrel{\pi_{n-1}}{\longrightarrow} \cdots \stackrel{\pi_{2}}{\longrightarrow} X_1 \stackrel{\pi_{1}}{\longrightarrow} \mathrm{Spec }\, \mathbf C.
\]

On the other hand, a stage $n$ Bott manifold $X_n$ can be written as a quotient of $(\mathbf C^2_*)^n$ by an algebraic torus $(\mathbf C^*)^n$, where $(\mathbf C^2_*)^n$ is
$n$ copies of $\mathbf C^2_*:=\mathbf C^2\setminus \set{0}$. In order to see this, we denote an algebraic torus $(\mathbf C^*)^n$ by $T$. Let $A$ be a lower triangular unipotent matrix of the form
\begin{equation}\label{eq:matrix}
A=\begin{pmatrix}
1      &   0      &  \cdots   & 0         &  0       \\
  a_2^1         & 1 &  \cdots    &  0     &      0   \\
     \vdots      &  \vdots      & \ddots &  \vdots      &    \vdots   \\
   a_{n-1}^1  &    a_{n-1}^2     &  \cdots      & 1 &   0     \\
      a_n^1      &   a_n^2     &  \cdots     &   a_n^{n-1}    &1
\end{pmatrix}
\end{equation}
with $a_j^i\in \mathbf Z$. Let us consider the induced action $\lambda$ of $\bs t=(t_1,\ldots, t_n)\in T$ on 
$(\bs z; \bs w)=((z_1, \ldots ,z_n);(w_1, \ldots ,w_n))\in (\mathbf C^2_*)^n
$ 
which is given by
\begin{equation}
\begin{split}\label{eq:action}
\xymatrix@R=-1ex{
\lambda:  T\times (\mathbf C^2_*)^n  \ar[r]&(\mathbf C^2_*)^n\\
\din &\din\\
(\bs t, (\bs z; \bs w))\ar@{|->}[r]& \left( (t_1z_1, \ldots , t_nz_n);
\left( \left( \prod_{i=1}^nt_i^{a_1^i}\right) w_1, \ldots, \left( \prod_{i=1}^nt_i^{a_n^i}\right) w_n
 \right)
\right).
}
\end{split}
\end{equation}
For the unit sphere $S^3$ in $\mathbf C^2_*$, there is the induced action of $(\mathbf R_{>0})^n$ on $(\mathbf C^2_*)^n$ by $\eqref{eq:action}$,
which is transverse to $(S^3)^n$. Hence the orbits of this action are bijectively correspond to orbits of the induced free $(S^1)^n$-action on $(S^3)^n$.
Consequently, the geometric quotient $(\mathbf C^2_*)^n/T$ is a complex $n$-dimensional compact manifold which will be denoted by $X_n(A)$.

As a special case, for $\bs k=(k_1, \ldots, k_{n-1})\in \mathbf Z^{n-1}$, we consider the case where $a_n^i=k_i$ and $a_j^i=0$ otherwise.
Then a lower triangular matrix in $\eqref{eq:matrix}$ is written as
\begin{equation*}
A=\begin{pmatrix}
1      &   0      &  \cdots   & 0         &  0       \\
 0        & 1 &  \cdots    &  0     &      0   \\
     \vdots      &  \vdots      & \ddots &  \vdots      &    \vdots   \\
  0  &    0     &  \cdots      & 1 &   0     \\
      k_1      &   k_2     &  \cdots     &   k_{n-1}    &1
\end{pmatrix}.
\end{equation*}
The associated Bott tower $X_n(A)$ is the sequence of $\mathbf P^1$-bundles
\begin{equation}\label{eq:BT}
\mathbf P_{(\mathbf P^1)^{n-1}}(\mathcal O\oplus \mathcal O(k_1, \ldots ,k_{n-1})) \stackrel{\pi_n}{\longrightarrow} (\mathbf P^1)^{n-1} \stackrel{\pi_{n-1}}{\longrightarrow} \cdots \stackrel{\pi_{2}}{\longrightarrow} \mathbf P^1 \stackrel{\pi_{1}}{\longrightarrow} \mathrm{Spec }\, \mathbf C
\end{equation}
which has twist one unless $\bs k=(0, \ldots ,0)$.
Since any stage $n$ Bott manifold in $\eqref{eq:BT}$ with $\prod_{i=1}^{n-1}k_i\neq 0$ admits an extremal K\"ahler metric in each K\"ahler class by Proposition $\ref{prop:Bott}$, our problem
 can be reduced to find examples of relatively Ding unstable Fano Bott manifolds in the form of $\eqref{eq:BT}$. In particular, they must be twist one.
 There are six candidates of such examples in Tables $\ref{table:toricFano3RelDSta}$--$\ref{table:toricFano4RelDSta}$, that is,
 $\mathcal C_2$, $L_1$, $L_2$, $L_3$, $L_4$, and $L_6$. 
Their twists are computable by primitive relations of the vertices of $P^\vee$ (see \cite[Proposition $3.3.6$]{B98}). Alternatively, one can use the \emph{signed rooted forest} associated with a Fano Bott manifold
for computing twist (see \cite[Section $2$]{CLMP21}).
In fact, the height of signed rooted forest is equal to the twist of a Fano Bott manifold, and all signed rooted forests of stage four Fano Bott manifolds are illustrated in \cite[Figure $2$]{CLMP21}.
Consequently, we have the following list of twist for each relatively Ding unstable Fano manifold.
\begin{figure}[H]
\begin{center}
\begin{tabular}{cccccccc} \toprule
Notation &\qquad &$\mathcal C_2$ & $L_1$ & $L_2$ & $L_3$ & $L_4$ & $L_6$\\ \midrule
twist &  &$2$ & $1$ &  $2$  &  $2$  &  $3$ &  $2$  \\
Number in \cite[Figure 2]{CLMP21} & &$-$ & $(12)$ & $(10)$ & $(8)$ & $(7)$ & $(3)$\\
\bottomrule
\end{tabular}
\end{center}
\end{figure}
The assertion is verified.

\begin{remark}\rm
Note that there exists \emph{some} K\"ahler classes of the toric Fano threefold $\mathcal C_2$ that admit extremal K\"ahler metrics by Theorem $1$ $(2)$ in \cite{BCT-F19}.
However, it is difficult to determine \emph{which} K\"ahler class may admit extremal K\"ahler metrics in general.
For the moment, we do not know if every K\"ahler class of $\mathcal C_2$ has an extremal K\"ahler metric, or even if its first Chern class has an extremal K\"ahler metric. 
\end{remark}
Hence the following problem is still open.
\begin{problem}
Does the toric Fano threefold $\mathcal C_2$ admit an extremal K\"ahler metric in its first Chern class? Moreover, is it a Calabi Dream manifold?
\end{problem}
If the answer is affirmative, then any stage three Fano Bott manifold is a Calabi dream manifold.

\newpage

\begin{table}[H]
\caption{Relative Ding stability of toric Fano fourfolds}\label{table:toricFano4RelDSta}
\begin{center} 
\begin{tabular}{cccc} \toprule \vspace{-0.23cm}
    &          &       Relative &       \\  \vspace{-0.23cm}
No. & Notation &                & $M_X$ \\
    &          & Ding stability &       \\   \midrule
  1 & $\mathbf P^4$    & uniformly stable & 0 \\ 
  2 &$B_1$    &       unstable  & $186633/108133$\\ 
  3 &$B_2$    &      unstable & $1827/1577$\\
  4 &$B_3$    &     uniformly stable &   $62937/101357$\\
  5 &$B_4$      & uniformly stable &	0 \\
  6 &$B_5$     & unstable &   $81/71$ \\
  7 &$C_1$    & unstable &   $5168/2803$ \\
  8 &$C_2$    & unstable &   $5056/4961$ \\
  9 &$C_3$    & uniformly stable  &   $1876/2921$ \\
 10 &$C_4$   & uniformly stable  &0\\
 11 &$E_1$  & unstable  &   $3117506664/1528096589$ \\
 12 &$E_2$   & unstable &   $1840195984/1038057499$ \\
 13 &$E_3$ & unstable &   $8725985172/6541476757$ \\
 14 &$D_1$   & unstable  &   $11387/8057$\\
 15 &$D_2$  & unstable &   $2344247/1129107$ \\
 16 &$D_3$   &  unstable &   $23933079/13920179$ \\
 17 &$D_4$   &  unstable &   $10601177/5141177$ \\
 18 &$D_5$  & unstable	&	380/349 \\
 19 &$D_6$    & uniformly stable &   $1818/1973$ \\
 20 &$D_7$   &  unstable &   $748/523$ \\
 21 &$D_8$   &  unstable &   $3165248067/2130145727$ \\
 22 &$D_9$   & unstable  &   $1380/1177$ \\
 23 &$D_{10}$  &  unstable &   $18353/11683$ \\
 24 &$D_{11}$   & unstable &   $1727/1007$ \\
 25 &$D_{12}$   & uniformly stable  & 55/97\\
 26 &$D_{13}$   & uniformly stable & 0 \\
 27 &$D_{14}$   & uniformly stable & 35/43 \\
 28 &$D_{15}$  & uniformly stable & 5/11\\
 29 &$D_{16}$   & uniformly stable  &   $1046933/1061921$ \\
 30 &$D_{17}$  &  uniformly stable &   $59/134$ \\
 31 &$D_{18}$   & uniformly stable  &   $3927/5177$ \\
 32 &$D_{19}$   & uniformly stable  &   $238/1613$ \\
 33 &$G_{1}$   & unstable  &   $29663642268/16641653953$ \\
 34 &$G_{2}$  & unstable &   $372321328/163900203$  \\
  35 &$G_{3}$   & unstable  &   $2130146436/1907713037$ \\
   36 &$G_{4}$   &  unstable &   $91293727706236/48057952407691$ \\
    37 &$G_{5}$   & unstable &   $7423814976/5976434111$  \\
 \bottomrule
\end{tabular} 
\end{center}
\end{table}

\begin{table}[H]
\begin{center}
\begin{tabular}{cccc} \toprule \vspace{-0.23cm}
    &          &       Relative &       \\  \vspace{-0.23cm}
No. & Notation &                & $M_X$ \\
    &          & Ding stability &       \\   \midrule
      38 &$G_{6}$  & unstable  &   $6431616/4388521$ \\
 39 &$H_{1}$  & unstable  &   $391552/172227$ \\
 40 &$H_{2}$   & unstable &   $23712381468/12261327193$ \\
 41 &$H_{3}$    & unstable & $1551503556/999563887$ \\
 42 &$H_{4}$   & unstable  & $173016/111181$ \\
     43 &$H_{5}$   & unstable & $9770225016/8410595791$ \\
 44 &$H_{6}$  & unstable & $7892183028/6776874793$  \\
 45 &$H_{7}$   & unstable & $2510023476/2191715291$  \\
 46 &$H_{8}$   &  uniformly stable & 304/409\\
 47 &$H_{9}$   & uniformly stable & $3339308412/5523943807$ \\
 48 &$H_{10}$   &  uniformly stable & $33344/116609$ \\
 49 &$L_{1}$   & unstable & $1827/1577$ \\
 50 &$L_{2}$ & unstable  & $4491311/2637399$ \\
 51 &$L_{3}$  & unstable  & $29551791/20356871$ \\ 
 52 &$L_{4}$   & unstable & $12523717673/6291985913$ \\
 53 &$L_{5}$   & uniformly stable &60/73  \\
 54 &$L_{6}$   &  unstable & 38/33\\
 55 &$L_{7}$   & uniformly stable &10/11 \\
 56 &$L_{8}$   & uniformly stable &0 \\
 57 &$L_{9}$  &  uniformly stable &5/11\\
 58 &$L_{10}$   & uniformly stable  & $11999/15439$ \\
 59 &$L_{11}$   & uniformly stable &	0 \\ 
 60 &$L_{12}$   & uniformly stable  & $98007/118907$ \\
 61 &$L_{13}$   & uniformly stable & $93/203$  \\
 62 &$I_{1}$   & unstable & $708660642/298795537$  \\
 63 &$I_{2}$   & unstable  & $3024970758/1752843533$ \\
 64 &$I_{3}$   & unstable & $1265748565401/557417025511$  \\
 65 &$I_{4}$   &  unstable & $94549671/42931741$ \\
 66 &$I_{5}$  &  unstable & $11713596999245802/6984760752795427$ \\
 67 &$I_{6}$   & unstable & $448243263/230998253$  \\
 68 &$I_{7}$   &  unstable &711861/467581 \\
 69 &$I_{8}$   & unstable  & $3126600189529/1577014856729$ \\
 70 &$I_{9}$   & unstable & $51542252/31553027$ \\
 71 &$I_{10}$   & unstable & $1376202783/1008086248$ \\
 72 &$I_{11}$  & unstable & $13278385/7524593$ \\
 73 &$I_{12}$   & uniformly stable & $3762595665/10127095471$ \\
 74 &$I_{13}$   & unstable &694595/650251 \\
 75 &$I_{14}$   & uniformly stable & $211431316083/218756440813$ \\
 76 &$I_{15}$  & unstable & $949610187/739116617$ \\
 77 &$M_{1}$   & unstable & $4575996/2853121$ \\
 78 &$M_{2}$   & unstable & $9794436236/4944865931$ \\
 79 &$M_{3}$   & unstable  & $25645181180008/16440690503203$ \\
     80 &$M_{4}$  & unstable & $19189680326248/13060549547443$ \\
\bottomrule
\end{tabular}
\end{center}
\end{table}

\begin{table}[H]
\begin{center} 
\begin{tabular}{cccc} \toprule \vspace{-0.23cm}
    &          &       Relative &       \\  \vspace{-0.23cm}
No. & Notation &                & $M_X$ \\
    &          & Ding stability &       \\   \midrule
 81 &$M_{5}$   & unstable & $12265145105056/5748810067571$ \\
 82 &$J_{1}$   & unstable & $326672481/271746296$ \\
 83 &$J_{2}$  & uniformly stable & $141120/450983$ \\
  84 &$Q_{1}$   & unstable & $510608/266403$ \\
 85 &$Q_{2}$  & unstable & $518137667/237174992$ \\
 86 &$Q_{3}$   & unstable & $90760711/67077371$ \\
 87 &$Q_{4}$  & unstable & $121849781/73313456$ \\
 88 &$Q_{5}$   & unstable &977249482633/573603226348 \\
 89 &$Q_{6}$   & unstable & 27195/19651 \\
 90 &$Q_{7}$   &  unstable & $138419/99094$ \\
 91 &$Q_{8}$   & unstable &2936215/2735927 \\
 92 &$Q_{9}$   & uniformly stable & $173974704/197727059$ \\
 93 &$Q_{10}$   & unstable & 5389/4499 \\
 94 &$Q_{11}$   & uniformly stable &304/409 \\
 95 &$Q_{12}$   & unstable & $203968842/196251577$ \\
 96 &$Q_{13}$   & uniformly stable & $31595933/35875483$ \\
 97 &$Q_{14}$   & unstable & $3046408136973/2450627729348$ \\
 98 &$Q_{15}$   & uniformly stable & 185791/394975 \\
 99 &$Q_{16}$   &  uniformly stable & $80506889/111477439$ \\
100 &$Q_{17}$   & uniformly stable  & $17172/162187$ \\
101 &$K_{1}$   &  unstable & $18300/16571$ \\
102 &$K_{2}$ & unstable & $400/341$ \\
103 &$K_{3}$   & uniformly stable  & $20550/35413$ \\
104 &$K_{4}$   & uniformly stable &0 \\
105 &$R_{1}$  &   unstable & $567397500/324135871$ \\
106 &$R_{2}$   & unstable & $2758057258564/2028544442569$ \\
107 &$R_{3}$   & uniformly stable & $277721596/369479321$ \\
108 &         &         unstable    & $182110284/153124261$   \\
109 &$U_{1}$   & uniformly stable & $2850/3389$ \\
110 &$U_{2}$   & unstable  & $1095285/924382$ \\
111 &$U_{3}$   & uniformly stable & $1275/1424$ \\
112 &$U_{4}$   & uniformly stable  & 5/11 \\
113 &$U_{5}$   & uniformly stable &0 \\
114 &$U_{6}$   & uniformly stable &31/67 \\
115 &$U_{7}$   & uniformly stable & $119/258$ \\
116 &$U_{8}$   & uniformly stable & 0 \\
117 &$\widetilde{V}^4$   & unstable & $737568/395263$ \\
118 &$V^4$  & uniformly stable &0 \\
119 &$dP_7\times dP_7$   & unstable &608/409 \\
120 &$dP_7\times dP_6$    & uniformly stable &304/409\\
121 &$dP_6\times dP_6$   & uniformly stable &0 \\
122 &$Z_1$   & uniformly stable  & $1013328/1289443$  \\
123 &$Z_2$   &  unstable & $3418840588/2592694163$ \\
124 &$W$   & uniformly stable & 0  \\
\bottomrule
\end{tabular}
\end{center}
\end{table}

\newpage

\bibliographystyle{amsalpha}

\appendix
\section{Computational data}
\subsection{2-dimensional Fano polytopes}
\subsubsection{{$\mathbf P^{2}$}}
\begin{align*}
&b_{0} = \mathrm{vol}(P) = \frac{9}{2}, 
&&b_{1} = \int_{P}x_{1}\,dx_{1}dx_{2} = 0, \\
&b_{2} = \int_{P}x_{2}\,dx_{1}dx_{2} = 0, 
&&c_{11} = \int_{P}x_{1}^{2}\,dx_{1}dx_{2} = \frac{9}{4}, \\
&c_{12} = \int_{P}x_{1}x_{2}\,dx_{1}dx_{2} = -\frac{9}{8}, 
&&c_{22} = \int_{P}x_{2}^{2}\,dx_{1}dx_{2} = \frac{9}{4}, 
\end{align*}
\begin{align*}
&\theta_P(x_{1}, x_{2}) = 0, \\
&M_{X} = \max_{P}\theta_P = 0 < 1. 
\end{align*}

\subsubsection{{$\mathbf P^{1} \times \mathbf P^{1}$}}
\begin{align*}
&b_{0} = \mathrm{vol}(P) = 4, 
&&b_{1} = \int_{P}x_{1}\,dx_{1}dx_{2} = 0, \\
&b_{2} = \int_{P}x_{2}\,dx_{1}dx_{2} = 0, 
&&c_{11} = \int_{P}x_{1}^{2}\,dx_{1}dx_{2} = \frac{4}{3}, \\
&c_{12} = \int_{P}x_{1}x_{2}\,dx_{1}dx_{2} = 0, 
&&c_{22} = \int_{P}x_{2}^{2}\,dx_{1}dx_{2} = \frac{4}{3}, 
\end{align*}
\begin{align*}
&\theta_P(x_{1}, x_{2}) = 0, \\
&M_{X} = \max_{P}\theta_P = 0 < 1. 
\end{align*}

\subsubsection{{$dP_{8}$}}
\begin{align*}
&b_{0} = \mathrm{vol}(P) = 4, 
&&b_{1} = \int_{P}x_{1}\,dx_{1}dx_{2} = -\frac{2}{3}, \\
&b_{2} = \int_{P}x_{2}\,dx_{1}dx_{2} = -\frac{1}{3}, 
&&c_{11} = \int_{P}x_{1}^{2}\,dx_{1}dx_{2} = \frac{4}{3}, \\
&c_{12} = \int_{P}x_{1}x_{2}\,dx_{1}dx_{2} = \frac{2}{3}, 
&&c_{22} = \int_{P}x_{2}^{2}\,dx_{1}dx_{2} = 2, 
\end{align*}
\begin{align*}
&\theta_P(x_{1}, x_{2}) = -\frac{6x_{1}+1}{11}, \\
&M_{X} = \max_{P}\theta_P = \frac{5}{11} < 1. 
\end{align*}

\subsubsection{{$dP_{7}$}}
\begin{align*}
&b_{0} = \mathrm{vol}(P) = \frac{7}{2}, 
&&b_{1} = \int_{P}x_{1}\,dx_{1}dx_{2} = -\frac{1}{3}, \\
&b_{2} = \int_{P}x_{2}\,dx_{1}dx_{2} = \frac{1}{3}, 
&&c_{11} = \int_{P}x_{1}^{2}\,dx_{1}dx_{2} = \frac{13}{12}, \\
&c_{12} = \int_{P}x_{1}x_{2}\,dx_{1}dx_{2} = \frac{5}{24}, 
&&c_{22} = \int_{P}x_{2}^{2}\,dx_{1}dx_{2} = \frac{13}{12}, 
\end{align*}
\begin{align*}
&\theta_P(x_{1}, x_{2}) = \frac{-168x_{1}+168x_{2}-32}{409}, \\
&M_{X} = \max_{P}\theta_P = \frac{304}{409} < 1. 
\end{align*}

\subsubsection{{$dP_{6}$}}
\begin{align*}
&b_{0} = \mathrm{vol}(P) = 3, 
&&b_{1} = \int_{P}x_{1}\,dx_{1}dx_{2} = 0, \\
&b_{2} = \int_{P}x_{2}\,dx_{1}dx_{2} = 0, 
&&c_{11} = \int_{P}x_{1}^{2}\,dx_{1}dx_{2} = \frac{5}{6}, \\
&c_{12} = \int_{P}x_{1}x_{2}\,dx_{1}dx_{2} = -\frac{5}{12}, 
&&c_{22} = \int_{P}x_{2}^{2}\,dx_{1}dx_{2} = \frac{5}{6}, 
\end{align*}
\begin{align*}
&\theta_P(x_{1}, x_{2}) = 0, \\
&M_{X} = \max_{P}\theta_P = 0 < 1. 
\end{align*}

\subsection{3-dimensional Fano polytopes}
\subsubsection{{$\mathbf P^{3}$}}
\begin{align*}
&b_{0} = \mathrm{vol}(P) = \frac{32}{3}, 
&&b_{1} = \int_{P}x_{1}dx_{1}dx_{2}dx_{3} = 0, \\
&b_{2} = \int_{P}x_{2}dx_{1}dx_{2}dx_{3} = 0, 
&&b_{3} = \int_{P}x_{3}dx_{1}dx_{2}dx_{3} = 0, \\
&c_{11} = \int_{P}x_{1}^{2}dx_{1}dx_{2}dx_{3} = \frac{32}{5}, 
&&c_{12} = \int_{P}x_{1}x_{2}dx_{1}dx_{2}dx_{3} = -\frac{32}{15}, \\
&c_{13} = \int_{P}x_{1}x_{3}dx_{1}dx_{2}dx_{3} = -\frac{32}{15}, 
&&c_{22} = \int_{P}x_{2}^{2}dx_{1}dx_{2}dx_{3} = \frac{32}{5}, \\
&c_{23} = \int_{P}x_{2}x_{3}dx_{1}dx_{2}dx_{3} = -\frac{32}{15}, 
&&c_{33} = \int_{P}x_{3}^{2}dx_{1}dx_{2}dx_{3} = \frac{32}{5}, 
\end{align*}
\begin{align*}
&\theta_P(x_{1}, x_{2}, x_{3}) = 0, \\
&M_{X} = \max_{P}\theta_P = 0 < 1. 
\end{align*}

\subsubsection{{$\mathcal{B}_{1}$}}
\begin{align*}
&b_{0} = \mathrm{vol}(P) = \frac{31}{3}, 
&&b_{1} = \int_{P}x_{1}dx_{1}dx_{2}dx_{3} = \frac{8}{3}, \\
&b_{2} = \int_{P}x_{2}dx_{1}dx_{2}dx_{3} = \frac{8}{3}, 
&&b_{3} = \int_{P}x_{3}dx_{1}dx_{2}dx_{3} = -4, \\
&c_{11} = \int_{P}x_{1}^{2}dx_{1}dx_{2}dx_{3} = \frac{311}{30}, 
&&c_{12} = \int_{P}x_{1}x_{2}dx_{1}dx_{2}dx_{3} = -\frac{53}{20}, \\
&c_{13} = \int_{P}x_{1}x_{3}dx_{1}dx_{2}dx_{3} = -\frac{38}{15}, 
&&c_{22} = \int_{P}x_{2}^{2}dx_{1}dx_{2}dx_{3} = \frac{311}{30}, \\
&c_{23} = \int_{P}x_{2}x_{3}dx_{1}dx_{2}dx_{3} = -\frac{38}{15}, 
&&c_{33} = \int_{P}x_{3}^{2}dx_{1}dx_{2}dx_{3} = \frac{19}{5}, 
\end{align*}
\begin{align*}
&\theta_P(x_{1}, x_{2}, x_{3}) = -\frac{620x_{3}+240}{349}, \\
&M_{X} = \max_{P}\theta_P = \frac{380}{349} > 1.  
\end{align*}

\subsubsection{{$\mathcal{B}_{2}$}}
\begin{align*}
&b_{0} = \mathrm{vol}(P) = \frac{28}{3}, 
&&b_{1} = \int_{P}x_{1}dx_{1}dx_{2}dx_{3} = \frac{2}{3}, \\
&b_{2} = \int_{P}x_{2}dx_{1}dx_{2}dx_{3} = \frac{2}{3}, 
&&b_{3} = \int_{P}x_{3}dx_{1}dx_{2}dx_{3} = -2, \\
&c_{11} = \int_{P}x_{1}^{2}dx_{1}dx_{2}dx_{3} = \frac{88}{15}, 
&&c_{12} = \int_{P}x_{1}x_{2}dx_{1}dx_{2}dx_{3} = -\frac{12}{5}, \\
&c_{13} = \int_{P}x_{1}x_{3}dx_{1}dx_{2}dx_{3} = -\frac{16}{15}, 
&&c_{22} = \int_{P}x_{2}^{2}dx_{1}dx_{2}dx_{3} = \frac{88}{15}, \\
&c_{23} = \int_{P}x_{2}x_{3}dx_{1}dx_{2}dx_{3} = -\frac{16}{15}, 
&&c_{33} = \int_{P}x_{3}^{2}dx_{1}dx_{2}dx_{3} = \frac{16}{5}, 
\end{align*}
\begin{align*}
&\theta_P(x_{1}, x_{2}, x_{3}) = -\frac{70x_{3}+15}{97}, \\
&M_{X} = \max_{P}\theta_P = \frac{55}{97} < 1. 
\end{align*}

\subsubsection{{$\mathcal{B}_{3}$}}
\begin{align*}
&b_{0} = \mathrm{vol}(P) = 9, 
&&b_{1} = \int_{P}x_{1}dx_{1}dx_{2}dx_{3} = -\frac{9}{8}, \\
&b_{2} = \int_{P}x_{2}dx_{1}dx_{2}dx_{3} = -\frac{9}{8}, 
&&b_{3} = \int_{P}x_{3}dx_{1}dx_{2}dx_{3} = \frac{9}{8}, \\
&c_{11} = \int_{P}x_{1}^{2}dx_{1}dx_{2}dx_{3} = \frac{81}{20}, 
&&c_{12} = \int_{P}x_{1}x_{2}dx_{1}dx_{2}dx_{3} = -\frac{27}{20}, \\
&c_{13} = \int_{P}x_{1}x_{3}dx_{1}dx_{2}dx_{3} = -\frac{27}{20}, 
&&c_{22} = \int_{P}x_{2}^{2}dx_{1}dx_{2}dx_{3} = \frac{81}{20}, \\
&c_{23} = \int_{P}x_{2}x_{3}dx_{1}dx_{2}dx_{3} = -\frac{27}{20}, 
&&c_{33} = \int_{P}x_{3}^{2}dx_{1}dx_{2}dx_{3} = \frac{111}{20}, 
\end{align*}
\begin{align*}
&\theta_P(x_{1}, x_{2}, x_{3}) = -\frac{20x_{1} + 20x_{2}+5}{43}, \\
&M_{X} = \max_{P}\theta_P = \frac{35}{43} < 1. 
\end{align*}

\subsubsection{{$\mathcal{B}_{4}$}}
\begin{align*}
&b_{0} = \mathrm{vol}(P) = 9, 
&&b_{1} = \int_{P}x_{1}dx_{1}dx_{2}dx_{3} = 0, \\
&b_{2} = \int_{P}x_{2}dx_{1}dx_{2}dx_{3} = 0, 
&&b_{3} = \int_{P}x_{3}dx_{1}dx_{2}dx_{3} = 0, \\
&c_{11} = \int_{P}x_{1}^{2}dx_{1}dx_{2}dx_{3} = \frac{9}{2}, 
&&c_{12} = \int_{P}x_{1}x_{2}dx_{1}dx_{2}dx_{3} = -\frac{9}{4}, \\
&c_{13} = \int_{P}x_{1}x_{3}dx_{1}dx_{2}dx_{3} = 0, 
&&c_{22} = \int_{P}x_{2}^{2}dx_{1}dx_{2}dx_{3} = \frac{9}{2}, \\
&c_{23} = \int_{P}x_{2}x_{3}dx_{1}dx_{2}dx_{3} = 0, 
&&c_{33} = \int_{P}x_{3}^{2}dx_{1}dx_{2}dx_{3} = 3, 
\end{align*}
\begin{align*}
&\theta_P(x_{1}, x_{2}, x_{3}) = 0,\\
&M_{X} = \max_{P}\theta_P = 0 < 1. 
\end{align*}

\subsubsection{{$\mathcal{C}_{1}$}}
\begin{align*}
&b_{0} = \mathrm{vol}(P) = \frac{26}{3}, 
&&b_{1} = \int_{P}x_{1}dx_{1}dx_{2}dx_{3} = \frac{4}{3}, \\
&b_{2} = \int_{P}x_{2}dx_{1}dx_{2}dx_{3} = \frac{4}{3}, 
&&b_{3} = \int_{P}x_{3}dx_{1}dx_{2}dx_{3} = -\frac{8}{3}, \\
&c_{11} = \int_{P}x_{1}^{2}dx_{1}dx_{2}dx_{3} = \frac{24}{5}, 
&&c_{12} = \int_{P}x_{1}x_{2}dx_{1}dx_{2}dx_{3} = \frac{23}{30}, \\
&c_{13} = \int_{P}x_{1}x_{3}dx_{1}dx_{2}dx_{3} = -\frac{23}{15}, 
&&c_{22} = \int_{P}x_{2}^{2}dx_{1}dx_{2}dx_{3} = \frac{24}{5}, \\
&c_{23} = \int_{P}x_{2}x_{3}dx_{1}dx_{2}dx_{3} = -\frac{23}{15}, 
&&c_{33} = \int_{P}x_{3}^{2}dx_{1}dx_{2}dx_{3} = \frac{46}{15}, 
\end{align*}
\begin{align*}
&\theta_P(x_{1}, x_{2}, x_{3}) = -\frac{260x_{3}+80}{219}, \\
&M_{X} = \max_{P}\theta_P = \frac{60}{73} < 1. 
\end{align*}

\subsubsection{{$\mathcal{C}_{2}$}}
\begin{align*}
&b_{0} = \mathrm{vol}(P) = \frac{25}{3}, 
&&b_{1} = \int_{P}x_{1}dx_{1}dx_{2}dx_{3} = -\frac{2}{3}, \\
&b_{2} = \int_{P}x_{2}dx_{1}dx_{2}dx_{3} = \frac{4}{3}, 
&&b_{3} = \int_{P}x_{3}dx_{1}dx_{2}dx_{3} = -2, \\
&c_{11} = \int_{P}x_{1}^{2}dx_{1}dx_{2}dx_{3} = \frac{37}{10}, 
&&c_{12} = \int_{P}x_{1}x_{2}dx_{1}dx_{2}dx_{3} = -\frac{89}{60}, \\
&c_{13} = \int_{P}x_{1}x_{3}dx_{1}dx_{2}dx_{3} = -\frac{11}{15}, 
&&c_{22} = \int_{P}x_{2}^{2}dx_{1}dx_{2}dx_{3} = \frac{161}{30}, \\
&c_{23} = \int_{P}x_{2}x_{3}dx_{1}dx_{2}dx_{3} = -\frac{16}{15}, 
&&c_{33} = \int_{P}x_{3}^{2}dx_{1}dx_{2}dx_{3} = \frac{43}{15}, 
\end{align*}
\begin{align*}
&\theta_P(x_{1}, x_{2}, x_{3}) = -\frac{7600x_{1}+17750x_{3}+4868}{17787}, \\
&M_{X} = \max_{P}\theta_P = \frac{38}{33} > 1. 
\end{align*}

\subsubsection{{$\mathcal{C}_{3}$}}
\begin{align*}
&b_{0} = \mathrm{vol}(P) = 8, 
&&b_{1} = \int_{P}x_{1}dx_{1}dx_{2}dx_{3} = 0, \\
&b_{2} = \int_{P}x_{2}dx_{1}dx_{2}dx_{3} = 0, 
&&b_{3} = \int_{P}x_{3}dx_{1}dx_{2}dx_{3} = 0, \\
&c_{11} = \int_{P}x_{1}^{2}dx_{1}dx_{2}dx_{3} = \frac{8}{3}, 
&&c_{12} = \int_{P}x_{1}x_{2}dx_{1}dx_{2}dx_{3} = 0, \\
&c_{13} = \int_{P}x_{1}x_{3}dx_{1}dx_{2}dx_{3} = 0, 
&&c_{22} = \int_{P}x_{2}^{2}dx_{1}dx_{2}dx_{3} = \frac{8}{3}, \\
&c_{23} = \int_{P}x_{2}x_{3}dx_{1}dx_{2}dx_{3} = 0, 
&&c_{33} = \int_{P}x_{3}^{2}dx_{1}dx_{2}dx_{3} = \frac{8}{3}, 
\end{align*}
\begin{align*}
&\theta_P(x_{1}, x_{2}, x_{3}) = 0, \\
&M_{X} = \max_{P}\theta_P = 0 < 1. 
\end{align*}

\subsubsection{{$\mathcal{C}_{4}$}}
\begin{align*}
&b_{0} = \mathrm{vol}(P) = 8, 
&&b_{1} = \int_{P}x_{1}dx_{1}dx_{2}dx_{3} = \frac{2}{3}, \\
&b_{2} = \int_{P}x_{2}dx_{1}dx_{2}dx_{3} = -\frac{4}{3}, 
&&b_{3} = \int_{P}x_{3}dx_{1}dx_{2}dx_{3} = 0, \\
&c_{11} = \int_{P}x_{1}^{2}dx_{1}dx_{2}dx_{3} = 4, 
&&c_{12} = \int_{P}x_{1}x_{2}dx_{1}dx_{2}dx_{3} = -\frac{4}{3}, \\
&c_{13} = \int_{P}x_{1}x_{3}dx_{1}dx_{2}dx_{3} = 0, 
&&c_{22} = \int_{P}x_{2}^{2}dx_{1}dx_{2}dx_{3} = \frac{8}{3}, \\
&c_{23} = \int_{P}x_{2}x_{3}dx_{1}dx_{2}dx_{3} = 0, 
&&c_{33} = \int_{P}x_{3}^{2}dx_{1}dx_{2}dx_{3} = \frac{8}{3}, 
\end{align*}
\begin{align*}
&\theta_P(x_{1}, x_{2}, x_{3}) = -\frac{6x_{2}+1}{11}, \\
&M_{X} = \max_{P}\theta_P = \frac{5}{11} < 1. 
\end{align*}

\subsubsection{{$\mathcal{C}_{5}$}}
\begin{align*}
&b_{0} = \mathrm{vol}(P) = \frac{22}{3}, 
&&b_{1} = \int_{P}x_{1}dx_{1}dx_{2}dx_{3} = 0, \\
&b_{2} = \int_{P}x_{2}dx_{1}dx_{2}dx_{3} = 0,\quad 
&&b_{3} = \int_{P}x_{3}dx_{1}dx_{2}dx_{3} = 0, \\
&c_{11} = \int_{P}x_{1}^{2}dx_{1}dx_{2}dx_{3} = \frac{16}{5}, 
&&c_{12} = \int_{P}x_{1}x_{2}dx_{1}dx_{2}dx_{3} = -\frac{17}{30}, \\
&c_{13} = \int_{P}x_{1}x_{3}dx_{1}dx_{2}dx_{3} = -\frac{17}{15}, 
&&c_{22} = \int_{P}x_{2}^{2}dx_{1}dx_{2}dx_{3} = \frac{16}{5}, \\
&c_{23} = \int_{P}x_{2}x_{3}dx_{1}dx_{2}dx_{3} = \frac{17}{15}, 
&&c_{33} = \int_{P}x_{3}^{2}dx_{1}dx_{2}dx_{3} = \frac{34}{15}, 
\end{align*}
\begin{align*}
&\theta_P(x_{1}, x_{2}, x_{3}) = 0, \\
&M_{X} = \max_{P}\theta_P = 0 < 1. 
\end{align*}

\subsubsection{{$\mathcal{D}_{1}$}}
\begin{align*}
&b_{0} = \mathrm{vol}(P) = \frac{25}{3}, 
&&b_{1} = \int_{P}x_{1}dx_{1}dx_{2}dx_{3} = \frac{11}{8}, \\
&b_{2} = \int_{P}x_{2}dx_{1}dx_{2}dx_{3} = -\frac{21}{8}, 
&&b_{3} = \int_{P}x_{3}dx_{1}dx_{2}dx_{3} = \frac{5}{8}, \\
&c_{11} = \int_{P}x_{1}^{2}dx_{1}dx_{2}dx_{3} = \frac{319}{60}, 
&&c_{12} = \int_{P}x_{1}x_{2}dx_{1}dx_{2}dx_{3} = -\frac{13}{20}, \\
&c_{13} = \int_{P}x_{1}x_{3}dx_{1}dx_{2}dx_{3} = -\frac{7}{3}, 
&&c_{22} = \int_{P}x_{2}^{2}dx_{1}dx_{2}dx_{3} = \frac{11}{4}, \\
&c_{23} = \int_{P}x_{2}x_{3}dx_{1}dx_{2}dx_{3} = -\frac{21}{20}, 
&&c_{33} = \int_{P}x_{3}^{2}dx_{1}dx_{2}dx_{3} = \frac{109}{20}, 
\end{align*}
\begin{align*}
&\theta_P(x_{1}, x_{2}, x_{3}) = \frac{99600x_{1} -627000x_{2}-213939}{467581},\\
&M_{X} = \max_{P}\theta_P = \frac{711861}{467581} > 1. 
\end{align*}

\subsubsection{{$\mathcal{D}_{2}$}}
\begin{align*}
&b_{0} = \mathrm{vol}(P) = \frac{23}{3}, 
&&b_{1} = \int_{P}x_{1}dx_{1}dx_{2}dx_{3} = \frac{11}{12}, \\
&b_{2} = \int_{P}x_{2}dx_{1}dx_{2}dx_{3} = -\frac{7}{8}, 
&&b_{3} = \int_{P}x_{3}dx_{1}dx_{2}dx_{3} = -\frac{11}{24}, \\
&c_{11} = \int_{P}x_{1}^{2}dx_{1}dx_{2}dx_{3} = \frac{19}{5}, 
&&c_{12} = \int_{P}x_{1}x_{2}dx_{1}dx_{2}dx_{3} = \frac{17}{30}, \\
&c_{13} = \int_{P}x_{1}x_{3}dx_{1}dx_{2}dx_{3} = -\frac{19}{10}, 
&&c_{22} = \int_{P}x_{2}^{2}dx_{1}dx_{2}dx_{3} = \frac{47}{20}, \\
&c_{23} = \int_{P}x_{2}x_{3}dx_{1}dx_{2}dx_{3} = -\frac{17}{60}, 
&&c_{33} = \int_{P}x_{3}^{2}dx_{1}dx_{2}dx_{3} = \frac{211}{60}, 
\end{align*}
\begin{align*}
&\theta_P(x_{1}, x_{2}, x_{3}) = \frac{219420x_{1} -318320x_{2} -62565}{650251}, \\
&M_{X} = \max_{P}\theta_P = \frac{694595}{650251} > 1. 
\end{align*}

\subsubsection{{$\mathcal{E}_{1}$}}
\begin{align*}
&b_{0} = \mathrm{vol}(P) = \frac{23}{3}, 
&&b_{1} = \int_{P}x_{1}dx_{1}dx_{2}dx_{3} = -\frac{37}{24}, \\
&b_{2} = \int_{P}x_{2}dx_{1}dx_{2}dx_{3} = -\frac{37}{24}, 
&&b_{3} = \int_{P}x_{3}dx_{1}dx_{2}dx_{3} = \frac{37}{24}, \\
&c_{11} = \int_{P}x_{1}^{2}dx_{1}dx_{2}dx_{3} = \frac{151}{60}, 
&&c_{12} = \int_{P}x_{1}x_{2}dx_{1}dx_{2}dx_{3} = -\frac{7}{60}, \\
&c_{13} = \int_{P}x_{1}x_{3}dx_{1}dx_{2}dx_{3} = -\frac{6}{5}, 
&&c_{22} = \int_{P}x_{2}^{2}dx_{1}dx_{2}dx_{3} = \frac{151}{60}, \\
&c_{23} = \int_{P}x_{2}x_{3}dx_{1}dx_{2}dx_{3} = -\frac{6}{5}, 
&&c_{33} = \int_{P}x_{3}^{2}dx_{1}dx_{2}dx_{3} = \frac{311}{60}, 
\end{align*}
\begin{align*}
&\theta_P(x_{1}, x_{2}, x_{3}) = -\frac{17020x_{1}+17020x_{2}+6845}{19651}, \\
&M_{X} = \max_{P}\theta_P = \frac{27195}{19651} > 1. 
\end{align*}

\subsubsection{{$\mathcal{E}_{2}$}}
\begin{align*}
&b_{0} = \mathrm{vol}(P) = \frac{22}{3}, 
&&b_{1} = \int_{P}x_{1}dx_{1}dx_{2}dx_{3} = -\frac{7}{4}, \\
&b_{2} = \int_{P}x_{2}dx_{1}dx_{2}dx_{3} = -\frac{11}{24}, 
&&b_{3} = \int_{P}x_{3}dx_{1}dx_{2}dx_{3} = \frac{7}{8}, \\
&c_{11} = \int_{P}x_{1}^{2}dx_{1}dx_{2}dx_{3} = \frac{71}{30}, 
&&c_{12} = \int_{P}x_{1}x_{2}dx_{1}dx_{2}dx_{3} = -\frac{4}{15}, \\
&c_{13} = \int_{P}x_{1}x_{3}dx_{1}dx_{2}dx_{3} = -\frac{71}{60}, 
&&c_{22} = \int_{P}x_{2}^{2}dx_{1}dx_{2}dx_{3} = \frac{139}{60}, \\
&c_{23} = \int_{P}x_{2}x_{3}dx_{1}dx_{2}dx_{3} = \frac{2}{15}, 
&&c_{33} = \int_{P}x_{3}^{2}dx_{1}dx_{2}dx_{3} = \frac{229}{60}, 
\end{align*}
\begin{align*}
&\theta_P(x_{1}, x_{2}, x_{3}) = -\frac{2646160x_{1}+982960x_{2}+692905}{2735927}, \\
&M_{X} = \max_{P}\theta_P = \frac{2936215}{2735927} > 1. 
\end{align*}

\subsubsection{{$\mathcal{E}_{3}$}}
\begin{align*}
&b_{0} = \mathrm{vol}(P) = 7, 
&&b_{1} = \int_{P}x_{1}dx_{1}dx_{2}dx_{3} = -\frac{2}{3}, \\
&b_{2} = \int_{P}x_{2}dx_{1}dx_{2}dx_{3} = -\frac{2}{3}, 
&&b_{3} = \int_{P}x_{3}dx_{1}dx_{2}dx_{3} = 0, \\
&c_{11} = \int_{P}x_{1}^{2}dx_{1}dx_{2}dx_{3} = \frac{13}{6}, 
&&c_{12} = \int_{P}x_{1}x_{2}dx_{1}dx_{2}dx_{3} = -\frac{5}{12}, \\
&c_{13} = \int_{P}x_{1}x_{3}dx_{1}dx_{2}dx_{3} = 0, 
&&c_{22} = \int_{P}x_{2}^{2}dx_{1}dx_{2}dx_{3} = \frac{13}{6}, \\
&c_{23} = \int_{P}x_{2}x_{3}dx_{1}dx_{2}dx_{3} = 0, 
&&c_{33} = \int_{P}x_{3}^{2}dx_{1}dx_{2}dx_{3} = 7/3 
\end{align*}
\begin{align*}
&\theta_P(x_{1}, x_{2}, x_{3}) = -\frac{168x_{1}+168x_{2}+32}{409}, \\
&M_{X} = \max_{P}\theta_P = \frac{304}{409} < 1. 
\end{align*}

\subsubsection{{$\mathcal{E}_{4}$}}
\begin{align*}
&b_{0} = \mathrm{vol}(P) = \frac{20}{3}, 
&&b_{1} = \int_{P}x_{1}dx_{1}dx_{2}dx_{3} = -\frac{7}{8}, \\
&b_{2} = \int_{P}x_{2}dx_{1}dx_{2}dx_{3} = \frac{5}{12}, 
&&b_{3} = \int_{P}x_{3}dx_{1}dx_{2}dx_{3} = \frac{5}{24}, \\
&c_{11} = \int_{P}x_{1}^{2}dx_{1}dx_{2}dx_{3} = \frac{121}{60}, 
&&c_{12} = \int_{P}x_{1}x_{2}dx_{1}dx_{2}dx_{3} = -\frac{17}{30}, \\
&c_{13} = \int_{P}x_{1}x_{3}dx_{1}dx_{2}dx_{3} = -\frac{17}{60}, 
&&c_{22} = \int_{P}x_{2}^{2}dx_{1}dx_{2}dx_{3} = \frac{59}{30}, \\
&c_{23} = \int_{P}x_{2}x_{3}dx_{1}dx_{2}dx_{3} = \frac{59}{60}, 
&&c_{33} = \int_{P}x_{3}^{2}dx_{1}dx_{2}dx_{3} = \frac{181}{60}, 
\end{align*}
\begin{align*}
&\theta_P(x_{1}, x_{2}, x_{3}) = -\frac{171040x_{1} -39680x_{2}+24929}{394975}, \\
&M_{X} = \max_{P}\theta_P = \frac{185791}{394975} < 1. 
\end{align*}

\subsubsection{{$\mathcal{F}_{1}$}}
\begin{align*}
&b_{0} = \mathrm{vol}(P) = 6, 
&&b_{1} = \int_{P}x_{1}dx_{1}dx_{2}dx_{3} = 0, \\
&b_{2} = \int_{P}x_{2}dx_{1}dx_{2}dx_{3} = 0, 
&&b_{3} = \int_{P}x_{3}dx_{1}dx_{2}dx_{3} = 0, \\
&c_{11} = \int_{P}x_{1}^{2}dx_{1}dx_{2}dx_{3} = \frac{5}{3}, 
&&c_{12} = \int_{P}x_{1}x_{2}dx_{1}dx_{2}dx_{3} = \frac{5}{6}, \\
&c_{13} = \int_{P}x_{1}x_{3}dx_{1}dx_{2}dx_{3} = 0, 
&&c_{22} = \int_{P}x_{2}^{2}dx_{1}dx_{2}dx_{3} = \frac{5}{3}, \\
&c_{23} = \int_{P}x_{2}x_{3}dx_{1}dx_{2}dx_{3} = 0, 
&&c_{33} = \int_{P}x_{3}^{2}dx_{1}dx_{2}dx_{3} = 2, 
\end{align*}
\begin{align*}
&\theta_P(x_{1}, x_{2}, x_{3}) = 0, \\
&M_{X} = \max_{P}\theta_P = 0 < 1. 
\end{align*}

\subsubsection{{$\mathcal{F}_{2}$}}
\begin{align*}
&b_{0} = \mathrm{vol}(P) = 6, 
&&b_{1} = \int_{P}x_{1}dx_{1}dx_{2}dx_{3} = \frac{5}{12}, \\
&b_{2} = \int_{P}x_{2}dx_{1}dx_{2}dx_{3} = \frac{5}{6}, 
&&b_{3} = \int_{P}x_{3}dx_{1}dx_{2}dx_{3} = \frac{5}{12}, \\
&c_{11} = \int_{P}x_{1}^{2}dx_{1}dx_{2}dx_{3} = \frac{5}{3}, 
&&c_{12} = \int_{P}x_{1}x_{2}dx_{1}dx_{2}dx_{3} = \frac{5}{6}, \\
&c_{13} = \int_{P}x_{1}x_{3}dx_{1}dx_{2}dx_{3} = \frac{5}{12}, 
&&c_{22} = \int_{P}x_{2}^{2}dx_{1}dx_{2}dx_{3} = \frac{5}{3}, \\
&c_{23} = \int_{P}x_{2}x_{3}dx_{1}dx_{2}dx_{3} = \frac{5}{6}, 
&&c_{33} = \int_{P}x_{3}^{2}dx_{1}dx_{2}dx_{3} = \frac{17}{6}, 
\end{align*}
\begin{align*}
&\theta_P(x_{1}, x_{2}, x_{3}) = \frac{36x_{2}-5}{67}, \\
&M_{X} = \max_{P}\theta_P = \frac{31}{67} < 1. 
\end{align*}

\newpage
\subsection{4-dimensional Fano polytopes}
\subsubsection{{$\mathbf P^{4}$}}
\begin{align*}
&b_{0} = \mathrm{vol}(P) = \frac{625}{24}, 
&&b_{1} = \int_{P}x_{1}dx_{1}dx_{2}dx_{3}dx_{4} = 0,\quad \\
&b_{2} = \int_{P}x_{2}dx_{1}dx_{2}dx_{3}dx_{4} = 0, 
&&b_{3} = \int_{P}x_{3}dx_{1}dx_{2}dx_{3}dx_{4} = 0, \\
&b_{4} = \int_{P}x_{4}dx_{1}dx_{2}dx_{3}dx_{4} = 0, 
&&c_{11} = \int_{P}x_{1}^{2}dx_{1}dx_{2}dx_{3}dx_{4} = \frac{625}{36}, \\
&c_{12} = \int_{P}x_{1}x_{2}dx_{1}dx_{2}dx_{3}dx_{4} = -\frac{625}{144}, 
&&c_{13} = \int_{P}x_{1}x_{3}dx_{1}dx_{2}dx_{3}dx_{4} = -\frac{625}{144}, \\
&c_{14} = \int_{P}x_{1}x_{4}dx_{1}dx_{2}dx_{3}dx_{4} = -\frac{625}{144}, 
&&c_{22} = \int_{P}x_{2}^{2}dx_{1}dx_{2}dx_{3}dx_{4} = \frac{625}{36}, \\
&c_{23} = \int_{P}x_{2}x_{3}dx_{1}dx_{2}dx_{3}dx_{4} = -\frac{625}{144}, 
&&c_{24} = \int_{P}x_{2}x_{4}dx_{1}dx_{2}dx_{3}dx_{4} = -\frac{625}{144}, \\
&c_{33} = \int_{P}x_{3}^{2}dx_{1}dx_{2}dx_{3}dx_{4} = \frac{625}{36}, 
&&c_{34} = \int_{P}x_{3}x_{4}dx_{1}dx_{2}dx_{3}dx_{4} = -\frac{625}{144}, \\
&c_{44} = \int_{P}x_{4}^{2}dx_{1}dx_{2}dx_{3}dx_{4} = \frac{625}{36}, 
\end{align*}
\begin{align*}
&\theta_P(x_{1}, x_{2}, x_{3}, x_{4}) = 0, \\
&M_{X} = \max_{P}\theta_P = 0 < 1. 
\end{align*}

\subsubsection{{$B_{1}$}}
\begin{align*}
&b_{0} = \mathrm{vol}(P) = \frac{100}{3}, 
&&b_{1} = \int_{P}x_{1}dx_{1}dx_{2}dx_{3}dx_{4} = \frac{89}{20},\quad \\
&b_{2} = \int_{P}x_{2}dx_{1}dx_{2}dx_{3}dx_{4} = \frac{89}{20}, 
&&b_{3} = \int_{P}x_{3}dx_{1}dx_{2}dx_{3}dx_{4} = \frac{89}{20}, \\
&b_{4} = \int_{P}x_{4}dx_{1}dx_{2}dx_{3}dx_{4} = \frac{89}{20}, 
&&c_{11} = \int_{P}x_{1}^{2}dx_{1}dx_{2}dx_{3}dx_{4} = \frac{3757}{90}, \\
&c_{12} = \int_{P}x_{1}x_{2}dx_{1}dx_{2}dx_{3}dx_{4} = -\frac{229}{18}, 
&&c_{13} = \int_{P}x_{1}x_{3}dx_{1}dx_{2}dx_{3}dx_{4} = -\frac{229}{18}, \\
&c_{14} = \int_{P}x_{1}x_{4}dx_{1}dx_{2}dx_{3}dx_{4} = -\frac{229}{18}, 
&&c_{22} = \int_{P}x_{2}^{2}dx_{1}dx_{2}dx_{3}dx_{4} = \frac{3757}{90}, \\
&c_{23} = \int_{P}x_{2}x_{3}dx_{1}dx_{2}dx_{3}dx_{4} = -\frac{229}{18}, 
&&c_{24} = \int_{P}x_{2}x_{4}dx_{1}dx_{2}dx_{3}dx_{4} = -\frac{229}{18}, \\
&c_{33} = \int_{P}x_{3}^{2}dx_{1}dx_{2}dx_{3}dx_{4} = \frac{3757}{90}, 
&&c_{34} = \int_{P}x_{3}x_{4}dx_{1}dx_{2}dx_{3}dx_{4} = -\frac{229}{18}, \\
&c_{44} = \int_{P}x_{4}^{2}dx_{1}dx_{2}dx_{3}dx_{4} = \frac{3757}{90}, 
\end{align*}
\begin{align*}
&\theta_P(x_{1}, x_{2}, x_{3}, x_{4}) = \frac{400500x_{1}+400500x_{2} + 400500x_{3} + 400500x_{4}-213867}{108133}, \\
&M_{X} = \max_{P}\theta_P = \frac{186633}{108133} > 1. 
\end{align*}

\subsubsection{{$B_{2}$}}
\begin{align*}
&b_{0} = \mathrm{vol}(P) = \frac{80}{3}, 
&&b_{1} = \int_{P}x_{1}dx_{1}dx_{2}dx_{3}dx_{4} = \frac{28}{5},\quad \\
&b_{2} = \int_{P}x_{2}dx_{1}dx_{2}dx_{3}dx_{4} = \frac{28}{5}, 
&&b_{3} = \int_{P}x_{3}dx_{1}dx_{2}dx_{3}dx_{4} = 0, \\
&b_{4} = \int_{P}x_{4}dx_{1}dx_{2}dx_{3}dx_{4} = 0, 
&&c_{11} = \int_{P}x_{1}^{2}dx_{1}dx_{2}dx_{3}dx_{4} = \frac{1208}{45}, \\
&c_{12} = \int_{P}x_{1}x_{2}dx_{1}dx_{2}dx_{3}dx_{4} = \frac{32}{3}, 
&&c_{13} = \int_{P}x_{1}x_{3}dx_{1}dx_{2}dx_{3}dx_{4} = -\frac{728}{45}, \\
&c_{14} = \int_{P}x_{1}x_{4}dx_{1}dx_{2}dx_{3}dx_{4} = -\frac{728}{45}, 
&&c_{22} = \int_{P}x_{2}^{2}dx_{1}dx_{2}dx_{3}dx_{4} = \frac{1208}{45}, \\
&c_{23} = \int_{P}x_{2}x_{3}dx_{1}dx_{2}dx_{3}dx_{4} = -\frac{728}{45}, 
&&c_{24} = \int_{P}x_{2}x_{4}dx_{1}dx_{2}dx_{3}dx_{4} = -\frac{728}{45}, \\
&c_{33} = \int_{P}x_{3}^{2}dx_{1}dx_{2}dx_{3}dx_{4} = \frac{1456}{45}, 
&&c_{34} = \int_{P}x_{3}x_{4}dx_{1}dx_{2}dx_{3}dx_{4} = 0, \\
&c_{44} = \int_{P}x_{4}^{2}dx_{1}dx_{2}dx_{3}dx_{4} = \frac{1456}{45}, 
\end{align*}
\begin{align*}
&\theta_P(x_{1}, x_{2}, x_{3}, x_{4}) = \frac{3150x_{1}+3150x_{2}+3150x_{3}+3150x_{4}-1323}{1577}, \\
&M_{X} = \max_{P}\theta_P = \frac{1827}{1577} > 1. 
\end{align*}

\subsubsection{{$B_{3}$}}
\begin{align*}
&b_{0} = \mathrm{vol}(P) = \frac{68}{3}, 
&&b_{1} = \int_{P}x_{1}dx_{1}dx_{2}dx_{3}dx_{4} = \frac{27}{20},\quad \\
&b_{2} = \int_{P}x_{2}dx_{1}dx_{2}dx_{3}dx_{4} = \frac{27}{20}, 
&&b_{3} = \int_{P}x_{3}dx_{1}dx_{2}dx_{3}dx_{4} = \frac{27}{20}, \\
&b_{4} = \int_{P}x_{4}dx_{1}dx_{2}dx_{3}dx_{4} = \frac{27}{20}, 
&&c_{11} = \int_{P}x_{1}^{2}dx_{1}dx_{2}dx_{3}dx_{4} = \frac{1441}{90}, \\
&c_{12} = \int_{P}x_{1}x_{2}dx_{1}dx_{2}dx_{3}dx_{4} = -\frac{421}{90}, 
&&c_{13} = \int_{P}x_{1}x_{3}dx_{1}dx_{2}dx_{3}dx_{4} = -\frac{421}{90}, \\
&c_{14} = \int_{P}x_{1}x_{4}dx_{1}dx_{2}dx_{3}dx_{4} = -\frac{421}{90}, 
&&c_{22} = \int_{P}x_{2}^{2}dx_{1}dx_{2}dx_{3}dx_{4} = \frac{1441}{90}, \\
&c_{23} = \int_{P}x_{2}x_{3}dx_{1}dx_{2}dx_{3}dx_{4} = -\frac{421}{90}, 
&&c_{24} = \int_{P}x_{2}x_{4}dx_{1}dx_{2}dx_{3}dx_{4} = -\frac{421}{90}, \\
&c_{33} = \int_{P}x_{3}^{2}dx_{1}dx_{2}dx_{3}dx_{4} = \frac{1441}{90}, 
&&c_{34} = \int_{P}x_{3}x_{4}dx_{1}dx_{2}dx_{3}dx_{4} = -\frac{421}{90}, \\
&c_{44} = \int_{P}x_{4}^{2}dx_{1}dx_{2}dx_{3}dx_{4} = \frac{1441}{90}, 
\end{align*}
\begin{align*}
&\theta_P(x_{1}, x_{2}, x_{3}, x_{4}) = \frac{82620x_{1}+82620x_{2}+82620x_{3}+82620x_{4}-19683}{101357}, \\
&M_{X} = \max_{P}\theta_P = \frac{62937}{101357} < 1. 
\end{align*}
\subsubsection{{$B_{4}$}}
\begin{align*}
&b_{0} = \mathrm{vol}(P) = \frac{64}{3}, 
&&b_{1} = \int_{P}x_{1}dx_{1}dx_{2}dx_{3}dx_{4} = 0,\quad \\
&b_{2} = \int_{P}x_{2}dx_{1}dx_{2}dx_{3}dx_{4} = 0, 
&&b_{3} = \int_{P}x_{3}dx_{1}dx_{2}dx_{3}dx_{4} = 0, \\
&b_{4} = \int_{P}x_{4}dx_{1}dx_{2}dx_{3}dx_{4} = 0, 
&&c_{11} = \int_{P}x_{1}^{2}dx_{1}dx_{2}dx_{3}dx_{4} = \frac{64}{5}, \\
&c_{12} = \int_{P}x_{1}x_{2}dx_{1}dx_{2}dx_{3}dx_{4} = -\frac{64}{15}, 
&&c_{13} = \int_{P}x_{1}x_{3}dx_{1}dx_{2}dx_{3}dx_{4} = -\frac{64}{15}, \\
&c_{14} = \int_{P}x_{1}x_{4}dx_{1}dx_{2}dx_{3}dx_{4} = 0, 
&&c_{22} = \int_{P}x_{2}^{2}dx_{1}dx_{2}dx_{3}dx_{4} = \frac{64}{5}, \\
&c_{23} = \int_{P}x_{2}x_{3}dx_{1}dx_{2}dx_{3}dx_{4} = -\frac{64}{15}, 
&&c_{24} = \int_{P}x_{2}x_{4}dx_{1}dx_{2}dx_{3}dx_{4} = 0, \\
&c_{33} = \int_{P}x_{3}^{2}dx_{1}dx_{2}dx_{3}dx_{4} = \frac{64}{5}, 
&&c_{34} = \int_{P}x_{3}x_{4}dx_{1}dx_{2}dx_{3}dx_{4} = 0, \\
&c_{44} = \int_{P}x_{4}^{2}dx_{1}dx_{2}dx_{3}dx_{4} = \frac{64}{9}, 
\end{align*}
\begin{align*}
&\theta_P(x_{1}, x_{2}, x_{3}, x_{4}) = 0, \\
&M_{X} = \max_{P}\theta_P = 0 < 1. 
\end{align*}

\subsubsection{{$B_{5}$}}
\begin{align*}
&b_{0} = \mathrm{vol}(P) = \frac{64}{3}, 
&&b_{1} = \int_{P}x_{1}dx_{1}dx_{2}dx_{3}dx_{4} = -\frac{32}{15},\quad \\
&b_{2} = \int_{P}x_{2}dx_{1}dx_{2}dx_{3}dx_{4} = -\frac{32}{15}, 
&&b_{3} = \int_{P}x_{3}dx_{1}dx_{2}dx_{3}dx_{4} = \frac{16}{5}, \\
&b_{4} = \int_{P}x_{4}dx_{1}dx_{2}dx_{3}dx_{4} = \frac{16}{5}, 
&&c_{11} = \int_{P}x_{1}^{2}dx_{1}dx_{2}dx_{3}dx_{4} = \frac{512}{45}, \\
&c_{12} = \int_{P}x_{1}x_{2}dx_{1}dx_{2}dx_{3}dx_{4} = -\frac{128}{45}, 
&&c_{13} = \int_{P}x_{1}x_{3}dx_{1}dx_{2}dx_{3}dx_{4} = -\frac{128}{45}, \\
&c_{14} = \int_{P}x_{1}x_{4}dx_{1}dx_{2}dx_{3}dx_{4} = -\frac{128}{45}, 
&&c_{22} = \int_{P}x_{2}^{2}dx_{1}dx_{2}dx_{3}dx_{4} = \frac{512}{45}, \\
&c_{23} = \int_{P}x_{2}x_{3}dx_{1}dx_{2}dx_{3}dx_{4} = -\frac{128}{45}, 
&&c_{24} = \int_{P}x_{2}x_{4}dx_{1}dx_{2}dx_{3}dx_{4} = -\frac{128}{45}, \\
&c_{33} = \int_{P}x_{3}^{2}dx_{1}dx_{2}dx_{3}dx_{4} = \frac{224}{15}, 
&&c_{34} = \int_{P}x_{3}x_{4}dx_{1}dx_{2}dx_{3}dx_{4} = -\frac{32}{5}, \\
&c_{44} = \int_{P}x_{4}^{2}dx_{1}dx_{2}dx_{3}dx_{4} = \frac{224}{15}, 
\end{align*}
\begin{align*}
&\theta_P(x_{1}, x_{2}, x_{3}, x_{4}) = \frac{30x_{3}+30x_{4}-9}{71}, \\
&M_{X} = \max_{P}\theta_P = \frac{81}{71} > 1. 
\end{align*}

\subsubsection{{$C_{1}$}}
\begin{align*}
&b_{0} = \mathrm{vol}(P) = \frac{99}{4}, 
&&b_{1} = \int_{P}x_{1}dx_{1}dx_{2}dx_{3}dx_{4} = \frac{153}{10},\quad \\
&b_{2} = \int_{P}x_{2}dx_{1}dx_{2}dx_{3}dx_{4} = -\frac{153}{20}, 
&&b_{3} = \int_{P}x_{3}dx_{1}dx_{2}dx_{3}dx_{4} = \frac{51}{5}, \\
&b_{4} = \int_{P}x_{4}dx_{1}dx_{2}dx_{3}dx_{4} = \frac{51}{5}, 
&&c_{11} = \int_{P}x_{1}^{2}dx_{1}dx_{2}dx_{3}dx_{4} = \frac{837}{40}, \\
&c_{12} = \int_{P}x_{1}x_{2}dx_{1}dx_{2}dx_{3}dx_{4} = -\frac{837}{80}, 
&&c_{13} = \int_{P}x_{1}x_{3}dx_{1}dx_{2}dx_{3}dx_{4} = \frac{279}{20}, \\
&c_{14} = \int_{P}x_{1}x_{4}dx_{1}dx_{2}dx_{3}dx_{4} = \frac{279}{20}, 
&&c_{22} = \int_{P}x_{2}^{2}dx_{1}dx_{2}dx_{3}dx_{4} = \frac{81}{8}, \\
&c_{23} = \int_{P}x_{2}x_{3}dx_{1}dx_{2}dx_{3}dx_{4} = -\frac{279}{40}, 
&&c_{24} = \int_{P}x_{2}x_{4}dx_{1}dx_{2}dx_{3}dx_{4} = -\frac{279}{40}, \\
&c_{33} = \int_{P}x_{3}^{2}dx_{1}dx_{2}dx_{3}dx_{4} = \frac{1461}{40}, 
&&c_{34} = \int_{P}x_{3}x_{4}dx_{1}dx_{2}dx_{3}dx_{4} = -\frac{69}{16}, \\
&c_{44} = \int_{P}x_{4}^{2}dx_{1}dx_{2}dx_{3}dx_{4} = \frac{1461}{40}, 
\end{align*}
\begin{align*}
&\theta_P(x_{1}, x_{2}, x_{3}, x_{4}) = \frac{3740x_{1}-2312}{2803}, \\
&M_{X} = \max_{P}\theta_P = \frac{5168}{2803} > 1. 
\end{align*}

\subsubsection{{$C_{2}$}}
\begin{align*}
&b_{0} = \mathrm{vol}(P) = \frac{171}{8}, 
&&b_{1} = \int_{P}x_{1}dx_{1}dx_{2}dx_{3}dx_{4} = \frac{36}{5},\quad \\
&b_{2} = \int_{P}x_{2}dx_{1}dx_{2}dx_{3}dx_{4} = -\frac{18}{5}, 
&&b_{3} = \int_{P}x_{3}dx_{1}dx_{2}dx_{3}dx_{4} = \frac{12}{5}, \\
&b_{4} = \int_{P}x_{4}dx_{1}dx_{2}dx_{3}dx_{4} = \frac{12}{5}, 
&&c_{11} = \int_{P}x_{1}^{2}dx_{1}dx_{2}dx_{3}dx_{4} = \frac{567}{40}, \\
&c_{12} = \int_{P}x_{1}x_{2}dx_{1}dx_{2}dx_{3}dx_{4} = -\frac{567}{80}, 
&&c_{13} = \int_{P}x_{1}x_{3}dx_{1}dx_{2}dx_{3}dx_{4} = \frac{189}{40}, \\
&c_{14} = \int_{P}x_{1}x_{4}dx_{1}dx_{2}dx_{3}dx_{4} = \frac{189}{40}, 
&&c_{22} = \int_{P}x_{2}^{2}dx_{1}dx_{2}dx_{3}dx_{4} = \frac{189}{20}, \\
&c_{23} = \int_{P}x_{2}x_{3}dx_{1}dx_{2}dx_{3}dx_{4} = -\frac{189}{80}, 
&&c_{24} = \int_{P}x_{2}x_{4}dx_{1}dx_{2}dx_{3}dx_{4} = -\frac{189}{80}, \\
&c_{33} = \int_{P}x_{3}^{2}dx_{1}dx_{2}dx_{3}dx_{4} = \frac{309}{20}, 
&&c_{34} = \int_{P}x_{3}x_{4}dx_{1}dx_{2}dx_{3}dx_{4} = -\frac{429}{80}, \\
&c_{44} = \int_{P}x_{4}^{2}dx_{1}dx_{2}dx_{3}dx_{4} = \frac{309}{20}, 
\end{align*}
\begin{align*}
&\theta_P(x_{1}, x_{2}, x_{3}, x_{4}) = \frac{3040x_{1}-1024}{4961}, \\
&M_{X} = \max_{P}\theta_P = \frac{5056}{4961} > 1. 
\end{align*}

\subsubsection{{$C_{3}$}}
\begin{align*}
&b_{0} = \mathrm{vol}(P) = \frac{171}{8}, 
&&b_{1} = \int_{P}x_{1}dx_{1}dx_{2}dx_{3}dx_{4} = \frac{63}{20},\quad \\
&b_{2} = \int_{P}x_{2}dx_{1}dx_{2}dx_{3}dx_{4} = \frac{63}{20}, 
&&b_{3} = \int_{P}x_{3}dx_{1}dx_{2}dx_{3}dx_{4} = \frac{21}{10}, \\
&b_{4} = \int_{P}x_{4}dx_{1}dx_{2}dx_{3}dx_{4} = \frac{21}{10}, 
&&c_{11} = \int_{P}x_{1}^{2}dx_{1}dx_{2}dx_{3}dx_{4} = \frac{243}{20}, \\
&c_{12} = \int_{P}x_{1}x_{2}dx_{1}dx_{2}dx_{3}dx_{4} = -\frac{621}{80}, 
&&c_{13} = \int_{P}x_{1}x_{3}dx_{1}dx_{2}dx_{3}dx_{4} = \frac{117}{80}, \\
&c_{14} = \int_{P}x_{1}x_{4}dx_{1}dx_{2}dx_{3}dx_{4} = \frac{117}{80}, 
&&c_{22} = \int_{P}x_{2}^{2}dx_{1}dx_{2}dx_{3}dx_{4} = \frac{243}{20}, \\
&c_{23} = \int_{P}x_{2}x_{3}dx_{1}dx_{2}dx_{3}dx_{4} = \frac{117}{80}, 
&&c_{24} = \int_{P}x_{2}x_{4}dx_{1}dx_{2}dx_{3}dx_{4} = \frac{117}{80}, \\
&c_{33} = \int_{P}x_{3}^{2}dx_{1}dx_{2}dx_{3}dx_{4} = \frac{57}{4}, 
&&c_{34} = \int_{P}x_{3}x_{4}dx_{1}dx_{2}dx_{3}dx_{4} =-\frac{453}{80}, \\
&c_{44} = \int_{P}x_{4}^{2}dx_{1}dx_{2}dx_{3}dx_{4} = \frac{57}{4}, 
\end{align*}
\begin{align*}
&\theta_P(x_{1}, x_{2}, x_{3}, x_{4}) = \frac{21280x_{1} + 21280x_{2} - 6272}{23368}, \\
&M_{X} = \max_{P}\theta_P = \frac{1876}{2921} < 1. 
\end{align*}

\subsubsection{{$C_{4}$}}
\begin{align*}
&b_{0} = \mathrm{vol}(P) = \frac{81}{4}, 
&&b_{1} = \int_{P}x_{1}dx_{1}dx_{2}dx_{3}dx_{4} = 0,\quad \\
&b_{2} = \int_{P}x_{2}dx_{1}dx_{2}dx_{3}dx_{4} = 0, 
&&b_{3} = \int_{P}x_{3}dx_{1}dx_{2}dx_{3}dx_{4} = 0, \\
&b_{4} = \int_{P}x_{4}dx_{1}dx_{2}dx_{3}dx_{4} = 0, 
&&c_{11} = \int_{P}x_{1}^{2}dx_{1}dx_{2}dx_{3}dx_{4} = \frac{81}{8}, \\
&c_{12} = \int_{P}x_{1}x_{2}dx_{1}dx_{2}dx_{3}dx_{4} = -\frac{81}{16}, 
&&c_{13} = \int_{P}x_{1}x_{3}dx_{1}dx_{2}dx_{3}dx_{4} = 0, \\
&c_{14} = \int_{P}x_{1}x_{4}dx_{1}dx_{2}dx_{3}dx_{4} = 0, 
&&c_{22} = \int_{P}x_{2}^{2}dx_{1}dx_{2}dx_{3}dx_{4} = \frac{81}{8}, \\
&c_{23} = \int_{P}x_{2}x_{3}dx_{1}dx_{2}dx_{3}dx_{4} = 0, 
&&c_{24} = \int_{P}x_{2}x_{4}dx_{1}dx_{2}dx_{3}dx_{4} = 0, \\
&c_{33} = \int_{P}x_{3}^{2}dx_{1}dx_{2}dx_{3}dx_{4} = \frac{81}{8}, 
&&c_{34} = \int_{P}x_{3}x_{4}dx_{1}dx_{2}dx_{3}dx_{4} = -\frac{81}{16}, \\
&c_{44} = \int_{P}x_{4}^{2}dx_{1}dx_{2}dx_{3}dx_{4} = \frac{81}{8}, 
\end{align*}
\begin{align*}
&\theta_P(x_{1}, x_{2}, x_{3}, x_{4}) = 0, \\
&M_{X} = \max_{P}\theta_P = 0 < 1. 
\end{align*}

\subsubsection{{$E_{1}$}}
\begin{align*}
&b_{0} = \mathrm{vol}(P) = \frac{605}{24}, 
&&b_{1} = \int_{P}x_{1}dx_{1}dx_{2}dx_{3}dx_{4} = 12,\quad \\
&b_{2} = \int_{P}x_{2}dx_{1}dx_{2}dx_{3}dx_{4} = \frac{67}{10}, 
&&b_{3} = \int_{P}x_{3}dx_{1}dx_{2}dx_{3}dx_{4} = \frac{173}{30}, \\
&b_{4} = \int_{P}x_{4}dx_{1}dx_{2}dx_{3}dx_{4} = \frac{173}{30}, 
&&c_{11} = \int_{P}x_{1}^{2}dx_{1}dx_{2}dx_{3}dx_{4} = \frac{587}{60}, \\
&c_{12} = \int_{P}x_{1}x_{2}dx_{1}dx_{2}dx_{3}dx_{4} = \frac{367}{80}, 
&&c_{13} = \int_{P}x_{1}x_{3}dx_{1}dx_{2}dx_{3}dx_{4} = \frac{719}{144}, \\
&c_{14} = \int_{P}x_{1}x_{4}dx_{1}dx_{2}dx_{3}dx_{4} = \frac{719}{144}, 
&&c_{22} = \int_{P}x_{2}^{2}dx_{1}dx_{2}dx_{3}dx_{4} = \frac{623}{24}, \\
&c_{23} = \int_{P}x_{2}x_{3}dx_{1}dx_{2}dx_{3}dx_{4} = -\frac{1007}{180}, 
&&c_{24} = \int_{P}x_{2}x_{4}dx_{1}dx_{2}dx_{3}dx_{4} = -\frac{1007}{180}, \\
&c_{33} = \int_{P}x_{3}^{2}dx_{1}dx_{2}dx_{3}dx_{4} = \frac{4709}{180}, 
&&c_{34} = \int_{P}x_{3}x_{4}dx_{1}dx_{2}dx_{3}dx_{4} = -\frac{3809}{720}, \\
&c_{44} = \int_{P}x_{4}^{2}dx_{1}dx_{2}dx_{3}dx_{4} = \frac{4709}{180}, 
\end{align*}
\begin{align*}
&\theta_P(x_{1}, x_{2}, x_{3}, x_{4}) = \frac{4447297800x_{1}+166293600x_{2}-2161259136}{1528096589}, \\
&M_{X} = \max_{P}\theta_P = \frac{3117506664}{1528096589} > 1. 
\end{align*}

\subsubsection{{$E_{2}$}}
\begin{align*}
&b_{0} = \mathrm{vol}(P) = \frac{163}{8}, 
&&b_{1} = \int_{P}x_{1}dx_{1}dx_{2}dx_{3}dx_{4} = \frac{34}{5},\quad \\
&b_{2} = \int_{P}x_{2}dx_{1}dx_{2}dx_{3}dx_{4} = 3, 
&&b_{3} = \int_{P}x_{3}dx_{1}dx_{2}dx_{3}dx_{4} = \frac{19}{15}, \\
&b_{4} = \int_{P}x_{4}dx_{1}dx_{2}dx_{3}dx_{4} = \frac{19}{15}, 
&&c_{11} = \int_{P}x_{1}^{2}dx_{1}dx_{2}dx_{3}dx_{4} = \frac{83}{12}, \\
&c_{12} = \int_{P}x_{1}x_{2}dx_{1}dx_{2}dx_{3}dx_{4} = \frac{247}{240}, 
&&c_{13} = \int_{P}x_{1}x_{3}dx_{1}dx_{2}dx_{3}dx_{4} = \frac{157}{80}, \\
&c_{14} = \int_{P}x_{1}x_{4}dx_{1}dx_{2}dx_{3}dx_{4} = \frac{157}{80}, 
&&c_{22} = \int_{P}x_{2}^{2}dx_{1}dx_{2}dx_{3}dx_{4} = \frac{883}{60}, \\
&c_{23} = \int_{P}x_{2}x_{3}dx_{1}dx_{2}dx_{3}dx_{4} = -\frac{73}{16}, 
&&c_{24} = \int_{P}x_{2}x_{4}dx_{1}dx_{2}dx_{3}dx_{4} = -\frac{73}{16}, \\
&c_{33} = \int_{P}x_{3}^{2}dx_{1}dx_{2}dx_{3}dx_{4} = \frac{883}{60}, 
&&c_{34} = \int_{P}x_{3}x_{4}dx_{1}dx_{2}dx_{3}dx_{4} = -\frac{983}{240}, \\
&c_{44} = \int_{P}x_{4}^{2}dx_{1}dx_{2}dx_{3}dx_{4} = \frac{883}{60}, 
\end{align*}
\begin{align*}
&\theta_P(x_{1}, x_{2}, x_{3}, x_{4}) = \frac{1517634320x_{1}+215186080x_{2}-538182656}{1038057499}, \\
&M_{X} = \max_{P}\theta_P = \frac{1840195984}{1038057499} > 1. 
\end{align*}

\subsubsection{{$E_{3}$}}
\begin{align*}
&b_{0} = \mathrm{vol}(P) = \frac{431}{24}, 
&&b_{1} = \int_{P}x_{1}dx_{1}dx_{2}dx_{3}dx_{4} = \frac{11}{5},\quad \\
&b_{2} = \int_{P}x_{2}dx_{1}dx_{2}dx_{3}dx_{4} = \frac{47}{20}, 
&&b_{3} = \int_{P}x_{3}dx_{1}dx_{2}dx_{3}dx_{4} = -\frac{47}{60}, \\
&b_{4} = \int_{P}x_{4}dx_{1}dx_{2}dx_{3}dx_{4} = -\frac{47}{60}, 
&&c_{11} = \int_{P}x_{1}^{2}dx_{1}dx_{2}dx_{3}dx_{4} = \frac{329}{60}, \\
&c_{12} = \int_{P}x_{1}x_{2}dx_{1}dx_{2}dx_{3}dx_{4} = -\frac{347}{240}, 
&&c_{13} = \int_{P}x_{1}x_{3}dx_{1}dx_{2}dx_{3}dx_{4} = \frac{347}{720}, \\
&c_{14} = \int_{P}x_{1}x_{4}dx_{1}dx_{2}dx_{3}dx_{4} = \frac{347}{720}, 
&&c_{22} = \int_{P}x_{2}^{2}dx_{1}dx_{2}dx_{3}dx_{4} = \frac{1319}{120}, \\
&c_{23} = \int_{P}x_{2}x_{3}dx_{1}dx_{2}dx_{3}dx_{4} = -\frac{1319}{360}, 
&&c_{24} = \int_{P}x_{2}x_{4}dx_{1}dx_{2}dx_{3}dx_{4} = -\frac{1319}{360}, \\
&c_{33} = \int_{P}x_{3}^{2}dx_{1}dx_{2}dx_{3}dx_{4} = \frac{361}{36}, 
&&c_{34} = \int_{P}x_{3}x_{4}dx_{1}dx_{2}dx_{3}dx_{4} = -\frac{2291}{720}, \\
&c_{44} = \int_{P}x_{4}^{2}dx_{1}dx_{2}dx_{3}dx_{4} = \frac{361}{36}, 
\end{align*}
\begin{align*}
&\theta_P(x_{1}, x_{2}, x_{3}, x_{4}) = \frac{3423372660x_{1}+1994323200x_{2}-680357088}{6541476757}, \\
&M_{X} = \max_{P}\theta_P = \frac{8725985172}{6541476757} > 1. 
\end{align*}

\subsubsection{{$D_{1}$}}
\begin{align*}
&b_{0} = \mathrm{vol}(P) = \frac{74}{3}, 
&&b_{1} = \int_{P}x_{1}dx_{1}dx_{2}dx_{3}dx_{4} = \frac{118}{15},\quad \\
&b_{2} = \int_{P}x_{2}dx_{1}dx_{2}dx_{3}dx_{4} = \frac{118}{15}, 
&&b_{3} = \int_{P}x_{3}dx_{1}dx_{2}dx_{3}dx_{4} = \frac{59}{10}, \\
&b_{4} = \int_{P}x_{4}dx_{1}dx_{2}dx_{3}dx_{4} = \frac{59}{5}, 
&&c_{11} = \int_{P}x_{1}^{2}dx_{1}dx_{2}dx_{3}dx_{4} = \frac{403}{15}, \\
&c_{12} = \int_{P}x_{1}x_{2}dx_{1}dx_{2}dx_{3}dx_{4} = -\frac{203}{30}, 
&&c_{13} = \int_{P}x_{1}x_{3}dx_{1}dx_{2}dx_{3}dx_{4} = \frac{10}{3}, \\
&c_{14} = \int_{P}x_{1}x_{4}dx_{1}dx_{2}dx_{3}dx_{4} = \frac{20}{3}, 
&&c_{22} = \int_{P}x_{2}^{2}dx_{1}dx_{2}dx_{3}dx_{4} = \frac{403}{15}, \\
&c_{23} = \int_{P}x_{2}x_{3}dx_{1}dx_{2}dx_{3}dx_{4} = \frac{10}{3}, 
&&c_{24} = \int_{P}x_{2}x_{4}dx_{1}dx_{2}dx_{3}dx_{4} = \frac{20}{3}, \\
&c_{33} = \int_{P}x_{3}^{2}dx_{1}dx_{2}dx_{3}dx_{4} = \frac{697}{45}, 
&&c_{34} = \int_{P}x_{3}x_{4}dx_{1}dx_{2}dx_{3}dx_{4} = 5, \\
&c_{44} = \int_{P}x_{4}^{2}dx_{1}dx_{2}dx_{3}dx_{4} = 10, 
\end{align*}
\begin{align*}
&\theta_P(x_{1}, x_{2}, x_{3}, x_{4}) = \frac{21830x_{4} - 10443}{8057}, \\
&M_{X} = \max_{P}\theta_P = \frac{11387}{8057} > 1. 
\end{align*}

\subsubsection{{$D_{2}$}}
\begin{align*}
&b_{0} = \mathrm{vol}(P) = 24, 
&&b_{1} = \int_{P}x_{1}dx_{1}dx_{2}dx_{3}dx_{4} = \frac{157}{15},\quad \\
&b_{2} = \int_{P}x_{2}dx_{1}dx_{2}dx_{3}dx_{4} = \frac{157}{15}, 
&&b_{3} = \int_{P}x_{3}dx_{1}dx_{2}dx_{3}dx_{4} = \frac{157}{10}, \\
&b_{4} = \int_{P}x_{4}dx_{1}dx_{2}dx_{3}dx_{4} = \frac{43}{5}, 
&&c_{11} = \int_{P}x_{1}^{2}dx_{1}dx_{2}dx_{3}dx_{4} = \frac{1631}{45}, \\
&c_{12} = \int_{P}x_{1}x_{2}dx_{1}dx_{2}dx_{3}dx_{4} = -\frac{391}{90}, 
&&c_{13} = \int_{P}x_{1}x_{3}dx_{1}dx_{2}dx_{3}dx_{4} = \frac{124}{9}, \\
&c_{14} = \int_{P}x_{1}x_{4}dx_{1}dx_{2}dx_{3}dx_{4} = \frac{298}{45}, 
&&c_{22} = \int_{P}x_{2}^{2}dx_{1}dx_{2}dx_{3}dx_{4} = \frac{1631}{45}, \\
&c_{23} = \int_{P}x_{2}x_{3}dx_{1}dx_{2}dx_{3}dx_{4} = \frac{124}{9}, 
&&c_{24} = \int_{P}x_{2}x_{4}dx_{1}dx_{2}dx_{3}dx_{4} = \frac{298}{45}, \\
&c_{33} = \int_{P}x_{3}^{2}dx_{1}dx_{2}dx_{3}dx_{4} = \frac{62}{3}, 
&&c_{34} = \int_{P}x_{3}x_{4}dx_{1}dx_{2}dx_{3}dx_{4} = \frac{149}{15}, \\
&c_{44} = \int_{P}x_{4}^{2}dx_{1}dx_{2}dx_{3}dx_{4} = \frac{80}{9}, 
\end{align*}
\begin{align*}
&\theta_P(x_{1}, x_{2}, x_{3}, x_{4}) = \frac{1461480x_{3}+588060x_{4}-1166773}{1129107}, \\
&M_{X} = \max_{P}\theta_P = \frac{2344247}{1129107} > 1. 
\end{align*}

\subsubsection{{$D_{3}$}}
\begin{align*}
&b_{0} = \mathrm{vol}(P) = \frac{70}{3}, 
&&b_{1} = \int_{P}x_{1}dx_{1}dx_{2}dx_{3}dx_{4} = \frac{83}{12},\quad \\
&b_{2} = \int_{P}x_{2}dx_{1}dx_{2}dx_{3}dx_{4} = \frac{83}{12}, 
&&b_{3} = \int_{P}x_{3}dx_{1}dx_{2}dx_{3}dx_{4} = \frac{219}{20}, \\
&b_{4} = \int_{P}x_{4}dx_{1}dx_{2}dx_{3}dx_{4} = \frac{49}{5}, 
&&c_{11} = \int_{P}x_{1}^{2}dx_{1}dx_{2}dx_{3}dx_{4} = \frac{2297}{90}, \\
&c_{12} = \int_{P}x_{1}x_{2}dx_{1}dx_{2}dx_{3}dx_{4} = -\frac{262}{45}, 
&&c_{13} = \int_{P}x_{1}x_{3}dx_{1}dx_{2}dx_{3}dx_{4} = \frac{751}{90}, \\
&c_{14} = \int_{P}x_{1}x_{4}dx_{1}dx_{2}dx_{3}dx_{4} = \frac{83}{15}, 
&&c_{22} = \int_{P}x_{2}^{2}dx_{1}dx_{2}dx_{3}dx_{4} = \frac{2297}{90}, \\
&c_{23} = \int_{P}x_{2}x_{3}dx_{1}dx_{2}dx_{3}dx_{4} = \frac{751}{90}, 
&&c_{24} = \int_{P}x_{2}x_{4}dx_{1}dx_{2}dx_{3}dx_{4} = \frac{83}{15}, \\
&c_{33} = \int_{P}x_{3}^{2}dx_{1}dx_{2}dx_{3}dx_{4} = \frac{1571}{90}, 
&&c_{34} = \int_{P}x_{3}x_{4}dx_{1}dx_{2}dx_{3}dx_{4} = \frac{341}{45}, \\
&c_{44} = \int_{P}x_{4}^{2}dx_{1}dx_{2}dx_{3}dx_{4} = \frac{406}{45}, 
\end{align*}
\begin{align*}
&\theta_P(x_{1}, x_{2}, x_{3}, x_{4}) = \frac{6623400x_{3}+23783700x_{4}-13097421}{13920179}, \\
&M_{X} = \max_{P}\theta_P = \frac{23933079}{13920179} > 1. 
\end{align*}

\subsubsection{{$D_{4}$}}
\begin{align*}
&b_{0} = \mathrm{vol}(P) = \frac{70}{3}, 
&&b_{1} = \int_{P}x_{1}dx_{1}dx_{2}dx_{3}dx_{4} = \frac{157}{10},\quad \\
&b_{2} = \int_{P}x_{2}dx_{1}dx_{2}dx_{3}dx_{4} = \frac{161}{60}, 
&&b_{3} = \int_{P}x_{3}dx_{1}dx_{2}dx_{3}dx_{4} = \frac{157}{20}, \\
&b_{4} = \int_{P}x_{4}dx_{1}dx_{2}dx_{3}dx_{4} = \frac{158}{15}, 
&&c_{11} = \int_{P}x_{1}^{2}dx_{1}dx_{2}dx_{3}dx_{4} = \frac{182}{5}, \\
&c_{12} = \int_{P}x_{1}x_{2}dx_{1}dx_{2}dx_{3}dx_{4} = -7, 
&&c_{13} = \int_{P}x_{1}x_{3}dx_{1}dx_{2}dx_{3}dx_{4} = \frac{91}{5}, \\
&c_{14} = \int_{P}x_{1}x_{4}dx_{1}dx_{2}dx_{3}dx_{4} = \frac{56}{5}, 
&&c_{22} = \int_{P}x_{2}^{2}dx_{1}dx_{2}dx_{3}dx_{4} = \frac{571}{30}, \\
&c_{23} = \int_{P}x_{2}x_{3}dx_{1}dx_{2}dx_{3}dx_{4} = -\frac{7}{2}, 
&&c_{24} = \int_{P}x_{2}x_{4}dx_{1}dx_{2}dx_{3}dx_{4} = \frac{18}{5}, \\
&c_{33} = \int_{P}x_{3}^{2}dx_{1}dx_{2}dx_{3}dx_{4} = \frac{2263}{90}, 
&&c_{34} = \int_{P}x_{3}x_{4}dx_{1}dx_{2}dx_{3}dx_{4} = \frac{28}{5}, \\
&c_{44} = \int_{P}x_{4}^{2}dx_{1}dx_{2}dx_{3}dx_{4} = \frac{46}{5}, 
\end{align*}
\begin{align*}
&\theta_P(x_{1}, x_{2}, x_{3}, x_{4}) = \frac{1389500x_{1}+10897600x_{4}-5854423}{5141177}, \\
&M_{X} = \max_{P}\theta_P = \frac{10601177}{5141177} > 1. 
\end{align*}

\subsubsection{{$D_{5}$}}
\begin{align*}
&b_{0} = \mathrm{vol}(P) = \frac{62}{3}, 
&&b_{1} = \int_{P}x_{1}dx_{1}dx_{2}dx_{3}dx_{4} = \frac{16}{3},\quad \\
&b_{2} = \int_{P}x_{2}dx_{1}dx_{2}dx_{3}dx_{4} = \frac{16}{3}, 
&&b_{3} = \int_{P}x_{3}dx_{1}dx_{2}dx_{3}dx_{4} = 8, \\
&b_{4} = \int_{P}x_{4}dx_{1}dx_{2}dx_{3}dx_{4} = 0, 
&&c_{11} = \int_{P}x_{1}^{2}dx_{1}dx_{2}dx_{3}dx_{4} = 0, \\
&c_{12} = \int_{P}x_{1}x_{2}dx_{1}dx_{2}dx_{3}dx_{4} = -\frac{53}{10}, 
&&c_{13} = \int_{P}x_{1}x_{3}dx_{1}dx_{2}dx_{3}dx_{4} = \frac{76}{15}, \\
&c_{14} = \int_{P}x_{1}x_{4}dx_{1}dx_{2}dx_{3}dx_{4} = 0, 
&&c_{22} = \int_{P}x_{2}^{2}dx_{1}dx_{2}dx_{3}dx_{4} = \frac{311}{15}, \\
&c_{23} = \int_{P}x_{2}x_{3}dx_{1}dx_{2}dx_{3}dx_{4} = \frac{76}{15}, 
&&c_{24} = \int_{P}x_{2}x_{4}dx_{1}dx_{2}dx_{3}dx_{4} = 0, \\
&c_{33} = \int_{P}x_{3}^{2}dx_{1}dx_{2}dx_{3}dx_{4} = \frac{38}{5}, 
&&c_{34} = \int_{P}x_{3}x_{4}dx_{1}dx_{2}dx_{3}dx_{4} = 0, \\
&c_{44} = \int_{P}x_{4}^{2}dx_{1}dx_{2}dx_{3}dx_{4} = \frac{62}{9}, 
\end{align*}
\begin{align*}
&\theta_P(x_{1}, x_{2}, x_{3}, x_{4}) = \frac{620{{x}_{3}}-240}{349}, \\
&M_{X} = \max_{P}\theta_P = \frac{380}{349} > 1. 
\end{align*}

\subsubsection{{$D_{6}$}}
\begin{align*}
&b_{0} = \mathrm{vol}(P) = \frac{62}{3}, 
&&b_{1} = \int_{P}x_{1}dx_{1}dx_{2}dx_{3}dx_{4} = \frac{12}{5},\quad \\
&b_{2} = \int_{P}x_{2}dx_{1}dx_{2}dx_{3}dx_{4} = \frac{12}{5}, 
&&b_{3} = \int_{P}x_{3}dx_{1}dx_{2}dx_{3}dx_{4} = \frac{18}{5}, \\
&b_{4} = \int_{P}x_{4}dx_{1}dx_{2}dx_{3}dx_{4} = \frac{36}{5}, 
&&c_{11} = \int_{P}x_{1}^{2}dx_{1}dx_{2}dx_{3}dx_{4} = 14, \\
&c_{12} = \int_{P}x_{1}x_{2}dx_{1}dx_{2}dx_{3}dx_{4} = -\frac{86}{15}, 
&&c_{13} = \int_{P}x_{1}x_{3}dx_{1}dx_{2}dx_{3}dx_{4} = \frac{19}{15}, \\
&c_{14} = \int_{P}x_{1}x_{4}dx_{1}dx_{2}dx_{3}dx_{4} = \frac{38}{15}, 
&&c_{22} = \int_{P}x_{2}^{2}dx_{1}dx_{2}dx_{3}dx_{4} = 14, \\
&c_{23} = \int_{P}x_{2}x_{3}dx_{1}dx_{2}dx_{3}dx_{4} = \frac{19}{15}, 
&&c_{24} = \int_{P}x_{2}x_{4}dx_{1}dx_{2}dx_{3}dx_{4} = \frac{38}{15}, \\
&c_{33} = \int_{P}x_{3}^{2}dx_{1}dx_{2}dx_{3}dx_{4} = \frac{532}{45}, 
&&c_{34} = \int_{P}x_{3}x_{4}dx_{1}dx_{2}dx_{3}dx_{4} = \frac{19}{5}, \\
&c_{44} = \int_{P}x_{4}^{2}dx_{1}dx_{2}dx_{3}dx_{4} = \frac{38}{5}, 
\end{align*}
\begin{align*}
&\theta_P(x_{1}, x_{2}, x_{3}, x_{4}) = \frac{2790{{x}_{4}}-972}{1973}, \\
&M_{X} = \max_{P}\theta_P = \frac{1818}{1973} < 1. 
\end{align*}

\subsubsection{{$D_{7}$}}
\begin{align*}
&b_{0} = \mathrm{vol}(P) = \frac{81}{4}, 
&&b_{1} = \int_{P}x_{1}dx_{1}dx_{2}dx_{3}dx_{4} = \frac{99}{10},\quad \\
&b_{2} = \int_{P}x_{2}dx_{1}dx_{2}dx_{3}dx_{4} = -\frac{99}{20}, 
&&b_{3} = \int_{P}x_{3}dx_{1}dx_{2}dx_{3}dx_{4} = \frac{99}{20}, \\
&b_{4} = \int_{P}x_{4}dx_{1}dx_{2}dx_{3}dx_{4} = \frac{99}{20}, 
&&c_{11} = \int_{P}x_{1}^{2}dx_{1}dx_{2}dx_{3}dx_{4} = \frac{153}{10}, \\
&c_{12} = \int_{P}x_{1}x_{2}dx_{1}dx_{2}dx_{3}dx_{4} = -\frac{153}{20}, 
&&c_{13} = \int_{P}x_{1}x_{3}dx_{1}dx_{2}dx_{3}dx_{4} = \frac{153}{20}, \\
&c_{14} = \int_{P}x_{1}x_{4}dx_{1}dx_{2}dx_{3}dx_{4} = \frac{153}{20}, 
&&c_{22} = \int_{P}x_{2}^{2}dx_{1}dx_{2}dx_{3}dx_{4} = \frac{171}{20}, \\
&c_{23} = \int_{P}x_{2}x_{3}dx_{1}dx_{2}dx_{3}dx_{4} = -\frac{153}{40}, 
&&c_{24} = \int_{P}x_{2}x_{4}dx_{1}dx_{2}dx_{3}dx_{4} = -\frac{153}{40}, \\
&c_{33} = \int_{P}x_{3}^{2}dx_{1}dx_{2}dx_{3}dx_{4} = \frac{303}{20}, 
&&c_{34} = \int_{P}x_{3}x_{4}dx_{1}dx_{2}dx_{3}dx_{4} = \frac{153}{40}, \\
&c_{44} = \int_{P}x_{4}^{2}dx_{1}dx_{2}dx_{3}dx_{4} = \frac{303}{20}, 
\end{align*}
\begin{align*}
&\theta_P(x_{1}, x_{2}, x_{3}, x_{4}) = \frac{495{{x}_{1}}-242}{523}, \\
&M_{X} = \max_{P}\theta_P = \frac{748}{523} > 1. 
\end{align*}

\subsubsection{$D_{8}$}
\begin{align*}
&b_{0} = \mathrm{vol}(P) = 20, 
&&b_{1} = \int_{P}x_{1}dx_{1}dx_{2}dx_{3}dx_{4} = \frac{161}{60},\quad \\
&b_{2} = \int_{P}x_{2}dx_{1}dx_{2}dx_{3}dx_{4} = \frac{161}{60}, 
&&b_{3} = \int_{P}x_{3}dx_{1}dx_{2}dx_{3}dx_{4} = \frac{153}{40}, \\
&b_{4} = \int_{P}x_{4}dx_{1}dx_{2}dx_{3}dx_{4} = \frac{27}{5}, 
&&c_{11} = \int_{P}x_{1}^{2}dx_{1}dx_{2}dx_{3}dx_{4} = \frac{269}{18}, \\
&c_{12} = \int_{P}x_{1}x_{2}dx_{1}dx_{2}dx_{3}dx_{4} = -\frac{469}{90}, 
&&c_{13} = \int_{P}x_{1}x_{3}dx_{1}dx_{2}dx_{3}dx_{4} = \frac{407}{90}, \\
&c_{14} = \int_{P}x_{1}x_{4}dx_{1}dx_{2}dx_{3}dx_{4} = \frac{89}{45}, 
&&c_{22} = \int_{P}x_{2}^{2}dx_{1}dx_{2}dx_{3}dx_{4} = \frac{269}{18}, \\
&c_{23} = \int_{P}x_{2}x_{3}dx_{1}dx_{2}dx_{3}dx_{4} = \frac{407}{90}, 
&&c_{24} = \int_{P}x_{2}x_{4}dx_{1}dx_{2}dx_{3}dx_{4} = \frac{89}{45}, \\
&c_{33} = \int_{P}x_{3}^{2}dx_{1}dx_{2}dx_{3}dx_{4} = \frac{407}{30}, 
&&c_{34} = \int_{P}x_{3}x_{4}dx_{1}dx_{2}dx_{3}dx_{4} = \frac{89}{15}, \\
&c_{44} = \int_{P}x_{4}^{2}dx_{1}dx_{2}dx_{3}dx_{4} = \frac{316}{45}, 
\end{align*}
\begin{align*}
&\theta_P(x_{1}, x_{2}, x_{3}, x_{4}) = \frac{783290760{{x}_{1}}+783290760{{x}_{2}}-1168558320{{x}_{3}}}{2130145727}+ \frac{2808344100{{x}_{4}} -811654353}{2130145727}, \\
&M_{X} = \max_{P}\theta_P = \frac{3165248067}{2130145727} > 1. 
\end{align*}

\subsubsection{$D_{9}$}
\begin{align*}
&b_{0} = \mathrm{vol}(P) = \frac{58}{3}, 
&&b_{1} = \int_{P}x_{1}dx_{1}dx_{2}dx_{3}dx_{4} = \frac{8}{3},\quad \\
&b_{2} = \int_{P}x_{2}dx_{1}dx_{2}dx_{3}dx_{4} = \frac{8}{3}, 
&&b_{3} = \int_{P}x_{3}dx_{1}dx_{2}dx_{3}dx_{4} = 4, \\
&b_{4} = \int_{P}x_{4}dx_{1}dx_{2}dx_{3}dx_{4} = 4, 
&&c_{11} = \int_{P}x_{1}^{2}dx_{1}dx_{2}dx_{3}dx_{4} = \frac{661}{45}, \\
&c_{12} = \int_{P}x_{1}x_{2}dx_{1}dx_{2}dx_{3}dx_{4} = -\frac{449}{90}, 
&&c_{13} = \int_{P}x_{1}x_{3}dx_{1}dx_{2}dx_{3}dx_{4} = \frac{106}{45}, \\
&c_{14} = \int_{P}x_{1}x_{4}dx_{1}dx_{2}dx_{3}dx_{4} = \frac{106}{45}, 
&&c_{22} = \int_{P}x_{2}^{2}dx_{1}dx_{2}dx_{3}dx_{4} = \frac{661}{45}, \\
&c_{23} = \int_{P}x_{2}x_{3}dx_{1}dx_{2}dx_{3}dx_{4} = \frac{106}{45}, 
&&c_{24} = \int_{P}x_{2}x_{4}dx_{1}dx_{2}dx_{3}dx_{4} = \frac{106}{45}, \\
&c_{33} = \int_{P}x_{3}^{2}dx_{1}dx_{2}dx_{3}dx_{4} = \frac{298}{45}, 
&&c_{34} = \int_{P}x_{3}x_{4}dx_{1}dx_{2}dx_{3}dx_{4} = \frac{4}{9}, \\
&c_{44} = \int_{P}x_{4}^{2}dx_{1}dx_{2}dx_{3}dx_{4} = \frac{298}{45}, 
\end{align*}
\begin{align*}
&\theta_P(x_{1}, x_{2}, x_{3}, x_{4}) = \frac{870{{x}_{3}}+870{{x}_{4}}-360}{1177}, \\
&M_{X} = \max_{P}\theta_P = \frac{1380}{1177} > 1. 
\end{align*}

\subsubsection{$D_{10}$}
\begin{align*}
&b_{0} = \mathrm{vol}(P) = \frac{58}{3}, 
&&b_{1} = \int_{P}x_{1}dx_{1}dx_{2}dx_{3}dx_{4} = \frac{36}{5},\quad \\
&b_{2} = \int_{P}x_{2}dx_{1}dx_{2}dx_{3}dx_{4} = -\frac{16}{15}, 
&&b_{3} = \int_{P}x_{3}dx_{1}dx_{2}dx_{3}dx_{4} = \frac{18}{5}, \\
&b_{4} = \int_{P}x_{4}dx_{1}dx_{2}dx_{3}dx_{4} = \frac{76}{15}, 
&&c_{11} = \int_{P}x_{1}^{2}dx_{1}dx_{2}dx_{3}dx_{4} = \frac{82}{5}, \\
&c_{12} = \int_{P}x_{1}x_{2}dx_{1}dx_{2}dx_{3}dx_{4} = -6, 
&&c_{13} = \int_{P}x_{1}x_{3}dx_{1}dx_{2}dx_{3}dx_{4} = \frac{41}{5}, \\
&c_{14} = \int_{P}x_{1}x_{4}dx_{1}dx_{2}dx_{3}dx_{4} = \frac{22}{5}, 
&&c_{22} = \int_{P}x_{2}^{2}dx_{1}dx_{2}dx_{3}dx_{4} = \frac{158}{15}, \\
&c_{23} = \int_{P}x_{2}x_{3}dx_{1}dx_{2}dx_{3}dx_{4} = -3, 
&&c_{24} = \int_{P}x_{2}x_{4}dx_{1}dx_{2}dx_{3}dx_{4} = \frac{6}{5}, \\
&c_{33} = \int_{P}x_{3}^{2}dx_{1}dx_{2}dx_{3}dx_{4} = \frac{644}{45}, 
&&c_{34} = \int_{P}x_{3}x_{4}dx_{1}dx_{2}dx_{3}dx_{4} = \frac{11}{5}, \\
&c_{44} = \int_{P}x_{4}^{2}dx_{1}dx_{2}dx_{3}dx_{4} = \frac{34}{5}, 
\end{align*}
\begin{align*}
&\theta_P(x_{1}, x_{2}, x_{3}, x_{4}) = \frac{18125{{x}_{1}}+34945{{x}_{4}}-15908}{46732}, \\
&M_{X} = \max_{P}\theta_P = \frac{18353}{11683} > 1. 
\end{align*}

\subsubsection{{$D_{11}$}}
\begin{align*}
&b_{0} = \mathrm{vol}(P) = \frac{153}{8}, 
&&b_{1} = \int_{P}x_{1}dx_{1}dx_{2}dx_{3}dx_{4} = \frac{36}{5},\quad \\
&b_{2} = \int_{P}x_{2}dx_{1}dx_{2}dx_{3}dx_{4} = -\frac{18}{5}, 
&&b_{3} = \int_{P}x_{3}dx_{1}dx_{2}dx_{3}dx_{4} = \frac{39}{5}, \\
&b_{4} = \int_{P}x_{4}dx_{1}dx_{2}dx_{3}dx_{4} = \frac{39}{10}, 
&&c_{11} = \int_{P}x_{1}^{2}dx_{1}dx_{2}dx_{3}dx_{4} = \frac{261}{20}, \\
&c_{12} = \int_{P}x_{1}x_{2}dx_{1}dx_{2}dx_{3}dx_{4} = -\frac{261}{40}, 
&&c_{13} = \int_{P}x_{1}x_{3}dx_{1}dx_{2}dx_{3}dx_{4} = \frac{189}{20}, \\
&c_{14} = \int_{P}x_{1}x_{4}dx_{1}dx_{2}dx_{3}dx_{4} = \frac{189}{40}, 
&&c_{22} = \int_{P}x_{2}^{2}dx_{1}dx_{2}dx_{3}dx_{4} = \frac{333}{40}, \\
&c_{23} = \int_{P}x_{2}x_{3}dx_{1}dx_{2}dx_{3}dx_{4} = -\frac{189}{40}, 
&&c_{24} = \int_{P}x_{2}x_{4}dx_{1}dx_{2}dx_{3}dx_{4} = -\frac{189}{80}, \\
&c_{33} = \int_{P}x_{3}^{2}dx_{1}dx_{2}dx_{3}dx_{4} = \frac{321}{20}, 
&&c_{34} = \int_{P}x_{3}x_{4}dx_{1}dx_{2}dx_{3}dx_{4} = \frac{321}{40}, \\
&c_{44} = \int_{P}x_{4}^{2}dx_{1}dx_{2}dx_{3}dx_{4} = \frac{573}{40}, 
\end{align*}
\begin{align*}
&\theta_P(x_{1}, x_{2}, x_{3}, x_{4}) = \frac{465{{x}_{1}}+375{{x}_{3}}-328}{1007}, \\
&M_{X} = \max_{P}\theta_P = \frac{1727}{1007} > 1. 
\end{align*}

\subsubsection{$D_{12}$}
\begin{align*}
&b_{0} = \mathrm{vol}(P) = \frac{56}{3}, 
&&b_{1} = \int_{P}x_{1}dx_{1}dx_{2}dx_{3}dx_{4} = \frac{4}{3},\quad \\
&b_{2} = \int_{P}x_{2}dx_{1}dx_{2}dx_{3}dx_{4} = \frac{4}{3}, 
&&b_{3} = \int_{P}x_{3}dx_{1}dx_{2}dx_{3}dx_{4} = 4, \\
&b_{4} = \int_{P}x_{4}dx_{1}dx_{2}dx_{3}dx_{4} = 0, 
&&c_{11} = \int_{P}x_{1}^{2}dx_{1}dx_{2}dx_{3}dx_{4} = \frac{176}{15}, \\
&c_{12} = \int_{P}x_{1}x_{2}dx_{1}dx_{2}dx_{3}dx_{4} = -\frac{24}{5}, 
&&c_{13} = \int_{P}x_{1}x_{3}dx_{1}dx_{2}dx_{3}dx_{4} = \frac{32}{15}, \\
&c_{14} = \int_{P}x_{1}x_{4}dx_{1}dx_{2}dx_{3}dx_{4} = 0, 
&&c_{22} = \int_{P}x_{2}^{2}dx_{1}dx_{2}dx_{3}dx_{4} = \frac{176}{15}, \\
&c_{23} = \int_{P}x_{2}x_{3}dx_{1}dx_{2}dx_{3}dx_{4} = \frac{32}{15}, 
&&c_{24} = \int_{P}x_{2}x_{4}dx_{1}dx_{2}dx_{3}dx_{4} = 0, \\
&c_{33} = \int_{P}x_{3}^{2}dx_{1}dx_{2}dx_{3}dx_{4} = \frac{32}{5}, 
&&c_{34} = \int_{P}x_{3}x_{4}dx_{1}dx_{2}dx_{3}dx_{4} = 0, \\
&c_{44} = \int_{P}x_{4}^{2}dx_{1}dx_{2}dx_{3}dx_{4} = -\frac{56}{9}, 
\end{align*}
\begin{align*}
&\theta_P(x_{1}, x_{2}, x_{3}, x_{4}) = \frac{70{{x}_{3}}-15}{97}, \\
&M_{X} = \max_{P}\theta_P = \frac{55}{97} < 1. 
\end{align*}

\subsubsection{{$D_{13}$}}
\begin{align*}
&b_{0} = \mathrm{vol}(P) = 18, 
&&b_{1} = \int_{P}x_{1}dx_{1}dx_{2}dx_{3}dx_{4} = 0, \\
&b_{2} = \int_{P}x_{2}dx_{1}dx_{2}dx_{3}dx_{4} = 0, 
&&b_{3} = \int_{P}x_{3}dx_{1}dx_{2}dx_{3}dx_{4} = 0, \\
&b_{4} = \int_{P}x_{4}dx_{1}dx_{2}dx_{3}dx_{4} = 0, 
&&c_{11} = \int_{P}x_{1}^{2}dx_{1}dx_{2}dx_{3}dx_{4} = 9, \\
&c_{12} = \int_{P}x_{1}x_{2}dx_{1}dx_{2}dx_{3}dx_{4} = -\frac{9}{2}, 
&&c_{13} = \int_{P}x_{1}x_{3}dx_{1}dx_{2}dx_{3}dx_{4} = 0, \\
&c_{14} = \int_{P}x_{1}x_{4}dx_{1}dx_{2}dx_{3}dx_{4} = 0, 
&&c_{22} = \int_{P}x_{2}^{2}dx_{1}dx_{2}dx_{3}dx_{4} = 3, \\
&c_{23} = \int_{P}x_{2}x_{3}dx_{1}dx_{2}dx_{3}dx_{4} = 0, 
&&c_{24} = \int_{P}x_{2}x_{4}dx_{1}dx_{2}dx_{3}dx_{4} = 0, \\
&c_{33} = \int_{P}x_{3}^{2}dx_{1}dx_{2}dx_{3}dx_{4} = 6, 
&&c_{34} = \int_{P}x_{3}x_{4}dx_{1}dx_{2}dx_{3}dx_{4} = 0, \\
&c_{44} = \int_{P}x_{4}^{2}dx_{1}dx_{2}dx_{3}dx_{4} = 6, 
\end{align*}
\begin{align*}
&\theta_P(x_{1}, x_{2}, x_{3}, x_{4}) = 0, \\
&M_{X} = \max_{P}\theta_P = 0 < 1. 
\end{align*}

\subsubsection{{$D_{14}$}}
\begin{align*}
&b_{0} = \mathrm{vol}(P) = 18, 
&&b_{1} = \int_{P}x_{1}dx_{1}dx_{2}dx_{3}dx_{4} = \frac{9}{2},\quad \\
&b_{2} = \int_{P}x_{2}dx_{1}dx_{2}dx_{3}dx_{4} = -\frac{9}{4}, 
&&b_{3} = \int_{P}x_{3}dx_{1}dx_{2}dx_{3}dx_{4} = 0, \\
&b_{4} = \int_{P}x_{4}dx_{1}dx_{2}dx_{3}dx_{4} = \frac{9}{4}, 
&&c_{11} = \int_{P}x_{1}^{2}dx_{1}dx_{2}dx_{3}dx_{4} = \frac{54}{5}, \\
&c_{12} = \int_{P}x_{1}x_{2}dx_{1}dx_{2}dx_{3}dx_{4} = -\frac{27}{5}, 
&&c_{13} = \int_{P}x_{1}x_{3}dx_{1}dx_{2}dx_{3}dx_{4} = 0, \\
&c_{14} = \int_{P}x_{1}x_{4}dx_{1}dx_{2}dx_{3}dx_{4} = \frac{27}{5}, 
&&c_{22} = \int_{P}x_{2}^{2}dx_{1}dx_{2}dx_{3}dx_{4} = \frac{81}{10}, \\
&c_{23} = \int_{P}x_{2}x_{3}dx_{1}dx_{2}dx_{3}dx_{4} = 0, 
&&c_{24} = \int_{P}x_{2}x_{4}dx_{1}dx_{2}dx_{3}dx_{4} = -\frac{27}{10}, \\
&c_{33} = \int_{P}x_{3}^{2}dx_{1}dx_{2}dx_{3}dx_{4} = 6, 
&&c_{34} = \int_{P}x_{3}x_{4}dx_{1}dx_{2}dx_{3}dx_{4} = 0, \\
&c_{44} = \int_{P}x_{4}^{2}dx_{1}dx_{2}dx_{3}dx_{4} = \frac{111}{10}, 
\end{align*}
\begin{align*}
&\theta_P(x_{1}, x_{2}, x_{3}, x_{4}) = \frac{20{{x}_{1}}-5}{43}, \\
&M_{X} = \max_{P}\theta_P = \frac{35}{43} < 1. 
\end{align*}

\subsubsection{{$D_{15}$}}
\begin{align*}
&b_{0} = \mathrm{vol}(P) = 18, 
&&b_{1} = \int_{P}x_{1}dx_{1}dx_{2}dx_{3}dx_{4} = 0, \\
&b_{2} = \int_{P}x_{2}dx_{1}dx_{2}dx_{3}dx_{4} = 0, 
&&b_{3} = \int_{P}x_{3}dx_{1}dx_{2}dx_{3}dx_{4} = 3, \\
&b_{4} = \int_{P}x_{4}dx_{1}dx_{2}dx_{3}dx_{4} = \frac{3}{2}, 
&&c_{11} = \int_{P}x_{1}^{2}dx_{1}dx_{2}dx_{3}dx_{4} = 9, \\
&c_{12} = \int_{P}x_{1}x_{2}dx_{1}dx_{2}dx_{3}dx_{4} = -\frac{9}{2}, 
&&c_{13} = \int_{P}x_{1}x_{3}dx_{1}dx_{2}dx_{3}dx_{4} = 0, \\
&c_{14} = \int_{P}x_{1}x_{4}dx_{1}dx_{2}dx_{3}dx_{4} = 0, 
&&c_{22} = \int_{P}x_{2}^{2}dx_{1}dx_{2}dx_{3}dx_{4} = 9, \\
&c_{23} = \int_{P}x_{2}x_{3}dx_{1}dx_{2}dx_{3}dx_{4} = 0, 
&&c_{24} = \int_{P}x_{2}x_{4}dx_{1}dx_{2}dx_{3}dx_{4} = 0, \\
&c_{33} = \int_{P}x_{3}^{2}dx_{1}dx_{2}dx_{3}dx_{4} = 6, 
&&c_{34} = \int_{P}x_{3}x_{4}dx_{1}dx_{2}dx_{3}dx_{4} = 3, \\
&c_{44} = \int_{P}x_{4}^{2}dx_{1}dx_{2}dx_{3}dx_{4} = 9, 
\end{align*}
\begin{align*}
&\theta_P(x_{1}, x_{2}, x_{3}, x_{4}) = \frac{6{{x}_{3}}-1}{11}, \\
&M_{X} = \max_{P}\theta_P = \frac{5}{11} < 1. 
\end{align*}

\subsubsection{{$D_{16}$}}
\begin{align*}
&b_{0} = \mathrm{vol}(P) = 18, 
&&b_{1} = \int_{P}x_{1}dx_{1}dx_{2}dx_{3}dx_{4} = \frac{79}{60}, \\
&b_{2} = \int_{P}x_{2}dx_{1}dx_{2}dx_{3}dx_{4} = \frac{79}{60}, 
&&b_{3} = \int_{P}x_{3}dx_{1}dx_{2}dx_{3}dx_{4} = \frac{107}{20}, \\
&b_{4} = \int_{P}x_{4}dx_{1}dx_{2}dx_{3}dx_{4} = \frac{7}{5}, 
&&c_{11} = \int_{P}x_{1}^{2}dx_{1}dx_{2}dx_{3}dx_{4} = \frac{1033}{90}, \\
&c_{12} = \int_{P}x_{1}x_{2}dx_{1}dx_{2}dx_{3}dx_{4} = -\frac{206}{45}, 
&&c_{13} = \int_{P}x_{1}x_{3}dx_{1}dx_{2}dx_{3}dx_{4} = \frac{35}{18}, \\
&c_{14} = \int_{P}x_{1}x_{4}dx_{1}dx_{2}dx_{3}dx_{4} = -\frac{17}{45}, 
&&c_{22} = \int_{P}x_{2}^{2}dx_{1}dx_{2}dx_{3}dx_{4} = \frac{1033}{90}, \\
&c_{23} = \int_{P}x_{2}x_{3}dx_{1}dx_{2}dx_{3}dx_{4} = \frac{35}{18}, 
&&c_{24} = \int_{P}x_{2}x_{4}dx_{1}dx_{2}dx_{3}dx_{4} = -\frac{17}{45}, \\
&c_{33} = \int_{P}x_{3}^{2}dx_{1}dx_{2}dx_{3}dx_{4} = \frac{947}{90}, 
&&c_{34} = \int_{P}x_{3}x_{4}dx_{1}dx_{2}dx_{3}dx_{4} = \frac{211}{45}, \\
&c_{44} = \int_{P}x_{4}^{2}dx_{1}dx_{2}dx_{3}dx_{4} = \frac{262}{45}, 
\end{align*}
\begin{align*}
&\theta_P(x_{1}, x_{2}, x_{3}, x_{4}) = \frac{796536{{x}_{3}}-335484{{x}_{4}}-210655}{1061921}, \\
&M_{X} = \max_{P}\theta_P = \frac{1046933}{1061921} < 1. 
\end{align*}

\subsubsection{{$D_{17}$}}
\begin{align*}
&b_{0} = \mathrm{vol}(P) = \frac{135}{8}, 
&&b_{1} = \int_{P}x_{1}dx_{1}dx_{2}dx_{3}dx_{4} = \frac{9}{5},\quad \\
&b_{2} = \int_{P}x_{2}dx_{1}dx_{2}dx_{3}dx_{4} = \frac{9}{5}, 
&&b_{3} = \int_{P}x_{3}dx_{1}dx_{2}dx_{3}dx_{4} = \frac{9}{10}, \\
&b_{4} = \int_{P}x_{4}dx_{1}dx_{2}dx_{3}dx_{4} = \frac{9}{10}, 
&&c_{11} = \int_{P}x_{1}^{2}dx_{1}dx_{2}dx_{3}dx_{4} = \frac{171}{20}, \\
&c_{12} = \int_{P}x_{1}x_{2}dx_{1}dx_{2}dx_{3}dx_{4} = -\frac{99}{20}, 
&&c_{13} = \int_{P}x_{1}x_{3}dx_{1}dx_{2}dx_{3}dx_{4} = \frac{171}{40}, \\
&c_{14} = \int_{P}x_{1}x_{4}dx_{1}dx_{2}dx_{3}dx_{4} = -\frac{99}{40}, 
&&c_{22} = \int_{P}x_{2}^{2}dx_{1}dx_{2}dx_{3}dx_{4} = \frac{171}{20}, \\
&c_{23} = \int_{P}x_{2}x_{3}dx_{1}dx_{2}dx_{3}dx_{4} = -\frac{99}{40}, 
&&c_{24} = \int_{P}x_{2}x_{4}dx_{1}dx_{2}dx_{3}dx_{4} = \frac{171}{40}, \\
&c_{33} = \int_{P}x_{3}^{2}dx_{1}dx_{2}dx_{3}dx_{4} = \frac{363}{40}, 
&&c_{34} = \int_{P}x_{3}x_{4}dx_{1}dx_{2}dx_{3}dx_{4} = -\frac{99}{80}, \\
&c_{44} = \int_{P}x_{4}^{2}dx_{1}dx_{2}dx_{3}dx_{4} = \frac{363}{40}, 
\end{align*}
\begin{align*}
&\theta_P(x_{1}, x_{2}, x_{3}, x_{4}) = \frac{75{{x}_{1}}+75{{x}_{2}}-16}{134}, \\
&M_{X} = \max_{P}\theta_P = \frac{59}{134} < 1. 
\end{align*}

\subsubsection{$D_{18}$}
\begin{align*}
&b_{0} = \mathrm{vol}(P) = -\frac{99}{80}, 
&&b_{1} = \int_{P}x_{1}dx_{1}dx_{2}dx_{3}dx_{4} = \frac{14}{5}, \\
&b_{2} = \int_{P}x_{2}dx_{1}dx_{2}dx_{3}dx_{4} = \frac{14}{5}, 
&&b_{3} = \int_{P}x_{3}dx_{1}dx_{2}dx_{3}dx_{4} = -\frac{21}{10}, \\
&b_{4} = \int_{P}x_{4}dx_{1}dx_{2}dx_{3}dx_{4} = -\frac{21}{5}, 
&&c_{11} = \int_{P}x_{1}^{2}dx_{1}dx_{2}dx_{3}dx_{4} = \frac{73}{5}, \\
&c_{12} = \int_{P}x_{1}x_{2}dx_{1}dx_{2}dx_{3}dx_{4} = -\frac{23}{6}, 
&&c_{13} = \int_{P}x_{1}x_{3}dx_{1}dx_{2}dx_{3}dx_{4} = -\frac{26}{15}, \\
&c_{14} = \int_{P}x_{1}x_{4}dx_{1}dx_{2}dx_{3}dx_{4} = -\frac{52}{15}, 
&&c_{22} = \int_{P}x_{2}^{2}dx_{1}dx_{2}dx_{3}dx_{4} = \frac{73}{5}, \\
&c_{23} = \int_{P}x_{2}x_{3}dx_{1}dx_{2}dx_{3}dx_{4} = -\frac{26}{15}, 
&&c_{24} = \int_{P}x_{2}x_{4}dx_{1}dx_{2}dx_{3}dx_{4} = -\frac{52}{15}, \\
&c_{33} = \int_{P}x_{3}^{2}dx_{1}dx_{2}dx_{3}dx_{4} = \frac{53}{9}, 
&&c_{34} = \int_{P}x_{3}x_{4}dx_{1}dx_{2}dx_{3}dx_{4} = \frac{13}{5}, \\
&c_{44} = \int_{P}x_{4}^{2}dx_{1}dx_{2}dx_{3}dx_{4} = \frac{26}{5}, 
\end{align*}
\begin{align*}
&\theta_P(x_{1}, x_{2}, x_{3}, x_{4}) = \frac{-5250{{x}_{4}}-1323}{5177}, \\
&M_{X} = \max_{P}\theta_P = \frac{3927}{5177} < 1. 
\end{align*}

\subsubsection{$D_{19}$}
\begin{align*}
&b_{0} = \mathrm{vol}(P) = -\frac{99}{80}, 
&&b_{1} = \int_{P}x_{1}dx_{1}dx_{2}dx_{3}dx_{4} = \frac{4}{15},\quad \\
&b_{2} = \int_{P}x_{2}dx_{1}dx_{2}dx_{3}dx_{4} = \frac{4}{15}, 
&&b_{3} = \int_{P}x_{3}dx_{1}dx_{2}dx_{3}dx_{4} = -\frac{2}{5}, \\
&b_{4} = \int_{P}x_{4}dx_{1}dx_{2}dx_{3}dx_{4} = -\frac{4}{5}, 
&&c_{11} = \int_{P}x_{1}^{2}dx_{1}dx_{2}dx_{3}dx_{4} = \frac{142}{15}, \\
&c_{12} = \int_{P}x_{1}x_{2}dx_{1}dx_{2}dx_{3}dx_{4} = -\frac{58}{15}, 
&&c_{13} = \int_{P}x_{1}x_{3}dx_{1}dx_{2}dx_{3}dx_{4} = -\frac{13}{15}, \\
&c_{14} = \int_{P}x_{1}x_{4}dx_{1}dx_{2}dx_{3}dx_{4} = -\frac{26}{15}, 
&&c_{22} = \int_{P}x_{2}^{2}dx_{1}dx_{2}dx_{3}dx_{4} = \frac{142}{15}, \\
&c_{23} = \int_{P}x_{2}x_{3}dx_{1}dx_{2}dx_{3}dx_{4} = -\frac{13}{15}, 
&&c_{24} = \int_{P}x_{2}x_{4}dx_{1}dx_{2}dx_{3}dx_{4} = -\frac{26}{15}, \\
&c_{33} = \int_{P}x_{3}^{2}dx_{1}dx_{2}dx_{3}dx_{4} = \frac{316}{45}, 
&&c_{34} = \int_{P}x_{3}x_{4}dx_{1}dx_{2}dx_{3}dx_{4} = \frac{13}{5}, \\
&c_{44} = \int_{P}x_{4}^{2}dx_{1}dx_{2}dx_{3}dx_{4} = \frac{26}{5}, 
\end{align*}
\begin{align*}
&\theta_P(x_{1}, x_{2}, x_{3}, x_{4}) = -\frac{250{{x}_{4}}+12}{1613}, \\
&M_{X} = \max_{P}\theta_P = \frac{238}{1613} < 1. 
\end{align*}

\subsubsection{$G_{1}$}
\begin{align*}
&b_{0} = \mathrm{vol}(P) = \frac{529}{24}, 
&&b_{1} = \int_{P}x_{1}dx_{1}dx_{2}dx_{3}dx_{4} = \frac{93}{10}, \\
&b_{2} = \int_{P}x_{2}dx_{1}dx_{2}dx_{3}dx_{4} = \frac{35}{6}, 
&&b_{3} = \int_{P}x_{3}dx_{1}dx_{2}dx_{3}dx_{4} = -\frac{71}{30}, \\
&b_{4} = \int_{P}x_{4}dx_{1}dx_{2}dx_{3}dx_{4} = \frac{629}{60}, 
&&c_{11} = \int_{P}x_{1}^{2}dx_{1}dx_{2}dx_{3}dx_{4} = \frac{1517}{180}, \\
&c_{12} = \int_{P}x_{1}x_{2}dx_{1}dx_{2}dx_{3}dx_{4} = \frac{2803}{720}, 
&&c_{13} = \int_{P}x_{1}x_{3}dx_{1}dx_{2}dx_{3}dx_{4} = \frac{77}{120}, \\
&c_{14} = \int_{P}x_{1}x_{4}dx_{1}dx_{2}dx_{3}dx_{4} = \frac{5837}{720}, 
&&c_{22} = \int_{P}x_{2}^{2}dx_{1}dx_{2}dx_{3}dx_{4} = \frac{3149}{180}, \\
&c_{23} = \int_{P}x_{2}x_{3}dx_{1}dx_{2}dx_{3}dx_{4} = -\frac{989}{180}, 
&&c_{24} = \int_{P}x_{2}x_{4}dx_{1}dx_{2}dx_{3}dx_{4} = \frac{4781}{720}, \\
&c_{33} = \int_{P}x_{3}^{2}dx_{1}dx_{2}dx_{3}dx_{4} = \frac{4187}{360}, 
&&c_{34} = \int_{P}x_{3}x_{4}dx_{1}dx_{2}dx_{3}dx_{4} = -\frac{745}{144}, \\
&c_{44} = \int_{P}x_{4}^{2}dx_{1}dx_{2}dx_{3}dx_{4} = \frac{5147}{180}, 
\end{align*}
\begin{align*}
&\theta_P(x_{1}, x_{2}, x_{3}, x_{4}) = \frac{37598378760{{x}_{1}}-8882851620{{x}_{3}}-16817588112}{16641653953}, \\
&M_{X} = \max_{P}\theta_P = \frac{29663642268}{16641653953} > 1. 
\end{align*}

\subsubsection{$G_{2}$}
\begin{align*}
&b_{0} = \mathrm{vol}(P) = \frac{75}{4}, 
&&b_{1} = \int_{P}x_{1}dx_{1}dx_{2}dx_{3}dx_{4} = \frac{89}{10},\quad \\
&b_{2} = \int_{P}x_{2}dx_{1}dx_{2}dx_{3}dx_{4} = -\frac{89}{20}, 
&&b_{3} = \int_{P}x_{3}dx_{1}dx_{2}dx_{3}dx_{4} = \frac{197}{15}, \\
&b_{4} = \int_{P}x_{4}dx_{1}dx_{2}dx_{3}dx_{4} = \frac{7}{3}, 
&&c_{11} = \int_{P}x_{1}^{2}dx_{1}dx_{2}dx_{3}dx_{4} = \frac{517}{40}, \\
&c_{12} = \int_{P}x_{1}x_{2}dx_{1}dx_{2}dx_{3}dx_{4} = -\frac{517}{80}, 
&&c_{13} = \int_{P}x_{1}x_{3}dx_{1}dx_{2}dx_{3}dx_{4} = \frac{595}{36}, \\
&c_{14} = \int_{P}x_{1}x_{4}dx_{1}dx_{2}dx_{3}dx_{4} = \frac{839}{180}, 
&&c_{22} = \int_{P}x_{2}^{2}dx_{1}dx_{2}dx_{3}dx_{4} = \frac{911}{120}, \\
&c_{23} = \int_{P}x_{2}x_{3}dx_{1}dx_{2}dx_{3}dx_{4} = -\frac{595}{72}, 
&&c_{24} = \int_{P}x_{2}x_{4}dx_{1}dx_{2}dx_{3}dx_{4} = -\frac{839}{360}, \\
&c_{33} = \int_{P}x_{3}^{2}dx_{1}dx_{2}dx_{3}dx_{4} = \frac{12301}{360}, 
&&c_{34} = \int_{P}x_{3}x_{4}dx_{1}dx_{2}dx_{3}dx_{4} = -\frac{401}{720}, \\
&c_{44} = \int_{P}x_{4}^{2}dx_{1}dx_{2}dx_{3}dx_{4} = \frac{1727}{120}, 
\end{align*}
\begin{align*}
&\theta_P(x_{1}, x_{2}, x_{3}, x_{4}) = \frac{1410100500{{x}_{1}}+366948000{{x}_{3}}-926354392}{1802902233}, \\
&M_{X} = \max_{P}\theta_P = \frac{372321328}{163900203} > 1. 
\end{align*}

\subsubsection{$G_{3}$}
\begin{align*}
&b_{0} = \mathrm{vol}(P) = \frac{433}{24}, 
&&b_{1} = \int_{P}x_{1}dx_{1}dx_{2}dx_{3}dx_{4} = \frac{89}{30},\quad \\
&b_{2} = \int_{P}x_{2}dx_{1}dx_{2}dx_{3}dx_{4} = \frac{133}{60}, 
&&b_{3} = \int_{P}x_{3}dx_{1}dx_{2}dx_{3}dx_{4} = -\frac{133}{30}, \\
&b_{4} = \int_{P}x_{4}dx_{1}dx_{2}dx_{3}dx_{4} = \frac{37}{10}, 
&&c_{11} = \int_{P}x_{1}^{2}dx_{1}dx_{2}dx_{3}dx_{4} = \frac{217}{36}, \\
&c_{12} = \int_{P}x_{1}x_{2}dx_{1}dx_{2}dx_{3}dx_{4} = \frac{19}{720}, 
&&c_{13} = \int_{P}x_{1}x_{3}dx_{1}dx_{2}dx_{3}dx_{4} = -\frac{19}{360}, \\
&c_{14} = \int_{P}x_{1}x_{4}dx_{1}dx_{2}dx_{3}dx_{4} = \frac{2189}{720}, 
&&c_{22} = \int_{P}x_{2}^{2}dx_{1}dx_{2}dx_{3}dx_{4} = \frac{1787}{180}, \\
&c_{23} = \int_{P}x_{2}x_{3}dx_{1}dx_{2}dx_{3}dx_{4} = -\frac{877}{240}, 
&&c_{24} = \int_{P}x_{2}x_{4}dx_{1}dx_{2}dx_{3}dx_{4} = \frac{265}{144}, \\
&c_{33} = \int_{P}x_{3}^{2}dx_{1}dx_{2}dx_{3}dx_{4} = \frac{877}{120}, 
&&c_{34} = \int_{P}x_{3}x_{4}dx_{1}dx_{2}dx_{3}dx_{4} = -\frac{265}{72}, \\
&c_{44} = \int_{P}x_{4}^{2}dx_{1}dx_{2}dx_{3}dx_{4} = \frac{2333}{180}, 
\end{align*}
\begin{align*}
&\theta_P(x_{1}, x_{2}, x_{3}, x_{4}) = \frac{28885436928x_{1} - 35779946976x_{3} -13541869440}{45785112888}, \\
&M_{X} = \max_{P}\theta_P = \frac{2130146436}{1907713037} > 1. 
\end{align*}

\subsubsection{$G_{4}$}
\begin{align*}
&b_{0} = \mathrm{vol}(P) = \frac{139}{8}, 
&&b_{1} = \int_{P}x_{1}dx_{1}dx_{2}dx_{3}dx_{4} = -\frac{17}{20},\quad \\
&b_{2} = \int_{P}x_{2}dx_{1}dx_{2}dx_{3}dx_{4} = \frac{301}{60}, 
&&b_{3} = \int_{P}x_{3}dx_{1}dx_{2}dx_{3}dx_{4} = \frac{127}{30}, \\
&b_{4} = \int_{P}x_{4}dx_{1}dx_{2}dx_{3}dx_{4} = -\frac{1}{30}, 
&&c_{11} = \int_{P}x_{1}^{2}dx_{1}dx_{2}dx_{3}dx_{4} = \frac{147}{20}, \\
&c_{12} = \int_{P}x_{1}x_{2}dx_{1}dx_{2}dx_{3}dx_{4} = -\frac{785}{144}, 
&&c_{13} = \int_{P}x_{1}x_{3}dx_{1}dx_{2}dx_{3}dx_{4} = \frac{2333}{720}, \\
&c_{14} = \int_{P}x_{1}x_{4}dx_{1}dx_{2}dx_{3}dx_{4} = -\frac{161}{240}, 
&&c_{22} = \int_{P}x_{2}^{2}dx_{1}dx_{2}dx_{3}dx_{4} = \frac{383}{36}, \\
&c_{23} = \int_{P}x_{2}x_{3}dx_{1}dx_{2}dx_{3}dx_{4} = \frac{53}{80}, 
&&c_{24} = \int_{P}x_{2}x_{4}dx_{1}dx_{2}dx_{3}dx_{4} = \frac{181}{80}, \\
&c_{33} = \int_{P}x_{3}x_{3}dx_{1}dx_{2}dx_{3}dx_{4} = \frac{2261}{180}, 
&&c_{34} = \int_{P}x_{3}x_{4}dx_{1}dx_{2}dx_{3}dx_{4} = -\frac{1039}{240}, \\
&c_{44} = \int_{P}x_{4}^{2}dx_{1}dx_{2}dx_{3}dx_{4} = \frac{583}{60}, 
\end{align*}
\begin{align*}
&\theta_P(x_{1}, x_{2}, x_{3}, x_{4}) = \frac{10356297206700x_{1}+33083176976940x_{2}+16155069457920x_{3}}{48057952407691} \\
&\quad \quad \quad \quad \quad \quad \quad \quad - \frac{12981537414704}{48057952407691}, \\
&M_{X} = \max_{P}\theta_P = \frac{91293727706236}{48057952407691} > 1. 
\end{align*}

\bigskip 

\subsubsection{$G_{5}$}
\begin{align*}
&b_{0} = \mathrm{vol}(P) = \frac{203}{12}, 
&&b_{1} = \int_{P}x_{1}dx_{1}dx_{2}dx_{3}dx_{4} = -\frac{16}{5},\quad \\
&b_{2} = \int_{P}x_{2}dx_{1}dx_{2}dx_{3}dx_{4} = \frac{8}{5}, 
&&b_{3} = \int_{P}x_{3}dx_{1}dx_{2}dx_{3}dx_{4} = \frac{28}{15}, \\
&b_{4} = \int_{P}x_{4}dx_{1}dx_{2}dx_{3}dx_{4} = -\frac{14}{15}, 
&&c_{11} = \int_{P}x_{1}^{2}dx_{1}dx_{2}dx_{3}dx_{4} = \frac{767}{120}, \\
&c_{12} = \int_{P}x_{1}x_{2}dx_{1}dx_{2}dx_{3}dx_{4} = -\frac{767}{240}, 
&&c_{13} = \int_{P}x_{1}x_{3}dx_{1}dx_{2}dx_{3}dx_{4} = \frac{68}{45}, \\
&c_{14} = \int_{P}x_{1}x_{4}dx_{1}dx_{2}dx_{3}dx_{4} = -\frac{34}{45}, 
&&c_{22} = \int_{P}x_{2}^{2}dx_{1}dx_{2}dx_{3}dx_{4} = \frac{3181}{360}, \\
&c_{23} = \int_{P}x_{2}x_{3}dx_{1}dx_{2}dx_{3}dx_{4} = -\frac{34}{45}, 
&&c_{24} = \int_{P}x_{2}x_{4}dx_{1}dx_{2}dx_{3}dx_{4} = \frac{17}{45}, \\
&c_{33} = \int_{P}x_{3}^{2}dx_{1}dx_{2}dx_{3}dx_{4} = \frac{1039}{120}, 
&&c_{34} = \int_{P}x_{3}x_{4}dx_{1}dx_{2}dx_{3}dx_{4} = -\frac{1039}{240}, \\
&c_{44} = \int_{P}x_{4}^{2}dx_{1}dx_{2}dx_{3}dx_{4} = \frac{569}{72}, 
\end{align*}
\begin{align*}
&\theta_P(x_{1}, x_{2}, x_{3}, x_{4}) = \frac{-4015697280{{x}_{1}}+2205554400{{x}_{3}}-1002991104}{5976434111}, \\
&M_{X} = \max_{P}\theta_P = \frac{7423814976}{5976434111} > 1. 
\end{align*}

\newpage
\subsubsection{$G_{6}$}
\begin{align*}
&b_{0} = \mathrm{vol}(P) = \frac{401}{24}, 
&&b_{1} = \int_{P}x_{1}dx_{1}dx_{2}dx_{3}dx_{4} = \frac{12}{5}, \\
&b_{2} = \int_{P}x_{2}dx_{1}dx_{2}dx_{3}dx_{4} = -\frac{6}{5}, 
&&b_{3} = \int_{P}x_{3}dx_{1}dx_{2}dx_{3}dx_{4} = \frac{24}{5}, \\
&b_{4} = \int_{P}x_{4}dx_{1}dx_{2}dx_{3}dx_{4} = -\frac{6}{5}, 
&&c_{11} = \int_{P}x_{1}^{2}dx_{1}dx_{2}dx_{3}dx_{4} = \frac{997}{120}, \\
&c_{12} = \int_{P}x_{1}x_{2}dx_{1}dx_{2}dx_{3}dx_{4} = -\frac{997}{240}, 
&&c_{13} = \int_{P}x_{1}x_{3}dx_{1}dx_{2}dx_{3}dx_{4} = \frac{2437}{360}, \\
&c_{14} = \int_{P}x_{1}x_{4}dx_{1}dx_{2}dx_{3}dx_{4} = \frac{277}{360}, 
&&c_{22} = \int_{P}x_{2}^{2}dx_{1}dx_{2}dx_{3}dx_{4} = \frac{1357}{180}, \\
&c_{23} = \int_{P}x_{2}x_{3}dx_{1}dx_{2}dx_{3}dx_{4} = -\frac{2437}{720}, 
&&c_{24} = \int_{P}x_{2}x_{4}dx_{1}dx_{2}dx_{3}dx_{4} = -\frac{277}{720}, \\
&c_{33} = \int_{P}x_{3}^{2}dx_{1}dx_{2}dx_{3}dx_{4} = \frac{2437}{180}, 
&&c_{34} = \int_{P}x_{3}x_{4}dx_{1}dx_{2}dx_{3}dx_{4} = -\frac{2437}{720}, \\
&c_{44} = \int_{P}x_{4}^{2}dx_{1}dx_{2}dx_{3}dx_{4} = \frac{1357}{180}, 
\end{align*}
\begin{align*}
&\theta_P(x_{1}, x_{2}, x_{3}, x_{4}) = \frac{1732320{{x}_{3}}-497664}{4388521}, \\
&M_{X} = \max_{P}\theta_P = \frac{6431616}{4388521} > 1. 
\end{align*}

\subsubsection{{$H_{1}$}}
\begin{align*}
&b_{0} = \mathrm{vol}(P) = \frac{93}{4}, 
&&b_{1} = \int_{P}x_{1}dx_{1}dx_{2}dx_{3}dx_{4} = \frac{153}{40}, \\
&b_{2} = \int_{P}x_{2}dx_{1}dx_{2}dx_{3}dx_{4} = \frac{161}{10}, 
&&b_{3} = \int_{P}x_{3}dx_{1}dx_{2}dx_{3}dx_{4} = \frac{161}{15}, \\
&b_{4} = \int_{P}x_{4}dx_{1}dx_{2}dx_{3}dx_{4} = \frac{161}{15}, 
&&c_{11} = \int_{P}x_{1}^{2}dx_{1}dx_{2}dx_{3}dx_{4} = \frac{3043}{360}, \\
&c_{12} = \int_{P}x_{1}x_{2}dx_{1}dx_{2}dx_{3}dx_{4} = \frac{2449}{240}, 
&&c_{13} = \int_{P}x_{1}x_{3}dx_{1}dx_{2}dx_{3}dx_{4} = \frac{2449}{360}, \\
&c_{14} = \int_{P}x_{1}x_{4}dx_{1}dx_{2}dx_{3}dx_{4} = \frac{2449}{360}, 
&&c_{22} = \int_{P}x_{2}^{2}dx_{1}dx_{2}dx_{3}dx_{4} = \frac{2449}{120}, \\
&c_{23} = \int_{P}x_{2}x_{3}dx_{1}dx_{2}dx_{3}dx_{4} = \frac{2449}{180}, 
&&c_{24} = \int_{P}x_{2}x_{4}dx_{1}dx_{2}dx_{3}dx_{4} = \frac{2449}{180}, \\
&c_{33} = \int_{P}x_{3}^{2}dx_{1}dx_{2}dx_{3}dx_{4} = \frac{12947}{360}, 
&&c_{34} = \int_{P}x_{3}x_{4}dx_{1}dx_{2}dx_{3}dx_{4} = -\frac{3151}{720}, \\
&c_{44} = \int_{P}x_{4}^{2}dx_{1}dx_{2}dx_{3}dx_{4} = \frac{12947}{360}, 
\end{align*}
\begin{align*}
&\theta_P(x_{1}, x_{2}, x_{3}, x_{4}) = \frac{299460{{x}_{2}}-207368}{172227}, \\
&M_{X} = \max_{P}\theta_P = \frac{391552}{172227} > 1. 
\end{align*}

\subsubsection{{$H_{2}$}}
\begin{align*}
&b_{0} = \mathrm{vol}(P) = \frac{505}{24}, 
&&b_{1} = \int_{P}x_{1}dx_{1}dx_{2}dx_{3}dx_{4} = \frac{163}{20}, \\
&b_{2} = \int_{P}x_{2}dx_{1}dx_{2}dx_{3}dx_{4} = \frac{247}{20}, 
&&b_{3} = \int_{P}x_{3}dx_{1}dx_{2}dx_{3}dx_{4} = \frac{41}{6}, \\
&b_{4} = \int_{P}x_{4}dx_{1}dx_{2}dx_{3}dx_{4} = \frac{41}{6}, 
&&c_{11} = \int_{P}x_{1}^{2}dx_{1}dx_{2}dx_{3}dx_{4} = \frac{1391}{180}, \\
&c_{12} = \int_{P}x_{1}x_{2}dx_{1}dx_{2}dx_{3}dx_{4} = \frac{6139}{720}, 
&&c_{13} = \int_{P}x_{1}x_{3}dx_{1}dx_{2}dx_{3}dx_{4} = \frac{3901}{720}, \\
&c_{14} = \int_{P}x_{1}x_{4}dx_{1}dx_{2}dx_{3}dx_{4} = \frac{3901}{720}, 
&&c_{22} = \int_{P}x_{2}^{2}dx_{1}dx_{2}dx_{3}dx_{4} = \frac{2963}{180}, \\
&c_{23} = \int_{P}x_{2}x_{3}dx_{1}dx_{2}dx_{3}dx_{4} = \frac{1999}{240}, 
&&c_{24} = \int_{P}x_{2}x_{4}dx_{1}dx_{2}dx_{3}dx_{4} = \frac{1999}{240}, \\
&c_{33} = \int_{P}x_{3}^{2}dx_{1}dx_{2}dx_{3}dx_{4} = \frac{4361}{180}, 
&&c_{34} = \int_{P}x_{3}x_{4}dx_{1}dx_{2}dx_{3}dx_{4} = -\frac{3773}{720}, \\
&c_{44} = \int_{P}x_{4}^{2}dx_{1}dx_{2}dx_{3}dx_{4} = \frac{4361}{180}, 
\end{align*}
\begin{align*}
&\theta_P(x_{1}, x_{2}, x_{3}, x_{4}) = \frac{12590952900{{x}_{1}}+11321625300{{x}_{2}}-11521822032}{12261327193}, \\
&M_{X} = \max_{P}\theta_P = \frac{23712381468}{12261327193} > 1. 
\end{align*}

\subsubsection{{$H_{3}$}}
\begin{align*}
&b_{0} = \mathrm{vol}(P) = \frac{239}{12}, 
&&b_{1} = \int_{P}x_{1}dx_{1}dx_{2}dx_{3}dx_{4} = \frac{42}{5}, \\
&b_{2} = \int_{P}x_{2}dx_{1}dx_{2}dx_{3}dx_{4} = \frac{179}{20}, 
&&b_{3} = \int_{P}x_{3}dx_{1}dx_{2}dx_{3}dx_{4} = \frac{28}{5}, \\
&b_{4} = \int_{P}x_{4}dx_{1}dx_{2}dx_{3}dx_{4} = \frac{28}{5}, 
&&c_{11} = \int_{P}x_{1}^{2}dx_{1}dx_{2}dx_{3}dx_{4} = \frac{881}{120}, \\
&c_{12} = \int_{P}x_{1}x_{2}dx_{1}dx_{2}dx_{3}dx_{4} = \frac{1697}{240}, 
&&c_{13} = \int_{P}x_{1}x_{3}dx_{1}dx_{2}dx_{3}dx_{4} = \frac{881}{180}, \\
&c_{14} = \int_{P}x_{1}x_{4}dx_{1}dx_{2}dx_{3}dx_{4} = \frac{881}{180}, 
&&c_{22} = \int_{P}x_{2}^{2}dx_{1}dx_{2}dx_{3}dx_{4} = \frac{4771}{360}, \\
&c_{23} = \int_{P}x_{2}x_{3}dx_{1}dx_{2}dx_{3}dx_{4} = \frac{1697}{360}, 
&&c_{24} = \int_{P}x_{2}x_{4}dx_{1}dx_{2}dx_{3}dx_{4} = \frac{1697}{360}, \\
&c_{33} = \int_{P}x_{3}^{2}dx_{1}dx_{2}dx_{3}dx_{4} = \frac{7363}{360}, 
&&c_{34} = \int_{P}x_{3}x_{4}dx_{1}dx_{2}dx_{3}dx_{4} = -\frac{3839}{720}, \\
&c_{44} = \int_{P}x_{4}^{2}dx_{1}dx_{2}dx_{3}dx_{4} = \frac{7363}{360}, 
\end{align*}
\begin{align*}
&\theta_P(x_{1}, x_{2}, x_{3}, x_{4}) = \frac{1983987756{{x}_{1}}+260718408{{{{x}_{}}}_{2}}-953921016}{999563887}, \\
&M_{X} = \max_{P}\theta_P = \frac{1551503556}{999563887} > 1. 
\end{align*}

\subsubsection{{$H_{4}$}}
\begin{align*}
&b_{0} = \mathrm{vol}(P) = \frac{149}{8}, 
&&b_{1} = \int_{P}x_{1}dx_{1}dx_{2}dx_{3}dx_{4} = \frac{89}{20}, \\
&b_{2} = \int_{P}x_{2}dx_{1}dx_{2}dx_{3}dx_{4} = \frac{89}{10}, 
&&b_{3} = \int_{P}x_{3}dx_{1}dx_{2}dx_{3}dx_{4} = \frac{89}{30}, \\
&b_{4} = \int_{P}x_{4}dx_{1}dx_{2}dx_{3}dx_{4} = \frac{89}{30}, 
&&c_{11} = \int_{P}x_{1}^{2}dx_{1}dx_{2}dx_{3}dx_{4} = \frac{1133}{180}, \\
&c_{12} = \int_{P}x_{1}x_{2}dx_{1}dx_{2}dx_{3}dx_{4} = \frac{311}{48}, 
&&c_{13} = \int_{P}x_{1}x_{3}dx_{1}dx_{2}dx_{3}dx_{4} = \frac{311}{144}, \\
&c_{14} = \int_{P}x_{1}x_{4}dx_{1}dx_{2}dx_{3}dx_{4} = \frac{311}{144}, 
&&c_{22} = \int_{P}x_{2}^{2}dx_{1}dx_{2}dx_{3}dx_{4} = \frac{311}{24}, \\
&c_{23} = \int_{P}x_{2}x_{3}dx_{1}dx_{2}dx_{3}dx_{4} = \frac{311}{72}, 
&&c_{24} = \int_{P}x_{2}x_{4}dx_{1}dx_{2}dx_{3}dx_{4} = \frac{311}{72}, \\
&c_{33} = \int_{P}x_{3}^{2}dx_{1}dx_{2}dx_{3}dx_{4} = \frac{2599}{180}, 
&&c_{34} = \int_{P}x_{3}x_{4}dx_{1}dx_{2}dx_{3}dx_{4} = -\frac{3643}{720}, \\
&c_{44} = \int_{P}x_{4}^{2}dx_{1}dx_{2}dx_{3}dx_{4} = \frac{2599}{180}, 
\end{align*}
\begin{align*}
&\theta_P(x_{1}, x_{2}, x_{3}, x_{4}) = \frac{795660{{x}_{2}}-380208}{778267}, \\
&M_{X} = \max_{P}\theta_P = \frac{173016}{111181} > 1. 
\end{align*}

\subsubsection{{$H_{5}$}}
\begin{align*}
&b_{0} = \mathrm{vol}(P) = \frac{415}{24}, 
&&b_{1} = \int_{P}x_{1}dx_{1}dx_{2}dx_{3}dx_{4} = \frac{97}{20}, \\
&b_{2} = \int_{P}x_{2}dx_{1}dx_{2}dx_{3}dx_{4} = \frac{29}{5}, 
&&b_{3} = \int_{P}x_{3}dx_{1}dx_{2}dx_{3}dx_{4} = \frac{97}{60}, \\
&b_{4} = \int_{P}x_{4}dx_{1}dx_{2}dx_{3}dx_{4} = \frac{97}{60}, 
&&c_{11} = \int_{P}x_{1}^{2}dx_{1}dx_{2}dx_{3}dx_{4} = \frac{139}{24}, \\
&c_{12} = \int_{P}x_{1}x_{2}dx_{1}dx_{2}dx_{3}dx_{4} = \frac{1259}{240}, 
&&c_{13} = \int_{P}x_{1}x_{3}dx_{1}dx_{2}dx_{3}dx_{4} = \frac{139}{72}, \\
&c_{14} = \int_{P}x_{1}x_{4}dx_{1}dx_{2}dx_{3}dx_{4} = \frac{139}{72}, 
&&c_{22} = \int_{P}x_{2}^{2}dx_{1}dx_{2}dx_{3}dx_{4} = \frac{367}{36}, \\
&c_{23} = \int_{P}x_{2}x_{3}dx_{1}dx_{2}dx_{3}dx_{4} = \frac{1259}{720}, 
&&c_{24} = \int_{P}x_{2}x_{4}dx_{1}dx_{2}dx_{3}dx_{4} = \frac{1259}{720}, \\
&c_{33} = \int_{P}x_{3}^{2}dx_{1}dx_{2}dx_{3}dx_{4} = \frac{2021}{180}, 
&&c_{34} = \int_{P}x_{3}x_{4}dx_{1}dx_{2}dx_{3}dx_{4} = -\frac{3347}{720}, \\
&c_{44} = \int_{P}x_{4}^{2}dx_{1}dx_{2}dx_{3}dx_{4} = \frac{2021}{180}, 
\end{align*}
\begin{align*}
&\theta_P(x_{1}, x_{2}, x_{3}, x_{4}) = \frac{6818815200{{x}_{1}}+2922039900{{x}_{2}}-2892669984}{8410595791}, \\
&M_{X} = \max_{P}\theta_P = \frac{9770225016}{8410595791} > 1. 
\end{align*}

\subsubsection{{$H_{6}$}}
\begin{align*}
&b_{0} = \mathrm{vol}(P) = \frac{409}{24}, 
&&b_{1} = \int_{P}x_{1}dx_{1}dx_{2}dx_{3}dx_{4} = \frac{27}{5}, \\
&b_{2} = \int_{P}x_{2}dx_{1}dx_{2}dx_{3}dx_{4} = \frac{61}{20}, 
&&b_{3} = \int_{P}x_{3}dx_{1}dx_{2}dx_{3}dx_{4} = \frac{31}{12}, \\
&b_{4} = \int_{P}x_{4}dx_{1}dx_{2}dx_{3}dx_{4} = \frac{31}{12}, 
&&c_{11} = \int_{P}x_{1}^{2}dx_{1}dx_{2}dx_{3}dx_{4} = \frac{1013}{180}, \\
&c_{12} = \int_{P}x_{1}x_{2}dx_{1}dx_{2}dx_{3}dx_{4} = \frac{3049}{720}, 
&&c_{13} = \int_{P}x_{1}x_{3}dx_{1}dx_{2}dx_{3}dx_{4} = \frac{337}{144}, \\
&c_{14} = \int_{P}x_{1}x_{4}dx_{1}dx_{2}dx_{3}dx_{4} = \frac{337}{144}, 
&&c_{22} = \int_{P}x_{2}^{2}dx_{1}dx_{2}dx_{3}dx_{4} = \frac{2941}{260}, \\
&c_{23} = \int_{P}x_{2}x_{3}dx_{1}dx_{2}dx_{3}dx_{4} = \frac{1}{10}, 
&&c_{24} = \int_{P}x_{2}x_{4}dx_{1}dx_{2}dx_{3}dx_{4} = \frac{1}{10}, \\
&c_{33} = \int_{P}x_{3}^{2}dx_{1}dx_{2}dx_{3}dx_{4} = \frac{2411}{180}, 
&&c_{34} = \int_{P}x_{3}x_{4}dx_{1}dx_{2}dx_{3}dx_{4} = -\frac{3173}{720}, \\
&c_{44} = \int_{P}x_{4}^{2}dx_{1}dx_{2}dx_{3}dx_{4} = \frac{2411}{180}, 
\end{align*}
\begin{align*}
&\theta_P(x_{1}, x_{2}, x_{3}, x_{4}) = \frac{11025012180{{x}_{1}}-2015224800{{x}_{2}} -3132829152}{6776874793}, \\
&M_{X} = \max_{P}\theta_P = \frac{7892183028}{6776874793} > 1. 
\end{align*}

\subsubsection{{$H_{7}$}}
\begin{align*}
&b_{0} = \mathrm{vol}(P) = \frac{191}{12}, 
&&b_{1} = \int_{P}x_{1}dx_{1}dx_{2}dx_{3}dx_{4} = \frac{61}{20}, \\
&b_{2} = \int_{P}x_{2}dx_{1}dx_{2}dx_{3}dx_{4} = -\frac{31}{20}, 
&&b_{3} = \int_{P}x_{3}dx_{1}dx_{2}dx_{3}dx_{4} = \frac{46}{15}, \\
&b_{4} = \int_{P}x_{4}dx_{1}dx_{2}dx_{3}dx_{4} = \frac{46}{15}, 
&&c_{11} = \int_{P}x_{1}^{2}dx_{1}dx_{2}dx_{3}dx_{4} = \frac{335}{72}, \\
&c_{12} = \int_{P}x_{1}x_{2}dx_{1}dx_{2}dx_{3}dx_{4} = \frac{1859}{720}, 
&&c_{13} = \int_{P}x_{1}x_{3}dx_{1}dx_{2}dx_{3}dx_{4} = \frac{497}{360}, \\
&c_{14} = \int_{P}x_{1}x_{4}dx_{1}dx_{2}dx_{3}dx_{4} = \frac{497}{360}, 
&&c_{22} = \int_{P}x_{2}^{2}dx_{1}dx_{2}dx_{3}dx_{4} = \frac{1963}{360}, \\
&c_{23} = \int_{P}x_{2}x_{3}dx_{1}dx_{2}dx_{3}dx_{4} = -\frac{689}{360}, 
&&c_{24} = \int_{P}x_{2}x_{4}dx_{1}dx_{2}dx_{3}dx_{4} = -\frac{689}{360}, \\
&c_{33} = \int_{P}x_{3}^{2}dx_{1}dx_{2}dx_{3}dx_{4} = \frac{1031}{72}, 
&&c_{34} = \int_{P}x_{3}x_{4}dx_{1}dx_{2}dx_{3}dx_{4} = -\frac{2783}{720}, \\
&c_{44} = \int_{P}x_{4}^{2}dx_{1}dx_{2}dx_{3}dx_{4} = \frac{1031}{72}, 
\end{align*}
\begin{align*}
&\theta_P(x_{1}, x_{2}, x_{3}, x_{4}) = \frac{3404937900{{x}_{1}}-2489673540{{x}_{2}}-894914424}{2191715291}, \\
&M_{X} = \max_{P}\theta_P = \frac{2510023476}{2191715291} > 1. 
\end{align*}

\subsubsection{{$H_{8}$}}
\begin{align*}
&b_{0} = \mathrm{vol}(P) = \frac{63}{4}, 
&&b_{1} = \int_{P}x_{1}dx_{1}dx_{2}dx_{3}dx_{4} = \frac{3}{2}, \\
&b_{2} = \int_{P}x_{2}dx_{1}dx_{2}dx_{3}dx_{4} = 3, 
&&b_{3} = \int_{P}x_{3}dx_{1}dx_{2}dx_{3}dx_{4} = 0, \\
&b_{4} = \int_{P}x_{4}dx_{1}dx_{2}dx_{3}dx_{4} = 0, 
&&c_{11} = \int_{P}x_{1}^{2}dx_{1}dx_{2}dx_{3}dx_{4} = \frac{33}{8}, \\
&c_{12} = \int_{P}x_{1}x_{2}dx_{1}dx_{2}dx_{3}dx_{4} = \frac{63}{16}, 
&&c_{13} = \int_{P}x_{1}x_{3}dx_{1}dx_{2}dx_{3}dx_{4} = 0, \\
&c_{14} = \int_{P}x_{1}x_{4}dx_{1}dx_{2}dx_{3}dx_{4} = 0, 
&&c_{22} = \int_{P}x_{2}^{2}dx_{1}dx_{2}dx_{3}dx_{4} = \frac{63}{8}, \\
&c_{23} = \int_{P}x_{2}x_{3}dx_{1}dx_{2}dx_{3}dx_{4} = 0, 
&&c_{24} = \int_{P}x_{2}x_{4}dx_{1}dx_{2}dx_{3}dx_{4} = 0, \\
&c_{33} = \int_{P}x_{3}^{2}dx_{1}dx_{2}dx_{3}dx_{4} = \frac{63}{8}, 
&&c_{34} = \int_{P}x_{3}x_{4}dx_{1}dx_{2}dx_{3}dx_{4} = -\frac{63}{16}, \\
&c_{44} = \int_{P}x_{4}^{2}dx_{1}dx_{2}dx_{3}dx_{4} = \frac{63}{8}, 
\end{align*}
\begin{align*}
&\theta_P(x_{1}, x_{2}, x_{3}, x_{4}) = \frac{168{{x}_{2}}-32}{409}, \\
&M_{X} = \max_{P}\theta_P = \frac{304}{409} < 1. 
\end{align*}

\subsubsection{$H_{9}$}
\begin{align*}
&b_{0} = \mathrm{vol}(P) = \frac{367}{24}, 
&&b_{1} = \int_{P}x_{1}dx_{1}dx_{2}dx_{3}dx_{4} = \frac{11}{5},\quad \\
&b_{2} = \int_{P}x_{2}dx_{1}dx_{2}dx_{3}dx_{4} = \frac{11}{20}, 
&&b_{3} = \int_{P}x_{3}dx_{1}dx_{2}dx_{3}dx_{4} = \frac{11}{20}, \\
&b_{4} = \int_{P}x_{4}dx_{1}dx_{2}dx_{3}dx_{4} = \frac{11}{20}, 
&&c_{11} = \int_{P}x_{1}^{2}dx_{1}dx_{2}dx_{3}dx_{4} = \frac{827}{180}, \\
&c_{12} = \int_{P}x_{1}x_{2}dx_{1}dx_{2}dx_{3}dx_{4} = \frac{2267}{720}, 
&&c_{13} = \int_{P}x_{1}x_{3}dx_{1}dx_{2}dx_{3}dx_{4} = \frac{347}{720}, \\
&c_{14} = \int_{P}x_{1}x_{4}dx_{1}dx_{2}dx_{3}dx_{4} = \frac{347}{720}, 
&&c_{22} = \int_{P}x_{2}^{2}dx_{1}dx_{2}dx_{3}dx_{4} = \frac{1133}{180}, \\
&c_{23} = \int_{P}x_{2}x_{3}dx_{1}dx_{2}dx_{3}dx_{4} = -\frac{151}{144}, 
&&c_{24} = \int_{P}x_{2}x_{4}dx_{1}dx_{2}dx_{3}dx_{4} = -\frac{151}{144}, \\
&c_{33} = \int_{P}x_{3}^{2}dx_{1}dx_{2}dx_{3}dx_{4} = \frac{1613}{180}, 
&&c_{34} = \int_{P}x_{3}x_{4}dx_{1}dx_{2}dx_{3}dx_{4} = -\frac{535}{144}, \\
&c_{44} = \int_{P}x_{4}^{2}dx_{1}dx_{2}dx_{3}dx_{4} = \frac{1613}{180}, 
\end{align*}
\begin{align*}
&\theta_P(x_{1}, x_{2}, x_{3}, x_{4}) = -\frac{-3841851420{{x}_{1}}+1395187200{{x}_{2}}+502543008}{5523943807}, \\
&M_{X} = \max_{P}\theta_P = \frac{3339308412}{5523943807} < 1. 
\end{align*}

\subsubsection{{$H_{10}$}}
\begin{align*}
&b_{0} = \mathrm{vol}(P) = \frac{117}{8}, 
&&b_{1} = \int_{P}x_{1}dx_{1}dx_{2}dx_{3}dx_{4} = -\frac{4}{5}, \\
&b_{2} = \int_{P}x_{2}dx_{1}dx_{2}dx_{3}dx_{4} = \frac{117}{8}, 
&&b_{3} = \int_{P}x_{3}dx_{1}dx_{2}dx_{3}dx_{4} = \frac{117}{8}, \\
&b_{4} = \int_{P}x_{4}dx_{1}dx_{2}dx_{3}dx_{4} = \frac{117}{8}, 
&&c_{11} = \int_{P}x_{1}^{2}dx_{1}dx_{2}dx_{3}dx_{4} = \frac{151}{36}, \\
&c_{12} = \int_{P}x_{1}x_{2}dx_{1}dx_{2}dx_{3}dx_{4} = \frac{619}{240}, 
&&c_{13} = \int_{P}x_{1}x_{3}dx_{1}dx_{2}dx_{3}dx_{4} = -\frac{619}{720}, \\
&c_{14} = \int_{P}x_{1}x_{4}dx_{1}dx_{2}dx_{3}dx_{4} = -\frac{619}{720}, 
&&c_{22} = \int_{P}x_{2}^{2}dx_{1}dx_{2}dx_{3}dx_{4} = \frac{619}{120}, \\
&c_{23} = \int_{P}x_{2}x_{3}dx_{1}dx_{2}dx_{3}dx_{4} = -\frac{619}{360}, 
&&c_{24} = \int_{P}x_{2}x_{4}dx_{1}dx_{2}dx_{3}dx_{4} = -\frac{619}{360}, \\
&c_{33} = \int_{P}x_{3}^{2}dx_{1}dx_{2}dx_{3}dx_{4} = \frac{1567}{180}, 
&&c_{34} = \int_{P}x_{3}x_{4}dx_{1}dx_{2}dx_{3}dx_{4} = -\frac{503}{144}, \\
&c_{44} = \int_{P}x_{4}^{2}dx_{1}dx_{2}dx_{3}dx_{4} = \frac{1567}{180}, 
\end{align*}
\begin{align*}
&\theta_P(x_{1}, x_{2}, x_{3}, x_{4}) = \frac{-37440{{x}_{2}}-4096}{116609}, \\
&M_{X} = \max_{P}\theta_P = \frac{33344}{116609} < 1. 
\end{align*}

\subsubsection{{$L_{1}$}}
\begin{align*}
&b_{0} = \mathrm{vol}(P) = 20, 
&&b_{1} = \int_{P}x_{1}dx_{1}dx_{2}dx_{3}dx_{4} = \frac{42}{5}, \\
&b_{2} = \int_{P}x_{2}dx_{1}dx_{2}dx_{3}dx_{4} = \frac{21}{5}, 
&&b_{3} = \int_{P}x_{3}dx_{1}dx_{2}dx_{3}dx_{4} = \frac{21}{5}, \\
&b_{4} = \int_{P}x_{4}dx_{1}dx_{2}dx_{3}dx_{4} = \frac{21}{5}, 
&&c_{11} = \int_{P}x_{1}^{2}dx_{1}dx_{2}dx_{3}dx_{4} = \frac{116}{15}, \\
&c_{12} = \int_{P}x_{1}x_{2}dx_{1}dx_{2}dx_{3}dx_{4} = \frac{58}{15}, 
&&c_{13} = \int_{P}x_{1}x_{3}dx_{1}dx_{2}dx_{3}dx_{4} = \frac{58}{15}, \\
&c_{14} = \int_{P}x_{1}x_{4}dx_{1}dx_{2}dx_{3}dx_{4} = \frac{58}{15}, 
&&c_{22} = \int_{P}x_{2}^{2}dx_{1}dx_{2}dx_{3}dx_{4} = \frac{542}{45}, \\
&c_{23} = \int_{P}x_{2}x_{3}dx_{1}dx_{2}dx_{3}dx_{4} = \frac{29}{15}, 
&&c_{24} = \int_{P}x_{2}x_{4}dx_{1}dx_{2}dx_{3}dx_{4} = \frac{29}{15}, \\
&c_{33} = \int_{P}x_{3}^{2}dx_{1}dx_{2}dx_{3}dx_{4} = \frac{542}{45}, 
&&c_{34} = \int_{P}x_{3}x_{4}dx_{1}dx_{2}dx_{3}dx_{4} = \frac{29}{15}, \\
&c_{44} = \int_{P}x_{4}^{2}dx_{1}dx_{2}dx_{3}dx_{4} = \frac{542}{45}, 
\end{align*}
\begin{align*}
&\theta_P(x_{1}, x_{2}, x_{3}, x_{4}) = \frac{3150{{x}_{1}}-1323}{1577}, \\
&M_{X} = \max_{P}\theta_P = \frac{1827}{1577} > 1. 
\end{align*}

\subsubsection{{$L_{2}$}}
\begin{align*}
&b_{0} = \mathrm{vol}(P) = \frac{58}{3}, 
&&b_{1} = \int_{P}x_{1}dx_{1}dx_{2}dx_{3}dx_{4} = \frac{92}{15}, \\
&b_{2} = \int_{P}x_{2}dx_{1}dx_{2}dx_{3}dx_{4} = -\frac{43}{10}, 
&&b_{3} = \int_{P}x_{3}dx_{1}dx_{2}dx_{3}dx_{4} = \frac{313}{60}, \\
&b_{4} = \int_{P}x_{4}dx_{1}dx_{2}dx_{3}dx_{4} = \frac{313}{60}, 
&&c_{11} = \int_{P}x_{1}^{2}dx_{1}dx_{2}dx_{3}dx_{4} = \frac{314}{45}, \\
&c_{12} = \int_{P}x_{1}x_{2}dx_{1}dx_{2}dx_{3}dx_{4} = \frac{2}{45}, 
&&c_{13} = \int_{P}x_{1}x_{3}dx_{1}dx_{2}dx_{3}dx_{4} = \frac{52}{15}, \\
&c_{14} = \int_{P}x_{1}x_{4}dx_{1}dx_{2}dx_{3}dx_{4} = \frac{52}{15}, 
&&c_{22} = \int_{P}x_{2}^{2}dx_{1}dx_{2}dx_{3}dx_{4} = \frac{362}{45}, \\
&c_{23} = \int_{P}x_{2}x_{3}dx_{1}dx_{2}dx_{3}dx_{4} = -4, 
&&c_{24} = \int_{P}x_{2}x_{4}dx_{1}dx_{2}dx_{3}dx_{4} = -4, \\
&c_{33} = \int_{P}x_{3}^{2}dx_{1}dx_{2}dx_{3}dx_{4} = \frac{149}{10}, 
&&c_{34} = \int_{P}x_{3}x_{4}dx_{1}dx_{2}dx_{3}dx_{4} = \frac{56}{15}, \\
&c_{44} = \int_{P}x_{4}^{2}dx_{1}dx_{2}dx_{3}dx_{4} = \frac{149}{10}, 
\end{align*}
\begin{align*}
&\theta_P(x_{1}, x_{2}, x_{3}, x_{4}) = \frac{3878228{{x}_{1}}-2370692{{x}_{2}}-1757609}{2637399}, \\
&M_{X} = \max_{P}\theta_P = \frac{4491311}{2637399} > 1. 
\end{align*}

\subsubsection{{$L_{3}$}}
\begin{align*}
&b_{0} = \mathrm{vol}(P) = \frac{56}{3}, 
&&b_{1} = \int_{P}x_{1}dx_{1}dx_{2}dx_{3}dx_{4} = \frac{103}{15}, \\
&b_{2} = \int_{P}x_{2}dx_{1}dx_{2}dx_{3}dx_{4} = \frac{103}{30}, 
&&b_{3} = \int_{P}x_{3}dx_{1}dx_{2}dx_{3}dx_{4} = \frac{112}{15}, \\
&b_{4} = \int_{P}x_{4}dx_{1}dx_{2}dx_{3}dx_{4} = \frac{56}{15}, 
&&c_{11} = \int_{P}x_{1}^{2}dx_{1}dx_{2}dx_{3}dx_{4} = \frac{104}{15}, \\
&c_{12} = \int_{P}x_{1}x_{2}dx_{1}dx_{2}dx_{3}dx_{4} = \frac{52}{15}, 
&&c_{13} = \int_{P}x_{1}x_{3}dx_{1}dx_{2}dx_{3}dx_{4} = \frac{248}{45}, \\
&c_{14} = \int_{P}x_{1}x_{4}dx_{1}dx_{2}dx_{3}dx_{4} = \frac{124}{45}, 
&&c_{22} = \int_{P}x_{2}^{2}dx_{1}dx_{2}dx_{3}dx_{4} = \frac{487}{45}, \\
&c_{23} = \int_{P}x_{2}x_{3}dx_{1}dx_{2}dx_{3}dx_{4} = \frac{124}{45}, 
&&c_{24} = \int_{P}x_{2}x_{4}dx_{1}dx_{2}dx_{3}dx_{4} = \frac{62}{45}, \\
&c_{33} = \int_{P}x_{3}^{2}dx_{1}dx_{2}dx_{3}dx_{4} = \frac{193}{15}, 
&&c_{34} = \int_{P}x_{3}x_{4}dx_{1}dx_{2}dx_{3}dx_{4} = \frac{193}{30}, \\
&c_{44} = \int_{P}x_{4}^{2}dx_{1}dx_{2}dx_{3}dx_{4} = 13, 
\end{align*}
\begin{align*}
&\theta_P(x_{1}, x_{2}, x_{3}, x_{4}) = \frac{26763240{{x}_{1}} + 7896000{{x}_{3}}-13003449}{20356871}, \\
&M_{X} = \max_{P}\theta_P = \frac{29551791}{20356871} > 1. 
\end{align*}

\subsubsection{{$L_{4}$}}
\begin{align*}
&b_{0} = \mathrm{vol}(P) = 18, 
&&b_{1} = \int_{P}x_{1}dx_{1}dx_{2}dx_{3}dx_{4} = \frac{76}{15}, \\
&b_{2} = \int_{P}x_{2}dx_{1}dx_{2}dx_{3}dx_{4} = -\frac{169}{60}, 
&&b_{3} = \int_{P}x_{3}dx_{1}dx_{2}dx_{3}dx_{4} = \frac{241}{30}, \\
&b_{4} = \int_{P}x_{4}dx_{1}dx_{2}dx_{3}dx_{4} = \frac{241}{60}, 
&&c_{11} = \int_{P}x_{1}^{2}dx_{1}dx_{2}dx_{3}dx_{4} = \frac{286}{45}, \\
&c_{12} = \int_{P}x_{1}x_{2}dx_{1}dx_{2}dx_{3}dx_{4} = \frac{34}{45}, 
&&c_{13} = \int_{P}x_{1}x_{3}dx_{1}dx_{2}dx_{3}dx_{4} = \frac{22}{5}, \\
&c_{14} = \int_{P}x_{1}x_{4}dx_{1}dx_{2}dx_{3}dx_{4} = \frac{11}{5}, 
&&c_{22} = \int_{P}x_{2}^{2}dx_{1}dx_{2}dx_{3}dx_{4} = \frac{139}{18}, \\
&c_{23} = \int_{P}x_{2}x_{3}dx_{1}dx_{2}dx_{3}dx_{4} = -\frac{73}{15}, 
&&c_{24} = \int_{P}x_{2}x_{4}dx_{1}dx_{2}dx_{3}dx_{4} = -\frac{73}{30}, \\
&c_{33} = \int_{P}x_{3}^{2}dx_{1}dx_{2}dx_{3}dx_{4} = \frac{79}{5}, 
&&c_{34} = \int_{P}x_{3}x_{4}dx_{1}dx_{2}dx_{3}dx_{4} = \frac{79}{10}, \\
&c_{44} = \int_{P}x_{4}^{2}dx_{1}dx_{2}dx_{3}dx_{4} = \frac{251}{18}, 
\end{align*}
\begin{align*}
&\theta_P(x_{1}, x_{2}, x_{3}, x_{4}) = \frac{6334289640{{x}_{1}}-2670575400{{x}_{2}}+2239779600{{x}_{3}}}{6291985913} - \frac{3200486167}{6291985913},\\
&M_{X} = \max_{P}\theta_P = \frac{12523717673}{6291985913} > 1. 
\end{align*}

\subsubsection{{$L_{5}$}}
\begin{align*}
&b_{0} = \mathrm{vol}(P) = \frac{52}{3}, 
&&b_{1} = \int_{P}x_{1}dx_{1}dx_{2}dx_{3}dx_{4} = 0, \\
&b_{2} = \int_{P}x_{2}dx_{1}dx_{2}dx_{3}dx_{4} = -\frac{16}{3}, 
&&b_{3} = \int_{P}x_{3}dx_{1}dx_{2}dx_{3}dx_{4} = \frac{8}{3}, \\
&b_{4} = \int_{P}x_{4}dx_{1}dx_{2}dx_{3}dx_{4} = \frac{8}{3}, 
&&c_{11} = \int_{P}x_{1}^{2}dx_{1}dx_{2}dx_{3}dx_{4} = \frac{52}{9}, \\
&c_{12} = \int_{P}x_{1}x_{2}dx_{1}dx_{2}dx_{3}dx_{4} = 0, 
&&c_{13} = \int_{P}x_{1}x_{3}dx_{1}dx_{2}dx_{3}dx_{4} = 0, \\
&c_{14} = \int_{P}x_{1}x_{4}dx_{1}dx_{2}dx_{3}dx_{4} = 0, 
&&c_{22} = \int_{P}x_{2}^{2}dx_{1}dx_{2}dx_{3}dx_{4} = \frac{92}{15}, \\
&c_{23} = \int_{P}x_{2}x_{3}dx_{1}dx_{2}dx_{3}dx_{4} = -\frac{46}{15}, 
&&c_{24} = \int_{P}x_{2}x_{4}dx_{1}dx_{2}dx_{3}dx_{4} = -\frac{46}{15}, \\
&c_{33} = \int_{P}x_{3}^{2}dx_{1}dx_{2}dx_{3}dx_{4} = \frac{48}{5}, 
&&c_{34} = \int_{P}x_{3}x_{4}dx_{1}dx_{2}dx_{3}dx_{4} = \frac{23}{15}, \\
&c_{44} = \int_{P}x_{4}^{2}dx_{1}dx_{2}dx_{3}dx_{4} = \frac{48}{5}, 
\end{align*}
\begin{align*}
&\theta_P(x_{1}, x_{2}, x_{3}, x_{4}) = \frac{-260{{x}_{2}}-80}{219}, \\
&M_{X} = \max_{P}\theta_P = \frac{60}{73} < 1. 
\end{align*}

\subsubsection{{$L_{6}$}}
\begin{align*}
&b_{0} = \mathrm{vol}(P) = -\frac{99}{80}, 
&&b_{1} = \int_{P}x_{1}dx_{1}dx_{2}dx_{3}dx_{4} = 0, \\
&b_{2} = \int_{P}x_{2}dx_{1}dx_{2}dx_{3}dx_{4} = -4, 
&&b_{3} = \int_{P}x_{3}dx_{1}dx_{2}dx_{3}dx_{4} = \frac{16}{3}, \\
&b_{4} = \int_{P}x_{4}dx_{1}dx_{2}dx_{3}dx_{4} = \frac{8}{3}, 
&&c_{11} = \int_{P}x_{1}^{2}dx_{1}dx_{2}dx_{3}dx_{4} = \frac{50}{9}, \\
&c_{12} = \int_{P}x_{1}x_{2}dx_{1}dx_{2}dx_{3}dx_{4} = 0, 
&&c_{13} = \int_{P}x_{1}x_{3}dx_{1}dx_{2}dx_{3}dx_{4} = 0, \\
&c_{14} = \int_{P}x_{1}x_{4}dx_{1}dx_{2}dx_{3}dx_{4} = 0, 
&&c_{22} = \int_{P}x_{2}^{2}dx_{1}dx_{2}dx_{3}dx_{4} = \frac{86}{15}, \\
&c_{23} = \int_{P}x_{2}x_{3}dx_{1}dx_{2}dx_{3}dx_{4} = -\frac{64}{15}, 
&&c_{24} = \int_{P}x_{2}x_{4}dx_{1}dx_{2}dx_{3}dx_{4} = -\frac{32}{15}, \\
&c_{33} = \int_{P}x_{3}^{2}dx_{1}dx_{2}dx_{3}dx_{4} = \frac{51}{5}, 
&&c_{34} = \int_{P}x_{3}x_{4}dx_{1}dx_{2}dx_{3}dx_{4} = \frac{51}{10}, \\
&c_{44} = \int_{P}x_{4}^{2}dx_{1}dx_{2}dx_{3}dx_{4} = \frac{161}{15}, 
\end{align*}
\begin{align*}
&\theta_P(x_{1}, x_{2}, x_{3}, x_{4}) = \frac{-10150{{x}_{2}}+7600{{x}_{3}}-4868}{17787}, \\
&M_{X} = \max_{P}\theta_P = \frac{38}{33} > 1. 
\end{align*}

\subsubsection{{$L_{7}$}}
\begin{align*}
&b_{0} = \mathrm{vol}(P) = 16, 
&&b_{1} = \int_{P}x_{1}dx_{1}dx_{2}dx_{3}dx_{4} = \frac{8}{3}, \\
&b_{2} = \int_{P}x_{2}dx_{1}dx_{2}dx_{3}dx_{4} = \frac{4}{3}, 
&&b_{3} = \int_{P}x_{3}dx_{1}dx_{2}dx_{3}dx_{4} = \frac{8}{3}, \\
&b_{4} = \int_{P}x_{4}dx_{1}dx_{2}dx_{3}dx_{4} = \frac{4}{3}, 
&&c_{11} = \int_{P}x_{1}^{2}dx_{1}dx_{2}dx_{3}dx_{4} = \frac{16}{3}, \\
&c_{12} = \int_{P}x_{1}x_{2}dx_{1}dx_{2}dx_{3}dx_{4} = \frac{8}{3}, 
&&c_{13} = \int_{P}x_{1}x_{3}dx_{1}dx_{2}dx_{3}dx_{4} = \frac{4}{9}, \\
&c_{14} = \int_{P}x_{1}x_{4}dx_{1}dx_{2}dx_{3}dx_{4} = \frac{2}{9}, 
&&c_{22} = \int_{P}x_{2}^{2}dx_{1}dx_{2}dx_{3}dx_{4} = 8, \\
&c_{23} = \int_{P}x_{2}x_{3}dx_{1}dx_{2}dx_{3}dx_{4} = \frac{2}{9}, 
&&c_{24} = \int_{P}x_{2}x_{4}dx_{1}dx_{2}dx_{3}dx_{4} = \frac{1}{9}, \\
&c_{33} = \int_{P}x_{3}^{2}dx_{1}dx_{2}dx_{3}dx_{4} = \frac{16}{3}, 
&&c_{34} = \int_{P}x_{3}x_{4}dx_{1}dx_{2}dx_{3}dx_{4} = \frac{8}{3}, \\
&c_{44} = \int_{P}x_{4}^{2}dx_{1}dx_{2}dx_{3}dx_{4} = 8, 
\end{align*}
\begin{align*}
&\theta_P(x_{1}, x_{2}, x_{3}, x_{4}) = \frac{6{{x}_{1}}+6{{x}_{3}}-2}{11}, \\
&M_{X} = \max_{P}\theta_P = \frac{10}{11} < 1. 
\end{align*}

\subsubsection{{$L_{8}$}}
\begin{align*}
&b_{0} = \mathrm{vol}(P) = 16, 
&&b_{1} = \int_{P}x_{1}dx_{1}dx_{2}dx_{3}dx_{4} = 0, \\
&b_{2} = \int_{P}x_{2}dx_{1}dx_{2}dx_{3}dx_{4} = 0, 
&&b_{3} = \int_{P}x_{3}dx_{1}dx_{2}dx_{3}dx_{4} = 0, \\
&b_{4} = \int_{P}x_{4}dx_{1}dx_{2}dx_{3}dx_{4} = 0, 
&&c_{11} = \int_{P}x_{1}^{2}dx_{1}dx_{2}dx_{3}dx_{4} = \frac{16}{3}, \\
&c_{12} = \int_{P}x_{1}x_{2}dx_{1}dx_{2}dx_{3}dx_{4} = 0, 
&&c_{13} = \int_{P}x_{1}x_{3}dx_{1}dx_{2}dx_{3}dx_{4} = 0, \\
&c_{14} = \int_{P}x_{1}x_{4}dx_{1}dx_{2}dx_{3}dx_{4} = 0, 
&&c_{22} = \int_{P}x_{2}^{2}dx_{1}dx_{2}dx_{3}dx_{4} = \frac{16}{3}, \\
&c_{23} = \int_{P}x_{2}x_{3}dx_{1}dx_{2}dx_{3}dx_{4} = 0, 
&&c_{24} = \int_{P}x_{2}x_{4}dx_{1}dx_{2}dx_{3}dx_{4} = 0, \\
&c_{33} = \int_{P}x_{3}^{2}dx_{1}dx_{2}dx_{3}dx_{4} = \frac{16}{3}, 
&&c_{34} = \int_{P}x_{3}x_{4}dx_{1}dx_{2}dx_{3}dx_{4} = 0, \\
&c_{44} = \int_{P}x_{4}^{2}dx_{1}dx_{2}dx_{3}dx_{4} = \frac{16}{3}, 
\end{align*}
\begin{align*}
&\theta_P(x_{1}, x_{2}, x_{3}, x_{4}) = 0, \\
&M_{X} = \max_{P}\theta_P = 0 < 1. 
\end{align*}

\subsubsection{{$L_{9}$}}
\begin{align*}
&b_{0} = \mathrm{vol}(P) = 16, 
&&b_{1} = \int_{P}x_{1}dx_{1}dx_{2}dx_{3}dx_{4} = 0, \\
&b_{2} = \int_{P}x_{2}dx_{1}dx_{2}dx_{3}dx_{4} = 0, 
&&b_{3} = \int_{P}x_{3}dx_{1}dx_{2}dx_{3}dx_{4} = \frac{8}{3}, \\
&b_{4} = \int_{P}x_{4}dx_{1}dx_{2}dx_{3}dx_{4} = \frac{4}{3}, 
&&c_{11} = \int_{P}x_{1}^{2}dx_{1}dx_{2}dx_{3}dx_{4} = \frac{16}{3}, \\
&c_{12} = \int_{P}x_{1}x_{2}dx_{1}dx_{2}dx_{3}dx_{4} = 0, 
&&c_{13} = \int_{P}x_{1}x_{3}dx_{1}dx_{2}dx_{3}dx_{4} = 0, \\
&c_{14} = \int_{P}x_{1}x_{4}dx_{1}dx_{2}dx_{3}dx_{4} = 0, 
&&c_{22} = \int_{P}x_{2}^{2}dx_{1}dx_{2}dx_{3}dx_{4} = \frac{16}{3}, \\
&c_{23} = \int_{P}x_{2}x_{3}dx_{1}dx_{2}dx_{3}dx_{4} = 0, 
&&c_{24} = \int_{P}x_{2}x_{4}dx_{1}dx_{2}dx_{3}dx_{4} = 0, \\
&c_{33} = \int_{P}x_{3}^{2}dx_{1}dx_{2}dx_{3}dx_{4} = \frac{16}{3}, 
&&c_{34} = \int_{P}x_{3}x_{4}dx_{1}dx_{2}dx_{3}dx_{4} = \frac{8}{3}, \\
&c_{44} = \int_{P}x_{4}^{2}dx_{1}dx_{2}dx_{3}dx_{4} = 8, 
\end{align*}
\begin{align*}
&\theta_P(x_{1}, x_{2}, x_{3}, x_{4}) = \frac{6{{x}_{3}}-1}{11}, \\
&M_{X} = \max_{P}\theta_P = \frac{5}{11} < 1. 
\end{align*}

\subsubsection{{$L_{10}$}}
\begin{align*}
&b_{0} = \mathrm{vol}(P) = 16, 
&&b_{1} = \int_{P}x_{1}dx_{1}dx_{2}dx_{3}dx_{4} = \frac{71}{15}, \\
&b_{2} = \int_{P}x_{2}dx_{1}dx_{2}dx_{3}dx_{4} = \frac{71}{30}, 
&&b_{3} = \int_{P}x_{3}dx_{1}dx_{2}dx_{3}dx_{4} = \frac{71}{60}, \\
&b_{4} = \int_{P}x_{4}dx_{1}dx_{2}dx_{3}dx_{4} = \frac{71}{60}, 
&&c_{11} = \int_{P}x_{1}^{2}dx_{1}dx_{2}dx_{3}dx_{4} = \frac{256}{45}, \\
&c_{12} = \int_{P}x_{1}x_{2}dx_{1}dx_{2}dx_{3}dx_{4} = \frac{128}{45}, 
&&c_{13} = \int_{P}x_{1}x_{3}dx_{1}dx_{2}dx_{3}dx_{4} = \frac{64}{45}, \\
&c_{14} = \int_{P}x_{1}x_{4}dx_{1}dx_{2}dx_{3}dx_{4} = \frac{64}{45}, 
&&c_{22} = \int_{P}x_{2}^{2}dx_{1}dx_{2}dx_{3}dx_{4} = \frac{122}{15}, \\
&c_{23} = \int_{P}x_{2}x_{3}dx_{1}dx_{2}dx_{3}dx_{4} = -\frac{119}{45}, 
&&c_{24} = \int_{P}x_{2}x_{4}dx_{1}dx_{2}dx_{3}dx_{4} = \frac{61}{15}, \\
&c_{33} = \int_{P}x_{3}^{2}dx_{1}dx_{2}dx_{3}dx_{4} = \frac{53}{6}, 
&&c_{34} = \int_{P}x_{3}x_{4}dx_{1}dx_{2}dx_{3}dx_{4} = -\frac{119}{90}, \\
&c_{44} = \int_{P}x_{4}^{2}dx_{1}dx_{2}dx_{3}dx_{4} = \frac{53}{6}, 
\end{align*}
\begin{align*}
&\theta_P(x_{1}, x_{2}, x_{3}, x_{4}) = \frac{17040{{x}_{1}}-5041}{15439}, \\
&M_{X} = \max_{P}\theta_P = \frac{11999}{15439} < 1. 
\end{align*}

\subsubsection{{$L_{11}$}}
\begin{align*}
&b_{0} = \mathrm{vol}(P) = \frac{44}{3}, 
&&b_{1} = \int_{P}x_{1}dx_{1}dx_{2}dx_{3}dx_{4} = 0, \\
&b_{2} = \int_{P}x_{2}dx_{1}dx_{2}dx_{3}dx_{4} = 0, 
&&b_{3} = \int_{P}x_{3}dx_{1}dx_{2}dx_{3}dx_{4} = 0, \\
&b_{4} = \int_{P}x_{4}dx_{1}dx_{2}dx_{3}dx_{4} = 0, 
&&c_{11} = \int_{P}x_{1}^{2}dx_{1}dx_{2}dx_{3}dx_{4} = \frac{44}{9}, \\
&c_{12} = \int_{P}x_{1}x_{2}dx_{1}dx_{2}dx_{3}dx_{4} = 0, 
&&c_{13} = \int_{P}x_{1}x_{3}dx_{1}dx_{2}dx_{3}dx_{4} = 0, \\
&c_{14} = \int_{P}x_{1}x_{4}dx_{1}dx_{2}dx_{3}dx_{4} = 0, 
&&c_{22} = \int_{P}x_{2}^{2}dx_{1}dx_{2}dx_{3}dx_{4} = \frac{68}{15}, \\
&c_{23} = \int_{P}x_{2}x_{3}dx_{1}dx_{2}dx_{3}dx_{4} = -\frac{34}{15}, 
&&c_{24} = \int_{P}x_{2}x_{4}dx_{1}dx_{2}dx_{3}dx_{4} = \frac{34}{15}, \\
&c_{33} = \int_{P}x_{3}^{2}dx_{1}dx_{2}dx_{3}dx_{4} = \frac{32}{5}, 
&&c_{34} = \int_{P}x_{3}x_{4}dx_{1}dx_{2}dx_{3}dx_{4} = -\frac{17}{15}, \\
&c_{44} = \int_{P}x_{4}^{2}dx_{1}dx_{2}dx_{3}dx_{4} = \frac{32}{5}, 
\end{align*}
\begin{align*}
&\theta_P(x_{1}, x_{2}, x_{3}, x_{4}) = 0, \\
&M_{X} = \max_{P}\theta_P = 0 < 1. 
\end{align*}

\subsubsection{{$L_{12}$}}
\begin{align*}
&b_{0} = \mathrm{vol}(P) = \frac{44}{3}, 
&&b_{1} = \int_{P}x_{1}dx_{1}dx_{2}dx_{3}dx_{4} = -\frac{17}{15}, \\
&b_{2} = \int_{P}x_{2}dx_{1}dx_{2}dx_{3}dx_{4} = -\frac{17}{30}, 
&&b_{3} = \int_{P}x_{3}dx_{1}dx_{2}dx_{3}dx_{4} = \frac{16}{5}, \\
&b_{4} = \int_{P}x_{4}dx_{1}dx_{2}dx_{3}dx_{4} = \frac{8}{5}, 
&&c_{11} = \int_{P}x_{1}^{2}dx_{1}dx_{2}dx_{3}dx_{4} = \frac{68}{15}, \\
&c_{12} = \int_{P}x_{1}x_{2}dx_{1}dx_{2}dx_{3}dx_{4} = \frac{34}{15}, 
&&c_{13} = \int_{P}x_{1}x_{3}dx_{1}dx_{2}dx_{3}dx_{4} = -\frac{136}{45}, \\
&c_{14} = \int_{P}x_{1}x_{4}dx_{1}dx_{2}dx_{3}dx_{4} = -\frac{68}{45}, 
&&c_{22} = \int_{P}x_{2}^{2}dx_{1}dx_{2}dx_{3}dx_{4} = \frac{271}{45}, \\
&c_{23} = \int_{P}x_{2}x_{3}dx_{1}dx_{2}dx_{3}dx_{4} = -\frac{68}{45}, 
&&c_{24} = \int_{P}x_{2}x_{4}dx_{1}dx_{2}dx_{3}dx_{4} = -\frac{34}{45}, \\
&c_{33} = \int_{P}x_{3}^{2}dx_{1}dx_{2}dx_{3}dx_{4} = \frac{113}{15}, 
&&c_{34} = \int_{P}x_{3}x_{4}dx_{1}dx_{2}dx_{3}dx_{4} = \frac{113}{30}, \\
&c_{44} = \int_{P}x_{4}^{2}dx_{1}dx_{2}dx_{3}dx_{4} = \frac{127}{15}, 
\end{align*}
\begin{align*}
&\theta_P(x_{1}, x_{2}, x_{3}, x_{4}) = \frac{5940{{x}_{1}}+58080{{x}_{3}}-12213}{118907}, \\
&M_{X} = \max_{P}\theta_P = \frac{98007}{118907} < 1. 
\end{align*}

\subsubsection{{$L_{13}$}}
\begin{align*}
&b_{0} = \mathrm{vol}(P) = \frac{44}{3}, 
&&b_{1} = \int_{P}x_{1}dx_{1}dx_{2}dx_{3}dx_{4} = \frac{34}{15}, \\
&b_{2} = \int_{P}x_{2}dx_{1}dx_{2}dx_{3}dx_{4} = \frac{17}{15}, 
&&b_{3} = \int_{P}x_{3}dx_{1}dx_{2}dx_{3}dx_{4} = \frac{17}{15}, \\
&b_{4} = \int_{P}x_{4}dx_{1}dx_{2}dx_{3}dx_{4} = -\frac{17}{15}, 
&&c_{11} = \int_{P}x_{1}^{2}dx_{1}dx_{2}dx_{3}dx_{4} = \frac{68}{15}, \\
&c_{12} = \int_{P}x_{1}x_{2}dx_{1}dx_{2}dx_{3}dx_{4} = \frac{34}{15}, 
&&c_{13} = \int_{P}x_{1}x_{3}dx_{1}dx_{2}dx_{3}dx_{4} = \frac{34}{15}, \\
&c_{14} = \int_{P}x_{1}x_{4}dx_{1}dx_{2}dx_{3}dx_{4} = -\frac{34}{15}, 
&&c_{22} = \int_{P}x_{2}^{2}dx_{1}dx_{2}dx_{3}dx_{4} = \frac{322}{45}, \\
&c_{23} = \int_{P}x_{2}x_{3}dx_{1}dx_{2}dx_{3}dx_{4} = \frac{17}{15}, 
&&c_{24} = \int_{P}x_{2}x_{4}dx_{1}dx_{2}dx_{3}dx_{4} = -\frac{17}{15}, \\
&c_{33} = \int_{P}x_{3}^{2}dx_{1}dx_{2}dx_{3}dx_{4} = \frac{322}{45}, 
&&c_{34} = \int_{P}x_{3}x_{4}dx_{1}dx_{2}dx_{3}dx_{4} = -\frac{17}{15}, \\
&c_{44} = \int_{P}x_{4}^{2}dx_{1}dx_{2}dx_{3}dx_{4} = \frac{254}{45}, 
\end{align*}
\begin{align*}
&\theta_P(x_{1}, x_{2}, x_{3}, x_{4}) = \frac{110{{x}_{1}}-17}{203}, \\
&M_{X} = \max_{P}\theta_P = \frac{93}{203} < 1. 
\end{align*}

\subsubsection{$I_{1}$}
\begin{align*}
&b_{0} = \mathrm{vol}(P) = \frac{62}{3}, 
&&b_{1} = \int_{P}x_{1}dx_{1}dx_{2}dx_{3}dx_{4} = \frac{173}{20}, \\
&b_{2} = \int_{P}x_{2}dx_{1}dx_{2}dx_{3}dx_{4} = \frac{521}{60}, 
&&b_{3} = \int_{P}x_{3}dx_{1}dx_{2}dx_{3}dx_{4} = \frac{521}{30}, \\
&b_{4} = \int_{P}x_{4}dx_{1}dx_{2}dx_{3}dx_{4} = -\frac{1}{30}, 
&&c_{11} = \int_{P}x_{1}^{2}dx_{1}dx_{2}dx_{3}dx_{4} = \frac{1393}{180}, \\
&c_{12} = \int_{P}x_{1}x_{2}dx_{1}dx_{2}dx_{3}dx_{4} = \frac{443}{72}, 
&&c_{13} = \int_{P}x_{1}x_{3}dx_{1}dx_{2}dx_{3}dx_{4} = \frac{443}{36}, \\
&c_{14} = \int_{P}x_{1}x_{4}dx_{1}dx_{2}dx_{3}dx_{4} = \frac{571}{360},	 
&&c_{22} = \int_{P}x_{2}^{2}dx_{1}dx_{2}dx_{3}dx_{4} = \frac{2197}{90}, \\
&c_{23} = \int_{P}x_{2}x_{3}dx_{1}dx_{2}dx_{3}dx_{4} = \frac{88}{5}, 
&&c_{24} = \int_{P}x_{2}x_{4}dx_{1}dx_{2}dx_{3}dx_{4} = -\frac{953}{360}, \\
&c_{33} = \int_{P}x_{3}^{2}dx_{1}dx_{2}dx_{3}dx_{4} = \frac{176}{5}, 
&&c_{34} = \int_{P}x_{3}x_{4}dx_{1}dx_{2}dx_{3}dx_{4} = -\frac{953}{180}, \\
&c_{44} = \int_{P}x_{4}^{2}dx_{1}dx_{2}dx_{3}dx_{4} = \frac{563}{45}, 
\end{align*}
\begin{align*}
&\theta_P(x_{1}, x_{2}, x_{3}, x_{4}) = \frac{455864610{{x}_{1}}+140393265{{x}_{3}}-308777028}{298795537}, \\
&M_{X} = \max_{P}\theta_P = \frac{708660642}{298795537} > 1. 
\end{align*}

\subsubsection{$I_{2}$}
\begin{align*}
&b_{0} = \mathrm{vol}(P) = \frac{463}{24}, 
&&b_{1} = \int_{P}x_{1}dx_{1}dx_{2}dx_{3}dx_{4} = 8, \\
&b_{2} = \int_{P}x_{2}dx_{1}dx_{2}dx_{3}dx_{4} = 4, 
&&b_{3} = \int_{P}x_{3}dx_{1}dx_{2}dx_{3}dx_{4} = \frac{57}{5}, \\
&b_{4} = \int_{P}x_{4}dx_{1}dx_{2}dx_{3}dx_{4} = \frac{23}{10}, 
&&c_{11} = \int_{P}x_{1}^{2}dx_{1}dx_{2}dx_{3}dx_{4} = \frac{141}{20}, \\
&c_{12} = \int_{P}x_{1}x_{2}dx_{1}dx_{2}dx_{3}dx_{4} = \frac{141}{40}, 
&&c_{13} = \int_{P}x_{1}x_{3}dx_{1}dx_{2}dx_{3}dx_{4} = \frac{1627}{180}, \\
&c_{14} = \int_{P}x_{1}x_{4}dx_{1}dx_{2}dx_{3}dx_{4} = \frac{911}{360}, 
&&c_{22} = \int_{P}x_{2}^{2}dx_{1}dx_{2}dx_{3}dx_{4} = \frac{4121}{360}, \\
&c_{23} = \int_{P}x_{2}x_{3}dx_{1}dx_{2}dx_{3}dx_{4} = \frac{1627}{360}, 
&&c_{24} = \int_{P}x_{2}x_{4}dx_{1}dx_{2}dx_{3}dx_{4} = \frac{911}{720}, \\
&c_{33} = \int_{P}x_{3}^{2}dx_{1}dx_{2}dx_{3}dx_{4} = \frac{4363}{180}, 
&&c_{34} = \int_{P}x_{3}x_{4}dx_{1}dx_{2}dx_{3}dx_{4} = -\frac{1109}{360}, \\
&c_{44} = \int_{P}x_{4}^{2}dx_{1}dx_{2}dx_{3}dx_{4} = \frac{1607}{120}, 
\end{align*}
\begin{align*}
&\theta_P(x_{1}, x_{2}, x_{3}, x_{4}) = \frac{3407814270{{x}_{1}}+302232510{{x}_{3}}-1591773552}{1752843533}, \\
&M_{X} = \max_{P}\theta_P = \frac{3024970758}{1752843533} > 1. 
\end{align*}

\subsubsection{$I_{3}$}
\begin{align*}
&b_{0} = \mathrm{vol}(P) = \frac{221}{12}, 
&&b_{1} = \int_{P}x_{1}dx_{1}dx_{2}dx_{3}dx_{4} = \frac{127}{20}, \\
&b_{2} = \int_{P}x_{2}dx_{1}dx_{2}dx_{3}dx_{4} = \frac{101}{12}, 
&&b_{3} = \int_{P}x_{3}dx_{1}dx_{2}dx_{3}dx_{4} = \frac{61}{6}, \\
&b_{4} = \int_{P}x_{4}dx_{1}dx_{2}dx_{3}dx_{4} = \frac{23}{10}, 
&&c_{11} = \int_{P}x_{1}^{2}dx_{1}dx_{2}dx_{3}dx_{4} = \frac{286}{45}, \\
&c_{12} = \int_{P}x_{1}x_{2}dx_{1}dx_{2}dx_{3}dx_{4} = \frac{679}{120}, 
&&c_{13} = \int_{P}x_{1}x_{3}dx_{1}dx_{2}dx_{3}dx_{4} = \frac{2771}{260}, \\
&c_{14} = \int_{P}x_{1}x_{4}dx_{1}dx_{2}dx_{3}dx_{4} = \frac{259}{120}, 
&&c_{22} = \int_{P}x_{2}^{2}dx_{1}dx_{2}dx_{3}dx_{4} = \frac{1583}{120}, \\
&c_{23} = \int_{P}x_{2}x_{3}dx_{1}dx_{2}dx_{3}dx_{4} = \frac{1193}{120}, 
&&c_{24} = \int_{P}x_{2}x_{4}dx_{1}dx_{2}dx_{3}dx_{4} = \frac{1069}{240}, \\
&c_{33} = \int_{P}x_{3}^{2}dx_{1}dx_{2}dx_{3}dx_{4} = \frac{1663}{72}, 
&&c_{34} = \int_{P}x_{3}x_{4}dx_{1}dx_{2}dx_{3}dx_{4} = -\frac{131}{48}, \\
&c_{44} = \int_{P}x_{4}^{2}dx_{1}dx_{2}dx_{3}dx_{4} = \frac{139}{10}, 
\end{align*}
\begin{align*}
&\theta_P(x_{1}, x_{2}, x_{3}, x_{4}) = \frac{1119451198840{{x}_{1}}+553671270165{{x}_{2}}+212169307415{{x}_{3}}}{1114834051022}-\frac{756143145443}{1114834051022}, \\
&M_{X} = \max_{P}\theta_P = \frac{1265748565401}{557417025511} > 1. 
\end{align*}

\subsubsection{$I_{4}$}
\begin{align*}
&b_{0} = \mathrm{vol}(P) = \frac{433}{24}, 
&&b_{1} = \int_{P}x_{1}dx_{1}dx_{2}dx_{3}dx_{4} = \frac{59}{10}, \\
&b_{2} = \int_{P}x_{2}dx_{1}dx_{2}dx_{3}dx_{4} = \frac{121}{30}, 
&&b_{3} = \int_{P}x_{3}dx_{1}dx_{2}dx_{3}dx_{4} = \frac{419}{30}, \\
&b_{4} = \int_{P}x_{4}dx_{1}dx_{2}dx_{3}dx_{4} = -\frac{13}{12}, 
&&c_{11} = \int_{P}x_{1}^{2}dx_{1}dx_{2}dx_{3}dx_{4} = \frac{557}{90}, \\
&c_{12} = \int_{P}x_{1}x_{2}dx_{1}dx_{2}dx_{3}dx_{4} = \frac{349}{180}, 
&&c_{13} = \int_{P}x_{1}x_{3}dx_{1}dx_{2}dx_{3}dx_{4} = \frac{151}{15}, \\
&c_{14} = \int_{P}x_{1}x_{4}dx_{1}dx_{2}dx_{3}dx_{4} = \frac{52}{45}, 
&&c_{22} = \int_{P}x_{2}^{2}dx_{1}dx_{2}dx_{3}dx_{4} = \frac{5023}{360}, \\
&c_{23} = \int_{P}x_{2}x_{3}dx_{1}dx_{2}dx_{3}dx_{4} = \frac{3533}{360}, 
&&c_{24} = \int_{P}x_{2}x_{4}dx_{1}dx_{2}dx_{3}dx_{4} = -\frac{2137}{720}, \\
&c_{33} = \int_{P}x_{3}^{2}dx_{1}dx_{2}dx_{3}dx_{4} = \frac{1069}{36}, 
&&c_{34} = \int_{P}x_{3}x_{4}dx_{1}dx_{2}dx_{3}dx_{4} = -\frac{1721}{360}, \\
&c_{44} = \int_{P}x_{4}^{2}dx_{1}dx_{2}dx_{3}dx_{4} = \frac{3607}{360}, 
\end{align*}
\begin{align*}
&\theta_P(x_{1}, x_{2}, x_{3}, x_{4}) = \frac{29597715{{x}_{1}}+23135190{{x}_{3}}-27588804}{42931741}, \\
&M_{X} = \max_{P}\theta_P = \frac{94549671}{42931741} > 1. 
\end{align*}

\subsubsection{$I_{5}$}
\begin{align*}
&b_{0} = \mathrm{vol}(P) = \frac{415}{24}, 
&&b_{1} = \int_{P}x_{1}dx_{1}dx_{2}dx_{3}dx_{4} = \frac{63}{10}, \\
&b_{2} = \int_{P}x_{2}dx_{1}dx_{2}dx_{3}dx_{4} = \frac{67}{20}, 
&&b_{3} = \int_{P}x_{3}dx_{1}dx_{2}dx_{3}dx_{4} = \frac{403}{69}, \\
&b_{4} = \int_{P}x_{4}dx_{1}dx_{2}dx_{3}dx_{4} = \frac{67}{10}, 
&&c_{11} = \int_{P}x_{1}^{2}dx_{1}dx_{2}dx_{3}dx_{4} = \frac{427}{72}, \\
&c_{12} = \int_{P}x_{1}x_{2}dx_{1}dx_{2}dx_{3}dx_{4} = \frac{551}{240}, 
&&c_{13} = \int_{P}x_{1}x_{3}dx_{1}dx_{2}dx_{3}dx_{4} = \frac{2263}{360}, \\
&c_{14} = \int_{P}x_{1}x_{4}dx_{1}dx_{2}dx_{3}dx_{4} = \frac{551}{120}, 
&&c_{22} = \int_{P}x_{2}^{2}dx_{1}dx_{2}dx_{3}dx_{4} = \frac{1577}{120}, \\
&c_{23} = \int_{P}x_{2}x_{3}dx_{1}dx_{2}dx_{3}dx_{4} = -\frac{43}{90}, 
&&c_{24} = \int_{P}x_{2}x_{4}dx_{1}dx_{2}dx_{3}dx_{4} = \frac{463}{60}, \\
&c_{33} = \int_{P}x_{3}^{2}dx_{1}dx_{2}dx_{3}dx_{4} = \frac{2881}{180}, 
&&c_{34} = \int_{P}x_{3}x_{4}dx_{1}dx_{2}dx_{3}dx_{4} = -\frac{43}{45}, \\
&c_{44} = \int_{P}x_{4}^{2}dx_{1}dx_{2}dx_{3}dx_{4} = \frac{463}{30}, 
\end{align*}
\begin{align*}
&\theta_P(x_{1}, x_{2}, x_{3}, x_{4}) = \frac{10897419402618300{{x}_{1}}+101247812541450{{x}_{3}}}{6984760752795427}  \\
&\quad \quad \quad \quad \quad \quad \quad \quad + \frac{1845227650200450{{x}_{4}}-4719505353973848}{6984760752795427}, \\
&M_{X} = \max_{P}\theta_P = \frac{11713596999245802}{6984760752795427} > 1. 
\end{align*}

\newpage
\subsubsection{$I_{6}$}
\begin{align*}
&b_{0} = \mathrm{vol}(P) = \frac{137}{8}, 
&&b_{1} = \int_{P}x_{1}dx_{1}dx_{2}dx_{3}dx_{4} = \frac{53}{15},\quad \\
&b_{2} = \int_{P}x_{2}dx_{1}dx_{2}dx_{3}dx_{4} = \frac{43}{10}, 
&&b_{3} = \int_{P}x_{3}dx_{1}dx_{2}dx_{3}dx_{4} = \frac{43}{5}, \\
&b_{4} = \int_{P}x_{4}dx_{1}dx_{2}dx_{3}dx_{4} = -\frac{38}{15}, 
&&c_{11} = \int_{P}x_{1}^{2}dx_{1}dx_{2}dx_{3}dx_{4} = \frac{253}{45}, \\
&c_{12} = \int_{P}x_{1}x_{2}dx_{1}dx_{2}dx_{3}dx_{4} = \frac{239}{90}, 
&&c_{13} = \int_{P}x_{1}x_{3}dx_{1}dx_{2}dx_{3}dx_{4} = \frac{239}{45}, \\
&c_{14} = \int_{P}x_{1}x_{4}dx_{1}dx_{2}dx_{3}dx_{4} = \frac{7}{45}, 
&&c_{22} = \int_{P}x_{2}^{2}dx_{1}dx_{2}dx_{3}dx_{4} = \frac{4933}{360}, \\
&c_{23} = \int_{P}x_{2}x_{3}dx_{1}dx_{2}dx_{3}dx_{4} = \frac{923}{120}, 
&&c_{24} = \int_{P}x_{2}x_{4}dx_{1}dx_{2}dx_{3}dx_{4} = -\frac{1813}{720}, \\
&c_{33} = \int_{P}x_{3}^{2}dx_{1}dx_{2}dx_{3}dx_{4} = \frac{923}{60}, 
&&c_{34} = \int_{P}x_{3}x_{4}dx_{1}dx_{2}dx_{3}dx_{4} = -\frac{1813}{360}, \\
&c_{44} = \int_{P}x_{4}^{2}dx_{1}dx_{2}dx_{3}dx_{4} = \frac{2693}{360}, 
\end{align*}
\begin{align*}
&\theta_P(x_{1}, x_{2}, x_{3}, x_{4}) = \frac{48154815{{x}_{1}}+164153400{{x}_{3}}-92371752}{230998253}, \\
&M_{X} = \max_{P}\theta_P = \frac{448243263}{230998253} > 1. 
\end{align*}

\subsubsection{$I_{7}$}
\begin{align*}
&b_{0} = \mathrm{vol}(P) = -\frac{99}{80}, 
&&b_{1} = \int_{P}x_{1}dx_{1}dx_{2}dx_{3}dx_{4} = \frac{21}{4}, \\
&b_{2} = \int_{P}x_{2}dx_{1}dx_{2}dx_{3}dx_{4} = 0, 
&&b_{3} = \int_{P}x_{3}dx_{1}dx_{2}dx_{3}dx_{4} = 8, \\
&b_{4} = \int_{P}x_{4}dx_{1}dx_{2}dx_{3}dx_{4} = \frac{5}{4}, 
&&c_{11} = \int_{P}x_{1}^{2}dx_{1}dx_{2}dx_{3}dx_{4} = \frac{11}{2}, \\
&c_{12} = \int_{P}x_{1}x_{2}dx_{1}dx_{2}dx_{3}dx_{4} = 0, 
&&c_{13} = \int_{P}x_{1}x_{3}dx_{1}dx_{2}dx_{3}dx_{4} = \frac{34}{5}, \\
&c_{14} = \int_{P}x_{1}x_{4}dx_{1}dx_{2}dx_{3}dx_{4} = \frac{21}{10}, 
&&c_{22} = \int_{P}x_{2}^{2}dx_{1}dx_{2}dx_{3}dx_{4} = \frac{50}{9}, \\
&c_{23} = \int_{P}x_{2}x_{3}dx_{1}dx_{2}dx_{3}dx_{4} = 0, 
&&c_{24} = \int_{P}x_{2}x_{4}dx_{1}dx_{2}dx_{3}dx_{4} = 0, \\
&c_{33} = \int_{P}x_{3}^{2}dx_{1}dx_{2}dx_{3}dx_{4} = \frac{281}{15}, 
&&c_{34} = \int_{P}x_{3}x_{4}dx_{1}dx_{2}dx_{3}dx_{4} = -\frac{77}{30}, \\
&c_{44} = \int_{P}x_{4}^{2}dx_{1}dx_{2}dx_{3}dx_{4} = \frac{109}{10}, 
\end{align*}
\begin{align*}
&\theta_P(x_{1}, x_{2}, x_{3}, x_{4}) = \frac{527400{{x}_{1}}+99600{{x}_{3}}-213939}{467581}, \\
&M_{X} = \max_{P}\theta_P = \frac{711861}{467581} > 1. 
\end{align*}

\subsubsection{$I_{8}$}
\begin{align*}
&b_{0} = \mathrm{vol}(P) = 16, 
&&b_{1} = \int_{P}x_{1}dx_{1}dx_{2}dx_{3}dx_{4} = -\frac{53}{20}, \\
&b_{2} = \int_{P}x_{2}dx_{1}dx_{2}dx_{3}dx_{4} = \frac{88}{15}, 
&&b_{3} = \int_{P}x_{3}dx_{1}dx_{2}dx_{3}dx_{4} = \frac{163}{60}, \\
&b_{4} = \int_{P}x_{4}dx_{1}dx_{2}dx_{3}dx_{4} = \frac{1}{4}, 
&&c_{11} = \int_{P}x_{1}^{2}dx_{1}dx_{2}dx_{3}dx_{4} = \frac{443}{90}, \\
&c_{12} = \int_{P}x_{1}x_{2}dx_{1}dx_{2}dx_{3}dx_{4} = -\frac{1547}{360}, 
&&c_{13} = \int_{P}x_{1}x_{3}dx_{1}dx_{2}dx_{3}dx_{4} = \frac{431}{360}, \\
&c_{14} = \int_{P}x_{1}x_{4}dx_{1}dx_{2}dx_{3}dx_{4} = -\frac{103}{360}, 
&&c_{22} = \int_{P}x_{2}^{2}dx_{1}dx_{2}dx_{3}dx_{4} = \frac{3611}{360}, \\
&c_{23} = \int_{P}x_{2}x_{3}dx_{1}dx_{2}dx_{3}dx_{4} = \frac{581}{360}, 
&&c_{24} = \int_{P}x_{2}x_{4}dx_{1}dx_{2}dx_{3}dx_{4} = \frac{1483}{720}, \\
&c_{33} = \int_{P}x_{3}^{2}dx_{1}dx_{2}dx_{3}dx_{4} = \frac{3743}{360}, 
&&c_{34} = \int_{P}x_{3}x_{4}dx_{1}dx_{2}dx_{3}dx_{4} = -\frac{2731}{720}, \\
&c_{44} = \int_{P}x_{4}^{2}dx_{1}dx_{2}dx_{3}dx_{4} = \frac{829}{90}, 
\end{align*}
\begin{align*}
&\theta_P(x_{1}, x_{2}, x_{3}, x_{4}) = \frac{-354416417280{{x}_{1}}+991006584000{{x}_{2}}+428321791680{{x}_{3}}}{1577014856729} - \frac{494794770791}{1577014856729},  \\
&M_{X} = \max_{P}\theta_P = \frac{3126600189529}{1577014856729} > 1. 
\end{align*}

\subsubsection{$I_{9}$}
\begin{align*}
&b_{0} = \mathrm{vol}(P) = \frac{65}{4}, 
&&b_{1} = \int_{P}x_{1}dx_{1}dx_{2}dx_{3}dx_{4} = \frac{71}{60},\quad \\
&b_{2} = \int_{P}x_{2}dx_{1}dx_{2}dx_{3}dx_{4} = \frac{169}{60}, 
&&b_{3} = \int_{P}x_{3}dx_{1}dx_{2}dx_{3}dx_{4} = \frac{409}{60}, \\
&b_{4} = \int_{P}x_{4}dx_{1}dx_{2}dx_{3}dx_{4} = -\frac{169}{60}, 
&&c_{11} = \int_{P}x_{1}^{2}dx_{1}dx_{2}dx_{3}dx_{4} = \frac{919}{180}, \\
&c_{12} = \int_{P}x_{1}x_{2}dx_{1}dx_{2}dx_{3}dx_{4} = -\frac{43}{120}, 
&&c_{13} = \int_{P}x_{1}x_{3}dx_{1}dx_{2}dx_{3}dx_{4} = \frac{79}{18}, \\
&c_{14} = \int_{P}x_{1}x_{4}dx_{1}dx_{2}dx_{3}dx_{4} = \frac{43}{120}, 
&&c_{22} = \int_{P}x_{2}^{2}dx_{1}dx_{2}dx_{3}dx_{4} = \frac{637}{60}, \\
&c_{23} = \int_{P}x_{2}x_{3}dx_{1}dx_{2}dx_{3}dx_{4} = \frac{37}{8}, 
&&c_{24} = \int_{P}x_{2}x_{4}dx_{1}dx_{2}dx_{3}dx_{4} = -\frac{299}{120}, \\
&c_{33} = \int_{P}x_{3}^{2}dx_{1}dx_{2}dx_{3}dx_{4} = \frac{491}{36}, 
&&c_{34} = \int_{P}x_{3}x_{4}dx_{1}dx_{2}dx_{3}dx_{4} = -\frac{37}{8}, \\
&c_{44} = \int_{P}x_{4}^{2}dx_{1}dx_{2}dx_{3}dx_{4} = \frac{247}{36}, 
\end{align*}
\begin{align*}
&\theta_P(x_{1}, x_{2}, x_{3}, x_{4}) = \frac{-11160375{{x}_{1}} + 23983575{{x}_{3}}-9248098}{31553027}, \\
&M_{X} = \max_{P}\theta_P = \frac{51542252}{31553027} > 1. 
\end{align*}

\subsubsection{$I_{10}$}
\begin{align*}
&b_{0} = \mathrm{vol}(P) = \frac{389}{24}, 
&&b_{1} = \int_{P}x_{1}dx_{1}dx_{2}dx_{3}dx_{4} = \frac{41}{10}, \\
&b_{2} = \int_{P}x_{2}dx_{1}dx_{2}dx_{3}dx_{4} = \frac{41}{20}, 
&&b_{3} = \int_{P}x_{3}dx_{1}dx_{2}dx_{3}dx_{4} = \frac{161}{30}, \\
&b_{4} = \int_{P}x_{4}dx_{1}dx_{2}dx_{3}dx_{4} = -\frac{19}{30}, 
&&c_{11} = \int_{P}x_{1}^{2}dx_{1}dx_{2}dx_{3}dx_{4} = \frac{313}{60}, \\
&c_{12} = \int_{P}x_{1}x_{2}dx_{1}dx_{2}dx_{3}dx_{4} = \frac{313}{120}, 
&&c_{13} = \int_{P}x_{1}x_{3}dx_{1}dx_{2}dx_{3}dx_{4} = \frac{202}{45}, \\
&c_{14} = \int_{P}x_{1}x_{4}dx_{1}dx_{2}dx_{3}dx_{4} = \frac{131}{360}, 
&&c_{22} = \int_{P}x_{2}^{2}dx_{1}dx_{2}dx_{3}dx_{4} = \frac{1021}{120}, \\
&c_{23} = \int_{P}x_{2}x_{3}dx_{1}dx_{2}dx_{3}dx_{4} = \frac{101}{45}, 
&&c_{24} = \int_{P}x_{2}x_{4}dx_{1}dx_{2}dx_{3}dx_{4} = \frac{131}{720}, \\
&c_{33} = \int_{P}x_{3}^{2}dx_{1}dx_{2}dx_{3}dx_{4} = \frac{106}{9}, 
&&c_{34} = \int_{P}x_{3}x_{4}dx_{1}dx_{2}dx_{3}dx_{4} = -\frac{164}{45}, \\
&c_{44} = \int_{P}x_{4}^{2}dx_{1}dx_{2}dx_{3}dx_{4} = \frac{551}{72}, 
\end{align*}
\begin{align*}
&\theta_P(x_{1}, x_{2}, x_{3}, x_{4}) = \frac{762471120{{x}_{1}}+302223825{{x}_{3}}-292939812}{1008086248}, \\
&M_{X} = \max_{P}\theta_P = \frac{1376202783}{1008086248} > 1. 
\end{align*}

\subsubsection{$I_{11}$}
\begin{align*}
&b_{0} = \mathrm{vol}(P) = 16, 
&&b_{1} = \int_{P}x_{1}dx_{1}dx_{2}dx_{3}dx_{4} = \frac{7}{4}, \\
&b_{2} = \int_{P}x_{2}dx_{1}dx_{2}dx_{3}dx_{4} = \frac{11}{3}, 
&&b_{3} = \int_{P}x_{3}dx_{1}dx_{2}dx_{3}dx_{4} = \frac{59}{12}, \\
&b_{4} = \int_{P}x_{4}dx_{1}dx_{2}dx_{3}dx_{4} = \frac{1}{4}, 
&&c_{11} = \int_{P}x_{1}^{2}dx_{1}dx_{2}dx_{3}dx_{4} = \frac{443}{90}, \\
&c_{12} = \int_{P}x_{1}x_{2}dx_{1}dx_{2}dx_{3}dx_{4} = \frac{2}{9}, 
&&c_{13} = \int_{P}x_{1}x_{3}dx_{1}dx_{2}dx_{3}dx_{4} = \frac{341}{90}, \\
&c_{14} = \int_{P}x_{1}x_{4}dx_{1}dx_{2}dx_{3}dx_{4} = \frac{61}{90}, 
&&c_{22} = \int_{P}x_{2}^{2}dx_{1}dx_{2}dx_{3}dx_{4} = \frac{248}{45}, \\
&c_{23} = \int_{P}x_{2}x_{3}dx_{1}dx_{2}dx_{3}dx_{4} = \frac{116}{45}, 
&&c_{24} = \int_{P}x_{2}x_{4}dx_{1}dx_{2}dx_{3}dx_{4} = \frac{71}{45}, \\
&c_{33} = \int_{P}x_{3}^{2}dx_{1}dx_{2}dx_{3}dx_{4} = \frac{1169}{90}, 
&&c_{34} = \int_{P}x_{3}x_{4}dx_{1}dx_{2}dx_{3}dx_{4} = -\frac{149}{45}, \\
&c_{44} = \int_{P}x_{4}^{2}dx_{1}dx_{2}dx_{3}dx_{4} = \frac{829}{90}, 
\end{align*}
\begin{align*}
&\theta_P(x_{1}, x_{2}, x_{3}, x_{4}) = \frac{1527360{{x}_{1}}+5306880{{x}_{2}}+2119680{{x}_{3}}-2034575}{7524593}, \\
&M_{X} = \max_{P}\theta_P = \frac{13278385}{7524593} > 1. 
\end{align*}

\subsubsection{$I_{12}$}
\begin{align*}
&b_{0} = \mathrm{vol}(P) = \frac{347}{24}, 
&&b_{1} = \int_{P}x_{1}dx_{1}dx_{2}dx_{3}dx_{4} = -\frac{3}{5},\quad \\
&b_{2} = \int_{P}x_{2}dx_{1}dx_{2}dx_{3}dx_{4} = \frac{3}{10},  
&&b_{3} = \int_{P}x_{3}dx_{1}dx_{2}dx_{3}dx_{4} = \frac{3}{5}, \\
&b_{4} = \int_{P}x_{4}dx_{1}dx_{2}dx_{3}dx_{4} = -\frac{6}{5}, 
&&c_{11} = \int_{P}x_{1}^{2}dx_{1}dx_{2}dx_{3}dx_{4} = \frac{251}{60}, \\
&c_{12} = \int_{P}x_{1}x_{2}dx_{1}dx_{2}dx_{3}dx_{4} = -\frac{251}{120}, 
&&c_{13} = \int_{P}x_{1}x_{3}dx_{1}dx_{2}dx_{3}dx_{4} = \frac{119}{45}, \\
&c_{14} = \int_{P}x_{1}x_{4}dx_{1}dx_{2}dx_{3}dx_{4} = \frac{277}{360}, 
&&c_{22} = \int_{P}x_{2}^{2}dx_{1}dx_{2}dx_{3}dx_{4} = \frac{2309}{360}, \\
&c_{23} = \int_{P}x_{2}x_{3}dx_{1}dx_{2}dx_{3}dx_{4} = -\frac{119}{90}, 
&&c_{24} = \int_{P}x_{2}x_{4}dx_{1}dx_{2}dx_{3}dx_{4} = -\frac{277}{720}, \\
&c_{33} = \int_{P}x_{3}^{2}dx_{1}dx_{2}dx_{3}dx_{4} = \frac{373}{45}, 
&&c_{34} = \int_{P}x_{3}x_{4}dx_{1}dx_{2}dx_{3}dx_{4} = -\frac{127}{45}, \\
&c_{44} = \int_{P}x_{4}^{2}dx_{1}dx_{2}dx_{3}dx_{4} = \frac{2309}{360}, 
\end{align*}
\begin{align*}
&\theta_P(x_{1}, x_{2}, x_{3}, x_{4}) = -\frac{1780035048{{x}_{1}}-859184145{{x}_{3}}}{10127095471}-\frac{1344488976{{x}_{4}}+221112504}{10127095471}, \\
&M_{X} = \max_{P}\theta_P = \frac{3762595665}{10127095471} < 1. 
\end{align*}

\subsubsection{$I_{13}$}
\begin{align*}
&b_{0} = \mathrm{vol}(P) = \frac{46}{3}, 
&&b_{1} = \int_{P}x_{1}dx_{1}dx_{2}dx_{3}dx_{4} = \frac{7}{4},\quad \\
&b_{2} = \int_{P}x_{2}dx_{1}dx_{2}dx_{3}dx_{4} = 0, 
&&b_{3} = \int_{P}x_{3}dx_{1}dx_{2}dx_{3}dx_{4} = \frac{43}{12}, \\
&b_{4} = \int_{P}x_{4}dx_{1}dx_{2}dx_{3}dx_{4} = -\frac{11}{12}, 
&&c_{11} = \int_{P}x_{1}^{2}dx_{1}dx_{2}dx_{3}dx_{4} = \frac{47}{10}, \\
&c_{12} = \int_{P}x_{1}x_{2}dx_{1}dx_{2}dx_{3}dx_{4} = 0, 
&&c_{13} = \int_{P}x_{1}x_{3}dx_{1}dx_{2}dx_{3}dx_{4} = \frac{107}{30}, \\
&c_{14} = \int_{P}x_{1}x_{4}dx_{1}dx_{2}dx_{3}dx_{4} = \frac{17}{30}, 
&&c_{22} = \int_{P}x_{2}^{2}dx_{1}dx_{2}dx_{3}dx_{4} = \frac{46}{9}, \\
&c_{23} = \int_{P}x_{2}x_{3}dx_{1}dx_{2}dx_{3}dx_{4} = 0, 
&&c_{24} = \int_{P}x_{2}x_{4}dx_{1}dx_{2}dx_{3}dx_{4} = 0, \\
&c_{33} = \int_{P}x_{3}^{2}dx_{1}dx_{2}dx_{3}dx_{4} = \frac{301}{30}, 
&&c_{34} = \int_{P}x_{3}x_{4}dx_{1}dx_{2}dx_{3}dx_{4} = -\frac{97}{30}, \\
&c_{44} = \int_{P}x_{4}^{2}dx_{1}dx_{2}dx_{3}dx_{4} = \frac{211}{30}, 
\end{align*}
\begin{align*}
&\theta_P(x_{1}, x_{2}, x_{3}, x_{4}) = \frac{98900{{x}_{1}}+219420{{x}_{3}}-62565}{650251}, \\
&M_{X} = \max_{P}\theta_P = \frac{694595}{650251} > 1. 
\end{align*}

\subsubsection{$I_{14}$}
\begin{align*}
&b_{0} = \mathrm{vol}(P) = \frac{119}{8}, 
&&b_{1} = \int_{P}x_{1}dx_{1}dx_{2}dx_{3}dx_{4} = \frac{61}{30},\quad \\
&b_{2} = \int_{P}x_{2}dx_{1}dx_{2}dx_{3}dx_{4} = \frac{13}{10}, 
&&b_{3} = \int_{P}x_{3}dx_{1}dx_{2}dx_{3}dx_{4} = \frac{59}{30}, \\
&b_{4} = \int_{P}x_{4}dx_{1}dx_{2}dx_{3}dx_{4} = \frac{13}{5}, 
&&c_{11} = \int_{P}x_{1}^{2}dx_{1}dx_{2}dx_{3}dx_{4} = \frac{1619}{360}, \\
&c_{12} = \int_{P}x_{1}x_{2}dx_{1}dx_{2}dx_{3}dx_{4} = \frac{427}{720}, 
&&c_{13} = \int_{P}x_{1}x_{3}dx_{1}dx_{2}dx_{3}dx_{4} = \frac{142}{45}, \\
&c_{14} = \int_{P}x_{1}x_{4}dx_{1}dx_{2}dx_{3}dx_{4} = \frac{427}{360}, 
&&c_{22} = \int_{P}x_{2}^{2}dx_{1}dx_{2}dx_{3}dx_{4} = \frac{3043}{360}, \\
&c_{23} = \int_{P}x_{2}x_{3}dx_{1}dx_{2}dx_{3}dx_{4} = -\frac{1003}{720}, 
&&c_{24} = \int_{P}x_{2}x_{4}dx_{1}dx_{2}dx_{3}dx_{4} = \frac{473}{120}, \\
&c_{33} = \int_{P}x_{3}^{2}dx_{1}dx_{2}dx_{3}dx_{4} = \frac{2963}{360}, 
&&c_{34} = \int_{P}x_{3}x_{4}dx_{1}dx_{2}dx_{3}dx_{4} = -\frac{1003}{360}, \\
&c_{44} = \int_{P}x_{4}^{2}dx_{1}dx_{2}dx_{3}dx_{4} = \frac{473}{60}, 
\end{align*}
\begin{align*}
&\theta_P(x_{1}, x_{2}, x_{3}, x_{4}) = \frac{21660926805{{x}_{1}}+90160555275{{x}_{3}}}{218756440813}+ \frac{112125015975{{x}_{4}}-34479642672}{218756440813}, \\
&M_{X} = \max_{P}\theta_P = \frac{211431316083}{218756440813} < 1. 
\end{align*}

\subsubsection{{$I_{15}$}}
\begin{align*}
&b_{0} = \mathrm{vol}(P) = \frac{337}{24}, 
&&b_{1} = \int_{P}x_{1}dx_{1}dx_{2}dx_{3}dx_{4} = \frac{5}{2}, \\
&b_{2} = \int_{P}x_{2}dx_{1}dx_{2}dx_{3}dx_{4} = -\frac{5}{4}, 
&&b_{3} = \int_{P}x_{3}dx_{1}dx_{2}dx_{3}dx_{4} = \frac{23}{5}, \\
&b_{4} = \int_{P}x_{4}dx_{1}dx_{2}dx_{3}dx_{4} = \frac{1}{5}, 
&&c_{11} = \int_{P}x_{1}^{2}dx_{1}dx_{2}dx_{3}dx_{4} = \frac{79}{20}, \\
&c_{12} = \int_{P}x_{1}x_{2}dx_{1}dx_{2}dx_{3}dx_{4} = -\frac{79}{40}, 
&&c_{13} = \int_{P}x_{1}x_{3}dx_{1}dx_{2}dx_{3}dx_{4} = \frac{821}{180}, \\
&c_{14} = \int_{P}x_{1}x_{4}dx_{1}dx_{2}dx_{3}dx_{4} = \frac{601}{360}, 
&&c_{22} = \int_{P}x_{2}^{2}dx_{1}dx_{2}dx_{3}dx_{4} = \frac{1859}{360}, \\
&c_{23} = \int_{P}x_{2}x_{3}dx_{1}dx_{2}dx_{3}dx_{4} = -\frac{821}{360}, 
&&c_{24} = \int_{P}x_{2}x_{4}dx_{1}dx_{2}dx_{3}dx_{4} = -\frac{601}{720}, \\
&c_{33} = \int_{P}x_{3}^{2}dx_{1}dx_{2}dx_{3}dx_{4} = \frac{2381}{180}, 
&&c_{34} = \int_{P}x_{3}x_{4}dx_{1}dx_{2}dx_{3}dx_{4} = -\frac{739}{360}, \\
&c_{44} = \int_{P}x_{4}^{2}dx_{1}dx_{2}dx_{3}dx_{4} = \frac{1009}{120}, 
\end{align*}
\begin{align*}
&\theta_P(x_{1}, x_{2}, x_{3}, x_{4}) = \frac{329975235{{x}_{1}}+184724865{{x}_{3}}-119264508}{739116617}, \\
&M_{X} = \max_{P}\theta_P = \frac{949610187}{739116617} > 1. 
\end{align*}

\subsubsection{{$M_{1}$}}
\begin{align*}
&b_{0} = \mathrm{vol}(P) = \frac{385}{24}, 
&&b_{1} = \int_{P}x_{1}dx_{1}dx_{2}dx_{3}dx_{4} = -\frac{79}{30}, \\
&b_{2} = \int_{P}x_{2}dx_{1}dx_{2}dx_{3}dx_{4} = \frac{79}{20}, 
&&b_{3} = \int_{P}x_{3}dx_{1}dx_{2}dx_{3}dx_{4} = \frac{79}{30}, \\
&b_{4} = \int_{P}x_{4}dx_{1}dx_{2}dx_{3}dx_{4} = \frac{79}{30}, 
&&c_{11} = \int_{P}x_{1}^{2}dx_{1}dx_{2}dx_{3}dx_{4} = \frac{193}{36}, \\
&c_{12} = \int_{P}x_{1}x_{2}dx_{1}dx_{2}dx_{3}dx_{4} = -\frac{1949}{720}, 
&&c_{13} = \int_{P}x_{1}x_{3}dx_{1}dx_{2}dx_{3}dx_{4} = -\frac{19}{720}, \\
&c_{14} = \int_{P}x_{1}x_{4}dx_{1}dx_{2}dx_{3}dx_{4} = -\frac{19}{720}, 
&&c_{22} = \int_{P}x_{2}^{2}dx_{1}dx_{2}dx_{3}dx_{4} = \frac{2411}{180}, \\
&c_{23} = \int_{P}x_{2}x_{3}dx_{1}dx_{2}dx_{3}dx_{4} = \frac{1949}{720}, 
&&c_{24} = \int_{P}x_{2}x_{4}dx_{1}dx_{2}dx_{3}dx_{4} = \frac{1949}{720}, \\
&c_{33} = \int_{P}x_{3}^{2}dx_{1}dx_{2}dx_{3}dx_{4} = \frac{193}{36}, 
&&c_{34} = \int_{P}x_{3}x_{4}dx_{1}dx_{2}dx_{3}dx_{4} = \frac{19}{720}, \\
&c_{44} = \int_{P}x_{4}^{2}dx_{1}dx_{2}dx_{3}dx_{4} = \frac{193}{36}, 
\end{align*}
\begin{align*}
&\theta_P(x_{1}, x_{2}, x_{3}, x_{4}) = \frac{-1824900{{x}_{1}}+1824900{{x}_{3}}+1824900{{x}_{4}}-898704}{2853121}, \\
&M_{X} = \max_{P}\theta_P = \frac{4575996}{2853121} > 1. 
\end{align*}

\subsubsection{{$M_{2}$}}
\begin{align*}
&b_{0} = \mathrm{vol}(P) = \frac{139}{8}, 
&&b_{1} = \int_{P}x_{1}dx_{1}dx_{2}dx_{3}dx_{4} = \frac{161}{30}, \\
&b_{2} = \int_{P}x_{2}dx_{1}dx_{2}dx_{3}dx_{4} = \frac{239}{60}, 
&&b_{3} = \int_{P}x_{3}dx_{1}dx_{2}dx_{3}dx_{4} = \frac{20}{3}, \\
&b_{4} = \int_{P}x_{4}dx_{1}dx_{2}dx_{3}dx_{4} = \frac{20}{3}, 
&&c_{11} = \int_{P}x_{1}^{2}dx_{1}dx_{2}dx_{3}dx_{4} = \frac{1109}{180}, \\
&c_{12} = \int_{P}x_{1}x_{2}dx_{1}dx_{2}dx_{3}dx_{4} = \frac{481}{240}, 
&&c_{13} = \int_{P}x_{1}x_{3}dx_{1}dx_{2}dx_{3}dx_{4} = \frac{481}{240}, \\
&c_{14} = \int_{P}x_{1}x_{4}dx_{1}dx_{2}dx_{3}dx_{4} = \frac{481}{240}, 
&&c_{22} = \int_{P}x_{2}^{2}dx_{1}dx_{2}dx_{3}dx_{4} = \frac{2503}{180}, \\
&c_{23} = \int_{P}x_{2}x_{3}dx_{1}dx_{2}dx_{3}dx_{4} = \frac{1223}{240}, 
&&c_{24} = \int_{P}x_{2}x_{4}dx_{1}dx_{2}dx_{3}dx_{4} = \frac{1223}{240}, \\
&c_{33} = \int_{P}x_{3}^{2}dx_{1}dx_{2}dx_{3}dx_{4} = \frac{2101}{180}, 
&&c_{34} = \int_{P}x_{3}x_{4}dx_{1}dx_{2}dx_{3}dx_{4} = \frac{173}{48}, \\
&c_{44} = \int_{P}x_{4}^{2}dx_{1}dx_{2}dx_{3}dx_{4} = \frac{2101}{180}, 
\end{align*}
\begin{align*}
&\theta_P(x_{1}, x_{2}, x_{3}, x_{4}) = \frac{2555701260{{x}_{1}}+2483476860{{x}_{3}}}{4944865931}+ \frac{2483476860{{x}_{4}}-2695172464}{4944865931}, \\
&M_{X} = \max_{P}\theta_P = \frac{9794436236}{4944865931} > 1. 
\end{align*}

\subsubsection{{$M_{3}$}}
\begin{align*}
&b_{0} = \mathrm{vol}(P) = \frac{123}{8}, 
&&b_{1} = \int_{P}x_{1}dx_{1}dx_{2}dx_{3}dx_{4} = \frac{89}{30}, \\
&b_{2} = \int_{P}x_{2}dx_{1}dx_{2}dx_{3}dx_{4} = \frac{13}{10}, 
&&b_{3} = \int_{P}x_{3}dx_{1}dx_{2}dx_{3}dx_{4} = \frac{7}{60}, \\
&b_{4} = \int_{P}x_{4}dx_{1}dx_{2}dx_{3}dx_{4} = \frac{109}{20}, 
&&c_{11} = \int_{P}x_{1}^{2}dx_{1}dx_{2}dx_{3}dx_{4} = \frac{185}{36}, \\
&c_{12} = \int_{P}x_{1}x_{2}dx_{1}dx_{2}dx_{3}dx_{4} = \frac{19}{720}, 
&&c_{13} = \int_{P}x_{1}x_{3}dx_{1}dx_{2}dx_{3}dx_{4} = \frac{1549}{720}, \\
&c_{14} = \int_{P}x_{1}x_{4}dx_{1}dx_{2}dx_{3}dx_{4} = \frac{2189}{720}, 
&&c_{22} = \int_{P}x_{2}^{2}dx_{1}dx_{2}dx_{3}dx_{4} = \frac{1433}{180}, \\
&c_{23} = \int_{P}x_{2}x_{3}dx_{1}dx_{2}dx_{3}dx_{4} = \frac{413}{720}, 
&&c_{24} = \int_{P}x_{2}x_{4}dx_{1}dx_{2}dx_{3}dx_{4} = \frac{1733}{720}, \\
&c_{33} = \int_{P}x_{3}^{2}dx_{1}dx_{2}dx_{3}dx_{4} = \frac{1279}{180}, 
&&c_{34} = \int_{P}x_{3}x_{4}dx_{1}dx_{2}dx_{3}dx_{4} = \frac{-2741}{720}, \\
&c_{44} = \int_{P}x_{4}^{2}dx_{1}dx_{2}dx_{3}dx_{4} = \frac{2099}{180}, 
\end{align*}
\begin{align*}
&\theta_P(x_{1}, x_{2}, x_{3}, x_{4}) = \frac{4151273350320{{x}_{1}}+4524155739540{{x}_{3}}}{16440690503203} + \frac{10150485305940{{x}_{4}}-4433392348592}{16440690503203},\\
&M_{X} = \max_{P}\theta_P = \frac{25645181180008}{16440690503203} > 1. 
\end{align*}

\subsubsection{{$M_{4}$}}
\begin{align*}
&b_{0} = \mathrm{vol}(P) = \frac{123}{8}, 
&&b_{1} = \int_{P}x_{1}dx_{1}dx_{2}dx_{3}dx_{4} = \frac{41}{30}, \\
&b_{2} = \int_{P}x_{2}dx_{1}dx_{2}dx_{3}dx_{4} = \frac{157}{60}, 
&&b_{3} = \int_{P}x_{3}dx_{1}dx_{2}dx_{3}dx_{4} = \frac{119}{30}, \\
&b_{4} = \int_{P}x_{4}dx_{1}dx_{2}dx_{3}dx_{4} = \frac{79}{30}, 
&&c_{11} = \int_{P}x_{1}^{2}dx_{1}dx_{2}dx_{3}dx_{4} = \frac{893}{180}, \\
&c_{12} = \int_{P}x_{1}x_{2}dx_{1}dx_{2}dx_{3}dx_{4} = -\frac{253}{720}, 
&&c_{13} = \int_{P}x_{1}x_{3}dx_{1}dx_{2}dx_{3}dx_{4} = \frac{553}{144}, \\
&c_{14} = \int_{P}x_{1}x_{4}dx_{1}dx_{2}dx_{3}dx_{4} = \frac{301}{720}, 
&&c_{22} = \int_{P}x_{2}^{2}dx_{1}dx_{2}dx_{3}dx_{4} = \frac{1879}{180}, \\
&c_{23} = \int_{P}x_{2}x_{3}dx_{1}dx_{2}dx_{3}dx_{4} = \frac{1813}{720}, 
&&c_{24} = \int_{P}x_{2}x_{4}dx_{1}dx_{2}dx_{3}dx_{4} = \frac{1789}{720}, \\
&c_{33} = \int_{P}x_{3}^{2}dx_{1}dx_{2}dx_{3}dx_{4} = \frac{1553}{180}, 
&&c_{34} = \int_{P}x_{3}x_{4}dx_{1}dx_{2}dx_{3}dx_{4} = \frac{179}{720}, \\
&c_{44} = \int_{P}x_{4}^{2}dx_{1}dx_{2}dx_{3}dx_{4} = \frac{185}{36}, 
\end{align*}
\begin{align*}
&\theta_P(x_{1}, x_{2}, x_{3}, x_{4}) = \frac{-2793968096940{{x}_{1}}+8560548364140{{x}_{3}}}{13060549547443}+ \frac{8232843774060{{x}_{4}}-3370292079152}{13060549547443}, \\
&M_{X} = \max_{P}\theta_P = \frac{19189680326248}{13060549547443} > 1. 
\end{align*}

\subsubsection{{$M_{5}$}}
\begin{align*}
&b_{0} = \mathrm{vol}(P) = \frac{91}{6}, 
&&b_{1} = \int_{P}x_{1}dx_{1}dx_{2}dx_{3}dx_{4} = \frac{347}{60}, \\
&b_{2} = \int_{P}x_{2}dx_{1}dx_{2}dx_{3}dx_{4} = -\frac{3}{2}, 
&&b_{3} = \int_{P}x_{3}dx_{1}dx_{2}dx_{3}dx_{4} = \frac{169}{30}, \\
&b_{4} = \int_{P}x_{4}dx_{1}dx_{2}dx_{3}dx_{4} = 3, 
&&c_{11} = \int_{P}x_{1}^{2}dx_{1}dx_{2}dx_{3}dx_{4} = \frac{577}{45}, \\
&c_{12} = \int_{P}x_{1}x_{2}dx_{1}dx_{2}dx_{3}dx_{4} = -\frac{2287}{720}, 
&&c_{13} = \int_{P}x_{1}x_{3}dx_{1}dx_{2}dx_{3}dx_{4} = \frac{141}{16}, \\
&c_{14} = \int_{P}x_{1}x_{4}dx_{1}dx_{2}dx_{3}dx_{4} = \frac{2287}{360}, 
&&c_{22} = \int_{P}x_{2}^{2}dx_{1}dx_{2}dx_{3}dx_{4} = \frac{2399}{360}, \\
&c_{23} = \int_{P}x_{2}x_{3}dx_{1}dx_{2}dx_{3}dx_{4} = -\frac{2423}{720}, 
&&c_{24} = \int_{P}x_{2}x_{4}dx_{1}dx_{2}dx_{3}dx_{4} = -\frac{313}{80}, \\
&c_{33} = \int_{P}x_{3}^{2}dx_{1}dx_{2}dx_{3}dx_{4} = \frac{1039}{90}, 
&&c_{34} = \int_{P}x_{3}x_{4}dx_{1}dx_{2}dx_{3}dx_{4} = \frac{2423}{360}, \\
&c_{44} = \int_{P}x_{4}^{2}dx_{1}dx_{2}dx_{3}dx_{4} = \frac{313}{40}, 
\end{align*}
\begin{align*}
&\theta_P(x_{1}, x_{2}, x_{3}, x_{4}) = \frac{2003379647360{{x}_{1}}+2700009652340{{x}_{3}}}{5748810067571}+ \frac{-1155042818020{{x}_{4}}-1538316805364}{5748810067571}, \\
&M_{X} = \max_{P}\theta_P = \frac{12265145105056}{5748810067571} > 1. 
\end{align*}

\subsubsection{{$J_{1}$}}
\begin{align*}
&b_{0} = \mathrm{vol}(P) = \frac{46}{3}, 
&&b_{1} = \int_{P}x_{1}dx_{1}dx_{2}dx_{3}dx_{4} = -\frac{139}{30}, \\
&b_{2} = \int_{P}x_{2}dx_{1}dx_{2}dx_{3}dx_{4} = \frac{71}{20}, 
&&b_{3} = \int_{P}x_{3}dx_{1}dx_{2}dx_{3}dx_{4} = \frac{37}{30}, \\
&b_{4} = \int_{P}x_{4}dx_{1}dx_{2}dx_{3}dx_{4} = \frac{37}{30}, 
&&c_{11} = \int_{P}x_{1}^{2}dx_{1}dx_{2}dx_{3}dx_{4} = \frac{871}{180}, \\
&c_{12} = \int_{P}x_{1}x_{2}dx_{1}dx_{2}dx_{3}dx_{4} = -\frac{2173}{720}, 
&&c_{13} = \int_{P}x_{1}x_{3}dx_{1}dx_{2}dx_{3}dx_{4} = -\frac{431}{720}, \\
&c_{14} = \int_{P}x_{1}x_{4}dx_{1}dx_{2}dx_{3}dx_{4} = -\frac{431}{720}, 
&&c_{22} = \int_{P}x_{2}^{2}dx_{1}dx_{2}dx_{3}dx_{4} = \frac{1417}{120}, \\
&c_{23} = \int_{P}x_{2}x_{3}dx_{1}dx_{2}dx_{3}dx_{4} = \frac{313}{180}, 
&&c_{24} = \int_{P}x_{2}x_{4}dx_{1}dx_{2}dx_{3}dx_{4} = \frac{313}{180}, \\
&c_{33} = \int_{P}x_{3}^{2}dx_{1}dx_{2}dx_{3}dx_{4} = \frac{2827}{360}, 
&&c_{34} = \int_{P}x_{3}x_{4}dx_{1}dx_{2}dx_{3}dx_{4} = -\frac{3581}{720}, \\
&c_{44} = \int_{P}x_{4}^{2}dx_{1}dx_{2}dx_{3}dx_{4} = \frac{2827}{360}, 
\end{align*}
\begin{align*}
&\theta_P(x_{1}, x_{2}, x_{3}, x_{4}) = \frac{-353629140{{x}_{1}}+95218620{{x}_{3}}+95218620{{x}_{4}}-122175279}{271746296}, \\
&M_{X} = \max_{P}\theta_P = \frac{326672481}{271746296} > 1. 
\end{align*}

\subsubsection{$J_{2}$}
\begin{align*}
&b_{0} = \mathrm{vol}(P) = \frac{163}{12}, 
&&b_{1} = \int_{P}x_{1}dx_{1}dx_{2}dx_{3}dx_{4} = 0,\quad \\
&b_{2} = \int_{P}x_{2}dx_{1}dx_{2}dx_{3}dx_{4} = \frac{2}{3}, 
&&b_{3} = \int_{P}x_{3}dx_{1}dx_{2}dx_{3}dx_{4} = \frac{2}{3}, \\
&b_{4} = \int_{P}x_{4}dx_{1}dx_{2}dx_{3}dx_{4} = \frac{2}{3}, 
&&c_{11} = \int_{P}x_{1}^{2}dx_{1}dx_{2}dx_{3}dx_{4} = \frac{137}{36}, \\
&c_{12} = \int_{P}x_{1}x_{2}dx_{1}dx_{2}dx_{3}dx_{4} = -\frac{137}{144}, 
&&c_{13} = \int_{P}x_{1}x_{3}dx_{1}dx_{2}dx_{3}dx_{4} = \frac{137}{144}, \\
&c_{14} = \int_{P}x_{1}x_{4}dx_{1}dx_{2}dx_{3}dx_{4} = \frac{137}{144}, 
&&c_{22} = \int_{P}x_{2}^{2}dx_{1}dx_{2}dx_{3}dx_{4} = \frac{2381}{360}, \\
&c_{23} = \int_{P}x_{2}x_{3}dx_{1}dx_{2}dx_{3}dx_{4} = \frac{34}{45}, 
&&c_{24} = \int_{P}x_{2}x_{4}dx_{1}dx_{2}dx_{3}dx_{4} = \frac{34}{45}, \\
&c_{33} = \int_{P}x_{3}^{2}dx_{1}dx_{2}dx_{3}dx_{4} = \frac{2381}{360}, 
&&c_{34} = \int_{P}x_{3}x_{4}dx_{1}dx_{2}dx_{3}dx_{4} = -\frac{2989}{720}, \\
&c_{44} = \int_{P}x_{4}^{2}dx_{1}dx_{2}dx_{3}dx_{4} = \frac{2381}{360}, 
\end{align*}
\begin{align*}
&\theta_P(x_{1}, x_{2}, x_{3}, x_{4}) = \frac{-78240{{x}_{1}}+156480{{x}_{3}}+156480{{x}_{4}}-15360}{450983}, \\
&M_{X} = \max_{P}\theta_P = \frac{141120}{450983} < 1. 
\end{align*}

\subsubsection{{$Q_{1}$}}
\begin{align*}
&b_{0} = \mathrm{vol}(P) = \frac{221}{12}, 
&&b_{1} = \int_{P}x_{1}dx_{1}dx_{2}dx_{3}dx_{4} = \frac{329}{60}, \\
&b_{2} = \int_{P}x_{2}dx_{1}dx_{2}dx_{3}dx_{4} = \frac{329}{30}, 
&&b_{3} = \int_{P}x_{3}dx_{1}dx_{2}dx_{3}dx_{4} = \frac{329}{60}, \\
&b_{4} = \int_{P}x_{4}dx_{1}dx_{2}dx_{3}dx_{4} = \frac{329}{60}, 
&&c_{11} = \int_{P}x_{1}^{2}dx_{1}dx_{2}dx_{3}dx_{4} = \frac{233}{36}, \\
&c_{12} = \int_{P}x_{1}x_{2}dx_{1}dx_{2}dx_{3}dx_{4} = \frac{437}{60}, 
&&c_{13} = \int_{P}x_{1}x_{3}dx_{1}dx_{2}dx_{3}dx_{4} = \frac{437}{120}, \\
&c_{14} = \int_{P}x_{1}x_{4}dx_{1}dx_{2}dx_{3}dx_{4} = \frac{437}{120}, 
&&c_{22} = \int_{P}x_{2}^{2}dx_{1}dx_{2}dx_{3}dx_{4} = \frac{437}{30}, \\
&c_{23} = \int_{P}x_{2}x_{3}dx_{1}dx_{2}dx_{3}dx_{4} = \frac{437}{60}, 
&&c_{24} = \int_{P}x_{2}x_{4}dx_{1}dx_{2}dx_{3}dx_{4} = \frac{437}{60}, \\
&c_{33} = \int_{P}x_{3}^{2}dx_{1}dx_{2}dx_{3}dx_{4} = \frac{293}{20}, 
&&c_{34} = \int_{P}x_{3}x_{4}dx_{1}dx_{2}dx_{3}dx_{4} = \frac{437}{120}, \\
&c_{44} = \int_{P}x_{4}^{2}dx_{1}dx_{2}dx_{3}dx_{4} = \frac{293}{20}, 
\end{align*}
\begin{align*}
&\theta_P(x_{1}, x_{2}, x_{3}, x_{4}) = \frac{363545{{x}_{2}}-216482}{266403}, \\
&M_{X} = \max_{P}\theta_P = \frac{510608}{266403} > 1. 
\end{align*}

\subsubsection{{$Q_{2}$}}
\begin{align*}
&b_{0} = \mathrm{vol}(P) = \frac{135}{8}, 
&&b_{1} = \int_{P}x_{1}dx_{1}dx_{2}dx_{3}dx_{4} = \frac{257}{60}, \\
&b_{2} = \int_{P}x_{2}dx_{1}dx_{2}dx_{3}dx_{4} = \frac{257}{30}, 
&&b_{3} = \int_{P}x_{3}dx_{1}dx_{2}dx_{3}dx_{4} = \frac{124}{15}, \\
&b_{4} = \int_{P}x_{4}dx_{1}dx_{2}dx_{3}dx_{4} = \frac{62}{15}, 
&&c_{11} = \int_{P}x_{1}^{2}dx_{1}dx_{2}dx_{3}dx_{4} = \frac{2071}{360}, \\
&c_{12} = \int_{P}x_{1}x_{2}dx_{1}dx_{2}dx_{3}dx_{4} = \frac{145}{24}, 
&&c_{13} = \int_{P}x_{1}x_{3}dx_{1}dx_{2}dx_{3}dx_{4} = \frac{109}{24}, \\
&c_{14} = \int_{P}x_{1}x_{4}dx_{1}dx_{2}dx_{3}dx_{4} = \frac{109}{48}, 
&&c_{22} = \int_{P}x_{2}^{2}dx_{1}dx_{2}dx_{3}dx_{4} = \frac{145}{12}, \\
&c_{23} = \int_{P}x_{2}x_{3}dx_{1}dx_{2}dx_{3}dx_{4} = \frac{109}{12}, 
&&c_{24} = \int_{P}x_{2}x_{4}dx_{1}dx_{2}dx_{3}dx_{4} = \frac{109}{24}, \\
&c_{33} = \int_{P}x_{3}^{2}dx_{1}dx_{2}dx_{3}dx_{4} = \frac{311}{20}, 
&&c_{34} = \int_{P}x_{3}x_{4}dx_{1}dx_{2}dx_{3}dx_{4} = \frac{311}{40}, \\
&c_{44} = \int_{P}x_{4}^{2}dx_{1}dx_{2}dx_{3}dx_{4} = \frac{4883}{360}, 
\end{align*}
\begin{align*}
&\theta_P(x_{1}, x_{2}, x_{3}, x_{4}) = \frac{211857525{{x}_{2}}+80463375{{x}_{3}}-146967508}{237174992}, \\
&M_{X} = \max_{P}\theta_P = \frac{518137667}{237174992} > 1. 
\end{align*}

\subsubsection{{$Q_{3}$}}
\begin{align*}
&b_{0} = \mathrm{vol}(P) = \frac{197}{12}, 
&&b_{1} = \int_{P}x_{1}dx_{1}dx_{2}dx_{3}dx_{4} = \frac{88}{15}, \\
&b_{2} = \int_{P}x_{2}dx_{1}dx_{2}dx_{3}dx_{4} = \frac{391}{60}, 
&&b_{3} = \int_{P}x_{3}dx_{1}dx_{2}dx_{3}dx_{4} = \frac{44}{15}, \\
&b_{4} = \int_{P}x_{4}dx_{1}dx_{2}dx_{3}dx_{4} = \frac{44}{15}, 
&&c_{11} = \int_{P}x_{1}^{2}dx_{1}dx_{2}dx_{3}dx_{4} = \frac{173}{30}, \\
&c_{12} = \int_{P}x_{1}x_{2}dx_{1}dx_{2}dx_{3}dx_{4} = \frac{65}{12}, 
&&c_{13} = \int_{P}x_{1}x_{3}dx_{1}dx_{2}dx_{3}dx_{4} = \frac{173}{60}, \\
&c_{14} = \int_{P}x_{1}x_{4}dx_{1}dx_{2}dx_{3}dx_{4} = \frac{173}{60}, 
&&c_{22} = \int_{P}x_{2}^{2}dx_{1}dx_{2}dx_{3}dx_{4} = \frac{1861}{180}, \\
&c_{23} = \int_{P}x_{2}x_{3}dx_{1}dx_{2}dx_{3}dx_{4} = \frac{65}{24}, 
&&c_{24} = \int_{P}x_{2}x_{4}dx_{1}dx_{2}dx_{3}dx_{4} = \frac{65}{24}, \\
&c_{33} = \int_{P}x_{3}^{2}dx_{1}dx_{2}dx_{3}dx_{4} = \frac{187}{20}, 
&&c_{34} = \int_{P}x_{3}x_{4}dx_{1}dx_{2}dx_{3}dx_{4} = \frac{173}{120}, \\
&c_{44} = \int_{P}x_{4}^{2}dx_{1}dx_{2}dx_{3}dx_{4} = \frac{187}{20}, 
\end{align*}
\begin{align*}
&\theta_P(x_{1}, x_{2}, x_{3}, x_{4}) = \frac{269739295{{x}_{1}}+61718130{{x}_{2}}-120893422}{201232113}, \\
&M_{X} = \max_{P}\theta_P = \frac{90760711}{67077371} > 1. 
\end{align*}

\subsubsection{{$Q_{4}$}}
\begin{align*}
&b_{0} = \mathrm{vol}(P) = \frac{135}{8}, 
&&b_{1} = \int_{P}x_{1}dx_{1}dx_{2}dx_{3}dx_{4} = \frac{28}{5}, \\
&b_{2} = \int_{P}x_{2}dx_{1}dx_{2}dx_{3}dx_{4} = \frac{257}{30}, 
&&b_{3} = \int_{P}x_{3}dx_{1}dx_{2}dx_{3}dx_{4} = \frac{257}{60}, \\
&b_{4} = \int_{P}x_{4}dx_{1}dx_{2}dx_{3}dx_{4} = \frac{14}{5}, 
&&c_{11} = \int_{P}x_{1}^{2}dx_{1}dx_{2}dx_{3}dx_{4} = \frac{119}{20}, \\
&c_{12} = \int_{P}x_{1}x_{2}dx_{1}dx_{2}dx_{3}dx_{4} = \frac{1123}{180}, 
&&c_{13} = \int_{P}x_{1}x_{3}dx_{1}dx_{2}dx_{3}dx_{4} = \frac{1123}{360}, \\
&c_{14} = \int_{P}x_{1}x_{4}dx_{1}dx_{2}dx_{3}dx_{4} = \frac{119}{40}, 
&&c_{22} = \int_{P}x_{2}^{2}dx_{1}dx_{2}dx_{3}dx_{4} = \frac{145}{12}, \\
&c_{23} = \int_{P}x_{2}x_{3}dx_{1}dx_{2}dx_{3}dx_{4} = \frac{145}{24}, 
&&c_{24} = \int_{P}x_{2}x_{4}dx_{1}dx_{2}dx_{3}dx_{4} = \frac{1123}{360}, \\
&c_{33} = \int_{P}x_{3}^{2}dx_{1}dx_{2}dx_{3}dx_{4} = \frac{1501}{120}, 
&&c_{34} = \int_{P}x_{3}x_{4}dx_{1}dx_{2}dx_{3}dx_{4} = \frac{1123}{720}, \\
&c_{44} = \int_{P}x_{4}^{2}dx_{1}dx_{2}dx_{3}dx_{4} = \frac{379}{40}, 
\end{align*}
\begin{align*}
&\theta_P(x_{1}, x_{2}, x_{3}, x_{4}) = \frac{51832575{{x}_{1}}+58443525{{x}_{2}}-46869844}{73313456}, \\
&M_{X} = \max_{P}\theta_P = \frac{121849781}{73313456} > 1. 
\end{align*}

\subsubsection{{$Q_{5}$}}
\begin{align*}
&b_{0} = \mathrm{vol}(P) = \frac{373}{24}, 
&&b_{1} = \int_{P}x_{1}dx_{1}dx_{2}dx_{3}dx_{4} = \frac{281}{60}, \\
&b_{2} = \int_{P}x_{2}dx_{1}dx_{2}dx_{3}dx_{4} = \frac{82}{15}, 
&&b_{3} = \int_{P}x_{3}dx_{1}dx_{2}dx_{3}dx_{4} = \frac{167}{30}, \\
&b_{4} = \int_{P}x_{4}dx_{1}dx_{2}dx_{3}dx_{4} = \frac{167}{60}, 
&&c_{11} = \int_{P}x_{1}^{2}dx_{1}dx_{2}dx_{3}dx_{4} = \frac{21}{4}, \\
&c_{12} = \int_{P}x_{1}x_{2}dx_{1}dx_{2}dx_{3}dx_{4} = \frac{577}{120}, 
&&c_{13} = \int_{P}x_{1}x_{3}dx_{1}dx_{2}dx_{3}dx_{4} = \frac{49}{12}, \\
&c_{14} = \int_{P}x_{1}x_{4}dx_{1}dx_{2}dx_{3}dx_{4} = \frac{49}{24}, 
&&c_{22} = \int_{P}x_{2}^{2}dx_{1}dx_{2}dx_{3}dx_{4} = \frac{671}{72}, \\
&c_{23} = \int_{P}x_{2}x_{3}dx_{1}dx_{2}dx_{3}dx_{4} = \frac{473}{120}, 
&&c_{24} = \int_{P}x_{2}x_{4}dx_{1}dx_{2}dx_{3}dx_{4} = \frac{473}{240}, \\
&c_{33} = \int_{P}x_{3}^{2}dx_{1}dx_{2}dx_{3}dx_{4} = \frac{199}{20}, 
&&c_{34} = \int_{P}x_{3}x_{4}dx_{1}dx_{2}dx_{3}dx_{4} = \frac{199}{40}, \\
&c_{44} = \int_{P}x_{4}^{2}dx_{1}dx_{2}dx_{3}dx_{4} = \frac{3727}{360}, 
\end{align*}
\begin{align*}
&\theta_P(x_{1}, x_{2}, x_{3}, x_{4}) = \frac{394596985345{{x}_{1}}+192548075640{{x}_{2}}+234003144575{{x}_{3}}}{573603226348}- \frac{270449943142}{573603226348},  \\
&M_{X} = \max_{P}\theta_P = \frac{977249482633}{573603226348} > 1. 
\end{align*}

\subsubsection{{$Q_{6}$}}
\begin{align*}
&b_{0} = \mathrm{vol}(P) = \frac{46}{3}, 
&&b_{1} = \int_{P}x_{1}dx_{1}dx_{2}dx_{3}dx_{4} = \frac{37}{12}, \\
&b_{2} = \int_{P}x_{2}dx_{1}dx_{2}dx_{3}dx_{4} = \frac{37}{6}, 
&&b_{3} = \int_{P}x_{3}dx_{1}dx_{2}dx_{3}dx_{4} = 0, \\
&b_{4} = \int_{P}x_{4}dx_{1}dx_{2}dx_{3}dx_{4} = \frac{37}{12}, 
&&c_{11} = \int_{P}x_{1}^{2}dx_{1}dx_{2}dx_{3}dx_{4} = \frac{151}{30}, \\
&c_{12} = \int_{P}x_{1}x_{2}dx_{1}dx_{2}dx_{3}dx_{4} = \frac{24}{5}, 
&&c_{13} = \int_{P}x_{1}x_{3}dx_{1}dx_{2}dx_{3}dx_{4} = 0, \\
&c_{14} = \int_{P}x_{1}x_{4}dx_{1}dx_{2}dx_{3}dx_{4} = \frac{12}{5}, 
&&c_{22} = \int_{P}x_{2}^{2}dx_{1}dx_{2}dx_{3}dx_{4} = \frac{48}{5}, \\
&c_{23} = \int_{P}x_{2}x_{3}dx_{1}dx_{2}dx_{3}dx_{4} = 0, 
&&c_{24} = \int_{P}x_{2}x_{4}dx_{1}dx_{2}dx_{3}dx_{4} = \frac{24}{5}, \\
&c_{33} = \int_{P}x_{3}^{2}dx_{1}dx_{2}dx_{3}dx_{4} = \frac{46}{9}, 
&&c_{34} = \int_{P}x_{3}x_{4}dx_{1}dx_{2}dx_{3}dx_{4} = 0, \\
&c_{44} = \int_{P}x_{4}^{2}dx_{1}dx_{2}dx_{3}dx_{4} = \frac{311}{30}, 
\end{align*}
\begin{align*}
&\theta_P(x_{1}, x_{2}, x_{3}, x_{4}) = \frac{17020{{x}_{2}}-6845}{19651}, \\
&M_{X} = \max_{P}\theta_P = \frac{27195}{19651} > 1. 
\end{align*}

\subsubsection{{$Q_{7}$}}
\begin{align*}
&b_{0} = \mathrm{vol}(P) = \frac{121}{8}, 
&&b_{1} = \int_{P}x_{1}dx_{1}dx_{2}dx_{3}dx_{4} = \frac{97}{30}, \\
&b_{2} = \int_{P}x_{2}dx_{1}dx_{2}dx_{3}dx_{4} = \frac{97}{15}, 
&&b_{3} = \int_{P}x_{3}dx_{1}dx_{2}dx_{3}dx_{4} = \frac{97}{60}, \\
&b_{4} = \int_{P}x_{4}dx_{1}dx_{2}dx_{3}dx_{4} = \frac{97}{60}, 
&&c_{11} = \int_{P}x_{1}^{2}dx_{1}dx_{2}dx_{3}dx_{4} = \frac{59}{12}, \\
&c_{12} = \int_{P}x_{1}x_{2}dx_{1}dx_{2}dx_{3}dx_{4} = \frac{226}{45}, 
&&c_{13} = \int_{P}x_{1}x_{3}dx_{1}dx_{2}dx_{3}dx_{4} = \frac{59}{24}, \\
&c_{14} = \int_{P}x_{1}x_{4}dx_{1}dx_{2}dx_{3}dx_{4} = \frac{19}{360}, 
&&c_{22} = \int_{P}x_{2}^{2}dx_{1}dx_{2}dx_{3}dx_{4} = \frac{452}{45}, \\
&c_{23} = \int_{P}x_{2}x_{3}dx_{1}dx_{2}dx_{3}dx_{4} = \frac{113}{45}, 
&&c_{24} = \int_{P}x_{2}x_{4}dx_{1}dx_{2}dx_{3}dx_{4} = \frac{113}{45}, \\
&c_{33} = \int_{P}x_{3}^{2}dx_{1}dx_{2}dx_{3}dx_{4} = \frac{931}{120}, 
&&c_{34} = \int_{P}x_{3}x_{4}dx_{1}dx_{2}dx_{3}dx_{4} = \frac{19}{720}, \\
&c_{44} = \int_{P}x_{4}^{2}dx_{1}dx_{2}dx_{3}dx_{4} = \frac{931}{120}, 
\end{align*}
\begin{align*}
&\theta_P(x_{1}, x_{2}, x_{3}, x_{4}) = \frac{176055{{x}_{2}}-75272}{198188}, \\
&M_{X} = \max_{P}\theta_P = \frac{138419}{99094} > 1. 
\end{align*}

\subsubsection{{$Q_{8}$}}
\begin{align*}
&b_{0} = \mathrm{vol}(P) = \frac{44}{3}, 
&&b_{1} = \int_{P}x_{1}dx_{1}dx_{2}dx_{3}dx_{4} = \frac{7}{2}, \\
&b_{2} = \int_{P}x_{2}dx_{1}dx_{2}dx_{3}dx_{4} = \frac{53}{12}, 
&&b_{3} = \int_{P}x_{3}dx_{1}dx_{2}dx_{3}dx_{4} = 0, \\
&b_{4} = \int_{P}x_{4}dx_{1}dx_{2}dx_{3}dx_{4} = \frac{7}{4}, 
&&c_{11} = \int_{P}x_{1}^{2}dx_{1}dx_{2}dx_{3}dx_{4} = \frac{71}{15}, \\
&c_{12} = \int_{P}x_{1}x_{2}dx_{1}dx_{2}dx_{3}dx_{4} = \frac{21}{5}, 
&&c_{13} = \int_{P}x_{1}x_{3}dx_{1}dx_{2}dx_{3}dx_{4} = 0, \\
&c_{14} = \int_{P}x_{1}x_{4}dx_{1}dx_{2}dx_{3}dx_{4} = \frac{71}{30}, 
&&c_{22} = \int_{P}x_{2}^{2}dx_{1}dx_{2}dx_{3}dx_{4} = \frac{83}{10}, \\
&c_{23} = \int_{P}x_{2}x_{3}dx_{1}dx_{2}dx_{3}dx_{4} = 0, 
&&c_{24} = \int_{P}x_{2}x_{4}dx_{1}dx_{2}dx_{3}dx_{4} = \frac{21}{10}, \\
&c_{33} = \int_{P}x_{3}^{2}dx_{1}dx_{2}dx_{3}dx_{4} = \frac{44}{9}, 
&&c_{34} = \int_{P}x_{3}x_{4}dx_{1}dx_{2}dx_{3}dx_{4} = 0, \\
&c_{44} = \int_{P}x_{4}^{2}dx_{1}dx_{2}dx_{3}dx_{4} = \frac{229}{30}, 
\end{align*}
\begin{align*}
&\theta_P(x_{1}, x_{2}, x_{3}, x_{4}) = \frac{1663200{{x}_{1}}+982960{{x}_{2}}-692905}{2735927}, \\
&M_{X} = \max_{P}\theta_P = \frac{2936215}{2735927} > 1. 
\end{align*}

\subsubsection{{$Q_{9}$}}
\begin{align*}
&b_{0} = \mathrm{vol}(P) = \frac{341}{24}, 
&&b_{1} = \int_{P}x_{1}dx_{1}dx_{2}dx_{3}dx_{4} = \frac{113}{30}, \\
&b_{2} = \int_{P}x_{2}dx_{1}dx_{2}dx_{3}dx_{4} = \frac{71}{30}, 
&&b_{3} = \int_{P}x_{3}dx_{1}dx_{2}dx_{3}dx_{4} = \frac{113}{60}, \\
&b_{4} = \int_{P}x_{4}dx_{1}dx_{2}dx_{3}dx_{4} = \frac{7}{10}, 
&&c_{11} = \int_{P}x_{1}^{2}dx_{1}dx_{2}dx_{3}dx_{4} = \frac{91}{20}, \\
&c_{12} = \int_{P}x_{1}x_{2}dx_{1}dx_{2}dx_{3}dx_{4} = \frac{152}{45}, 
&&c_{13} = \int_{P}x_{1}x_{3}dx_{1}dx_{2}dx_{3}dx_{4} = \frac{911}{40}, \\
&c_{14} = \int_{P}x_{1}x_{4}dx_{1}dx_{2}dx_{3}dx_{4} = \frac{211}{360}, 
&&c_{22} = \int_{P}x_{2}^{2}dx_{1}dx_{2}dx_{3}dx_{4} = \frac{59}{9}, \\
&c_{23} = \int_{P}x_{2}x_{3}dx_{1}dx_{2}dx_{3}dx_{4} = \frac{76}{45}, 
&&c_{24} = \int_{P}x_{2}x_{4}dx_{1}dx_{2}dx_{3}dx_{4} = -\frac{143}{90}, \\
&c_{33} = \int_{P}x_{3}^{2}dx_{1}dx_{2}dx_{3}dx_{4} = \frac{901}{120}, 
&&c_{34} = \int_{P}x_{3}x_{4}dx_{1}dx_{2}dx_{3}dx_{4} = \frac{211}{720}, \\
&c_{44} = \int_{P}x_{4}^{2}dx_{1}dx_{2}dx_{3}dx_{4} = \frac{479}{72}, 
\end{align*}
\begin{align*}
&\theta_P(x_{1}, x_{2}, x_{3}, x_{4}) = \frac{461229780{{x}_{1}}-53988825{{x}_{2}}-113280372}{395454118}, \\
&M_{X} = \max_{P}\theta_P = \frac{173974704}{197727059} < 1. 
\end{align*}

\subsubsection{{$Q_{10}$}}
\begin{align*}
&b_{0} = \mathrm{vol}(P) = 14, 
&&b_{1} = \int_{P}x_{1}dx_{1}dx_{2}dx_{3}dx_{4} = \frac{4}{3}, \\
&b_{2} = \int_{P}x_{2}dx_{1}dx_{2}dx_{3}dx_{4} = \frac{8}{3}, 
&&b_{3} = \int_{P}x_{3}dx_{1}dx_{2}dx_{3}dx_{4} = \frac{7}{3}, \\
&b_{4} = \int_{P}x_{4}dx_{1}dx_{2}dx_{3}dx_{4} = \frac{7}{6}, 
&&c_{11} = \int_{P}x_{1}^{2}dx_{1}dx_{2}dx_{3}dx_{4} = \frac{13}{3}, \\
&c_{12} = \int_{P}x_{1}x_{2}dx_{1}dx_{2}dx_{3}dx_{4} = \frac{7}{2}, 
&&c_{13} = \int_{P}x_{1}x_{3}dx_{1}dx_{2}dx_{3}dx_{4} = \frac{2}{9}, \\
&c_{14} = \int_{P}x_{1}x_{4}dx_{1}dx_{2}dx_{3}dx_{4} = \frac{1}{9}, 
&&c_{22} = \int_{P}x_{2}^{2}dx_{1}dx_{2}dx_{3}dx_{4} = 7, \\
&c_{23} = \int_{P}x_{2}x_{3}dx_{1}dx_{2}dx_{3}dx_{4} = \frac{4}{9}, 
&&c_{24} = \int_{P}x_{2}x_{4}dx_{1}dx_{2}dx_{3}dx_{4} = \frac{2}{9}, \\
&c_{33} = \int_{P}x_{3}^{2}dx_{1}dx_{2}dx_{3}dx_{4} = \frac{14}{3}, 
&&c_{34} = \int_{P}x_{3}x_{4}dx_{1}dx_{2}dx_{3}dx_{4} = \frac{7}{3}, \\
&c_{44} = \int_{P}x_{4}^{2}dx_{1}dx_{2}dx_{3}dx_{4} = 7, 
\end{align*}
\begin{align*}
&\theta_P(x_{1}, x_{2}, x_{3}, x_{4}) = \frac{1848{{x}_{2}}+2454{{x}_{3}}-761}{4499}, \\
&M_{X} = \max_{P}\theta_P = \frac{5389}{4499} > 1. 
\end{align*}

\subsubsection{{$Q_{11}$}}
\begin{align*}
&b_{0} = \mathrm{vol}(P) = 14, 
&&b_{1} = \int_{P}x_{1}dx_{1}dx_{2}dx_{3}dx_{4} = \frac{4}{3}, \\
&b_{2} = \int_{P}x_{2}dx_{1}dx_{2}dx_{3}dx_{4} = \frac{8}{3}, 
&&b_{3} = \int_{P}x_{3}dx_{1}dx_{2}dx_{3}dx_{4} = 0, \\
&b_{4} = \int_{P}x_{4}dx_{1}dx_{2}dx_{3}dx_{4} = 0, 
&&c_{11} = \int_{P}x_{1}^{2}dx_{1}dx_{2}dx_{3}dx_{4} = \frac{13}{3}, \\
&c_{12} = \int_{P}x_{1}x_{2}dx_{1}dx_{2}dx_{3}dx_{4} = \frac{7}{2}, 
&&c_{13} = \int_{P}x_{1}x_{3}dx_{1}dx_{2}dx_{3}dx_{4} = 0, \\
&c_{14} = \int_{P}x_{1}x_{4}dx_{1}dx_{2}dx_{3}dx_{4} = 0, 
&&c_{22} = \int_{P}x_{2}^{2}dx_{1}dx_{2}dx_{3}dx_{4} = 7, \\
&c_{23} = \int_{P}x_{2}x_{3}dx_{1}dx_{2}dx_{3}dx_{4} = 0, 
&&c_{24} = \int_{P}x_{2}x_{4}dx_{1}dx_{2}dx_{3}dx_{4} = 0, \\
&c_{33} = \int_{P}x_{3}^{2}dx_{1}dx_{2}dx_{3}dx_{4} = \frac{14}{3}, 
&&c_{34} = \int_{P}x_{3}x_{4}dx_{1}dx_{2}dx_{3}dx_{4} = 0, \\
&c_{44} = \int_{P}x_{4}^{2}dx_{1}dx_{2}dx_{3}dx_{4} = \frac{14}{3}, 
\end{align*}
\begin{align*}
&\theta_P(x_{1}, x_{2}, x_{3}, x_{4}) = \frac{168{{x}_{2}}-32}{409}, \\
&M_{X} = \max_{P}\theta_P = \frac{304}{409} < 1. 
\end{align*}

\subsubsection{{$Q_{12}$}}
\begin{align*}
&b_{0} = \mathrm{vol}(P) = \frac{331}{24}, 
&&b_{1} = \int_{P}x_{1}dx_{1}dx_{2}dx_{3}dx_{4} = \frac{16}{5}, \\
&b_{2} = \int_{P}x_{2}dx_{1}dx_{2}dx_{3}dx_{4} = \frac{113}{30}, 
&&b_{3} = \int_{P}x_{3}dx_{1}dx_{2}dx_{3}dx_{4} = \frac{113}{60}, \\
&b_{4} = \int_{P}x_{4}dx_{1}dx_{2}dx_{3}dx_{4} = -\frac{17}{60}, 
&&c_{11} = \int_{P}x_{1}^{2}dx_{1}dx_{2}dx_{3}dx_{4} = \frac{203}{45}, \\
&c_{12} = \int_{P}x_{1}x_{2}dx_{1}dx_{2}dx_{3}dx_{4} = \frac{169}{45}, 
&&c_{13} = \int_{P}x_{1}x_{3}dx_{1}dx_{2}dx_{3}dx_{4} = \frac{169}{90}, \\
&c_{14} = \int_{P}x_{1}x_{4}dx_{1}dx_{2}dx_{3}dx_{4} = \frac{17}{45}, 
&&c_{22} = \int_{P}x_{2}^{2}dx_{1}dx_{2}dx_{3}dx_{4} = \frac{427}{60}, \\
&c_{23} = \int_{P}x_{2}x_{3}dx_{1}dx_{2}dx_{3}dx_{4} = \frac{427}{120}, 
&&c_{24} = \int_{P}x_{2}x_{4}dx_{1}dx_{2}dx_{3}dx_{4} = -\frac{121}{72}, \\
&c_{33} = \int_{P}x_{3}^{2}dx_{1}dx_{2}dx_{3}dx_{4} = \frac{329}{40}, 
&&c_{34} = \int_{P}x_{3}x_{4}dx_{1}dx_{2}dx_{3}dx_{4} = -\frac{121}{144}, \\
&c_{44} = \int_{P}x_{4}^{2}dx_{1}dx_{2}dx_{3}dx_{4} = \frac{2081}{360}, 
\end{align*}
\begin{align*}
&\theta_P(x_{1}, x_{2}, x_{3}, x_{4}) = \frac{115654710{{x}_{1}}+66679950{{x}_{2}}-45045768}{196251577}, \\
&M_{X} = \max_{P}\theta_P = \frac{203968842}{196251577} > 1. 
\end{align*}

\subsubsection{{$Q_{13}$}}
\begin{align*}
&b_{0} = \mathrm{vol}(P) = \frac{55}{4}, 
&&b_{1} = \int_{P}x_{1}dx_{1}dx_{2}dx_{3}dx_{4} = \frac{139}{60}, \\
&b_{2} = \int_{P}x_{2}dx_{1}dx_{2}dx_{3}dx_{4} = -\frac{29}{60}, 
&&b_{3} = \int_{P}x_{3}dx_{1}dx_{2}dx_{3}dx_{4} = \frac{7}{5}, \\
&b_{4} = \int_{P}x_{4}dx_{1}dx_{2}dx_{3}dx_{4} = \frac{7}{5}, 
&&c_{11} = \int_{P}x_{1}^{2}dx_{1}dx_{2}dx_{3}dx_{4} = \frac{733}{180}, \\
&c_{12} = \int_{P}x_{1}x_{2}dx_{1}dx_{2}dx_{3}dx_{4} = \frac{227}{90}, 
&&c_{13} = \int_{P}x_{1}x_{3}dx_{1}dx_{2}dx_{3}dx_{4} = \frac{31}{40}, \\
&c_{14} = \int_{P}x_{1}x_{4}dx_{1}dx_{2}dx_{3}dx_{4} = \frac{31}{40}, 
&&c_{22} = \int_{P}x_{2}^{2}dx_{1}dx_{2}dx_{3}dx_{4} = \frac{185}{36}, \\
&c_{23} = \int_{P}x_{2}x_{3}dx_{1}dx_{2}dx_{3}dx_{4} = -\frac{157}{120}, 
&&c_{24} = \int_{P}x_{2}x_{4}dx_{1}dx_{2}dx_{3}dx_{4} = -\frac{157}{120}, \\
&c_{33} = \int_{P}x_{3}^{2}dx_{1}dx_{2}dx_{3}dx_{4} = \frac{1243}{180}, 
&&c_{34} = \int_{P}x_{3}x_{4}dx_{1}dx_{2}dx_{3}dx_{4} = \frac{25}{24}, \\
&c_{44} = \int_{P}x_{4}^{2}dx_{1}dx_{2}dx_{3}dx_{4} = \frac{1243}{180}, 
\end{align*}
\begin{align*}
&\theta_P(x_{1}, x_{2}, x_{3}, x_{4}) = \frac{38978775{{x}_{1}}-23199825{{x}_{2}}-7382842}{35875483}, \\
&M_{X} = \max_{P}\theta_P = \frac{31595933}{35875483} < 1. 
\end{align*}

\subsubsection{{$Q_{14}$}}
\begin{align*}
&b_{0} = \mathrm{vol}(P) = \frac{325}{24}, 
&&b_{1} = \int_{P}x_{1}dx_{1}dx_{2}dx_{3}dx_{4} = \frac{61}{30}, \\
&b_{2} = \int_{P}x_{2}dx_{1}dx_{2}dx_{3}dx_{4} = \frac{13}{60}, 
&&b_{3} = \int_{P}x_{3}dx_{1}dx_{2}dx_{3}dx_{4} = \frac{103}{30}, \\
&b_{4} = \int_{P}x_{4}dx_{1}dx_{2}dx_{3}dx_{4} = \frac{103}{60}, 
&&c_{11} = \int_{P}x_{1}^{2}dx_{1}dx_{2}dx_{3}dx_{4} = \frac{1459}{360}, \\
&c_{12} = \int_{P}x_{1}x_{2}dx_{1}dx_{2}dx_{3}dx_{4} = \frac{122}{45}, 
&&c_{13} = \int_{P}x_{1}x_{3}dx_{1}dx_{2}dx_{3}dx_{4} = \frac{427}{360}, \\
&c_{14} = \int_{P}x_{1}x_{4}dx_{1}dx_{2}dx_{3}dx_{4} = \frac{427}{720}, 
&&c_{22} = \int_{P}x_{2}^{2}dx_{1}dx_{2}dx_{3}dx_{4} = \frac{1951}{360}, \\
&c_{23} = \int_{P}x_{2}x_{3}dx_{1}dx_{2}dx_{3}dx_{4} = -\frac{119}{72}, 
&&c_{24} = \int_{P}x_{2}x_{4}dx_{1}dx_{2}dx_{3}dx_{4} = -\frac{119}{144}, \\
&c_{33} = \int_{P}x_{3}^{2}dx_{1}dx_{2}dx_{3}dx_{4} = \frac{437}{60}, 
&&c_{34} = \int_{P}x_{3}x_{4}dx_{1}dx_{2}dx_{3}dx_{4} = \frac{437}{120}, \\
&c_{44} = \int_{P}x_{4}^{2}dx_{1}dx_{2}dx_{3}dx_{4} = \frac{2911}{360}, 
\end{align*}
\begin{align*}
&\theta_P(x_{1}, x_{2}, x_{3}, x_{4}) = \frac{1233812146125{{x}_{1}}-151909753125{{x}_{2}}+1142554261875{{x}_{3}}}{2450627729348}- \frac{472512532902}{2450627729348}, \ \\
&M_{X} = \max_{P}\theta_P = \frac{3046408136973}{2450627729348} > 1. 
\end{align*}

\subsubsection{{$Q_{15}$}}
\begin{align*}
&b_{0} = \mathrm{vol}(P) = \frac{40}{3}, 
&&b_{1} = \int_{P}x_{1}dx_{1}dx_{2}dx_{3}dx_{4} = \frac{7}{4}, \\
&b_{2} = \int_{P}x_{2}dx_{1}dx_{2}dx_{3}dx_{4} = \frac{11}{12}, 
&&b_{3} = \int_{P}x_{3}dx_{1}dx_{2}dx_{3}dx_{4} = 0, \\
&b_{4} = \int_{P}x_{4}dx_{1}dx_{2}dx_{3}dx_{4} = \frac{5}{12}, 
&&c_{11} = \int_{P}x_{1}^{2}dx_{1}dx_{2}dx_{3}dx_{4} = \frac{121}{30}, \\
&c_{12} = \int_{P}x_{1}x_{2}dx_{1}dx_{2}dx_{3}dx_{4} = \frac{29}{10}, 
&&c_{13} = \int_{P}x_{1}x_{3}dx_{1}dx_{2}dx_{3}dx_{4} = 0, \\
&c_{14} = \int_{P}x_{1}x_{4}dx_{1}dx_{2}dx_{3}dx_{4} = \frac{17}{30}, 
&&c_{22} = \int_{P}x_{2}^{2}dx_{1}dx_{2}dx_{3}dx_{4} = \frac{57}{10}, \\
&c_{23} = \int_{P}x_{2}x_{3}dx_{1}dx_{2}dx_{3}dx_{4} = 0, 
&&c_{24} = \int_{P}x_{2}x_{4}dx_{1}dx_{2}dx_{3}dx_{4} = -\frac{7}{5}, \\
&c_{33} = \int_{P}x_{3}^{2}dx_{1}dx_{2}dx_{3}dx_{4} = \frac{40}{9}, 
&&c_{34} = \int_{P}x_{3}x_{4}dx_{1}dx_{2}dx_{3}dx_{4} = 0, \\
&c_{44} = \int_{P}x_{4}^{2}dx_{1}dx_{2}dx_{3}dx_{4} = \frac{181}{30}, 
\end{align*}
\begin{align*}
&\theta_P(x_{1}, x_{2}, x_{3}, x_{4}) = \frac{210720{{x}_{1}}-39680{{x}_{2}}-24929}{394975}, \\
&M_{X} = \max_{P}\theta_P = \frac{185791}{394975} < 1. 
\end{align*}

\subsubsection{{$Q_{16}$}}
\begin{align*}
&b_{0} = \mathrm{vol}(P) = \frac{155}{12}, 
&&b_{1} = \int_{P}x_{1}dx_{1}dx_{2}dx_{3}dx_{4} = \frac{17}{15}, \\
&b_{2} = \int_{P}x_{2}dx_{1}dx_{2}dx_{3}dx_{4} = \frac{139}{60}, 
&&b_{3} = \int_{P}x_{3}dx_{1}dx_{2}dx_{3}dx_{4} = \frac{17}{30}, \\
&b_{4} = \int_{P}x_{4}dx_{1}dx_{2}dx_{3}dx_{4} = -\frac{17}{30}, 
&&c_{11} = \int_{P}x_{1}^{2}dx_{1}dx_{2}dx_{3}dx_{4} = \frac{37}{10}, \\
&c_{12} = \int_{P}x_{1}x_{2}dx_{1}dx_{2}dx_{3}dx_{4} = \frac{179}{80}, 
&&c_{13} = \int_{P}x_{1}x_{3}dx_{1}dx_{2}dx_{3}dx_{4} = \frac{37}{20}, \\
&c_{14} = \int_{P}x_{1}x_{4}dx_{1}dx_{2}dx_{3}dx_{4} = -\frac{37}{20}, 
&&c_{22} = \int_{P}x_{2}^{2}dx_{1}dx_{2}dx_{3}dx_{4} = \frac{1127}{180}, \\
&c_{23} = \int_{P}x_{2}x_{3}dx_{1}dx_{2}dx_{3}dx_{4} = \frac{179}{120}, 
&&c_{24} = \int_{P}x_{2}x_{4}dx_{1}dx_{2}dx_{3}dx_{4} = -\frac{179}{120}, \\
&c_{33} = \int_{P}x_{3}^{2}dx_{1}dx_{2}dx_{3}dx_{4} = \frac{71}{12}, 
&&c_{34} = \int_{P}x_{3}x_{4}dx_{1}dx_{2}dx_{3}dx_{4} = -\frac{37}{40}, \\
&c_{44} = \int_{P}x_{4}^{2}dx_{1}dx_{2}dx_{3}dx_{4} = \frac{929}{180}, 
\end{align*}
\begin{align*}
&\theta_P(x_{1}, x_{2}, x_{3}, x_{4}) = \frac{1544575{{x}_{1}}+43444950{{x}_{2}}-7927586}{111477439}, \\
&M_{X} = \max_{P}\theta_P = \frac{80506889}{111477439} < 1. 
\end{align*}

\subsubsection{{$Q_{17}$}}
\begin{align*}
&b_{0} = \mathrm{vol}(P) = \frac{299}{24}, 
&&b_{1} = \int_{P}x_{1}dx_{1}dx_{2}dx_{3}dx_{4} = -\frac{4}{15}, \\
&b_{2} = \int_{P}x_{2}dx_{1}dx_{2}dx_{3}dx_{4} = -\frac{8}{15}, 
&&b_{3} = \int_{P}x_{3}dx_{1}dx_{2}dx_{3}dx_{4} = \frac{2}{15}, \\
&b_{4} = \int_{P}x_{4}dx_{1}dx_{2}dx_{3}dx_{4} = \frac{2}{15}, 
&&c_{11} = \int_{P}x_{1}^{2}dx_{1}dx_{2}dx_{3}dx_{4} = \frac{211}{60}, \\
&c_{12} = \int_{P}x_{1}x_{2}dx_{1}dx_{2}dx_{3}dx_{4} = \frac{109}{45}, 
&&c_{13} = \int_{P}x_{1}x_{3}dx_{1}dx_{2}dx_{3}dx_{4} = \frac{197}{360}, \\
&c_{14} = \int_{P}x_{1}x_{4}dx_{1}dx_{2}dx_{3}dx_{4} = -\frac{211}{120}, 
&&c_{22} = \int_{P}x_{2}^{2}dx_{1}dx_{2}dx_{3}dx_{4} = \frac{218}{45}, \\
&c_{23} = \int_{P}x_{2}x_{3}dx_{1}dx_{2}dx_{3}dx_{4} = -\frac{109}{90}, 
&&c_{24} = \int_{P}x_{2}x_{4}dx_{1}dx_{2}dx_{3}dx_{4} = -\frac{109}{90}, \\
&c_{33} = \int_{P}x_{3}^{2}dx_{1}dx_{2}dx_{3}dx_{4} = \frac{1949}{360}, 
&&c_{34} = \int_{P}x_{3}x_{4}dx_{1}dx_{2}dx_{3}dx_{4} = -\frac{197}{720}, \\
&c_{44} = \int_{P}x_{4}^{2}dx_{1}dx_{2}dx_{3}dx_{4} = \frac{1949}{360}, 
\end{align*}
\begin{align*}
&\theta_P(x_{1}, x_{2}, x_{3}, x_{4}) = \frac{-17940{{x}_{2}}-768}{162187}, \\
&M_{X} = \max_{P}\theta_P = \frac{17172}{162187} < 1. 
\end{align*}

\subsubsection{{$K_{1}$}}
\begin{align*}
&b_{0} = \mathrm{vol}(P) = \frac{91}{6}, 
&&b_{1} = \int_{P}x_{1}dx_{1}dx_{2}dx_{3}dx_{4} = 5, \\
&b_{2} = \int_{P}x_{2}dx_{1}dx_{2}dx_{3}dx_{4} = \frac{5}{2}, 
&&b_{3} = \int_{P}x_{3}dx_{1}dx_{2}dx_{3}dx_{4} = \frac{10}{3}, \\
&b_{4} = \int_{P}x_{4}dx_{1}dx_{2}dx_{3}dx_{4} = \frac{10}{3}, 
&&c_{11} = \int_{P}x_{1}^{2}dx_{1}dx_{2}dx_{3}dx_{4} = \frac{281}{60}, \\
&c_{12} = \int_{P}x_{1}x_{2}dx_{1}dx_{2}dx_{3}dx_{4} = \frac{281}{120}, 
&&c_{13} = \int_{P}x_{1}x_{3}dx_{1}dx_{2}dx_{3}dx_{4} = \frac{281}{90}, \\
&c_{14} = \int_{P}x_{1}x_{4}dx_{1}dx_{2}dx_{3}dx_{4} = \frac{281}{90}, 
&&c_{22} = \int_{P}x_{2}^{2}dx_{1}dx_{2}dx_{3}dx_{4} = \frac{253}{60}, \\
&c_{23} = \int_{P}x_{2}x_{3}dx_{1}dx_{2}dx_{3}dx_{4} = \frac{281}{180}, 
&&c_{24} = \int_{P}x_{2}x_{4}dx_{1}dx_{2}dx_{3}dx_{4} = \frac{281}{180}, \\
&c_{33} = \int_{P}x_{3}^{2}dx_{1}dx_{2}dx_{3}dx_{4} = \frac{2527}{180}, 
&&c_{34} = \int_{P}x_{3}x_{4}dx_{1}dx_{2}dx_{3}dx_{4} = -\frac{1403}{360}, \\
&c_{44} = \int_{P}x_{4}^{2}dx_{1}dx_{2}dx_{3}dx_{4} = \frac{2527}{180}, 
\end{align*}
\begin{align*}
&\theta_P(x_{1}, x_{2}, x_{3}, x_{4}) = \frac{27300{{x}_{1}}-9000}{16571}, \\
&M_{X} = \max_{P}\theta_P = \frac{18300}{16571} > 1. 
\end{align*}

\subsubsection{{$K_{2}$}}
\begin{align*}
&b_{0} = \mathrm{vol}(P) = \frac{59}{4}, 
&&b_{1} = \int_{P}x_{1}dx_{1}dx_{2}dx_{3}dx_{4} = \frac{15}{4}, \\
&b_{2} = \int_{P}x_{2}dx_{1}dx_{2}dx_{3}dx_{4} = \frac{15}{4}, 
&&b_{3} = \int_{P}x_{3}dx_{1}dx_{2}dx_{3}dx_{4} = \frac{5}{2}, \\
&b_{4} = \int_{P}x_{4}dx_{1}dx_{2}dx_{3}dx_{4} = \frac{5}{2}, 
&&c_{11} = \int_{P}x_{1}^{2}dx_{1}dx_{2}dx_{3}dx_{4} = \frac{13}{3}, \\
&c_{12} = \int_{P}x_{1}x_{2}dx_{1}dx_{2}dx_{3}dx_{4} = \frac{281}{120}, 
&&c_{13} = \int_{P}x_{1}x_{3}dx_{1}dx_{2}dx_{3}dx_{4} = \frac{89}{40}, \\
&c_{14} = \int_{P}x_{1}x_{4}dx_{1}dx_{2}dx_{3}dx_{4} = \frac{89}{40}, 
&&c_{22} = \int_{P}x_{2}^{2}dx_{1}dx_{2}dx_{3}dx_{4} = \frac{13}{3}, \\
&c_{23} = \int_{P}x_{2}x_{3}dx_{1}dx_{2}dx_{3}dx_{4} = \frac{89}{40}, 
&&c_{24} = \int_{P}x_{2}x_{4}dx_{1}dx_{2}dx_{3}dx_{4} = \frac{89}{40}, \\
&c_{33} = \int_{P}x_{3}^{2}dx_{1}dx_{2}dx_{3}dx_{4} = \frac{121}{10}, 
&&c_{34} = \int_{P}x_{3}x_{4}dx_{1}dx_{2}dx_{3}dx_{4} = -\frac{153}{40}, \\
&c_{44} = \int_{P}x_{4}^{2}dx_{1}dx_{2}dx_{3}dx_{4} = \frac{121}{10}, 
\end{align*}
\begin{align*}
&\theta_P(x_{1}, x_{2}, x_{3}, x_{4}) = \frac{2950{{x}_{1}}+2950{{x}_{2}}-1500}{3751}, \\
&M_{X} = \max_{P}\theta_P = \frac{400}{341} > 1. 
\end{align*}

\subsubsection{{$K_{3}$}}
\begin{align*}
&b_{0} = \mathrm{vol}(P) = \frac{167}{12}, 
&&b_{1} = \int_{P}x_{1}dx_{1}dx_{2}dx_{3}dx_{4} = \frac{5}{2}, \\
&b_{2} = \int_{P}x_{2}dx_{1}dx_{2}dx_{3}dx_{4} = \frac{5}{4}, 
&&b_{3} = \int_{P}x_{3}dx_{1}dx_{2}dx_{3}dx_{4} = \frac{5}{6}, \\
&b_{4} = \int_{P}x_{4}dx_{1}dx_{2}dx_{3}dx_{4} = \frac{5}{6}, 
&&c_{11} = \int_{P}x_{1}^{2}dx_{1}dx_{2}dx_{3}dx_{4} = \frac{239}{60}, \\
&c_{12} = \int_{P}x_{1}x_{2}dx_{1}dx_{2}dx_{3}dx_{4} = \frac{239}{120}, 
&&c_{13} = \int_{P}x_{1}x_{3}dx_{1}dx_{2}dx_{3}dx_{4} = \frac{239}{180}, \\
&c_{14} = \int_{P}x_{1}x_{4}dx_{1}dx_{2}dx_{3}dx_{4} = \frac{239}{180}, 
&&c_{22} = \int_{P}x_{2}^{2}dx_{1}dx_{2}dx_{3}dx_{4} = \frac{58}{15}, \\
&c_{23} = \int_{P}x_{2}x_{3}dx_{1}dx_{2}dx_{3}dx_{4} = \frac{239}{180}, 
&&c_{24} = \int_{P}x_{2}x_{4}dx_{1}dx_{2}dx_{3}dx_{4} = \frac{239}{180}, \\
&c_{33} = \int_{P}x_{3}^{2}dx_{1}dx_{2}dx_{3}dx_{4} = \frac{761}{90}, 
&&c_{34} = \int_{P}x_{3}x_{4}dx_{1}dx_{2}dx_{3}dx_{4} = -\frac{1283}{360}, \\
&c_{44} = \int_{P}x_{4}^{2}dx_{1}dx_{2}dx_{3}dx_{4} = \frac{761}{90}, 
\end{align*}
\begin{align*}
&\theta_P(x_{1}, x_{2}, x_{3}, x_{4}) = \frac{25050{{x}_{1}}-4500}{35413}, \\
&M_{X} = \max_{P}\theta_P = \frac{20550}{35413} < 1. 
\end{align*}

\subsubsection{{$K_{4}$}}
\begin{align*}
&b_{0} = \mathrm{vol}(P) = \frac{27}{2}, 
&&b_{1} = \int_{P}x_{1}dx_{1}dx_{2}dx_{3}dx_{4} = 0, \\
&b_{2} = \int_{P}x_{2}dx_{1}dx_{2}dx_{3}dx_{4} = 0, 
&&b_{3} = \int_{P}x_{3}dx_{1}dx_{2}dx_{3}dx_{4} = 0, \\
&b_{4} = \int_{P}x_{4}dx_{1}dx_{2}dx_{3}dx_{4} = 0, 
&&c_{11} = \int_{P}x_{1}^{2}dx_{1}dx_{2}dx_{3}dx_{4} = \frac{15}{4}, \\
&c_{12} = \int_{P}x_{1}x_{2}dx_{1}dx_{2}dx_{3}dx_{4} = \frac{15}{8}, 
&&c_{13} = \int_{P}x_{1}x_{3}dx_{1}dx_{2}dx_{3}dx_{4} = 0, \\
&c_{14} = \int_{P}x_{1}x_{4}dx_{1}dx_{2}dx_{3}dx_{4} = 0, 
&&c_{22} = \int_{P}x_{2}^{2}dx_{1}dx_{2}dx_{3}dx_{4} = \frac{15}{4}, \\
&c_{23} = \int_{P}x_{2}x_{3}dx_{1}dx_{2}dx_{3}dx_{4} = 0, 
&&c_{24} = \int_{P}x_{2}x_{4}dx_{1}dx_{2}dx_{3}dx_{4} = 0, \\
&c_{33} = \int_{P}x_{3}^{2}dx_{1}dx_{2}dx_{3}dx_{4} = \frac{27}{4}, 
&&c_{34} = \int_{P}x_{3}x_{4}dx_{1}dx_{2}dx_{3}dx_{4} = -\frac{27}{8}, \\
&c_{44} = \int_{P}x_{4}^{2}dx_{1}dx_{2}dx_{3}dx_{4} = \frac{27}{4}, 
\end{align*}
\begin{align*}
&\theta_P(x_{1}, x_{2}, x_{3}, x_{4}) = 0, \\
&M_{X} = \max_{P}\theta_P = 0 < 1. 
\end{align*}

\subsubsection{{$R_{1}$}}
\begin{align*}
&b_{0} = \mathrm{vol}(P) = \frac{83}{6}, 
&&b_{1} = \int_{P}x_{1}dx_{1}dx_{2}dx_{3}dx_{4} = 1, \\
&b_{2} = \int_{P}x_{2}dx_{1}dx_{2}dx_{3}dx_{4} = 1, 
&&b_{3} = \int_{P}x_{3}dx_{1}dx_{2}dx_{3}dx_{4} = \frac{35}{8}, \\
&b_{4} = \int_{P}x_{4}dx_{1}dx_{2}dx_{3}dx_{4} = -\frac{23}{12}, 
&&c_{11} = \int_{P}x_{1}^{2}dx_{1}dx_{2}dx_{3}dx_{4} = \frac{2551}{360}, \\
&c_{12} = \int_{P}x_{1}x_{2}dx_{1}dx_{2}dx_{3}dx_{4} = -\frac{3149}{720}, 
&&c_{13} = \int_{P}x_{1}x_{3}dx_{1}dx_{2}dx_{3}dx_{4} = \frac{1709}{720}, \\
&c_{14} = \int_{P}x_{1}x_{4}dx_{1}dx_{2}dx_{3}dx_{4} = \frac{61}{360}, 
&&c_{22} = \int_{P}x_{2}^{2}dx_{1}dx_{2}dx_{3}dx_{4} = \frac{2551}{360}, \\
&c_{23} = \int_{P}x_{2}x_{3}dx_{1}dx_{2}dx_{3}dx_{4} = \frac{1709}{720}, 
&&c_{24} = \int_{P}x_{2}x_{4}dx_{1}dx_{2}dx_{3}dx_{4} = \frac{61}{360}, \\
&c_{33} = \int_{P}x_{3}^{2}dx_{1}dx_{2}dx_{3}dx_{4} = \frac{2033}{180}, 
&&c_{34} = \int_{P}x_{3}x_{4}dx_{1}dx_{2}dx_{3}dx_{4} = -\frac{2357}{720}, \\
&c_{44} = \int_{P}x_{4}^{2}dx_{1}dx_{2}dx_{3}dx_{4} = \frac{521}{120}, 
\end{align*}
\begin{align*}
&\theta_P(x_{1}, x_{2}, x_{3}, x_{4}) = \frac{-54894540{{x}_{1}}-54894540{{x}_{2}}+238278060{{x}_{3}}-92542140}{324135871}, \\
&M_{X} = \max_{P}\theta_P = \frac{567397500}{324135871} > 1. 
\end{align*}

\newpage
\subsubsection{{$R_{2}$}}
\begin{align*}
&b_{0} = \mathrm{vol}(P) = \frac{107}{8}, 
&&b_{1} = \int_{P}x_{1}dx_{1}dx_{2}dx_{3}dx_{4} = \frac{31}{20}, \\
&b_{2} = \int_{P}x_{2}dx_{1}dx_{2}dx_{3}dx_{4} = \frac{31}{20}, 
&&b_{3} = \int_{P}x_{3}dx_{1}dx_{2}dx_{3}dx_{4} = \frac{269}{60}, \\
&b_{4} = \int_{P}x_{4}dx_{1}dx_{2}dx_{3}dx_{4} = -\frac{157}{60}, 
&&c_{11} = \int_{P}x_{1}^{2}dx_{1}dx_{2}dx_{3}dx_{4} = \frac{1471}{180}, \\
&c_{12} = \int_{P}x_{1}x_{2}dx_{1}dx_{2}dx_{3}dx_{4} = -\frac{2989}{720}, 
&&c_{13} = \int_{P}x_{1}x_{3}dx_{1}dx_{2}dx_{3}dx_{4} = \frac{79}{30}, \\
&c_{14} = \int_{P}x_{1}x_{4}dx_{1}dx_{2}dx_{3}dx_{4} = -\frac{5}{16}, 
&&c_{22} = \int_{P}x_{2}^{2}dx_{1}dx_{2}dx_{3}dx_{4} = \frac{1471}{180}, \\
&c_{23} = \int_{P}x_{2}x_{3}dx_{1}dx_{2}dx_{3}dx_{4} = \frac{79}{30}, 
&&c_{24} = \int_{P}x_{2}x_{4}dx_{1}dx_{2}dx_{3}dx_{4} = -\frac{5}{16}, \\
&c_{33} = \int_{P}x_{3}^{2}dx_{1}dx_{2}dx_{3}dx_{4} = \frac{2929}{360}, 
&&c_{34} = \int_{P}x_{3}x_{4}dx_{1}dx_{2}dx_{3}dx_{4} = -\frac{2483}{720}, \\
&c_{44} = \int_{P}x_{4}^{2}dx_{1}dx_{2}dx_{3}dx_{4} = \frac{731}{180}, 
\end{align*}
\begin{align*}
&\theta_P(x_{1}, x_{2}, x_{3}, x_{4}) = \frac{451036894560{{x}_{1}}+451036894560{{x}_{2}}+701882402340{{x}_{3}}}{2028544442569} \\
&\quad \quad \quad \quad \quad \quad \quad \quad + \frac{-984670344060{{x}_{4}}-532451679296}{2028544442569}, \\
&M_{X} = \max_{P}\theta_P = \frac{2758057258564}{2028544442569} > 1. 
\end{align*}

\newpage
\subsubsection{{$R_{3}$}}
\begin{align*}
&b_{0} = \mathrm{vol}(P) = \frac{305}{24}, 
&&b_{1} = \int_{P}x_{1}dx_{1}dx_{2}dx_{3}dx_{4} = \frac{23}{15}, \\
&b_{2} = \int_{P}x_{2}dx_{1}dx_{2}dx_{3}dx_{4} = \frac{23}{15}, 
&&b_{3} = \int_{P}x_{3}dx_{1}dx_{2}dx_{3}dx_{4} = \frac{23}{10}, \\
&b_{4} = \int_{P}x_{4}dx_{1}dx_{2}dx_{3}dx_{4} = \frac{23}{60}, 
&&c_{11} = \int_{P}x_{1}^{2}dx_{1}dx_{2}dx_{3}dx_{4} = \frac{95}{12}, \\
&c_{12} = \int_{P}x_{1}x_{2}dx_{1}dx_{2}dx_{3}dx_{4} = -\frac{943}{240}, 
&&c_{13} = \int_{P}x_{1}x_{3}dx_{1}dx_{2}dx_{3}dx_{4} = \frac{463}{240}, \\
&c_{14} = \int_{P}x_{1}x_{4}dx_{1}dx_{2}dx_{3}dx_{4} = \frac{247}{240}, 
&&c_{22} = \int_{P}x_{2}^{2}dx_{1}dx_{2}dx_{3}dx_{4} = \frac{95}{12}, \\
&c_{23} = \int_{P}x_{2}x_{3}dx_{1}dx_{2}dx_{3}dx_{4} = \frac{463}{240}, 
&&c_{24} = \int_{P}x_{2}x_{4}dx_{1}dx_{2}dx_{3}dx_{4} = \frac{247}{240}, \\
&c_{33} = \int_{P}x_{3}^{2}dx_{1}dx_{2}dx_{3}dx_{4} = \frac{17}{4}, 
&&c_{34} = \int_{P}x_{3}x_{4}dx_{1}dx_{2}dx_{3}dx_{4} = -\frac{47}{240}, \\
&c_{44} = \int_{P}x_{4}^{2}dx_{1}dx_{2}dx_{3}dx_{4} = \frac{659}{180}, 
\end{align*}
\begin{align*}
&\theta_P(x_{1}, x_{2}, x_{3}, x_{4}) = \frac{91335300{{x}_{1}}+91335300{{x}_{2}}+142965700{{x}_{3}}-47914704}{369479321}, \\
&M_{X} = \max_{P}\theta_P = \frac{277721596}{369479321} < 1. 
\end{align*}

\subsubsection{}
\begin{align*}
&b_{0} = \mathrm{vol}(P) = \frac{155}{12}, 
&&b_{1} = \int_{P}x_{1}dx_{1}dx_{2}dx_{3}dx_{4} = \frac{19}{12}, \\
&b_{2} = \int_{P}x_{2}dx_{1}dx_{2}dx_{3}dx_{4} = \frac{7}{3}, 
&&b_{3} = \int_{P}x_{3}dx_{1}dx_{2}dx_{3}dx_{4} = -\frac{3}{2}, \\
&b_{4} = \int_{P}x_{4}dx_{1}dx_{2}dx_{3}dx_{4} = -\frac{19}{12}, 
&&c_{11} = \int_{P}x_{1}^{2}dx_{1}dx_{2}dx_{3}dx_{4} = \frac{1417}{360}, \\
&c_{12} = \int_{P}x_{1}x_{2}dx_{1}dx_{2}dx_{3}dx_{4} = \frac{2093}{720}, 
&&c_{13} = \int_{P}x_{1}x_{3}dx_{1}dx_{2}dx_{3}dx_{4} = \frac{253}{720}, \\
&c_{14} = \int_{P}x_{1}x_{4}dx_{1}dx_{2}dx_{3}dx_{4} = -\frac{1}{9}, 
&&c_{22} = \int_{P}x_{2}^{2}dx_{1}dx_{2}dx_{3}dx_{4} = \frac{2749}{360}, \\
&c_{23} = \int_{P}x_{2}x_{3}dx_{1}dx_{2}dx_{3}dx_{4} = -\frac{1741}{720}, 
&&c_{24} = \int_{P}x_{2}x_{4}dx_{1}dx_{2}dx_{3}dx_{4} = -\frac{71}{90}, \\
&c_{33} = \int_{P}x_{3}^{2}dx_{1}dx_{2}dx_{3}dx_{4} = \frac{997}{180}, 
&&c_{34} = \int_{P}x_{3}x_{4}dx_{1}dx_{2}dx_{3}dx_{4} = -\frac{253}{720}, \\
&c_{44} = \int_{P}x_{4}^{2}dx_{1}dx_{2}dx_{3}dx_{4} = \frac{1417}{360}, 
\end{align*}
\begin{align*}
&\theta_P(x_{1}, x_{2}, x_{3}, x_{4}) = \frac{74703180{{x}_{1}}-57721380{{x}_{3}}-74703180{{x}_{4}}-25017456}{153124261}, \\
&M_{X} = \max_{P}\theta_P = \frac{182110284}{153124261} > 1. 
\end{align*}

\subsubsection{{$U_{1}$}}
\begin{align*}
&b_{0} = \mathrm{vol}(P) = \frac{77}{6}, 
&&b_{1} = \int_{P}x_{1}dx_{1}dx_{2}dx_{3}dx_{4} = \frac{10}{3}, \\
&b_{2} = \int_{P}x_{2}dx_{1}dx_{2}dx_{3}dx_{4} = \frac{5}{3}, 
&&b_{3} = \int_{P}x_{3}dx_{1}dx_{2}dx_{3}dx_{4} = \frac{5}{3}, \\
&b_{4} = \int_{P}x_{4}dx_{1}dx_{2}dx_{3}dx_{4} = \frac{5}{3}, 
&&c_{11} = \int_{P}x_{1}^{2}dx_{1}dx_{2}dx_{3}dx_{4} = \frac{19}{5}, \\
&c_{12} = \int_{P}x_{1}x_{2}dx_{1}dx_{2}dx_{3}dx_{4} = \frac{19}{10}, 
&&c_{13} = \int_{P}x_{1}x_{3}dx_{1}dx_{2}dx_{3}dx_{4} = \frac{19}{10}, \\
&c_{14} = \int_{P}x_{1}x_{4}dx_{1}dx_{2}dx_{3}dx_{4} = \frac{19}{10}, 
&&c_{22} = \int_{P}x_{2}^{2}dx_{1}dx_{2}dx_{3}dx_{4} = \frac{107}{30}, \\
&c_{23} = \int_{P}x_{2}x_{3}dx_{1}dx_{2}dx_{3}dx_{4} = \frac{19}{20}, 
&&c_{24} = \int_{P}x_{2}x_{4}dx_{1}dx_{2}dx_{3}dx_{4} = \frac{19}{20}, \\
&c_{33} = \int_{P}x_{3}^{2}dx_{1}dx_{2}dx_{3}dx_{4} = \frac{599}{90}, 
&&c_{34} = \int_{P}x_{3}x_{4}dx_{1}dx_{2}dx_{3}dx_{4} = \frac{19}{20}, \\
&c_{44} = \int_{P}x_{4}^{2}dx_{1}dx_{2}dx_{3}dx_{4} = \frac{599}{90}, 
\end{align*}
\begin{align*}
&\theta_P(x_{1}, x_{2}, x_{3}, x_{4}) = \frac{3850{{x}_{1}}-1000}{3389}, \\
&M_{X} = \max_{P}\theta_P = \frac{2850}{3389} < 1. 
\end{align*}

\subsubsection{{$U_{2}$}}
\begin{align*}
&b_{0} = \mathrm{vol}(P) = \frac{149}{12}, 
&&b_{1} = \int_{P}x_{1}dx_{1}dx_{2}dx_{3}dx_{4} = \frac{5}{2}, \\
&b_{2} = \int_{P}x_{2}dx_{1}dx_{2}dx_{3}dx_{4} = \frac{5}{4}, 
&&b_{3} = \int_{P}x_{3}dx_{1}dx_{2}dx_{3}dx_{4} = -\frac{7}{6}, \\
&b_{4} = \int_{P}x_{4}dx_{1}dx_{2}dx_{3}dx_{4} = \frac{11}{6}, 
&&c_{11} = \int_{P}x_{1}^{2}dx_{1}dx_{2}dx_{3}dx_{4} = \frac{107}{30}, \\
&c_{12} = \int_{P}x_{1}x_{2}dx_{1}dx_{2}dx_{3}dx_{4} = \frac{107}{60}, 
&&c_{13} = \int_{P}x_{1}x_{3}dx_{1}dx_{2}dx_{3}dx_{4} = \frac{41}{45}, \\
&c_{14} = \int_{P}x_{1}x_{4}dx_{1}dx_{2}dx_{3}dx_{4} = \frac{239}{180}, 
&&c_{22} = \int_{P}x_{2}^{2}dx_{1}dx_{2}dx_{3}dx_{4} = \frac{69}{20}, \\
&c_{23} = \int_{P}x_{2}x_{3}dx_{1}dx_{2}dx_{3}dx_{4} = \frac{41}{90}, 
&&c_{24} = \int_{P}x_{2}x_{4}dx_{1}dx_{2}dx_{3}dx_{4} = \frac{239}{180}, \\
&c_{33} = \int_{P}x_{3}^{2}dx_{1}dx_{2}dx_{3}dx_{4} = \frac{238}{45}, 
&&c_{34} = \int_{P}x_{3}x_{4}dx_{1}dx_{2}dx_{3}dx_{4} = -\frac{197}{90}, \\
&c_{44} = \int_{P}x_{4}^{2}dx_{1}dx_{2}dx_{3}dx_{4} = \frac{1387}{180}, 
\end{align*}
\begin{align*}
&\theta_P(x_{1}, x_{2}, x_{3}, x_{4}) = \frac{907410{{x}_{1}}-409005{{x}_{3}}-221130}{924382}, \\
&M_{X} = \max_{P}\theta_P = \frac{1095285}{924382} > 1. 
\end{align*}

\subsubsection{{$U_{3}$}}
\begin{align*}
&b_{0} = \mathrm{vol}(P) = \frac{149}{12}, 
&&b_{1} = \int_{P}x_{1}dx_{1}dx_{2}dx_{3}dx_{4} = \frac{5}{2}, \\
&b_{2} = \int_{P}x_{2}dx_{1}dx_{2}dx_{3}dx_{4} = \frac{5}{2}, 
&&b_{3} = \int_{P}x_{3}dx_{1}dx_{2}dx_{3}dx_{4} = \frac{5}{4}, \\
&b_{4} = \int_{P}x_{4}dx_{1}dx_{2}dx_{3}dx_{4} = \frac{5}{4}, 
&&c_{11} = \int_{P}x_{1}^{2}dx_{1}dx_{2}dx_{3}dx_{4} = \frac{107}{30}, \\
&c_{12} = \int_{P}x_{1}x_{2}dx_{1}dx_{2}dx_{3}dx_{4} = \frac{19}{10}, 
&&c_{13} = \int_{P}x_{1}x_{3}dx_{1}dx_{2}dx_{3}dx_{4} = \frac{107}{60}, \\
&c_{14} = \int_{P}x_{1}x_{4}dx_{1}dx_{2}dx_{3}dx_{4} = \frac{19}{20}, 
&&c_{22} = \int_{P}x_{2}^{2}dx_{1}dx_{2}dx_{3}dx_{4} = \frac{107}{30}, \\
&c_{23} = \int_{P}x_{2}x_{3}dx_{1}dx_{2}dx_{3}dx_{4} = \frac{19}{20}, 
&&c_{24} = \int_{P}x_{2}x_{4}dx_{1}dx_{2}dx_{3}dx_{4} = \frac{107}{60}, \\
&c_{33} = \int_{P}x_{3}^{2}dx_{1}dx_{2}dx_{3}dx_{4} = \frac{1109}{180}, 
&&c_{34} = \int_{P}x_{3}x_{4}dx_{1}dx_{2}dx_{3}dx_{4} = \frac{19}{40}, \\
&c_{44} = \int_{P}x_{4}^{2}dx_{1}dx_{2}dx_{3}dx_{4} = \frac{1109}{180}, 
\end{align*}
\begin{align*}
&\theta_P(x_{1}, x_{2}, x_{3}, x_{4}) = \frac{11175{{x}_{1}}+11175{{x}_{2}}-4500}{19936}, \\
&M_{X} = \max_{P}\theta_P = \frac{1275}{1424} < 1. 
\end{align*}

\subsubsection{{$U_{4}$}}
\begin{align*}
&b_{0} = \mathrm{vol}(P) = 12, 
&&b_{1} = \int_{P}x_{1}dx_{1}dx_{2}dx_{3}dx_{4} = 0, \\
&b_{2} = \int_{P}x_{2}dx_{1}dx_{2}dx_{3}dx_{4} = 0, 
&&b_{3} = \int_{P}x_{3}dx_{1}dx_{2}dx_{3}dx_{4} = -2, \\
&b_{4} = \int_{P}x_{4}dx_{1}dx_{2}dx_{3}dx_{4} = 1, 
&&c_{11} = \int_{P}x_{1}^{2}dx_{1}dx_{2}dx_{3}dx_{4} = \frac{10}{3}, \\
&c_{12} = \int_{P}x_{1}x_{2}dx_{1}dx_{2}dx_{3}dx_{4} = \frac{5}{3}, 
&&c_{13} = \int_{P}x_{1}x_{3}dx_{1}dx_{2}dx_{3}dx_{4} = 0, \\
&c_{14} = \int_{P}x_{1}x_{4}dx_{1}dx_{2}dx_{3}dx_{4} = 0, 
&&c_{22} = \int_{P}x_{2}^{2}dx_{1}dx_{2}dx_{3}dx_{4} = \frac{10}{3}, \\
&c_{23} = \int_{P}x_{2}x_{3}dx_{1}dx_{2}dx_{3}dx_{4} = 0, 
&&c_{24} = \int_{P}x_{2}x_{4}dx_{1}dx_{2}dx_{3}dx_{4} = 0, \\
&c_{33} = \int_{P}x_{3}^{2}dx_{1}dx_{2}dx_{3}dx_{4} = 4, 
&&c_{34} = \int_{P}x_{3}x_{4}dx_{1}dx_{2}dx_{3}dx_{4} = -2, \\
&c_{44} = \int_{P}x_{4}^{2}dx_{1}dx_{2}dx_{3}dx_{4} = 6, 
\end{align*}
\begin{align*}
&\theta_P(x_{1}, x_{2}, x_{3}, x_{4}) = \frac{-6{{x}_{3}}-1}{11}, \\
&M_{X} = \max_{P}\theta_P = \frac{5}{11} < 1. 
\end{align*}

\subsubsection{{$U_{5}$}}
\begin{align*}
&b_{0} = \mathrm{vol}(P) = 12, 
&&b_{1} = \int_{P}x_{1}dx_{1}dx_{2}dx_{3}dx_{4} = 0, \\
&b_{2} = \int_{P}x_{2}dx_{1}dx_{2}dx_{3}dx_{4} = 0, 
&&b_{3} = \int_{P}x_{3}dx_{1}dx_{2}dx_{3}dx_{4} = 0, \\
&b_{4} = \int_{P}x_{4}dx_{1}dx_{2}dx_{3}dx_{4} = 0, 
&&c_{11} = \int_{P}x_{1}^{2}dx_{1}dx_{2}dx_{3}dx_{4} = \frac{10}{3}, \\
&c_{12} = \int_{P}x_{1}x_{2}dx_{1}dx_{2}dx_{3}dx_{4} = \frac{5}{3}, 
&&c_{13} = \int_{P}x_{1}x_{3}dx_{1}dx_{2}dx_{3}dx_{4} = 0, \\
&c_{14} = \int_{P}x_{1}x_{4}dx_{1}dx_{2}dx_{3}dx_{4} = 0, 
&&c_{22} = \int_{P}x_{2}^{2}dx_{1}dx_{2}dx_{3}dx_{4} = \frac{10}{3}, \\
&c_{23} = \int_{P}x_{2}x_{3}dx_{1}dx_{2}dx_{3}dx_{4} = 0, 
&&c_{24} = \int_{P}x_{2}x_{4}dx_{1}dx_{2}dx_{3}dx_{4} = 0, \\
&c_{33} = \int_{P}x_{3}^{2}dx_{1}dx_{2}dx_{3}dx_{4} = 4, 
&&c_{34} = \int_{P}x_{3}x_{4}dx_{1}dx_{2}dx_{3}dx_{4} = 0, \\
&c_{44} = \int_{P}x_{4}^{2}dx_{1}dx_{2}dx_{3}dx_{4} = 4, 
\end{align*}
\begin{align*}
&\theta_P(x_{1}, x_{2}, x_{3}, x_{4}) = 0, \\
&M_{X} = \max_{P}\theta_P = 0 < 1. 
\end{align*}

\subsubsection{{$U_{6}$}}
\begin{align*}
&b_{0} = \mathrm{vol}(P) = 12, 
&&b_{1} = \int_{P}x_{1}dx_{1}dx_{2}dx_{3}dx_{4} = \frac{5}{3}, \\
&b_{2} = \int_{P}x_{2}dx_{1}dx_{2}dx_{3}dx_{4} = \frac{5}{6}, 
&&b_{3} = \int_{P}x_{3}dx_{1}dx_{2}dx_{3}dx_{4} = \frac{5}{6}, \\
&b_{4} = \int_{P}x_{4}dx_{1}dx_{2}dx_{3}dx_{4} = 0, 
&&c_{11} = \int_{P}x_{1}^{2}dx_{1}dx_{2}dx_{3}dx_{4} = \frac{10}{3}, \\
&c_{12} = \int_{P}x_{1}x_{2}dx_{1}dx_{2}dx_{3}dx_{4} = \frac{5}{3}, 
&&c_{13} = \int_{P}x_{1}x_{3}dx_{1}dx_{2}dx_{3}dx_{4} = \frac{5}{3}, \\
&c_{14} = \int_{P}x_{1}x_{4}dx_{1}dx_{2}dx_{3}dx_{4} = 0, 
&&c_{22} = \int_{P}x_{2}^{2}dx_{1}dx_{2}dx_{3}dx_{4} = \frac{10}{3}, \\
&c_{23} = \int_{P}x_{2}x_{3}dx_{1}dx_{2}dx_{3}dx_{4} = \frac{5}{6}, 
&&c_{24} = \int_{P}x_{2}x_{4}dx_{1}dx_{2}dx_{3}dx_{4} = 0, \\
&c_{33} = \int_{P}x_{3}^{2}dx_{1}dx_{2}dx_{3}dx_{4} = \frac{17}{3}, 
&&c_{34} = \int_{P}x_{3}x_{4}dx_{1}dx_{2}dx_{3}dx_{4} = 0, \\
&c_{44} = \int_{P}x_{4}^{2}dx_{1}dx_{2}dx_{3}dx_{4} = 4, 
\end{align*}
\begin{align*}
&\theta_P(x_{1}, x_{2}, x_{3}, x_{4}) = \frac{36{{x}_{1}}-5}{67}, \\
&M_{X} = \max_{P}\theta_P = \frac{31}{67} < 1. 
\end{align*}

\subsubsection{{$U_{7}$}}
\begin{align*}
&b_{0} = \mathrm{vol}(P) = \frac{139}{12}, 
&&b_{1} = \int_{P}x_{1}dx_{1}dx_{2}dx_{3}dx_{4} = \frac{5}{6}, \\
&b_{2} = \int_{P}x_{2}dx_{1}dx_{2}dx_{3}dx_{4} = \frac{5}{3}, 
&&b_{3} = \int_{P}x_{3}dx_{1}dx_{2}dx_{3}dx_{4} = \frac{5}{12}, \\
&b_{4} = \int_{P}x_{4}dx_{1}dx_{2}dx_{3}dx_{4} = \frac{5}{12}, 
&&c_{11} = \int_{P}x_{1}^{2}dx_{1}dx_{2}dx_{3}dx_{4} = \frac{31}{10}, \\
&c_{12} = \int_{P}x_{1}x_{2}dx_{1}dx_{2}dx_{3}dx_{4} = \frac{5}{3}, 
&&c_{13} = \int_{P}x_{1}x_{3}dx_{1}dx_{2}dx_{3}dx_{4} = \frac{31}{20}, \\
&c_{14} = \int_{P}x_{1}x_{4}dx_{1}dx_{2}dx_{3}dx_{4} = -\frac{43}{60}, 
&&c_{22} = \int_{P}x_{2}^{2}dx_{1}dx_{2}dx_{3}dx_{4} = \frac{10}{3}, \\
&c_{23} = \int_{P}x_{2}x_{3}dx_{1}dx_{2}dx_{3}dx_{4} = \frac{5}{6}, 
&&c_{24} = \int_{P}x_{2}x_{4}dx_{1}dx_{2}dx_{3}dx_{4} = \frac{5}{6}, \\
&c_{33} = \int_{P}x_{3}^{2}dx_{1}dx_{2}dx_{3}dx_{4} = \frac{931}{180}, 
&&c_{34} = \int_{P}x_{3}x_{4}dx_{1}dx_{2}dx_{3}dx_{4} = -\frac{43}{120}, \\
&c_{44} = \int_{P}x_{4}^{2}dx_{1}dx_{2}dx_{3}dx_{4} = \frac{931}{180}, 
\end{align*}
\begin{align*}
&\theta_P(x_{1}, x_{2}, x_{3}, x_{4}) = \frac{139{{{{x}_{}}}_{2}}-20}{258}, \\
&M_{X} = \max_{P}\theta_P = \frac{119}{258} < 1. 
\end{align*}

\subsubsection{{$U_{8}$}}
\begin{align*}
&b_{0} = \mathrm{vol}(P) = \frac{67}{6}, 
&&b_{1} = \int_{P}x_{1}dx_{1}dx_{2}dx_{3}dx_{4} = 0, \\
&b_{2} = \int_{P}x_{2}dx_{1}dx_{2}dx_{3}dx_{4} = 0, 
&&b_{3} = \int_{P}x_{3}dx_{1}dx_{2}dx_{3}dx_{4} = 0, \\
&b_{4} = \int_{P}x_{4}dx_{1}dx_{2}dx_{3}dx_{4} = 0, 
&&c_{11} = \int_{P}x_{1}^{2}dx_{1}dx_{2}dx_{3}dx_{4} = \frac{43}{15}, \\
&c_{12} = \int_{P}x_{1}x_{2}dx_{1}dx_{2}dx_{3}dx_{4} = \frac{43}{30}, 
&&c_{13} = \int_{P}x_{1}x_{3}dx_{1}dx_{2}dx_{3}dx_{4} = \frac{43}{30}, \\
&c_{14} = \int_{P}x_{1}x_{4}dx_{1}dx_{2}dx_{3}dx_{4} = -\frac{43}{30}, 
&&c_{22} = \int_{P}x_{2}^{2}dx_{1}dx_{2}dx_{3}dx_{4} = \frac{31}{10}, \\
&c_{23} = \int_{P}x_{2}x_{3}dx_{1}dx_{2}dx_{3}dx_{4} = \frac{43}{60}, 
&&c_{24} = \int_{P}x_{2}x_{4}dx_{1}dx_{2}dx_{3}dx_{4} = -\frac{43}{60}, \\
&c_{33} = \int_{P}x_{3}^{2}dx_{1}dx_{2}dx_{3}dx_{4} = \frac{421}{90}, 
&&c_{34} = \int_{P}x_{3}x_{4}dx_{1}dx_{2}dx_{3}dx_{4} = -\frac{43}{60}, \\
&c_{44} = \int_{P}x_{4}^{2}dx_{1}dx_{2}dx_{3}dx_{4} = \frac{421}{90}, 
\end{align*}
\begin{align*}
&\theta_P(x_{1}, x_{2}, x_{3}, x_{4}) = 0, \\
&M_{X} = \max_{P}\theta_P = 0 < 1. 
\end{align*}

\subsubsection{{$\widetilde{V}^{4}$}}
\begin{align*}
&b_{0} = \mathrm{vol}(P) = \frac{307}{24}, 
&&b_{1} = \int_{P}x_{1}dx_{1}dx_{2}dx_{3}dx_{4} = \frac{13}{10}, \\
&b_{2} = \int_{P}x_{2}dx_{1}dx_{2}dx_{3}dx_{4} = \frac{13}{10}, 
&&b_{3} = \int_{P}x_{3}dx_{1}dx_{2}dx_{3}dx_{4} = \frac{13}{10}, \\
&b_{4} = \int_{P}x_{4}dx_{1}dx_{2}dx_{3}dx_{4} = \frac{13}{10}, 
&&c_{11} = \int_{P}x_{1}^{2}dx_{1}dx_{2}dx_{3}dx_{4} = \frac{743}{180}, \\
&c_{12} = \int_{P}x_{1}x_{2}dx_{1}dx_{2}dx_{3}dx_{4} = -\frac{263}{720}, 
&&c_{13} = \int_{P}x_{1}x_{3}dx_{1}dx_{2}dx_{3}dx_{4} = -\frac{263}{720}, \\
&c_{14} = \int_{P}x_{1}x_{4}dx_{1}dx_{2}dx_{3}dx_{4} = -\frac{263}{720}, 
&&c_{22} = \int_{P}x_{2}^{2}dx_{1}dx_{2}dx_{3}dx_{4} = \frac{743}{180}, \\
&c_{23} = \int_{P}x_{2}x_{3}dx_{1}dx_{2}dx_{3}dx_{4} = -\frac{263}{720}, 
&&c_{24} = \int_{P}x_{2}x_{4}dx_{1}dx_{2}dx_{3}dx_{4} = -\frac{263}{720}, \\
&c_{33} = \int_{P}x_{3}^{2}dx_{1}dx_{2}dx_{3}dx_{4} = \frac{743}{180}, 
&&c_{34} = \int_{P}x_{3}x_{4}dx_{1}dx_{2}dx_{3}dx_{4} = -\frac{263}{720}, \\
&c_{44} = \int_{P}x_{4}^{2}dx_{1}dx_{2}dx_{3}dx_{4} = \frac{743}{180}, 
\end{align*}
\begin{align*}
&\theta_P(x_{1}, x_{2}, x_{3}, x_{4}) = \frac{1436760{{x}_{1}}+1436760{{x}_{2}}+1436760{{x}_{3}}}{2766841} + \frac{1436760{{x}_{4}}-584064}{2766841}, \\
&M_{X} = \max_{P}\theta_P = \frac{737568}{395263} > 1. 
\end{align*}

\subsubsection{{$V^{4}$}}
\begin{align*}
&b_{0} = \mathrm{vol}(P) = \frac{115}{12}, 
&&b_{1} = \int_{P}x_{1}dx_{1}dx_{2}dx_{3}dx_{4} = 0, \\
&b_{2} = \int_{P}x_{2}dx_{1}dx_{2}dx_{3}dx_{4} = 0, 
&&b_{3} = \int_{P}x_{3}dx_{1}dx_{2}dx_{3}dx_{4} = 0, \\
&b_{4} = \int_{P}x_{4}dx_{1}dx_{2}dx_{3}dx_{4} = 0, 
&&c_{11} = \int_{P}x_{1}^{2}dx_{1}dx_{2}dx_{3}dx_{4} = \frac{263}{90}, \\
&c_{12} = \int_{P}x_{1}x_{2}dx_{1}dx_{2}dx_{3}dx_{4} = -\frac{263}{360}, 
&&c_{13} = \int_{P}x_{1}x_{3}dx_{1}dx_{2}dx_{3}dx_{4} = -\frac{263}{360}, \\
&c_{14} = \int_{P}x_{1}x_{4}dx_{1}dx_{2}dx_{3}dx_{4} = -\frac{263}{360}, 
&&c_{22} = \int_{P}x_{2}^{2}dx_{1}dx_{2}dx_{3}dx_{4} = \frac{263}{90}, \\
&c_{23} = \int_{P}x_{2}x_{3}dx_{1}dx_{2}dx_{3}dx_{4} = -\frac{263}{360}, 
&&c_{24} = \int_{P}x_{2}x_{4}dx_{1}dx_{2}dx_{3}dx_{4} = -\frac{263}{360}, \\
&c_{33} = \int_{P}x_{3}^{2}dx_{1}dx_{2}dx_{3}dx_{4} = \frac{263}{90}, 
&&c_{34} = \int_{P}x_{3}x_{4}dx_{1}dx_{2}dx_{3}dx_{4} = -\frac{263}{360}, \\
&c_{44} = \int_{P}x_{4}^{2}dx_{1}dx_{2}dx_{3}dx_{4} = \frac{263}{90}, 
\end{align*}
\begin{align*}
&\theta_P(x_{1}, x_{2}, x_{3}, x_{4}) = 0, \\
&M_{X} = \max_{P}\theta_P = 0 < 1. 
\end{align*}

\subsubsection{{$dP_{7} \times dP_{7}$}}
\begin{align*}
&b_{0} = \mathrm{vol}(P) = \frac{49}{4}, 
&&b_{1} = \int_{P}x_{1}dx_{1}dx_{2}dx_{3}dx_{4} = -\frac{7}{6}, \\
&b_{2} = \int_{P}x_{2}dx_{1}dx_{2}dx_{3}dx_{4} = -\frac{7}{6}, 
&&b_{3} = \int_{P}x_{3}dx_{1}dx_{2}dx_{3}dx_{4} = -\frac{7}{6}, \\
&b_{4} = \int_{P}x_{4}dx_{1}dx_{2}dx_{3}dx_{4} = -\frac{7}{6}, 
&&c_{11} = \int_{P}x_{1}^{2}dx_{1}dx_{2}dx_{3}dx_{4} = \frac{91}{24}, \\
&c_{12} = \int_{P}x_{1}x_{2}dx_{1}dx_{2}dx_{3}dx_{4} = -\frac{35}{48}, 
&&c_{13} = \int_{P}x_{1}x_{3}dx_{1}dx_{2}dx_{3}dx_{4} = \frac{1}{9}, \\
&c_{14} = \int_{P}x_{1}x_{4}dx_{1}dx_{2}dx_{3}dx_{4} = \frac{1}{9}, 
&&c_{22} = \int_{P}x_{2}^{2}dx_{1}dx_{2}dx_{3}dx_{4} = \frac{91}{24}, \\
&c_{23} = \int_{P}x_{2}x_{3}dx_{1}dx_{2}dx_{3}dx_{4} = \frac{1}{9}, 
&&c_{24} = \int_{P}x_{2}x_{4}dx_{1}dx_{2}dx_{3}dx_{4} = \frac{1}{9}, \\
&c_{33} = \int_{P}x_{3}^{2}dx_{1}dx_{2}dx_{3}dx_{4} = \frac{91}{24}, 
&&c_{34} = \int_{P}x_{3}x_{4}dx_{1}dx_{2}dx_{3}dx_{4} = -\frac{35}{48}, \\
&c_{44} = \int_{P}x_{4}^{2}dx_{1}dx_{2}dx_{3}dx_{4} = \frac{91}{24}, 
\end{align*}
\begin{align*}
&\theta_P(x_{1}, x_{2}, x_{3}, x_{4}) = -\frac{168{{x}_{1}}+168{{x}_{2}}+168{{x}_{3}}+168{{x}_{4}}+64}{409}, \\
&M_{X} = \max_{P}\theta_P = \frac{608}{409} > 1. 
\end{align*}

\subsubsection{{$dP_{7} \times dP_{6}$}}
\begin{align*}
&b_{0} = \mathrm{vol}(P) = \frac{21}{2}, 
&&b_{1} = \int_{P}x_{1}dx_{1}dx_{2}dx_{3}dx_{4} = -1, \\
&b_{2} = \int_{P}x_{2}dx_{1}dx_{2}dx_{3}dx_{4} = -1, 
&&b_{3} = \int_{P}x_{3}dx_{1}dx_{2}dx_{3}dx_{4} = -1, \\
&b_{4} = \int_{P}x_{4}dx_{1}dx_{2}dx_{3}dx_{4} = 0, 
&&c_{11} = \int_{P}x_{1}^{2}dx_{1}dx_{2}dx_{3}dx_{4} = \frac{13}{4}, \\
&c_{12} = \int_{P}x_{1}x_{2}dx_{1}dx_{2}dx_{3}dx_{4} = -\frac{5}{8}, 
&&c_{13} = \int_{P}x_{1}x_{3}dx_{1}dx_{2}dx_{3}dx_{4} = 0, \\
&c_{14} = \int_{P}x_{1}x_{4}dx_{1}dx_{2}dx_{3}dx_{4} = 0, 
&&c_{22} = \int_{P}x_{2}^{2}dx_{1}dx_{2}dx_{3}dx_{4} = \frac{13}{4}, \\
&c_{23} = \int_{P}x_{2}x_{3}dx_{1}dx_{2}dx_{3}dx_{4} = 0, 
&&c_{24} = \int_{P}x_{2}x_{4}dx_{1}dx_{2}dx_{3}dx_{4} = 0, \\
&c_{33} = \int_{P}x_{3}^{2}dx_{1}dx_{2}dx_{3}dx_{4} = \frac{35}{12}, 
&&c_{34} = \int_{P}x_{3}x_{4}dx_{1}dx_{2}dx_{3}dx_{4} = -\frac{35}{24}, \\
&c_{44} = \int_{P}x_{4}^{2}dx_{1}dx_{2}dx_{3}dx_{4} = \frac{35}{12}, 
\end{align*}
\begin{align*}
&\theta_P(x_{1}, x_{2}, x_{3}, x_{4}) = -\frac{168{{x}_{1}}+168{{x}_{2}}+32}{409}, \\
&M_{X} = \max_{P}\theta_P = \frac{304}{409} < 1. 
\end{align*}

\subsubsection{{$dP_{6} \times dP_{6}$}}
\begin{align*}
&b_{0} = \mathrm{vol}(P) = 9, 
&&b_{1} = \int_{P}x_{1}dx_{1}dx_{2}dx_{3}dx_{4} = 0, \\
&b_{2} = \int_{P}x_{2}dx_{1}dx_{2}dx_{3}dx_{4} = 0, 
&&b_{3} = \int_{P}x_{3}dx_{1}dx_{2}dx_{3}dx_{4} = 0, \\
&b_{4} = \int_{P}x_{4}dx_{1}dx_{2}dx_{3}dx_{4} = 0, 
&&c_{11} = \int_{P}x_{1}^{2}dx_{1}dx_{2}dx_{3}dx_{4} = \frac{5}{2}, \\
&c_{12} = \int_{P}x_{1}x_{2}dx_{1}dx_{2}dx_{3}dx_{4} = -\frac{5}{4}, 
&&c_{13} = \int_{P}x_{1}x_{3}dx_{1}dx_{2}dx_{3}dx_{4} = 0, \\
&c_{14} = \int_{P}x_{1}x_{4}dx_{1}dx_{2}dx_{3}dx_{4} = 0, 
&&c_{22} = \int_{P}x_{2}^{2}dx_{1}dx_{2}dx_{3}dx_{4} = \frac{5}{2}, \\
&c_{23} = \int_{P}x_{2}x_{3}dx_{1}dx_{2}dx_{3}dx_{4} = 0, 
&&c_{24} = \int_{P}x_{2}x_{4}dx_{1}dx_{2}dx_{3}dx_{4} = 0, \\
&c_{33} = \int_{P}x_{3}^{2}dx_{1}dx_{2}dx_{3}dx_{4} = \frac{5}{2}, 
&&c_{34} = \int_{P}x_{3}x_{4}dx_{1}dx_{2}dx_{3}dx_{4} = -\frac{5}{4}, \\
&c_{44} = \int_{P}x_{4}^{2}dx_{1}dx_{2}dx_{3}dx_{4} = \frac{5}{2}, 
\end{align*}
\begin{align*}
&\theta_P(x_{1}, x_{2}, x_{3}, x_{4}) = 0, \\
&M_{X} = \max_{P}\theta_P = 0 < 1. 
\end{align*}

\subsubsection{{$Z_{1}$}}
\begin{align*}
&b_{0} = \mathrm{vol}(P) = \frac{161}{12}, 
&&b_{1} = \int_{P}x_{1}dx_{1}dx_{2}dx_{3}dx_{4} = \frac{31}{30}, \\
&b_{2} = \int_{P}x_{2}dx_{1}dx_{2}dx_{3}dx_{4} = -\frac{31}{15}, 
&&b_{3} = \int_{P}x_{3}dx_{1}dx_{2}dx_{3}dx_{4} = \frac{31}{15}, \\
&b_{4} = \int_{P}x_{4}dx_{1}dx_{2}dx_{3}dx_{4} = \frac{31}{15}, 
&&c_{11} = \int_{P}x_{1}^{2}dx_{1}dx_{2}dx_{3}dx_{4} = \frac{421}{72}, \\
&c_{12} = \int_{P}x_{1}x_{2}dx_{1}dx_{2}dx_{3}dx_{4} = -\frac{217}{80}, 
&&c_{13} = \int_{P}x_{1}x_{3}dx_{1}dx_{2}dx_{3}dx_{4} = \frac{145}{36}, \\
&c_{14} = \int_{P}x_{1}x_{4}dx_{1}dx_{2}dx_{3}dx_{4} = -\frac{1069}{720}, 
&&c_{22} = \int_{P}x_{2}^{2}dx_{1}dx_{2}dx_{3}dx_{4} = \frac{217}{40}, \\
&c_{23} = \int_{P}x_{2}x_{3}dx_{1}dx_{2}dx_{3}dx_{4} = -\frac{1831}{720}, 
&&c_{24} = \int_{P}x_{2}x_{4}dx_{1}dx_{2}dx_{3}dx_{4} = -\frac{1831}{720}, \\
&c_{33} = \int_{P}x_{3}^{2}dx_{1}dx_{2}dx_{3}dx_{4} = \frac{145}{18}, 
&&c_{34} = \int_{P}x_{3}x_{4}dx_{1}dx_{2}dx_{3}dx_{4} = -\frac{1069}{360}, \\
&c_{44} = \int_{P}x_{4}^{2}dx_{1}dx_{2}dx_{3}dx_{4} = \frac{145}{18}, 
\end{align*}
\begin{align*}
&\theta_P(x_{1}, x_{2}, x_{3}, x_{4}) = \frac{598920{{x}_{3}}+598920{{x}_{4}}-184512}{1289443}, \\
&M_{X} = \max_{P}\theta_P = \frac{1013328}{1289443} < 1. 
\end{align*}

\subsubsection{{$Z_{2}$}}
\begin{align*}
&b_{0} = \mathrm{vol}(P) = \frac{109}{8}, 
&&b_{1} = \int_{P}x_{1}dx_{1}dx_{2}dx_{3}dx_{4} = -\frac{19}{12}, \\
&b_{2} = \int_{P}x_{2}dx_{1}dx_{2}dx_{3}dx_{4} = \frac{19}{6}, 
&&b_{3} = \int_{P}x_{3}dx_{1}dx_{2}dx_{3}dx_{4} = \frac{9}{10}, \\
&b_{4} = \int_{P}x_{4}dx_{1}dx_{2}dx_{3}dx_{4} = \frac{9}{10}, 
&&c_{11} = \int_{P}x_{1}^{2}dx_{1}dx_{2}dx_{3}dx_{4} = \frac{46}{9}, \\
&c_{12} = \int_{P}x_{1}x_{2}dx_{1}dx_{2}dx_{3}dx_{4} = -\frac{303}{80}, 
&&c_{13} = \int_{P}x_{1}x_{3}dx_{1}dx_{2}dx_{3}dx_{4} = \frac{1351}{720}, \\
&c_{14} = \int_{P}x_{1}x_{4}dx_{1}dx_{2}dx_{3}dx_{4} = -\frac{61}{40}, 
&&c_{22} = \int_{P}x_{2}^{2}dx_{1}dx_{2}dx_{3}dx_{4} = \frac{303}{40}, \\
&c_{23} = \int_{P}x_{2}x_{3}dx_{1}dx_{2}dx_{3}dx_{4} = -\frac{253}{720}, 
&&c_{24} = \int_{P}x_{2}x_{4}dx_{1}dx_{2}dx_{3}dx_{4} = -\frac{253}{720}, \\
&c_{33} = \int_{P}x_{3}^{2}dx_{1}dx_{2}dx_{3}dx_{4} = \frac{2327}{360}, 
&&c_{34} = \int_{P}x_{3}x_{4}dx_{1}dx_{2}dx_{3}dx_{4} = -\frac{257}{72}, \\
&c_{44} = \int_{P}x_{4}^{2}dx_{1}dx_{2}dx_{3}dx_{4} = \frac{2327}{360}, 
\end{align*}
\begin{align*}
&\theta_P(x_{1}, x_{2}, x_{3}, x_{4}) = \frac{1384138680{{x}_{2}}+1120256220{{x}_{4}}+1120256220{{x}_{3}}}{2592694163} - \frac{469692992}{2592694163},\\
&M_{X} = \max_{P}\theta_P = \frac{3418840588}{2592694163} > 1. 
\end{align*}

\subsubsection{$W$}
\begin{align*}
&b_{0} = \mathrm{vol}(P) = \frac{83}{8}, 
&&b_{1} = \int_{P}x_{1}dx_{1}dx_{2}dx_{3}dx_{4} = 0, \\
&b_{2} = \int_{P}x_{2}dx_{1}dx_{2}dx_{3}dx_{4} = 0, 
&&b_{3} = \int_{P}x_{3}dx_{1}dx_{2}dx_{3}dx_{4} = 0, \\
&b_{4} = \int_{P}x_{4}dx_{1}dx_{2}dx_{3}dx_{4} = 0, 
&&c_{11} = \int_{P}x_{1}^{2}dx_{1}dx_{2}dx_{3}dx_{4} = \frac{463}{120}, \\
&c_{12} = \int_{P}x_{1}x_{2}dx_{1}dx_{2}dx_{3}dx_{4} = -\frac{463}{240}, 
&&c_{13} = \int_{P}x_{1}x_{3}dx_{1}dx_{2}dx_{3}dx_{4} = -\frac{601}{360}, \\
&c_{14} = \int_{P}x_{1}x_{4}dx_{1}dx_{2}dx_{3}dx_{4} = \frac{601}{720}, 
&&c_{22} = \int_{P}x_{2}^{2}dx_{1}dx_{2}dx_{3}dx_{4} = \frac{463}{120}, \\
&c_{23} = \int_{P}x_{2}x_{3}dx_{1}dx_{2}dx_{3}dx_{4} = \frac{601}{720}, 
&&c_{24} = \int_{P}x_{2}x_{4}dx_{1}dx_{2}dx_{3}dx_{4} = -\frac{601}{360}, \\
&c_{33} = \int_{P}x_{3}^{2}dx_{1}dx_{2}dx_{3}dx_{4} = \frac{463}{120}, 
&&c_{34} = \int_{P}x_{3}x_{4}dx_{1}dx_{2}dx_{3}dx_{4} = -\frac{463}{240}, \\
&c_{44} = \int_{P}x_{4}^{2}dx_{1}dx_{2}dx_{3}dx_{4} = \frac{463}{120}, 
\end{align*}
\begin{align*}
&\theta_P(x_{1}, x_{2}, x_{3}, x_{4}) = 0, \\
&M_{X} = \max_{P}\theta_P = 0 < 1. 
\end{align*}

\end{document}